\theoremstyle{definition} \newtheorem{defn}[subsection]{Definition}
			 \newtheorem{rmk}[subsection]{Remark}
			 \newtheorem{rmks}[subsection]{Remarks}
			 \newtheorem{rules}[subsection]{Rules}
			 \newtheorem{defn-prop}[subsection]{Definition-Proposition}
\theoremstyle{plain}      \newtheorem{thm}[subsection]{Theorem}
			 \newtheorem{lem}[subsection]{Lemma}
			 \newtheorem{cor}[subsection]{Corollary}
			 \newtheorem{prop}[subsection]{Proposition}
\numberwithin{equation}{subsection}
\newcommand{\reset}[1]{\setcounter{#1}{0}}
\newcounter{romain}[subsection]
\newcommand{\romain}
{\stepcounter{romain}
\noindent\makebox[1.5cm][r]{{\normalfont(\roman{romain})}\hspace{0.3cm}}}
\newcounter{numero}[subsection]
\newcommand{\numero}
{\stepcounter{numero}
\noindent\makebox[1.4cm][r]{\arabic{numero})\hspace{0.3cm}}}
\newcommand{\numerop}[1]
{\stepcounter{numero}
\noindent\makebox[1.4cm][r]{(\arabic{numero}#1)\hspace{0.3cm}}\hspace{-0.12cm}}
\newcounter{alphab}[subsection]
\newcommand{\alphab}
{\stepcounter{alphab}
\noindent\hspace{0.5cm}{\normalfont\alph{alphab})}\hspace{0.3cm}}
\newcommand{\ie}{i.e., }
\newcommand{\lp}{{\normalfont (}}
\newcommand{\rp}{{\normalfont )}}
\newenvironment{rem}{\noindent\textit{Remark.}}{}
\newcommand{\eq}[2]{\begin{equation}\label{#1}#2 \end{equation}}
\newcommand{\eqn}[1]{\begin{equation*}#1\end{equation*}}
\newcommand{\eqa}[2]{\begin{eqnarray}\label{#1}#2 \end{eqnarray}}
\newcommand{\eqna}[1]{\begin{eqnarray*}#1\end{eqnarray*}}
\newcommand{\ml}[2]{\begin{multline}\label{#1}#2 \end{multline}}
\newcommand{\mln}[1]{\begin{multline*}#1 \end{multline*}}
\newcommand{\ga}[2]{\begin{gather}\label{#1}#2 \end{gather}}
\newcommand{\gan}[1]{\begin{gather*}#1 \end{gather*}}
\newcommand{\surj}{\twoheadrightarrow}
\newcommand{\inj}{\hookrightarrow}
\newcommand{\lra}{\longrightarrow}
\newcommand{\xra}[1]{\xrightarrow{#1}}
\newcommand{\riso}{\xrightarrow{\ \sim\ }}
\newcommand{\liso}{\xleftarrow{\ \sim\ }}
\newcommand{\red}{{\rm red}}
\newcommand{\codim}{{\rm codim}}
\newcommand{\Sym}{{\rm Sym}}
\newcommand{\Pic}{{\rm Pic}}
\newcommand{\Tor}{{\rm Tor}}
\newcommand{\Spec}{{\rm Spec \,}}
\newcommand{\Proj}{{\rm Proj \,}}
\newcommand{\image}{{\rm Im}}
\newcommand{\Char}{{\rm char}}
\newcommand{\Tr}{{\rm Tr}}
\newcommand{\Trs}{{\rm Tr}^{\,\sharp}}
\newcommand{\Trf}{{\rm Trf}}
\newcommand{\Trp}{{\rm Trp}}
\newcommand{\Res}{{\rm Res}}
\newcommand{\Gal}{{\rm Gal}}
\newcommand{\ord}{{\rm ord}}
\newcommand{\Ker}{\mathrm{Ker}}
\renewcommand{\Im}{\mathrm{Im}}
\newcommand{\Id}{\mathrm{Id}}
\newcommand{\otimesl}{\otimes^\LL}
\newcommand{\otimesL}{\overset{\LL}{\otimes}}
\newcommand{\bd}{\bar{d}}
\newcommand{\bk}{\bar{k}}
\newcommand{\bt}{\bar{t}}
\newcommand{\del}{\partial}
\newcommand{\mbt}{\mathbf{t}}
\newcommand{\FV}{$F$-$\mspace{1mu}V\mspace{-2mu}$}
\newcommand{\restr}[1]{\raisebox{-0.8pt}{{\small$|$}}{}_{#1}}
\newcommand{\tF}{\widetilde{F}}
\newcommand{\tG}{\widetilde{G}}
\newcommand{\tu}{\tilde{u}}
\newcommand{\WC}[3]{W_{#1}\Omega^{#2}_{#3}}
\newcommand{\dlog}{\mathrm{dlog\,}}
\newcommand{\Fil}{\mathrm{Fil}}
\newcommand{\gr}{\mathrm{gr}}
\newcommand{\sHom}{\mathcal{H}{\mspace{-1mu}\it om}}
\newcommand{\sExt}{\mathcal{E}{\mspace{-2mu}\it xt}}
\DeclareMathOperator{\cl}{cl}
\DeclareMathOperator{\can}{can}
\newcommand{\Wedge}{\raisebox{.6mm}{$\bigwedge$}}
\newcommand{\Zar}{\mathrm{Zar}}
\newcommand{\et}{\mathrm{\acute{e}t}}
\newcommand{\crys}{\mathrm{crys}}
\newcommand{\Crys}{\mathrm{Crys}}
\newcommand{\st}{\mathrm{st}}
\newcommand{\dR}{\mathrm{dR}}
\newcommand{\dRW}{\mathrm{dRW}}
\newcommand{\sk}{\mathrm{sk}}
\newcommand{\cosk}{\mathrm{cosk}}
\newcommand{\Sets}{\underline{\mathrm{Sets}}}
\newcommand{\sA}{{\mathcal A}}
\newcommand{\sB}{{\mathcal B}}
\newcommand{\sE}{{\mathcal E}}
\newcommand{\sF}{{\mathcal F}}
\newcommand{\sG}{{\mathcal G}}
\newcommand{\sH}{{\mathcal H}}
\newcommand{\sI}{{\mathcal I}}
\newcommand{\sJ}{{\mathcal J}}
\newcommand{\sL}{{\mathcal L}}
\newcommand{\sM}{{\mathcal M}}
\newcommand{\sN}{{\mathcal N}}
\newcommand{\sO}{{\mathcal O}}
\newcommand{\sP}{{\mathcal P}}
\newcommand{\sQ}{{\mathcal Q}}
\newcommand{\sT}{{\mathcal T}}
\newcommand{\A}{{\mathbb A}}
\newcommand{\C}{{\mathbb C}}
\newcommand{\FF}{{\mathbb F}}
\newcommand{\LL}{\mathbb{L}}
\newcommand{\N}{{\mathbb N}}
\renewcommand{\P}{{\mathbb P}}
\newcommand{\Q}{{\mathbb Q}}
\newcommand{\RR}{\mathbb{R}}
\newcommand{\Z}{\mathbb{Z}}
\newcommand{\fU}{\mathfrak{U}}
\newcommand{\fV}{\mathfrak{V}}
\newcommand{\fa}{\mathfrak{a}}
\newcommand{\fm}{\mathfrak{m}}
\newcommand{\oline}[1]{\mspace{1mu}\overline{\mspace{-1mu}#1}}
\newcommand{\Oline}[1]{\mspace{2mu}\overline{\mspace{-2mu}#1}}
\newcommand{\bg}{\oline{g}}
\newcommand{\ok}{\oline{k}}
\newcommand{\oF}{\Oline{F}}
\newcommand{\oK}{\Oline{K}}
\newcommand{\Fpb}{\overline{\mathbb{F}}_p}
\newcommand{\Uline}[1]{\underline{#1\mspace{-2mu}}\mspace{2mu}}
\newcommand{\uP}{\Uline{P}}
\newcommand{\uS}{\Uline{S}}
\newcommand{\uT}{\Uline{T}}
\newcommand{\uX}{\Uline{X}}
\newcommand{\uY}{\Uline{Y}}
\newcommand{\uZ}{\Uline{Z}}
\newcommand{\usT}{\Uline{\mathcal{T}}}
\newcommand{\usZ}{\Uline{\mathcal{Z}}}
\newcommand{\ux}{\mspace{-1mu}\underline{\mspace{1mu}x\mspace{-1mu}}\mspace{1mu}}
\newcommand{\ualpha}{\underline{\alpha}{}}
\newcommand{\uGamma}{\underline{\Gamma}{}}
\newcommand{\uSigma}{\underline{\Sigma}{}}
\newcommand{\sbul}{\raisebox{.1mm}{\tiny$\bullet$}}
\newcommand{\hbul}{^{\sbul}}
\newcommand{\lbul}{_{\sbul}}
\newcommand{\lbbul}{_{\sbul,\sbul}}
\newcommand{\Frac}{\mathrm{Frac}}
\newcommand{\trace}{\mathrm{trace}}
\newcommand{\Qcoh}{\mathrm{Qcoh}}
\newcommand{\Db}{D^\mathrm{b}}
\newcommand{\Dpqc}{D^+_\mathrm{qc}}
\newcommand{\Dbqc}{D^\mathrm{b}_\mathrm{qc}}
\newcommand{\Dcoh}{D_\mathrm{coh}}
\newcommand{\Dpcoh}{D^+_\mathrm{coh}}
\newcommand{\Dbcoh}{D^\mathrm{b}_\mathrm{coh}}
\newcommand{\Dtdf}{D^\mathrm{b}_\mathrm{fTd}}
\newcommand{\Dqctdf}{D^\mathrm{b}_\mathrm{qc,fTd}}
\newcommand{\vC}{\check{\mathrm{C}}}
\newcommand{\half}{\frac{1}{2}}
\newcommand{\entry}[1]{\raisebox{0mm}[3mm][1mm]{$#1$}}
\begin{document}

\title[Rational points of regular models]{Rational points over finite fields for
regular models of algebraic varieties of Hodge type $\geq 1$}

\author{Pierre Berthelot}
\address{IRMAR, Universit\'e de Rennes 1,
Campus de Beaulieu,
35042 Rennes cedex, France}
\email{pierre.berthelot@univ-rennes1.fr}

\author{H\'el\`ene Esnault}
\address{Mathematik,
Universit\"at Duisburg-Essen, FB6, Mathematik, 45117 Essen, Germany}
\email{esnault@uni-due.de}

\author{Kay R\"ulling}
\address{Mathematik,
Universit\"at Duisburg-Essen, FB6, Mathematik, 45117 Essen, Germany}
\email{kay.ruelling@uni-due.de}

\subjclass[2010]{Primary: 11G25. Secondary: 13F35, 14F30, 14G05}

\keywords{Complete intersections, de Rham-Witt complex, fundamental class, Hodge type,
$p$-adic cohomology, $p$-adic Hodge theory, rational points, regular models, 
slope filtration, trace morphism, Witt vectors, zeta function.}

\thanks{Partially supported by the DFG Leibnitz Preis, the SFB/TR45, the ERC Advanced 
Grant 226257.}

\date{September 9, 2011}

\begin{abstract}
Let $R$ be a discrete valuation ring of mixed characteristics $(0,p)$, with finite
residue field $k$ and fraction field $K$, let $k'$ be a finite extension of $k$, and
let $X$ be a regular, proper and flat $R$-scheme, with generic fibre $X_K$ and special
fibre $X_k$. Assume that $X_K$ is geometrically connected and of Hodge type $\geq 1$ in
positive degrees. Then we show that the number of $k'$-rational points of $X$ satisfies
the congruence $|X(k')| \equiv 1$ mod $|k'|$. Thanks to \cite{BBE07}, we deduce such
congruences from a vanishing theorem for the Witt cohomology groups $H^q(X_k,
W\sO_{X_k,\Q})$, for $q > 0$. In our proof of this last result, a key step is the
construction of a trace morphism between the Witt cohomologies of the special fibres of
two flat regular $R$-schemes $X$ and $Y$ of the same dimension, defined by a surjective
projective morphism $f : Y \to X$.
\end{abstract}

\maketitle
\thispagestyle{empty}

\setcounter{tocdepth}{1}\tableofcontents

\section{Introduction and first reductions}\label{Intro}

Let $R$ be a discrete valuation ring of mixed characteristics $(0,p)$, with perfect
residue field $k$, and fraction field $K$. The main goal of this article is to prove
the following theorem.

\begin{thm}\label{Main}
Let $X$ be a proper and flat $R$-scheme, with generic fibre $X_K$, such that the
following conditions hold:

\alphab $X$ is a regular scheme.

\alphab $X_K$ is geometrically connected.

\alphab $H^q(X_K, \sO_{X_K}) = 0$ for all $q \geq 1$.

If $k$ is finite, then, for any finite extension $k'$ of $k$, the number of
$k'$-rational points of $X$ satisfies the congruence
\eq{maincong}{ |X(k')| \equiv 1 \mod |k'|.}
\end{thm}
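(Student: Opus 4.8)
The plan is to reduce, using \cite{BBE07}, to a geometric vanishing statement about the special fibre, and then to prove it by passing to a semistable model via de Jong's theorem, transferring the question along a trace morphism, and applying $p$-adic Hodge theory. First I would carry out the standard reductions: a $k'$-point of $X$ factors through $X_k$, so $X(k')=X_k(k')$ depends only on the $k$-scheme $X_k$, and one may replace $R$ by its completion. The scheme $X_K$, being a localisation of the regular scheme $X$ with $K$ perfect, is smooth, proper and geometrically connected over $K$, and (c) says exactly that $\Fil^1 H^q_{\dR}(X_K/K)=H^q_{\dR}(X_K/K)$ for $q>0$, since $\gr^0_F H^q_{\dR}(X_K/K)=H^q(X_K,\sO_{X_K})$. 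By \cite{BBE07} --- which identifies $H^q(Z,W\sO_{Z,\Q})$ with the slope $<1$ part of $H^q_{\mathrm{rig}}(Z/K_0)$ for $Z$ proper over a perfect field, and deduces $|Z(\FF_q)|\equiv1\bmod q$ when $Z$ is moreover geometrically connected with $H^{>0}(Z,W\sO_{Z,\Q})=0$ --- the theorem follows once one proves $H^q(X_k,W\sO_{X_k,\Q})=0$ for all $q>0$; the general extension $k'/k$ reduces to $k'=k$ because $W\sO$-cohomology commutes with the finite étale base change $W(k)\to W(k')$.

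Next I would invoke de Jong's alteration theorem: after a finite extension $R\subseteq R'$ of discrete valuation rings --- absorbing the residue extension into an unramified base change, which is harmless for the vanishing, so that $R'/R$ becomes totally ramified --- there is a projective alteration $f\colon Y\to X$ with $Y$ regular and $Y/R'$ strictly semistable, so $Y_k$ is a reduced strict normal crossings divisor. Then $Y$ is flat and regular over $R$, of the same dimension as $X$, and $f$ is surjective and projective, so the trace morphism constructed in this paper gives $\Tr_f\colon \R f_*W\sO_{Y_k,\Q}\to W\sO_{X_k,\Q}$ with $\Tr_f\circ f^{*}=\deg(f)\cdot\id$. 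Hence $f^{*}\colon H^q(X_k,W\sO_{X_k,\Q})\to H^q(Y_k,W\sO_{Y_k,\Q})$ is split injective, and it suffices to show that its image $V$ vanishes for $q>0$. One cannot expect $H^q(Y_k,W\sO_{Y_k,\Q})$ itself to vanish --- $Y_{K'}$ is only a finite cover of $X_{K'}$ and need not be of Hodge type $\ge1$ --- so controlling the subspace $V$, rather than the whole group, is essential.

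By \cite{BBE07}, $H^q(Y_k,W\sO_{Y_k,\Q})$ is the slope $<1$ part of $H^q_{\mathrm{rig}}(Y_k/K_0)$, which (the log structure only contributing at slopes $\ge1$) is identified with the slope $<1$ part of the Hyodo--Kato cohomology $H^q_{\mathrm{HK}}(Y_k)$; since $N$ lowers slopes by $1$ and slopes of $H^q_{\mathrm{HK}}$ are $\ge0$, this is a $\varphi$-submodule of $H^q_{\mathrm{HK}}(Y_k)$ on which the monodromy $N$ vanishes, and $H^q_{\mathrm{HK}}(Y_k)$ together with its Hodge filtration --- via the Hyodo--Kato isomorphism $H^q_{\mathrm{HK}}(Y_k)\otimes_{K_0}K'\riso H^q_{\dR}(Y_{K'}/K')$ --- is a weakly admissible filtered $(\varphi,N)$-module (the $C_{\st}$-comparison, valid as $R'$ is a complete discrete valuation ring with perfect residue field). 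Thus $V$ is a sub-$(\varphi,N)$-module of $H^q_{\mathrm{HK}}(Y_k)$. A comparison, compatible with $f$, of the (de Rham--)Witt and rigid cohomologies of the special fibres with the Hodge-filtered de Rham cohomologies of the generic fibres --- this is where (c) enters, forcing the classes of $V_{K'}=V\otimes_{K_0}K'$ into $\Fil^1 H^q_{\dR}(Y_{K'})$ because those coming from $X_{K'}=X_K\otimes_K K'$ lie there --- shows that the filtration induced on $V_{K'}$ is concentrated in degrees $\ge1$. Weak admissibility applied to the sub-$(\varphi,N)$-module $V$ then gives
\[ \dim_{K_0}V\;>\;t_N(V)\;\ge\;t_H(V)\;\ge\;\dim_{K_0}V, \]
the extreme inequalities expressing that all slopes of $V$ are $<1$ and all Hodge jumps of $V_{K'}$ are $\ge1$; hence $V=0$, so $H^q(X_k,W\sO_{X_k,\Q})=0$ for $q>0$ and the theorem follows.

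The step I expect to be the main obstacle is the construction of the trace morphism $\Tr_f$ together with the identity $\Tr_f\circ f^{*}=\deg(f)\cdot\id$: since the special fibres $X_k$ and $Y_k$ are singular, smooth duality for $W\sO$ is unavailable, and one must build a Grothendieck-type duality formalism for the coherent de Rham--Witt sheaves on these schemes, exploiting the regularity of the total spaces $X$, $Y$ and, in the construction, factorisations of $f$ through regular closed immersions and smooth morphisms. A second, substantial piece of work is to make precise and prove the compatibilities with $f^{*}$ used above --- the identification of \cite{BBE07}, the passage from rigid to Hyodo--Kato cohomology in the semistable case, the Hyodo--Kato isomorphism and weak admissibility, and the relation for the regular model $X$ between Witt cohomology of the special fibre and Hodge-filtered de Rham cohomology of the generic fibre --- while managing the bookkeeping forced by the ramified base change $R'/R$.
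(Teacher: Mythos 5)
There is a genuine gap at the step where you claim that the Hodge filtration induced on $V_{K'}$ is concentrated in degrees $\geq 1$ "because those coming from $X_{K'}$ lie there". The subspace $V$ is the image of $f_k^*$ applied to $H^q(X_k, W\sO_{X_k,\Q})$, an invariant of the \emph{special} fibre of the merely regular model $X$; to place its Hyodo--Kato transport inside $\Fil^1 H^q_{\dR}(Y_{K'})$ you would need to identify $V_{K'}$ with (a subspace of) $f_{K'}^*H^q_{\dR}(X_{K'})$, i.e.\ you would need a comparison isomorphism, compatible with $f$, between a crystalline-type cohomology of $X_k$ whose slope $<1$ part is $H^q(X_k,W\sO_{X_k,\Q})$ and the Hodge-filtered de Rham cohomology of $X_K$. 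No such comparison exists for a regular, proper, flat $R$-scheme that is neither smooth nor semi-stable --- this is precisely the obstruction the theorem is designed to circumvent, not a bookkeeping compatibility. Equivalently: the idempotent $\tfrac1r f_k^*\circ\tau$ cutting out $V$ on the special-fibre side has no a priori counterpart acting on $H^q_{\dR}(Y_{K'})$ through the Hyodo--Kato isomorphism, so $V$ cannot be recognized as a filtered $(\varphi,N)$-submodule with the stated Hodge bounds. Once that is granted, your weak admissibility inequality and the conclusion $V=0$ would be fine, and your use of the trace morphism to reduce to the image of $f_k^*$ is exactly the role it plays in the paper (Theorem \ref{Tworeg}); but the filtration claim is the heart of the matter and it does not follow from what you have.

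The paper's way around this is to replace the single alteration by an $m$-truncated simplicial semi-stable scheme $Y\lbul \to X_{R'}$ (Lemma \ref{mtruncres}) whose \emph{generic} fibre is a proper hypercovering of $X_{K'}$: cohomological descent (Proposition \ref{DRsimpl}) then computes $H^q(Y\lbul{}_{K'},\Omega\hbul)$ as $H^q_{\dR}(X_{K'})$ with its Hodge filtration, so hypothesis (c) really does force the Hodge slopes to be $\geq 1$, and Tsuji's simplicial comparison theorem gives weak admissibility of the whole simplicial Hyodo--Kato module, whence $H^q(Y_{\sbul\,k'}, W\sO_{Y_{\sbul\,k'},\Q})=0$. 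Because the special fibre of $Y\lbul$ need not be a hypercovering of $X_k$, the vanishing is transferred back to $X_k$ by the injectivity theorem --- this is where the trace morphism is actually used. If you want to avoid the simplicial machinery, you must still supply the missing link between the special-fibre Witt cohomology of $X$ and the generic-fibre Hodge filtration, and that link is exactly what the hypercovering provides.
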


Condition c) should be viewed as a Hodge theoretic property of $X_K$, which can be
stated by saying that $X_K$ has Hodge type $\geq 1$ in positive degrees. From this
point of view, this theorem fits in the general analogy between the vanishing of Hodge
numbers for varieties over a field of characteristic $0$, and congruences on the number
of rational points with values in finite extensions for varieties over a finite field.
This analogy came to light with the coincidence between the numerical values in
Deligne's theorem on smooth complete intersections in a projective space \cite[Expos\'e
XI, Th.~2.5]{SGA 7 II}, and in the Ax-Katz theorem on congruences on the number of
solutions of systems of algebraic equations \cite[Th.~1.0]{Kz71}. It has been made
effective by Katz's conjecture \cite[Conj.~2.9]{Kz71} relating the Newton and Hodge
polygons associated to the cohomology of a proper and smooth variety (and generalizing
earlier results of Dwork for hypersurfaces \cite{Dw64}). For varieties in
characteristic $p$, this conjecture was proved by Mazur (\cite{Ma72}, \cite{Ma73}) and
Ogus \cite[Th.~8.39]{BO78}. In the mixed characteristic case, where a stronger form can
be given using the Hodge polygon of the generic fibre, it is a consequence of the
fundamental results in $p$-adic Hodge theory. Our proof of Theorem \ref{Main} makes
essential use of the unequality between these two polygons, but the setup of the theorem
is actually more general, since the scheme $X$ is not supposed to be semi-stable over
$R$.

Let us also recall that a result similar to Theorem \ref{Main} has been proved by the
second author \cite[Th.~1.1]{Es06} by $\ell$-adic methods, with condition c) replaced
by a coniveau condition: for any $q \geq 1$, any cohomology class in
$H^q_{\et}(X_{\oK},\Q_{\ell})$ vanishes in $H^q_{\et}(U_{\oK},\Q_{\ell})$ for some non
empty open subset $U \subset X_K$. It is easy to see, using \cite{De71}, that this
coniveau condition implies that the Hodge level of $X_K$ is $\geq 1$ in degree $q \geq 
1$ (see \cite[4.4 (d)]{Il06} for a more general discussion). It
would actually follow from Grothendieck's generalized Hodge conjecture \cite{Gr69} that
the two conditions are equivalent. In this article, the use of $p$-adic methods, and in
particular of $p$-adic Hodge theory, allows us to derive congruence \eqref{maincong}
directly from Hodge theoretic hypotheses.

\subsection{}\label{Wittreduc}
As explained by Ax \cite{Ax64}, congruences such as \eqref{maincong} can be expressed
in terms of the zeta function of the special fibre $X_k$ of $X$. We recall that the 
rationality of the zeta function $Z(X_k,t)$ allows to define the slope $< 1$ part 
$Z^{<1}(X_k,t)$ of $Z(X_k,t)$ as follows \cite[6.1]{BBE07}. Let $|k| = p^a$, and write  
\eqn{ Z(X_k,t) = \prod_i(1-\alpha_i t)/\prod_j(1-\beta_j t), }
with $\alpha_i, \beta_j \in \overline{\Q}_p$ and $\alpha_i \neq \beta_j$ for all $i, 
j$. Normalizing the $p$-adic valuation $v$ of $\overline{\Q}_p$ by $v(p^a)=1$, one sets 
\eqn{ Z^{<1}(X_k,t) = \prod_{v(\alpha_i)<1}(1-\alpha_i t)/\prod_{v(\beta_j)<1}(1-\beta_j 
t).  }
Then the congruences \eqref{maincong} are equivalent to 
\eq{partialzeta}{ Z^{<1}(X_k,t)  =  \frac{1}{1-t} }
\cite[Prop.~6.3]{BBE07}.

On the other hand, let $W(\sO_{X_k})$ be the sheaf of Witt vectors with coefficients in
$\sO_{X_k}$, and $W\sO_{X_k,\Q} = W(\sO_{X_k})\otimes \Q$. Then the identification of
the slope $< 1$ part of rigid cohomology with Witt vector cohomology provides the
cohomological interpretation
\eq{wittzeta}{ Z^{<1}(X_k,t)  =  \prod_i \det(1-tF^a|H^i(X_k,W\sO_{X_k,\Q}))^{(-1)^{i+1}}, }
where $F$ is induced by the Frobenius endomorphism of $W(\sO_{X_k})$
\cite[Cor.~1.3]{BBE07}. Therefore, Theorem \ref{Main} is a consequence of the
following theorem, where $k$ is only assumed to be perfect:

\begin{thm}\label{Wittvanish}
Let $X$ be a regular, proper and flat $R$-scheme. Assume that \hfill\linebreak 
$H^q(X_K, \sO_{X_K}) = 0$ for some $q \geq 1$. Then:
\eq{wittvanish}{ H^q(X_k, W\sO_{X_k,\Q}) = 0.}
\end{thm}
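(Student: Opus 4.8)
The plan is to reduce to the case where $X$ admits a surjective projective morphism from a strictly semi-stable $R$-scheme, where rigid cohomology and $p$-adic Hodge theory give the required vanishing, and to transport the vanishing back along a trace morphism on Witt cohomology. First I would note that the question is local on $\Spec R$ and compatible with completion, so I may assume $R$ complete; by standard limit arguments the residue field $k$ can be assumed finite, and then, after a finite extension of $R$ (which is harmless since Witt cohomology with $\Q$-coefficients only gets larger after such a base change in a controlled way), I would like to apply resolution-type results. Since $X$ is regular, proper and flat over $R$, one cannot in general resolve to a semi-stable model in mixed characteristic; instead I would invoke de Jong's theory of alterations to produce a projective, generically finite, surjective morphism $f : Y \to X$ with $Y$ regular, flat and strictly semi-stable over a finite extension $R'$ of $R$. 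The morphism $f$ has the same dimension on source and target, so the trace morphism on Witt cohomology constructed in the body of the paper applies.

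The second step is to prove the vanishing $H^q(Y_{k'}, W\sO_{Y_{k'},\Q}) = 0$ for the semi-stable model $Y$. Here I would use the hypothesis $H^q(X_K,\sO_{X_K}) = 0$: by flat base change and the generic finiteness of $f$, together with the degeneration of the Hodge-de Rham spectral sequence and the fact that $f^* : H^q(X_K,\sO_{X_K}) \to H^q(Y_{K'},\sO_{Y_{K'}})$ need not be injective, I would instead argue that the Hodge type hypothesis on $X_K$ forces the Hodge polygon of $H^q_{\dR}(X_K/K)$ to lie strictly above the line of slope zero in positive degrees. Passing to $Y$ and using that $Y$ is semi-stable, the comparison theorems of $p$-adic Hodge theory (the $C_{\st}$ comparison between de Rham cohomology of the generic fibre and the log-crystalline, equivalently rigid, cohomology of the special fibre) identify the Newton polygon of Frobenius on $H^q_{\rig}(Y_{k'}/K_0)$ with a polygon that lies on or above the Hodge polygon of $H^q_{\dR}$. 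Combined with Mazur–Ogus, this shows the slope-$<1$ part of $H^q_{\rig}(Y_{k'})$ vanishes, hence by the identification of the slope-$<1$ part of rigid cohomology with Witt cohomology (\cite[Cor.~1.3]{BBE07}) we get $H^q(Y_{k'},W\sO_{Y_{k'},\Q}) = 0$.

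The third step is descent along the trace. The trace morphism $\Tr_f : Rf_* W\sO_{Y_k,\Q} \to W\sO_{X_k,\Q}$ (suitably interpreted after the base change $R \to R'$, $k \to k'$) composed with the canonical pullback $W\sO_{X_k,\Q} \to Rf_* W\sO_{Y_k,\Q}$ should be multiplication by the generic degree of $f$, which is a nonzero integer, hence invertible in $\Q$-coefficients. Therefore $H^q(X_k, W\sO_{X_k,\Q})$ is a direct summand of $H^q(Y_k, W\sO_{Y_k,\Q})$ (after the finite base change), and the latter vanishes; a further descent argument from $k'$ back to $k$, using that $H^q(X_{k'}, W\sO_{X_{k'},\Q}) = H^q(X_k, W\sO_{X_k,\Q}) \otimes_{\W(k)} \W(k')$ faithfully flatly, concludes. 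The main obstacle is precisely the construction and the expected properties of the trace morphism $\Tr_f$ on Witt cohomology — in particular verifying the projection formula $\Tr_f \circ f^* = \deg(f) \cdot \id$ at the level of Witt vector sheaves on singular special fibres, where $W\sO$ is not a dualizing complex and the usual Grothendieck duality machinery does not directly apply; this is the technical heart flagged in the abstract, and the rest of the argument is a fairly standard assembly of alterations, $p$-adic Hodge theory, and the results of \cite{BBE07}.
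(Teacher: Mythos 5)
There is a genuine gap at the heart of your second step. You reduce to a single semi-stable alteration $f : Y \to X$ over $R'$ and then want to run the Newton-above-Hodge argument on $Y$. But that argument needs the Hodge polygon of $H^q_{\dR}(Y_{K'}/K')$ to have smallest slope $\geq 1$, i.e.\ it needs $H^q(Y_{K'},\sO_{Y_{K'}})=0$ --- and this is \emph{not} implied by $H^q(X_K,\sO_{X_K})=0$. A generically finite surjective morphism can strictly enlarge $H^q(\cdot,\sO)$: already a double cover of $\P^2$ branched along a curve of high degree has $H^2(Y,\sO_Y)\neq 0$. So the slope-$<1$ part of $H^q_{\crys}$ of the special fibre of $Y$ need not vanish, and there is no way to cut out ``the part coming from $X$'' inside the weakly admissible module attached to $Y$ using only the Hodge data of $Y_{K'}$. (The non-injectivity of $f^*$ on $H^q(\sO)$, which you mention, is a red herring; the problem is non-surjectivity.) This is precisely why the paper does not use a single alteration: it builds, via de Jong, an $m$-truncated \emph{proper hypercovering} $Y\lbul{}_{,K'}\to X_{K'}$ by pullbacks of semi-stable schemes (Lemma \ref{mtruncres}), so that cohomological descent for de Rham cohomology (Proposition \ref{DRsimpl}, Corollary \ref{Hodgesimpl}) transfers the Hodge bound from $X_{K'}$ to the simplicial object in degrees $<m$, and then applies Tsuji's simplicial comparison theorem to get $H^q(Y_{\sbul\,k'},W\sO_{Y_{\sbul\,k'},\Q})=0$.

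Your third step (trace descent via $\Trs$ and the projection formula, giving injectivity of $f_k^*$ on $W\sO_{\Q}$-cohomology) is exactly the role of Theorem \ref{Tworeg} in the paper, and it is needed for the same reason you would need it: the special fibre of the (truncated simplicial) covering is not a proper hypercovering of $X_k$, so one cannot descend the vanishing by cohomological descent on the special fibre. But the injectivity alone cannot rescue your argument, because the group you would be injecting into, $H^q(Y_{k'},W\sO_{Y_{k'},\Q})$ for a single alteration $Y$, does not vanish in general. The fix is to combine both ingredients as the paper does: vanishing on the truncated simplicial special fibre from $p$-adic Hodge theory, plus injectivity of $H^q(X_k,W\sO_{X_k,\Q})\to H^q(Y_{0,k'},W\sO_{\Q})$ obtained from Theorem \ref{Tworeg} applied to the component $Y$ of $Y_0$, together with the separability argument for the base change $k_1\to k'$. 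As a minor point, your reduction to finite $k$ is unnecessary: the theorem is stated and proved for perfect residue fields.
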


\noindent\textit{Proof of Theorem \ref{Main}, assuming Theorem \ref{Wittvanish}}. Let
us prove here this implication, which is easy and does not use the regularity
assumption on $X$. Let $W = W(k)$, and $K_0 = \Frac(W)$. Thanks to \eqref{partialzeta}
and \eqref{wittzeta}, Theorem \ref{Wittvanish} implies that it suffices to prove that
the homomorphism $K_0 \to H^0(X_k, W\sO_{X_k,\Q})$ is an isomorphism.

Since $X$ is proper and flat over $R$, $H^0(X,\sO_X)$ is a free finitely generated
$R$-module. As the generic fibre $X_K$ is geometrically connected and geometrically
reduced, the rank of $H^0(X,\sO_X)$ is $1$. The homomorphism $R \to H^0(X,\sO_X)$
maps $1$ to $1$, hence Nakayama's lemma implies that it is an isomorphism. Applying
Zariski's connectedness theorem, it follows that $X_k$ is connected, and even
geometrically connected, since the same argument can be applied after any base change
from $R$ to $R'$, where $R'$ is the ring of integers of a finite extension of $K$.

On the other hand, let $\bk$ be an algebraic closure of $k$, and let $k'$ be a finite
extension of $k$ such that $X_{\bk\,\red}$ is defined over $k'$. As $k'$ is separable
over $k$, the homomorphisms $W_n(k)\to W_n(k')$ are finite \'etale liftings of $k \to
k'$, and the homomorphisms $W_n(k')\otimes_{W_n(k)} W_n(\sO_{X_k}) \to
W_n(\sO_{X_{k'}})$ are isomorphisms \cite[I, Prop.~1.5.8]{Il79}. It follows that the
homomorphism $W(k')\otimes_{W(k)} H^0(X_k, W(\sO_{X_k})) \to H^0(X_{k'},
W(\sO_{X_{k'}}))$ is an isomorphism, and that it suffices to prove the claim for
$X_{k'}$. Using the fact that
\eqn{ H^0(X_{k'}, W\sO_{X_{k'},\Q}) \riso H^0(X_{k'\,\red}, W\sO_{X_{k'\,\red},\Q}) }
by \cite[Prop.~2.1 (i)]{BBE07}, it suffices to check that, if $Z$ is a proper,
geometrically connected and geometrically reduced $k$-scheme, the homomorphism $W(k)
\to H^0(Z,W(\sO_{Z}))$ is an isomorphism.

Under these assumptions, the homomorphism $k \to H^0(Z, \sO_Z)$ is an isomorphism. As
the homomorphism $R : W_n(\sO_Z) \to W_{n-1}(\sO_Z)$ is the projection of a product
onto one of its factors, the homomorphisms $H^0(Z, W_n(\sO_Z)) \to H^0(Z,
W_{n-1}(\sO_Z))$ are surjective, and one gets by induction that the homomorphism
$W_n(k) \to H^0(Z, W_n(\sO_Z))$ is an isomorphism for all $n$. Taking inverse limits,
the claim follows.
\hfill$\Box$

\subsection{}\label{Trivial}
Theorem \ref{Wittvanish} is deeper, and most of our paper is devoted to developing the
techniques used in its proof. We may observe though that, in the context of Theorem
\ref{Main}, there is a case where \eqref{wittvanish} is trivial: namely, if we replace
the condition on the Hodge numbers of $X_K$, which is equivalent to requiring that the
modules $H^q(X,\sO_X)$ be $p$-torsion modules, by the stronger condition that
$H^q(X,\sO_X)$ vanishes for all $q \geq 1$. Indeed, the flatness of $X$ over $R$ allows
to apply the derived base change formula for coherent cohomology and to conclude that
$H^q(X_k,\sO_{X_k}) = 0$ for all $q \geq 1$. By induction on $n$, one gets that
$H^q(X_k,W_n(\sO_{X_k})) = 0$ for all $n, q \geq 1$, and \eqref{wittvanish} follows for
all $q \geq 1$ (even before tensoring with $\Q$).

In the general case, where the $H^q(X,\sO_X)$ are $p$-torsion modules, we do not know
any direct argument to derive the vanishing property stated in \eqref{wittvanish}. Our
strategy is then to use the results of $p$-adic Hodge theory relating the Hodge and
Newton polygons of certain filtered $F$-isocrystals on $k$, which allow to study
separately the cohomology groups for a given $q$ as in Theorem \ref{Wittvanish}. In
particular, when $X$ is semi-stable on $R$, a straightforward argument using the
fundamental comparison theorems of $p$-adic Hodge theory allows to deduce
\eqref{wittvanish} from the unequality between the two polygons defined by the log
crystalline cohomology of $X_k$. We explain this argument in Theorem \ref{Semistable}.

In the rest of Section \ref{pHodge}, we show that this argument can be modified to
prove the vanishing of $H^q(X_k, W\sO_{X_k,\Q})$ in the general case. For any finite
extension $K'$ of $K$, with ring of integers $R'$, let $X_{R'}$ be deduced from $X$ by
base change from $R$ to $R'$. After reducing to the case where $R$ is complete, 
the first step is to apply de Jong's alteration theorem
to construct for any $m$ an $m$-truncated simplicial scheme $Y\lbul$ over the ring of
integers $R'$ of a suitable extension $K'$ of $K$, endowed with an augmentation
morphism $Y_0 \to X_{R'}$, such that the $Y_i$'s are pullbacks of proper semi-stable
schemes, and $Y\lbul \to X_{R'}$ induces an $m$-truncated proper hypercovering of
$X_{K'}$ (see Lemma \ref{mtruncres} for a precise statement). Then, using Tsuji's
extension of the comparison theorems to truncated simplicial schemes \cite{Ts98}, we
show that, in this situation, the cohomology group $H^q(Y\lbul{}_k,
W\sO_{Y\lbul{}_k,\Q})$ vanishes. However, due to the possible presence of vertical 
components in the coskeletons, the special fibre $Y\lbul{}_k$ of the
$m$-truncated simplicial scheme $Y\lbul$ may not be a proper hypercovering of $X_k$,
and it is unclear how the groups $H^q(Y\lbul{}_k, W\sO_{Y\lbul{}_k,\Q})$ are related to
the groups $H^q(X_k, W\sO_{X_k,\Q})$. Therefore another ingredient will be necessary to
complete the proof. It will be provided by the following injectivity theorem, the proof
of which will be given in section \ref{Injectivity2}.

\begin{thm}\label{Tworeg}
Let $X$, $Y$ be two flat, regular $R$-schemes of finite type, of the same dimension,
and let $f : Y \to X$ be a projective and surjective $R$-morphism, with reduction 
$f_k$ over $\Spec k$. Then, for all $q \geq 0$, the functoriality homomorphism 
\eq{tworeg}{ f_k^* : H^q(X_k, W\sO_{X_k,\Q}) \lra H^q(Y_k, W\sO_{Y_k,\Q}) }
is injective.
\end{thm}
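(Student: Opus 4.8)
The plan is to construct a trace morphism $\Tr_{f_k} : \R f_{k*} W\sO_{Y_k,\Q} \to W\sO_{X_k,\Q}$ in the derived category of sheaves of $W(k)_\Q$-modules on $X_k$, and to show that the composite $\Tr_{f_k} \circ f_k^\flat$ with the canonical adjunction map $f_k^\flat : W\sO_{X_k,\Q} \to \R f_{k*} W\sO_{Y_k,\Q}$ is (a nonzero rational multiple of) the identity. Granting this, applying $H^q(X_k,-)$ immediately yields that $f_k^*$ is split injective, which is the assertion. So the entire content is the construction of $\Tr_{f_k}$ and the computation of $\Tr_{f_k}\circ f_k^\flat$.

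For the construction, the key observation is that the regularity of $X$ and $Y$ and the equality of dimensions let us build the trace out of Grothendieck–Serre duality for the \emph{characteristic-zero} scheme $f$. More precisely: since $X$ and $Y$ are regular and equidimensional and $f:Y\to X$ is projective, $f$ admits a dualizing complex relative structure, and because $\dim Y = \dim X$ the relative dualizing complex $\omega_f = f^!\sO_X$ is concentrated in degree $0$ and is a line bundle (or at least rank-one reflexive sheaf) on the locus where $f$ is flat; surjectivity forces $f$ to be generically finite, and the Grothendieck trace $\Tr_f : \R f_* \omega_f \to \sO_X$ is an isomorphism in degree $0$ on a dense open and, after inverting the degree, gives an honest splitting $\sO_X \to \R f_* \sO_Y \to \sO_X$ of $f^\flat$ on $X_\Q$ — this is the standard ``projective + generically finite + equidimensional between regular schemes'' splitting for $\sO$-cohomology. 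One then needs to \emph{lift} this splitting from coherent cohomology of $X$ over $R$ to Witt-vector cohomology of the special fibre $X_k$. Here I would use the de Rham–Witt machinery of Illusie–Raynaud: $W\sO_{X_k}$ is the zeroth term of the de Rham–Witt complex, and for regular schemes flat over $R$ there is a comparison relating $W\sO_{X_k,\Q}$ (slope-$<1$ part of rigid/crystalline cohomology of $X_k$) to the coherent cohomology of the $p$-adic formal completion $\widehat{X}$, respectively to $\R\Gamma(X,\sO_X)\otimes\Q$ via the base-change and the identification of $H^0$ already used in the proof of Theorem~\ref{Main}. The point is that a morphism of regular flat $R$-schemes which is projective induces a map on Witt cohomology that is compatible, under this comparison, with the map on $\sO$-cohomology over $R$; functoriality of the Grothendieck trace (proper base change for $\R f_*$ and compatibility of $f^!$ with the reduction) then transports the splitting.

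Concretely I would proceed in the following steps. \emph{Step 1:} Reduce to $R$ complete and then, by Popescu / a limit argument, reduce to the case where one may use resolution of singularities-type inputs only through de Jong alterations already invoked in the paper — but in fact no such reduction is needed since regularity of $X,Y$ is assumed; so Step 1 is just the harmless reduction to $R$ complete with finite residue field irrelevant (the theorem is stated for perfect $k$). \emph{Step 2:} Construct $f^\flat_k$ as the edge map of $\R f_{k*}$, using that $W\sO_{X_k,\Q} = \R f_{k*}W\sO_{Y_k,\Q}$ would be too strong — instead just the canonical map, which exists because $W\sO$ is a sheaf of rings and $f_k$ is a morphism of ringed spaces. \emph{Step 3:} Construct $\Tr_{f_k}$. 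Because $Y_k,X_k$ need not be smooth or even reduced, one cannot use the de Rham–Witt trace of Ekedahl directly; instead build the trace on the level of the generic fibre via Grothendieck duality for $f_K: Y_K\to X_K$ (projective, generically finite, equidimensional between regular schemes, hence $f_K^!\sO_{X_K}$ is a line bundle in degree $0$ and the trace $\R f_{K*}f_K^!\sO_{X_K}\to\sO_{X_K}$ restricts to a splitting of $\sO_{X_K}\to \R f_{K*}\sO_{Y_K}$ over $\Q$), then \emph{spread it out} over $R$ (duality over the base $R$, using $f^!\sO_X$ which is again a line bundle in degree $0$ off a closed subset of the special fibre not containing any component, by equidimensionality and the regularity of source and target — this is the delicate point), and finally specialize to $X_k$ and compare with $W\sO_{X_k,\Q}$ using the identifications of \cite{BBE07} (Prop.~2.1 and Cor.~1.3) that match $H^\bullet(X_k,W\sO_{X_k,\Q})$ with the slope-$<1$ part of crystalline/rigid cohomology, together with the degeneration of the slope spectral sequence rationally. \emph{Step 4:} Check $\Tr_{f_k}\circ f^\flat_k = (\deg f)\cdot \id$ by reducing to the generic fibre, where it is the classical identity $\Tr_{f_K}\circ f_K^\flat = \deg(f_K)$ for a generically finite projective morphism; invert $\deg f$ in $\Q$ to conclude.

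The main obstacle is \emph{Step 3}: producing a trace on $W\sO$-cohomology when neither special fibre is smooth. The honest difficulty is that Witt-vector cohomology $H^q(X_k,W\sO_{X_k,\Q})$ is not literally coherent cohomology over $R$, so one must go through the $p$-adic comparison theorems (slope-$<1$ part of rigid cohomology $\cong$ Witt cohomology, \cite[Cor.~1.3]{BBE07}) to even make sense of ``the reduction of the Grothendieck trace''; and rigid cohomology traces for generically finite morphisms between singular varieties, while available in principle, need to be matched with the $\sO$-theoretic trace over $R$ via the crystalline–de Rham comparison. I expect this matching — essentially a compatibility between Grothendieck duality over $R$ and the rigid/crystalline trace, restricted to slopes $<1$ — to be where the real work lies, and it is presumably why the paper isolates this as a separate theorem with its own section. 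Everything else (the formal splitting argument, the reduction to the generic fibre for the degree computation) is routine once the trace is in hand.
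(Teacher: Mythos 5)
Your overall strategy --- split $f_k^*$ by constructing a trace $\tau$ with $\tau\circ f_k^* = r\cdot\Id$ for $r$ the generic degree, and invert $r$ in $\Q$ --- is exactly the paper's (Theorem \ref{ThinjWitt} and Proposition \ref{Mult}), and your identification of $f$ as a projective, generically finite, complete intersection morphism of virtual relative dimension $0$ with a coherent Grothendieck trace $\tau_f$ satisfying $\tau_f\circ f^* = r$ is also the paper's starting point. But your Step 3 and Step 4 contain a genuine gap. To build the Witt-vector trace you propose to pass through the identification of $H^q(X_k,W\sO_{X_k,\Q})$ with the slope $<1$ part of rigid cohomology and then invoke a ``crystalline--de Rham comparison'' matching a rigid-cohomology trace with the coherent trace over $R$. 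For the schemes at hand the special fibres $X_k$, $Y_k$ are singular (possibly non-reduced) and the model is not semi-stable, so no such comparison is available --- the unavailability of exactly this kind of comparison is why Section \ref{pHodge} of the paper has to resort to alterations in the first place --- and one would additionally need a Frobenius-equivariant trace in rigid cohomology for a generically finite morphism between singular varieties, which is not off-the-shelf. Worse, your Step 4 proposes to verify $\Tr_{f_k}\circ f_k^\flat = \deg(f)$ ``by reducing to the generic fibre'': this is not meaningful, because $W\sO_{X_k,\Q}$ lives only on the special fibre and there is no specialization map from $H^q(X_K,\sO_{X_K})$ to $H^q(X_k,W\sO_{X_k,\Q})$ (if there were, Theorem \ref{Wittvanish} would be trivial). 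The identity has to be checked as an identity of endomorphisms of the pro-sheaf $W_n(\sO_{X_0})$, and that is where the real content lies.

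The paper's route around this is quite different in its execution. It constructs the Witt trace directly, with no recourse to rigid or crystalline cohomology: choosing a factorization $f=\pi\circ i$ with $i: Y\inj\P^{\,d}_X$ a regular immersion, it defines a Hodge--Witt fundamental class $\gamma_{Y,n}\in\sH^d_Y(\WC{n}{d}{P/X})$ by explicit Koszul--Ext symbols in the Teichm\"uller lifts of a regular sequence (the independence of the choice of sequence, Theorem \ref{Indep}, is a nontrivial computation in the Langer--Zink relative de Rham--Witt complex), and a projective-space trace $\Trp_{\pi,n}$ via powers of the $\dlog$ Chern class of $\sO_P(1)$; the trace is $\tau_{i,\pi,n}=\Trp_{\pi,n}\circ\RR\pi_*(\gamma_{i,\pi,n})$. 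The identity $\tau_{i_0,\pi_0,n}\circ f_0^*=r$ is then \emph{not} obtained from the generic fibre but from the coherent identities $\tau_{f_m}\circ f_m^*=r$ on all the infinitesimal thickenings $X_m$ (which do follow from the characteristic-zero statement by Tor-independent base change), transported to Witt vectors by the divided-Frobenius maps $\tF^n: W_n(\sO_{X_0})\to\sO_{X_{n-1}}$ of Proposition \ref{DefFtilde}, together with the injectivity statement $\Ker(F-\Id)\cap\bigcap_n\Ker(\tF^n\circ R_n)=0$, which uses the flatness of $X$ over $\Z_{(p)}$ in an essential way. This last mechanism is the missing idea in your proposal.
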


\subsection{}\label{Traceintro}
We will deduce Theorem \ref{Tworeg} from the existence of a trace morphism 
\eq{tauipi}{ \tau_{i,\pi} : \RR f_*(W\sO_{Y_k,\Q}) \lra W\sO_{X_k,\Q}, }
defined by means of a factorization $f = \pi \circ i$, where $\pi$ is the projection of
a projective space $\P^{\,d}_X$ on $X$, and $i$ is a closed immersion. The key fact
used in the construction of this trace morphism is that, under the assumptions of
Theorem \ref{Tworeg}, $i$ is a regular immersion of codimension $d$, or, said
otherwise, that $f$ is a complete intersection morphism of virtual relative dimension
$0$, in the sense of \cite[Expos\'e VIII]{SGA 6}. 

Sections \ref{Injectivity1} to \ref{HWclass} are devoted to the construction of 
$\tau_{i,\pi}$. In section \ref{Injectivity1}, we state a similar result for $\sO_X$, 
providing a canonical trace morphism 
\eqn{ \tau_f : \RR f_*(\sO_Y) \to \sO_X, }
whenever $X$ is a noetherian scheme with a relative dualizing complex, and $f : Y \to
X$ is a proper complete intersection morphism of virtual relative dimension $0$ (see
Theorem \ref{Thtau}). The existence of $\tau_f$ has been observed by El Zein as a
particular case of his construction of the relative fundamental class \cite[IV,
Prop.~6]{El78}. However, there does not seem to be in the literature a complete proof
of the properties listed in Theorem \ref{Thtau}. Due to the many corrections and
complements to \cite{Ha66} made by Conrad in \cite{Co00}, we have included in an
Appendix the details of a proof of Theorem \ref{Thtau} based on \cite{Co00}. So we
refer to \ref{Deftau} for the definition of $\tau_f$, and to \ref{Prooftau} for the
proof of Theorem \ref{Thtau}. When $Y$ is finite locally free of rank $r$ over $X$, the
composition of the functoriality morphism $\sO_X \to \RR f_*(\sO_Y)$ with $\tau_f$ is
multiplication by $r$ on $\sO_X$. This has striking consequences for the functoriality
maps induced by $f$ on coherent cohomology (see Theorem \ref{Injth}). For example, if
$r$ is invertible on $X$, one obtains an injectivity theorem which may be of
independent interest. An outline of the construction of $\tau_f$ is given in the
introduction to the Appendix.

To construct the trace morphism $\tau_{i,\pi}$, we consider more generally a projective
complete intersection morphism $f : Y \to X$ of virtual relative dimension $0$ between
two noetherian $\FF_p$-schemes with dualizing complexes. Under these assumptions, we
construct a compatible family of morphisms
\eqn{ \tau_{i,\pi,n} : \RR f_*(W_n(\sO_Y)) \to W_n(\sO_X) }
for $n \geq 1$, with $\tau_{i,\pi,1} = \tau_f$. Our main tool here is the theory of the
relative de Rham-Witt complex developped by Langer and Zink \cite{LZ04}. In Section
\ref{DRWprel}, we recall some basic facts about their construction, and we extend to
the relative case some structure theorems proved by Illusie \cite{Il79} when the base
scheme is perfect (see in particular Proposition \ref{KerR} and Theorem
\ref{Structgrn}). Then we define $\tau_{i,\pi,n}$ by combining two morphisms. On the
one hand, we consider a projective space $P := \P^{\,d}_X$ with projection $\pi$ on
$X$, and we define in Section \ref{HWtrp} a trace morphism
\eqn{ \Trp_{\pi,n} : \RR\pi_*(\WC{n}{d}{P/X}[d]) \to W_n(\sO_X), }
using the $d$-th power of the Chern class of the canonical bundle $\sO_P(1)$. On the
other hand, we consider a regularly embedded closed subscheme $Y$ of a smooth
$X$-scheme $P$, and we define in Section \ref{HWclass} a relative Hodge-Witt
fundamental class for $Y$ in $P$, which is a section of $\sH^d_Y(\WC{n}{d}{P/X})$ and
defines a morphism
\eqn{ \gamma_{i,\pi,n} : i_*W_n(\sO_Y) \to \WC{n}{d}{P/X}[d], }
with $i : Y \inj P$ and $d = \codim_P(Y)$. This allows to define the morphism
$\tau_{i,\pi,n}$ as being the composition $\Trp_{\pi,n}\circ
\RR\pi_*(\gamma_{i,\pi,n})$. The proof of Theorem \ref{Tworeg} is then completed in
Section \ref{Injectivity2} thanks to a theorem relating the morphisms $\tau_{i,\pi,n}$
defined by the reduction mod $p$ of a factorization of the given morphism $f : Y \to X$
over $R$, and the morphism $\tau_f$ defined by $f$.

It may be worth pointing out here that these results seem to indicate that
Grothendieck's relative duality theory for coherent $\sO$-modules can be generalized to
some extent to the Hodge-Witt sheaves, as was already apparent from \cite{Ek84} when
the base scheme is a perfect field. We do not try to develop such a generalization in
this article, and we limit ourselves to the properties needed for the proof of
Theorem \ref{Main}. For example, it is very likely that the morphisms $\tau_{i,\pi,n}$
only depend on $f$, and not on the chosen factorization $f = \pi\circ i$, but this is
not needed here, and we do not prove it in this article. A natural context one might
think of for developing our results is the theory of the trace map for projectively
embeddable morphisms outlined in \cite[III, 10.5 and \S 11]{Ha66}. However, as
discussed by Conrad in \cite[p.~103-104]{Co00}, the foundational work needed for the
definition of such a theory has not really been done even for coherent $\sO$-modules.
We hope to return to these questions in another article.

Finally, we conclude in Section \ref{TheExample} by giving a family of examples to
which Theorem \ref{Main} can be applied, but which are not covered by earlier results,
nor by cases where Theorem \ref{Wittvanish} can be proved directly, such as the trivial
case where $H^i(X, \sO_X) = 0 $ for all $i \geq 1$, or the semi-stable case. These
examples are obtained for $p \geq 7$, and are quotients of an hypersurface of degree $p$
in a projective space $\P^{\,p-2}_R$ by a free $(\Z/p\Z)$-action. Their generic fibre
is a smooth variety of general type, and their special fibre has isolated
singularities, at least when $p$ is not a Fermat number.

\medskip
\noindent\textbf{Acknowledgements}

The authors thank the referee for his careful reading of the manuscript, and for his 
very useful comments, questions and suggestions. 

Part of this work was done in March 2008, when the second and third authors enjoyed the
hospitality of the Department of Mathematics at Rennes.

\medskip
\noindent\textbf{General conventions}  

\numero All schemes under consideration are supposed to be separated. By a projective
morphism $f : Y \to X$, we always mean a morphism which can be factorized as $f = \pi
\circ i$, where $i$ is a closed immersion in some projective space $\P^n_X$, and $\pi$
is the natural projection $\P^n_X \to X$.

\numero In this paper, we use the terminology of \cite{SGA 6} for complete intersection
morphisms: a morphism of schemes $f : Y \to X$ is said to be a \textit{complete
intersection morphism} if, for any $y \in Y$, there exists an open neighbourhood $U$ of
$y$ in $Y$ such that the restriction of $f$ to $U$ can be factorized as 
$f\restr{U} = \pi
\circ i$, where $\pi$ is a smooth morphism and $i$ a regular immersion \cite[VIII,
1.1]{SGA 6}. Note that this notion of complete intersection morphism is more general
than the notion of ``local complete intersection map'' used in \cite{Ha66} and
\cite{Co00}, where ``lci map'' is only used for regular immersions.

If $d$ is the codimension of $i$ at $y$, and $n$ the relative dimension of $\pi$ at
$i(y)$, the integer $m = n-d$ does not depend upon the local factorization $f\restr{U} =
\pi \circ i$, and is called the \textit{virtual relative dimension} of $f$ at $y$
\cite[VIII, 1.9]{SGA 6}. One says that $f$ has \textit{constant virtual relative
dimension} $m$ if the integer $m$ does not depend upon $y$. We will always assume in
this paper that the virtual relative dimension of the morphisms under consideration is
constant (however, the dimension of the fibres of such morphisms can vary).

\numero Apart from the previous remark, we will use the definitions and sign
conventions from Conrad's book \cite{Co00}. In particular, when $i : Y \inj P$ is a
regular immersion of codimension $d$ defined by an ideal $\sI \subset \sO_P$, we define
$\omega_{Y/P}$ by
\eqn{\omega_{Y/P} = \wedge^d((\sI/\sI^2)^\vee)}
rather than $(\wedge^d(\sI/\sI^2))^\vee$ as in \cite[III, p.\ 141]{Ha66} (see \cite[p.\
7]{Co00}). The canonical identification between both definitions is given by \cite[III,
\S 11, Prop.\ 7]{Bo70}.

\numero If $R$, $S$ are commutative rings, $R \to S$ a ring homomorphism, and $X$ an
$R$-scheme, we denote by $X_S$ the $S$-scheme $\Spec S \times_{\Spec R} X$.

\numero If $\sE\hbul$ is a complex, we denote by $(\sigma_{\geq i}\sE\hbul)_{i\in\Z}$
the naive filtration on $\sE\hbul$, i.e., the filtration defined by $\sigma_{\geq i}
\sE^n = 0$ if $n < i$, $\sigma_{\geq i}\sE^n = \sE^n$ if $n \geq i$.

\numero If $X$ is a scheme (resp.~locally noetherian scheme), we denote by
$\Dbqc(\sO_X)$ (resp.~$\Dbcoh(\sO_X)$) the full subcategory of the derived category
$D(\sO_X)$ which has as objects the bounded complexes with $\sO_X$-quasi-coherent
(resp.~$\sO_X$-coherent) cohomology sheaves. We denote by $\Dtdf(\sO_X) 
\subset D(\sO_X)$ the full subcategory of complexes which are isomorphic to a bounded
complex of flat $\sO_X$-modules. Adding several of the indices to $\Db(\sO_X)$, as in
$\Dqctdf(\sO_X)$, means taking the intersection of the corresponding subcategories.

When relevant, we will use similar notations for the analogous subcategories of
$D(\sO_X)$ and $D^+(\sO_X)$.

\section{Application of $p$-adic Hodge theory}\label{pHodge}

We explain in this section how the fundamental results of $p$-adic Hodge theory can be 
used to prove Theorem \ref{Wittvanish}. We begin with the semi-stable case, where 
$p$-adic Hodge theory suffices to conclude, and which will serve as a model for the 
general case. We use the notations $R$, $K$, $k$ as in the introduction.

\begin{thm}\label{Semistable}
Let $X$ be a proper and semi-stable $R$-scheme, with generic fibre $X_K$ and special 
fibre $X_k$, and let $q \geq 0$ be an integer. If $H^q(X_K, \sO_{X_K}) = 0$, then 
$H^q(X_k,W\sO_{X_k,\Q}) = 0$. 
\end{thm}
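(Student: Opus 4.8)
The plan is to feed the log crystalline cohomology of $X_k$ into the dictionary of $p$-adic Hodge theory, to convert the hypothesis $H^q(X_K,\sO_{X_K}) = 0$ into a lower bound on Frobenius slopes, and then to recognise $H^q(X_k, W\sO_{X_k,\Q})$ as the slope $<1$ part of that cohomology. First I would reduce to the case where $R$ is complete: replacing $R$ by $\hat R$ and $X$ by $X_{\hat R}$ leaves $X_k$ unchanged, preserves properness and semi-stability, and keeps $H^q(X_{\hat K},\sO_{X_{\hat K}}) = H^q(X_K,\sO_{X_K})\otimes_K\hat K$ equal to $0$; so assume $R$ complete, write $W = W(k)$ and $K_0 = \Frac(W)$, and note that $X_K$ is then smooth and proper over $K$.

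For the $p$-adic Hodge theory input, Tsuji's semi-stable comparison theorem together with the Hyodo--Kato isomorphism shows that $H^q_{\et}(X_{\oK},\Q_p)$ is a semi-stable representation of $\Gal(\oK/K)$ and that
\[
D := H^q_{\mathrm{log}\text{-}\crys}(X_k/W)\otimes_W K_0 \;\cong\; D_{\mathrm{st}}\bigl(H^q_{\et}(X_{\oK},\Q_p)\bigr)
\]
as filtered $(\varphi,N)$-modules, where the filtration on $D\otimes_{K_0}K$ is the one transported from the Hodge filtration of $H^q_{\dR}(X_K/K)$. In particular $D$ is weakly admissible, and hence, by the standard property of weakly admissible filtered modules, the Newton polygon of $(D,\varphi)$ lies on or above its Hodge polygon. (The argument is insensitive to the presence of $N$: since $N\varphi = p\varphi N$ the operator $N$ lowers $\varphi$-slopes by $1$, so the minimal-slope $\varphi$-stable submodules computing the Newton polygon are automatically $N$-stable, and $N$ kills the slope $<1$ part because crystalline slopes are $\geq 0$.)

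Next I would translate the hypothesis. The exact sequence of complexes
$0 \to \sigma_{\geq 1}\Omega^{\sbul}_{X_K/K} \to \Omega^{\sbul}_{X_K/K} \to \sO_{X_K}[0] \to 0$
identifies $\gr^0$ of the Hodge filtration on $H^q_{\dR}(X_K/K)$ with the image of the edge map $H^q_{\dR}(X_K/K) \to H^q(X_K,\sO_{X_K})$, which is $0$ by hypothesis. Since the filtration on $D\otimes_{K_0}K$ is this Hodge filtration, the Hodge polygon of $D$ has no segment of slope $0$; being convex and based at the origin, it therefore lies on or above the line of slope $1$, and by the previous paragraph so does the Newton polygon of $D$. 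A convex polygon based at the origin and lying above that line has no segment of slope $<1$, so the $F$-isocrystal $H^q_{\mathrm{log}\text{-}\crys}(X_k/W)\otimes\Q$ has all its Frobenius slopes $\geq 1$, i.e.\ its slope $<1$ part vanishes. Finally, by \cite{BBE07} the group $H^q(X_k,W\sO_{X_k,\Q})$ is the slope $<1$ part of the rigid cohomology $H^q_{\mathrm{rig}}(X_k/K_0)$, and since $X_k$ is the special fibre of the semi-stable $R$-scheme $X$ this coincides with the slope $<1$ part of the Hyodo--Kato cohomology $H^q_{\mathrm{log}\text{-}\crys}(X_k/W)\otimes\Q$ --- both being computed by the degree-$0$ term $W\sO_{X_k}$ of the respective (log) de Rham--Witt complexes. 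Hence $H^q(X_k,W\sO_{X_k,\Q}) = 0$.

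The comparison theorems and the polygon manipulations are by now routine; the step I expect to be the main obstacle is the last identification, namely relating the rigid (equivalently, de Rham--Witt) cohomology of the possibly singular special fibre $X_k$ to the log crystalline cohomology of the semi-stable model \emph{on the slope $<1$ part}. The two cohomology theories genuinely differ --- the monodromy operator is really present --- so one must check that their slope $<1$ parts agree; this uses the vanishing of $N$ on the slope $<1$ part noted above together with the de Rham--Witt description of the low-slope part of each theory. (For general, non semi-stable, $X$ the absence of a semi-stable model is exactly what necessitates the more elaborate argument, via de Jong's alterations and the trace morphism, described in the introduction.)
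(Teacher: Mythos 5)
Your proposal is correct and follows essentially the same route as the paper: reduce to complete $R$, use the Hyodo--Kato isomorphism and Tsuji's comparison theorem to make $H^q_{\crys}(\uX_k/\uSigma)\otimes K_0$ a weakly admissible filtered $(\varphi,N)$-module whose Hodge slopes are all $\geq 1$, and conclude that its slope $<1$ part --- identified with $H^q(X_k,W\sO_{X_k,\Q})$ via the degeneration of the slope spectral sequence of the log de Rham--Witt complex --- must vanish. The only divergence is your detour through rigid cohomology in the final step, which is unnecessary: the paper obtains $(H^q_{\crys}(\uX_k/\uSigma)\otimes K_0)^{<1}\riso H^q(X_k,W\sO_{X_k,\Q})$ directly from $\WC{n}{0}{\uX_k}=W_n(\sO_{X_k})$ together with the slope bounds on the terms $H^j(\uX_k,\WC{}{i}{\uX_k})\otimes K_0$, which is exactly the de Rham--Witt argument you yourself sketch as the resolution of your ``main obstacle.''
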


\begin{proof}
We may assume that $R$ is a complete discrete valuation ring. Indeed, if $\widehat{R}$ is the
completion of $R$, $\widehat{K} = \Frac(\widehat{R})$ and $\widetilde{X} = X_{\widehat{R}}$, then 
$\widetilde{X}$ is proper and
semi-stable over $\widehat{R}$, $H^q(\widetilde{X}_{\widehat{K}}, \sO_{\widetilde{X}_{\widehat{K}}}) 
= \widehat{K}\otimes_K H^q(X_K, \sO_{X_K}) =
0$, and $X$ and $\widetilde{X}$ have isomorphic special fibres. So the theorem for 
$\widetilde{X}$ implies the
theorem for $X$. 

We endow $S = \Spec R$ with the log structure defined by the divisor $\Spec k \subset
S$, $S_0 = \Spec k$ with the induced log structure, and we denote by $\uS$, $\uS_0$ the
corresponding log schemes. Similarly, we endow $X$ with the log structure defined by
the special fibre $X_k$, $X_k$ with the induced log structure, and we denote by $\uX$,
$\uX_k$ the corresponding log schemes. Then $\uX$ is smooth over $\uS$, and $\uX_k$ is
smooth of Cartier type \cite[(4.8)]{Ka89} over $\uS_0$.

Let $W_n = W_n(k)$ (resp.~$W = W(k)$), and let $\uSigma_n$ (resp.~$\uSigma$) be the log
scheme obtained by endowing $\Sigma_n = \Spec W_n$ (resp.~$\Sigma = \Spec W$) with the
log structure associated to the pre-log structure defined by the morphism $M_{S_0} \to
\sO_{S_0}=\sO_{\Sigma_1} \to \sO_{\Sigma_n}$ (resp.~$\sO_{\Sigma}$) provided by
composition with the Teichm\"uller representative map. We can then consider the log
crystalline cohomology groups $H^q_{\crys}(\uX/\uSigma_n)$, which are finitely
generated $W_n$-modules endowed with a Frobenius action $\varphi$ and a monodromy
operator $N$. The log scheme $\uX_k$ also carries a logarithmic de Rham-Witt complex
$\WC{}{\sbul}{\uX_k} = \varprojlim_n \WC{n}{\sbul}{\uX_k}$, constructed by Hyodo
\cite{Hy91} in the semi-stable case, and generalized by Hyodo and Kato
\cite[(4.1)]{HK94} to the case of smooth $\uS_0$-log schemes of Cartier type. In degree
$0$, we have
\eq{dRW0}{ \WC{n}{0}{\uX_k}  =  W_n(\sO_{X_k}), }
by \cite[Prop.~(4.6)]{HK94}. 

It follows from \cite[Th.~(4.19)]{HK94} that, for all $q$, there are canonical
isomorphisms
\eq{logdRWn}{ H^q_{\crys}(\uX_k/\uSigma_n) \riso H^q(\uX_k, \WC{n}{\sbul}{\uX_k}), }
which are compatible when $n$ varies, and commute with the Frobenius actions. As $X_k$
is proper over $S_0$, these cohomology groups are artinian $W$-modules. Therefore one
can apply the Mittag-Leffler criterium to get canonical isomorphisms 
\eq{logdRW}{ H^q_{\crys}(\uX_k/\uSigma) \riso \varprojlim_n H^q_{\crys}(\uX_k/\uSigma_n)
\riso \varprojlim_n H^q(\uX_k, \WC{n}{\sbul}{\uX_k}) \liso H^q(\uX_k, \WC{}{\sbul}{\uX_k}) }
compatible with the Frobenius actions. Using the naive filtration of
$\WC{}{\sbul}{\uX_k}$ and tensoring by $K_0$, one obtains a spectral sequence
\eq{slopeseq}{ E^{i,j}_1 = H^j(\uX_k, \WC{}{i}{\uX_k})\otimes K_0 \ \Longrightarrow\ 
H^{i+j}_{\crys}(\uX_k/\uSigma)\otimes K_0  }
endowed by functoriality with a Frobenius action $F^*$. The operators $d$, $F$ and $V$
on the logarithmic de Rham-Witt complex satisfy the same relations than on the usual de
Rham-Witt complex \cite[(4.1)]{HK94}, and the structure theorems of \cite{Il79} remain
valid in the logarithmic case \cite[Th.~(4.4) and Cor.~(4.5)]{HK94}. It follows that, for all $i$,
$j$, the $K_0$-vector space $H^j(\uX_k, \WC{}{i}{\uX_k})\otimes K_0$ is finite
dimensional, and the action of $F^*$ on this space has slopes in $[i,i+1[$ 
\cite[3.1]{Lo02}.
Therefore, the spectral sequence \eqref{slopeseq} degenerates at $E_1$, and yields in
particular an isomorphism $(H^q_{\crys}(\uX_k/\uSigma)\otimes K_0)^{<1} \xra{\sim}
H^q(\uX_k, \WC{}{0}{\uX_k})\otimes K_0$, the source being the part of
$H^q_{\crys}(\uX_k/\uSigma)\otimes K_0$ where Frobenius acts with slope $< 1$. Thanks 
to \eqref{dRW0}, we finally get a canonical isomorphism
\eq{slopepart}{ (H^q_{\crys}(\uX_k/\uSigma)\otimes K_0)^{<1} \riso H^q(X_k,
W\sO_{X_k,\Q}). }

On the other hand, the choice of an uniformizer of $R$ determines a Hyodo-Kato 
isomorphism \cite[Th.~(5.1)]{HK94}
\eq{HKiso}{ \rho : H^q_{\crys}(\uX_k/\uSigma)\otimes_W K \riso H^q(X_K, 
\Omega\hbul_{X_K/K}). }
This allows to endow $H^q_{\crys}(\uX_k/\uSigma)\otimes_W K$ with the filtration
deduced via $\rho$ from the Hodge filtration of $H^q(X_K, \Omega\hbul_{X_K/K})$.
Together with its Frobenius action and monodromy operator,
$H^q_{\crys}(\uX_k/\uSigma)\otimes_W K$ is then a filtered $(\varphi,N)$-module as
defined by Fontaine \cite[4.3.2 and 4.4.8]{Fo94}. As such, it has both a Newton
polygon, built as usual from the slopes of the Frobenius action, and a Hodge polygon,
built as usual from the Hodge numbers of $H^q(X_K, \Omega\hbul_{X_K/K})$. Now, let
$\oK$ be an algebraic closure of $K$, and let $B_{\st}$, $B_{\dR}$ be the 
Fontaine $p$-adic period rings. Then Tsuji's comparison theorem \cite[Th.~0.2]{Ts99}
provides a $B_{\st}$-linear isomorphism
\eq{tsuji1}{ B_{\st} \otimes_{K_0} H^q_{\crys}(\uX_k/\uSigma) \riso B_{\st} \otimes_K 
H^q_{\et}(X_{\oK}, \Q_p), }
compatible with the natural Galois, Frobenius and monodromy actions on both sides, and
with the natural Hodge filtrations defined on both sides after scalar extension from
$B_{\st}$ to $B_{\dR}$. Thus $H^q_{\crys}(\uX_k/\uSigma)\otimes K_0$ is an admissible
filtered $(\varphi,N)$-module \cite[5.3.3]{Fo94}, and therefore is weakly admissible
\cite[5.4.2]{Fo94}. This implies that its Newton polygon lies above its Hodge polygon
\cite[4.4.6]{Fo94}. In particular, either $H^q_{\crys}(\uX_k/\uSigma)\otimes K_0 = 0$,
or the smallest slope of its Newton polygon is bigger than the smallest 
slope of its Hodge polygon. By assumption, the latter is at least 1, which forces the
part of slope $< 1$ of $H^q_{\crys}(\uX_k/\uSigma)\otimes K_0$ to vanish. Thanks to
\eqref{slopepart}, this implies the theorem.
\end{proof}

In the general case, we will use truncated simplicial log schemes satisfying the
conditions of the next lemma. We will assume that all the log schemes under
consideration are fine log schemes \cite[(2.3)]{Ka89}, and all constructions involving
log schemes will be done in the category of fine log schemes. For any finite extension
$K'$ of $K$, with ring of integers $R'$, we will endow $\Spec R'$ with the log
structure defined by its closed point, and pullbacks of log schemes to $\Spec R'$ will
mean pullbacks in the category of log schemes. Note that, because of \cite[(4.4) (ii) 
and (4.3.1)]{Ka89}, the underlying scheme of such a pullback is the usual pullback in 
the category of schemes. We will denote log schemes by underlined letters, and drop 
the underlining to denote the underlying schemes.

\begin{lem}\label{mtruncres}
Assume that $R$ is complete, and that $X$ is an integral, flat $R$-scheme of finite
type. Let $m \geq 0$ be an integer. Then there exists a finite extension $K'$ of $K$,
with ring of integers $R'$, a split $m$-truncated simplicial $R'$-log scheme $\uY\lbul
= (Y\lbul,M_{Y\lbul})$ \cite[Vbis, 5.1.1]{SGA 4 II}, and an augmentation morphism $u :
Y_0 \to X_{R'}$ over $R'$, such that the following conditions hold:

\alphab For each $r$, $Y_r$ is projective over $X_{R'}$, and $\uY_r$ is a disjoint
union of pullbacks to $R'$ of semi-stable schemes over the integers of
sub-$K$-extensions of $K'$ endowed with the log structure defined by their special
fibre;

\alphab Via the augmentation morphism induced by $u$, $Y\lbul{}_{,K'}$ is an
$m$-truncated proper hypercovering of $X_{K'}$;

\alphab There exists a projective $R$-alteration $f : Y \to X$, where $Y$ is
semi-stable over the ring of integers $R_1$ of a sub-$K$-extension $K_1 $ of
$K'$, and there exists finitely many $R$-embeddings $\sigma_i : R_1 \inj R'$, such
that, if $u_1 : Y \to X_{R_1}$ denotes the $R_1$-morphism defined by $f$, and if 
$Y_{\sigma_i}$
\lp resp.\ $u_{\sigma_i} : Y_{\sigma_i} \to X_{R'}$\rp\ denotes the $R'$-scheme \lp resp.\
$R'$-morphism\rp\ deduced by base change via $\sigma_i$ from $Y$ \lp resp.\ $u_1$\rp,
then $Y_0 = \coprod_i Y_{\sigma_i}$ and $u\restr{Y_{\sigma_i}}=u_{\sigma_i}$.

\end{lem}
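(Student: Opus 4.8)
\textbf{Proof strategy for Lemma \ref{mtruncres}.}

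The plan is to combine de Jong's alteration theorem with Tsuji's version of the construction of truncated simplicial proper hypercoverings, keeping careful track of log structures at each stage. First I would apply de Jong's theorem \cite{dJ96} to the integral flat $R$-scheme $X$: after possibly replacing $K$ by a finite extension $K_1$, there is a projective alteration $f : Y \to X$ with $Y$ regular and flat over the ring of integers $R_1$ of $K_1$, and moreover one can arrange that $Y$ is semi-stable over $R_1$ (this is the form of de Jong's theorem in which the special fibre is a strict normal crossings divisor; if necessary one enlarges $K_1$ further to achieve the semi-stable, as opposed to merely strictly semi-stable, shape). Equip $Y$ with the log structure defined by its special fibre, so that $\uY$ is log smooth over $\Spec R_1$ with its natural log structure, and let $u_1 : Y \to X_{R_1}$ be the induced morphism. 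This gives the building block for condition c).

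Next I would produce the $m$-truncated simplicial object. The standard device (see Deligne \cite{De74}, or the account adapted to log schemes) is the coskeleton: starting from the augmented object $Y \to X_{R_1}$, form inductively $Y_r$ as (a modification of) the $r$-fold fibre product $(\cosk_0 Y / X)_r$, and at each stage apply de Jong's theorem again to the components of this fibre product — which need not be semi-stable, nor even regular — in order to dominate them by semi-stable schemes over the integers of suitable further finite extensions of $K$. Because each such step introduces a new finite extension, after finitely many steps (there are only finitely many for an $m$-truncated object) one arrives at a single finite extension $K'$, with ring of integers $R'$, over which everything is defined; base-changing the earlier $Y$ via the various $R$-embeddings $\sigma_i : R_1 \inj R'$ (one for each factor of $Y_0 = \coprod_i Y_{\sigma_i}$ appearing in the fibre-product description of the $0$-skeleton) yields the splitting asserted in c). The split structure in the sense of \cite[Vbis, 5.1.1]{SGA 4 II} is built in automatically by the coskeleton construction, since one only needs to add the degenerate pieces; each $Y_r$ being a disjoint union of pullbacks to $R'$ of semi-stable schemes, with the log structures given by their special fibres, is then condition a). That $Y_r$ is projective over $X_{R'}$ for each $r$ follows since de Jong's alterations are projective and fibre products and the augmentation $Y \to X$ are projective, and composition and base change preserve projectivity.

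For condition b), the point is that over the generic fibre all log structures are trivial and the construction restricts to the classical one: a de Jong alteration induces a proper surjective generically finite morphism on generic fibres, hence a proper hypercovering in the sense of cohomological descent once one passes to the coskeleton, and the $m$-truncation of a proper hypercovering is an $m$-truncated proper hypercovering. So $Y\lbul{}_{,K'} \to X_{K'}$ is an $m$-truncated proper hypercovering by Deligne's criterion (the augmentation from the $0$-skeleton is proper and surjective, and each transition to higher skeleta is built from fibre products and further alterations, which preserves the hypercovering property up to degree $m$). The main obstacle, and the part requiring the most care, is the bookkeeping of log structures through the coskeleton: the fibre products $Y \times_{X_{R'}} \cdots \times_{X_{R'}} Y$ taken in the category of schemes need not be semi-stable, so one genuinely must re-alter at each stage, and one must check that the log structure ``defined by the special fibre'' is compatible with the simplicial face and degeneracy maps — i.e.\ that the structure maps of $\uY\lbul$ are morphisms of log schemes. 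This is where one invokes the remark (already in the text, using \cite[(4.4)(ii) and (4.3.1)]{Ka89}) that pullback of fine log schemes to $\Spec R'$ has underlying scheme the ordinary pullback, so that the simplicial identities on underlying schemes lift to the log category, and where one must arrange the successive alterations so that the new log structures are again the ones defined by special fibres. Once this compatibility is in place, the three conditions a), b), c) fall out of the construction as described.
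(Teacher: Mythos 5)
Your strategy is essentially the paper's own proof: induction on $m$ using de Jong's alteration theorem together with the split-coskeleton construction of Saint-Donat and Deligne, re-altering at each simplicial level (since fibre products of semi-stable schemes need not be semi-stable), with base change along several embeddings $R_1 \inj R'$ producing the decomposition in c) and the triviality of the log structures on the generic fibres giving both b) and the unique extension of each alteration to a log morphism. The one detail to make explicit when carrying this out is that de Jong's theorem can only be applied to those irreducible components of the degree-$m$ part of the $(m-1)$-coskeleton that are flat over the base ring; the vertical components must be discarded, which is harmless for b) since they miss the generic fibre, but is precisely why the special fibre of $\uY\lbul$ need not be a hypercovering of $X_k$.
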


Therefore, we obtain the following commutative diagram:

\eq{}{ \xymatrix{
Y_0 = \coprod_i Y_{\sigma_i} \ar[rd]^{u} & \,Y_{\sigma_i} \ar@{_{(}->}[l] \ar[r] 
\ar[d]^{u_{\sigma_i}} & Y \ar[d]^{u_1} \ar[dr]^{f} & \\
& X_{R'} \ar[r] & X_{R_1} \ar[r]  &  X.
} }

\begin{proof}
This is a well known consequence of de Jong's alteration theorem \cite[Th.~6.5]{dJ96}.
For the sake of completeness, we briefly recall how to construct such a simplicial log
scheme. For $r \geq 0$, we denote by $[r]$ the ordered set $\{0,\ldots, r\}$, and by
$\Delta$ (resp.~$\Delta[m]$) the category which has the sets $[r]$ (resp.~with $r \leq
m$) as objects, the set of morphisms from $[r]$ to $[s]$ being the set of
non-decreasing maps $[r] \to [s]$.

One proceeds by induction on $m$. Assume first that $m = 0$. De Jong's theorem provides
a finite extension $K_1$ of $K$, an integral semi-stable scheme $Y$ over the ring of
integers $R_1$ of $K_1$, and an $R$-morphism $f : Y \to X$ which is a projective
alteration. Let $u_1 : Y \to X_{R_1}$ be the morphism defined by $f$. Let $K'$ be a
finite extension of $K_1$ such that $K'/K$ is Galois, and let $R'$ be its ring of
integers. For any $g \in \Gal(K'/K)$, let $\sigma_g$ be the composition $K_1 \to K'
\xra{g} K'$, and let $\uY_g$ (resp.~$u_g : Y_g \to X_{R'}$) be the $R'$-log scheme
(resp.~$R'$-morphism) deduced from $\uY$ (resp.~$u_1$) by base change via $\sigma_g :
R_1 \to R'$. Then one defines $\uY_0$ and $u$ by setting
\eqn{ \uY_0 = \coprod_{g \in \Gal(K'/K)}\uY_g, \quad\quad u\restr{Y_g} = u_g. }
One easily checks by Galois descent that $Y_{0,K'} \to X_{K'}$ is surjective, and 
conditions a) - c) are then satisfied. 

Assume now that the lemma has been proved for $m-1$. Over the ring of integers $R''$ of
some finite extension $K''$ of $K$, this provides a split $(m-1)$-truncated simplicial
log scheme $\uY''\lbul$, together with an augmentation morphism $u'' : Y''_0 \to
X_{R''}$, so as to satisfy conditions a) - c). Note that these conditions remain
satisfied after a base change to the ring of integers of any finite extension of $K''$.
Let $\cosk_{m-1}(\uY''\lbul)$ be the coskeleton of $\uY''\lbul$ in the category of
simplicial fine $R''$-log schemes, and $\uZ = \cosk_{m-1}(\uY''\lbul)_m$ its
component of index $m$. Denote by $Z_1,\ldots,Z_c$ those irreducible components of $Z$
which are flat over $R''$, and endow each $Z_j$ with the log structure
induced by the log structure of $\uZ$. As a consequence of condition a), this log
structure induces the trivial log structure on the generic fibre $\uZ_{j,K''}$. Applying
de Jong's theorem to $Z_j$, one can find a finite extension $K'_j$ of $K''$, with ring
of integers $R'_j$, an integral semi-stable scheme $T_j$ over $R'_j$ and a projective
alteration $f_j : T_j \to Z_j$. One endows $T_j$ with the log structure defined by its
special fibre. Because the log structure of the generic fibre $\uZ_{j,K''}$ is trivial,
the morphism $f_j$ extends uniquely to a log morphism $f_j : \uT_j \to \uZ_j$. Let $K'$
be a Galois extension of $K''$ containing $K'_j$ for all $j$, $1 \leq j \leq c$, and
let $R'$ be its ring of integers. Arguing as in the case $m = 0$ above, one can deduce
from the alterations $f_j$ an $R'$-morphism
\eq{cosk}{ \uT \lra \coprod_{j=1}^c \uZ_{j,R'} \lra \uZ_{R'} \riso 
\cosk_{m-1}(\uY''\lbul{}_{,R'})_m }
where $\uT$ satisfies condition a), and $T_{K'} \to
\cosk_{m-1}(Y''\lbul{}_{\!\!,K'})_m$ is projective and surjective (note that, since
all log structures are trivial on the generic fibres, the generic fibre of the
coskeleton computed in the category of fine log schemes is the coskeleton of the
generic fibres computed in the category of schemes). One can then follow the method of 
Saint-Donat \cite[Vbis, 5.1.3]{SGA 4 II} and Deligne 
\cite[(6.2.5)]{De74} to extend $\uY''\lbul{}_{,R'}$ as a split
$m$-truncated simplicial log scheme $\uY\lbul$ over $R'$. The $R'$-log
scheme $\uY_m$ is defined by 
\eqn{\uY_m\ =\ \uT \mbox{\ $\coprod$\ } \coprod_{[m]\surj[l],\; l<m} 
N(\uY_l),}
where $N(\uY_l)$ is the complement of the union of the images of the degeneracy
morphisms with target $\uY_l$. It satisfies condition a) because $\uT$ does and
$\uY\lbul$ is split. Similarly, the morphism $Y_{m,K'} \to
\cosk_{m-1}(Y\lbul{}_{,K'})_m$ is proper and surjective because the morphism $T_{K'}
\to \cosk_{m-1}(Y''\lbul{}_{\!\!,K'})_m$ is proper and surjective. Thus the
$m$-truncated simplicial scheme $Y\lbul{}_{,K'}$ is an $m$-truncated proper
hypercovering of $X_{K'}$. Finally, condition c) is satisfied thanks to the induction
hypothesis.
\end{proof}

\subsection{}\label{Cohsimpl}
We recall how to associate cohomological invariants to simplicial schemes and truncated
simplicial schemes (see \cite[Vbis, 2.3]{SGA 4 II}, \cite[5.2]{De74}, \cite[(6.2)]{Ts98}). 

If $\sT$ is a topos, we denote by $\sT^{\Delta}$
(resp.~$\sT^{\Delta[m]}$) the topos of cosimplicial objects (resp.~$m$-truncated
cosimplicial objects) in $\sT$. Let $\sA$ be a ring in $\sT$, and $\sA\hbul$ the 
constant cosimplicial ring defined by $\sA$. If $\sE\hbul$ is an $\sA\hbul$-module of 
$\sT^{\Delta}$ (resp.~$\sT^{\Delta[m]}$), one associates to $\sE\hbul$ the complex 
\gan{ \varepsilon_*\sE\hbul\ =\ \sE^0 \to \sE^1 \to \cdots \to \sE^r 
\xra{\sum_j (-1)^j\del^j } \sE^{r+1} \to \cdots\\
\text{(resp.}\quad \varepsilon^m_*\sE\hbul\ =\ \sE^0 \to \sE^1 \to \cdots \to \sE^m
\to 0 \to \cdots \quad ). }
One views $\varepsilon_*\sE\hbul$ (resp.~$\varepsilon^m_*\sE\hbul$) as a filtered
complex of $\sA$-modules using the naive filtration. The functors $\varepsilon_*$ and
$\varepsilon^m_*$ are exact functors from the category of $\sA\hbul$-modules to the
category of filtered complexes of $\sA$-modules (which means that they transform a
short exact sequence of $\sA\hbul$-modules into a short exact sequence of filtered
complexes, i.e., such that the sequence of $\Fil^i$'s is exact for all $i$). Hence,
they factorize so as to define exact functors $\RR\varepsilon_*$ and
$\RR\varepsilon^m_*$ from $D^+(\sT^{\Delta}, \sA\hbul)$ (resp.~$D^+(\sT^{\Delta[m]},\sA
\hbul)$) to $D^+F(\sT,\sA)$. For any complex $\sE^{\sbul,\sbul} \in
D^+(\sA\hbul)$, they provide functorial spectral sequences
\eq{simplspecseq}{ E_1^{r,q} = \sH^q(\sE^{r,\sbul}) \Rightarrow 
\sH^{r+q}(\RR\varepsilon_*(\sE^{\sbul,\sbul})) }
and similarly for $\RR\varepsilon^m_*$ with $E_1^{r,q} = 0$ for $r>m$ (we use here the
first index to denote the simplicial degree). Note that the truncation functor induces
a functorial morphism
\eq{cohtrunc}{ \RR\varepsilon_*(\sE^{\sbul,\sbul}) \lra 
\RR\varepsilon^m_*(\sk_m(\sE^{\sbul,\sbul})), }
and therefore a morphism between the corresponding spectral sequences
\eqref{simplspecseq}. It follows that, if $\sH^q(\sE^{r,\sbul}) = 0$ for $q < 0$ and all
$r$, then the morphism \eqref{cohtrunc} is a quasi-isomorphism in degrees $< m$. 

Let $Y\lbul$ be a simplicial scheme (resp. $m$-truncated simplicial scheme), and
$\Sets$ the topos of sets. If $R$ is a commutative ring, and $\sE\hbul$ a (Zariski,
\'etale, \ldots) sheaf of $R$-modules on $Y\lbul$, one can associate to $\sE\hbul$ a
cosimplicial $R\hbul$-module $\Gamma\hbul(Y\lbul, \sE\hbul) \in \Sets^{\Delta}$
(resp.~$\Sets^{\Delta[m]}$) by setting for all $r \geq 0$
\eqn{ \Gamma^r(Y\lbul, \sE\hbul) = \Gamma(Y_r, \sE^r). }
The functor $\Gamma\hbul$ can be derived, and its right derived functor
$\RR\Gamma\hbul$ can be computed using resolutions by complexes $\sI^{\sbul,\sbul}$
such that, for each $r,q$, the sheaf $\sI^{r,q}$ is acyclic on $Y_r$. The cohomology 
of $Y\lbul$ with coefficients in a complex $\sE^{\sbul,\sbul}$ is then by definition 
\gan{ \RR\Gamma(Y\lbul,\sE^{\sbul,\sbul}) = \RR\varepsilon_* 
\RR\Gamma\hbul(Y\lbul,\sE^{\sbul,\sbul}) \quad\quad 
\text{(resp.\ \ }\RR\varepsilon^m_*),\\
H^q(Y\lbul,\sE^{\sbul,\sbul}) = H^q(\RR\Gamma(Y\lbul,\sE^{\sbul,\sbul})). }
If $Y\lbul$ is a smooth simplicial (resp.\ $m$-truncated simplicial) $R$-scheme, this
can be applied to the complex $\Omega\hbul_{Y\lbul/R}$ and to its sub-complexes
$\sigma_{\geq i}\Omega\hbul_{Y\lbul/R}$, defining the naive filtration. This provides
the definition of the de Rham cohomology of $Y\lbul$, and of its Hodge filtration.

\begin{prop}\label{DRsimpl}
Let $K$ be a field of charactristic $0$, $X$ a proper and smooth $K$-scheme, $Y\lbul
\to X$ an $m$-truncated proper hypercovering of $X$ over $K$ such that $Y_r$ is proper
and smooth for all $r$. Then, for all $q < m$, the canonical homomorphism
\eq{dRsimpl}{ H^q(X,\Omega\hbul_{X/K}) \lra H^q(Y\lbul, 
\Omega\hbul_{Y\lbul/K}) }
is an isomorphism of filtered $K$-vector spaces for the Hodge filtrations.
\end{prop}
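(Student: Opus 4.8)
The plan is to deduce Proposition \ref{DRsimpl} from the analogous statement in $\ell$-adic or Betti cohomology, together with the compatibility of the Hodge filtration under the cohomological descent spectral sequence. First I would recall that, since $Y\lbul \to X$ is an $m$-truncated proper hypercovering, cohomological descent (\cite[Vbis]{SGA 4 II}, \cite[(6.2)]{De74}) gives that the canonical map \eqref{dRsimpl} is part of the morphism of spectral sequences associated to \eqref{cohtrunc}: for the constant simplicial scheme $X$ the spectral sequence \eqref{simplspecseq} degenerates trivially, and the comparison map lands in the spectral sequence $E_1^{r,q} = H^q(Y_r, \Omega\hbul_{Y_r/K}) \Rightarrow H^{r+q}(Y\lbul, \Omega\hbul_{Y\lbul/K})$. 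The point is that for $q < m$ the truncation does not affect the abutment (as noted after \eqref{cohtrunc}, since $\sH^q = 0$ for $q<0$), so it suffices to prove that the genuine (non-truncated) simplicial de Rham cohomology $H^q(Y\lbul, \Omega\hbul_{Y\lbul/K})$ computes $H^q_{\dR}(X/K)$ compatibly with Hodge filtrations.

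Next I would invoke proper cohomological descent for de Rham cohomology of smooth proper $K$-schemes. Over a field of characteristic $0$ one may pass to $\C$ (de Rham cohomology and its Hodge filtration are compatible with the field extension $K \to \C$, and descending back is a faithfully flat descent argument), and then compare algebraic de Rham cohomology with the holomorphic de Rham complex, hence with singular cohomology of $X^{\mathrm{an}}$, by Grothendieck's theorem. On the analytic side, a proper hypercovering $Y\lbul^{\mathrm{an}} \to X^{\mathrm{an}}$ is of universal cohomological descent for any abelian sheaf (Deligne, \cite[(5.3.5), (6.2)]{De74}), so $H^q(X^{\mathrm{an}}, \C) \riso H^q(Y\lbul^{\mathrm{an}}, \C)$. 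Unwinding through the comparison isomorphisms, and using that the Hodge-to-de-Rham spectral sequence of each smooth proper $Y_r$ degenerates, one obtains that \eqref{dRsimpl} is an isomorphism of abstract $K$-vector spaces for $q < m$, and that on each side the naive filtration spectral sequence degenerates, so that the filtration on $H^q(Y\lbul, \Omega\hbul_{Y\lbul/K})$ is the filtration transported from the Hodge filtration on $Y_0$, compatibly with the one on $H^q_{\dR}(X/K)$.

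The step requiring care, and the main obstacle, is the compatibility of the Hodge filtrations under \eqref{dRsimpl}, as opposed to the mere isomorphism of underlying vector spaces. Here I would argue as follows: on $X$ the Hodge filtration is the one coming from the naive (stupid) filtration $\sigma_{\geq i}\Omega\hbul_{X/K}$, and on $Y\lbul$ it is the filtration coming from $\sigma_{\geq i}\Omega\hbul_{Y\lbul/K}$ viewed via $\RR\varepsilon^m_*$; the map \eqref{dRsimpl} is manifestly filtered, being induced by the filtered pullback $\Omega\hbul_{X/K} \to \RR f_*\Omega\hbul_{Y\lbul/K}$. To see it is strictly compatible, one checks that both associated graded spectral sequences degenerate at $E_1$: for $X$ this is the classical Hodge-to-de-Rham degeneration, and for $Y\lbul$ this is the degeneration of the analogous spectral sequence, which follows by comparing dimensions once one knows (via the descent argument above applied degree by degree to $\Omega^i$, i.e.\ to the sheaf cohomology of $\Omega^i_{Y_r/K}$) that $\gr^i H^q(Y\lbul, \Omega\hbul_{Y\lbul/K})$ has the same dimension as $\gr^i H^q_{\dR}(X/K) = H^{q-i}(X, \Omega^i_{X/K})$. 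A map of filtered vector spaces which is an isomorphism on the total spaces and whose source and target have strictly compatible (degenerating) filtrations of the same graded dimensions is automatically a filtered isomorphism; this completes the proof.
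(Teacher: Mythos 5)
Your first two steps (reduction to $K=\C$, and Betti cohomological descent for the constant sheaf along the truncated proper hypercovering in degrees $<m$) are sound and agree with the paper. The genuine gap is in the last paragraph, precisely at the step you flag as "requiring care". You propose to obtain the $E_1$-degeneration of the Hodge spectral sequence of $Y\lbul$ and the equality $\dim\gr^i H^q(Y\lbul,\Omega\hbul_{Y\lbul/K})=\dim H^{q-i}(X,\Omega^i_{X/K})$ by "the descent argument above applied degree by degree to $\Omega^i$". But cohomological descent along a proper hypercovering holds for (pullbacks of) abelian sheaves such as the constant sheaf $\C$ on the analytic spaces; it fails for coherent cohomology, and the sheaves $\Omega^i_{Y_r/K}$ are in any case not pullbacks of $\Omega^i_{X/K}$. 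So neither the graded dimension count nor the degeneration of the spectral sequence for $Y\lbul$ follows this way; indeed the equality of graded dimensions is essentially equivalent to the strict filtered isomorphism you are trying to establish, so the argument is circular. The degeneration at $E_1$ of the Hodge spectral sequence of a smooth proper simplicial scheme, and the strictness of the comparison map, are theorems of Deligne's mixed Hodge theory \cite[(8.1.9), (8.2.2)]{De74}: both sides carry (mixed) Hodge structures, the comparison is a morphism of mixed Hodge structures, and such morphisms are strictly compatible with the filtrations, so an isomorphism of underlying vector spaces is automatically a filtered isomorphism. This input cannot be replaced by a formal descent/dimension argument.

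A secondary point: you repeatedly refer to "the genuine (non-truncated) simplicial de Rham cohomology of $Y\lbul$", but $Y\lbul$ is only $m$-truncated, and the simplicial Hodge theory you need is stated for full smooth proper simplicial schemes. The coskeleton $\cosk_m(Y\lbul)$ has components which are proper but in general singular, so it does not qualify. The paper handles both issues at once: it uses resolution of singularities to produce a full proper \emph{smooth} hypercovering $Z\lbul$ of $X$ with $\sk_m(Z\lbul)=Y\lbul$, observes that the truncation morphism \eqref{cohtrunc} applied to each $\sigma_{\geq i}\Omega\hbul_{Z\lbul/\C}$ is a quasi-isomorphism in degrees $<m$ (so the filtered cohomologies of $Y\lbul$ and $Z\lbul$ agree in that range), and then quotes \cite[Prop.~(8.2.2)]{De74} for $Z\lbul\to X$. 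Your proof needs these two ingredients — the extension to a smooth full hypercovering, and Deligne's filtered comparison theorem — at the point where it currently invokes descent for $\Omega^i$.
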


\begin{proof}
Since algebraic de Rham cohomology (endowed with the Hodge filtration) commutes with
base field extensions, standard limit arguments allow to assume that $K$ is of finite
type over $\Q$. Choosing an embedding $\iota : K \inj \C$, we are reduced to the case
where $K = \C$. Using resolution of singularities, we can find a proper and smooth
hypercovering $Z\lbul$ of $X$ such that $\sk_m(Z\lbul) = Y\lbul$. As the morphism
\eqref{cohtrunc} for $\sigma_{\geq i}\Omega\hbul_{Z\lbul/\C}$ is a quasi-isomorphism in
degrees $< m$ for all $i$, it suffices to prove the proposition with $Y\lbul$ replaced
by $Z\lbul$. This now follows from \cite[Prop.~(8.2.2)]{De74}.
\end{proof}

\begin{cor}\label{Hodgesimpl}
Under the assumptions a) and b) of Lemma \ref{mtruncres}, assume in addition that $X_K$
is proper and smooth, and that $H^q(X_K, \sO_{X_K}) = 0$ for some $q < m$. Then the
smallest Hodge slope of $H^q(Y\lbul{}_{K'}, \Omega\hbul_{Y\lbul{}_{K'}})$ is at
least $1$.
\end{cor}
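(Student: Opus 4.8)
The plan is to reduce the statement to Proposition \ref{DRsimpl} combined with the hypothesis on $H^q(X_K,\sO_{X_K})$ and the definition of the Hodge polygon. First I would apply Proposition \ref{DRsimpl} to the $m$-truncated proper hypercovering $Y\lbul{}_{,K'} \to X_{K'}$, which is legitimate since, by condition b) of Lemma \ref{mtruncres} together with the added assumption that $X_K$ is proper and smooth, $X_{K'}$ is proper and smooth over $K'$, and by condition a) each $Y_{r,K'}$ is (a disjoint union of generic fibres of semi-stable schemes, hence) proper and smooth over $K'$. Therefore, for all $q < m$, the canonical map
\eqn{ H^q(X_{K'},\Omega\hbul_{X_{K'}/K'}) \lra H^q(Y\lbul{}_{,K'},\Omega\hbul_{Y\lbul{}_{,K'}/K'}) }
is an isomorphism of filtered $K'$-vector spaces for the Hodge filtrations.

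Next I would identify the bottom piece of the Hodge filtration. The smallest Hodge slope of $H^q(Y\lbul{}_{,K'},\Omega\hbul_{Y\lbul{}_{,K'}/K'})$ being at least $1$ means precisely that $\gr^0_{\Fil} = \Fil^0/\Fil^1 = 0$ for the Hodge filtration, i.e.\ that the associated $E_1^{0,q}$-term vanishes. By the isomorphism of filtered vector spaces just obtained, this $\gr^0$ is identified with $\gr^0$ of the Hodge filtration on $H^q(X_{K'},\Omega\hbul_{X_{K'}/K'})$, which is $H^q(X_{K'},\sO_{X_{K'}})$. Since de Rham cohomology and coherent cohomology both commute with the flat base field extension $K \to K'$, we have $H^q(X_{K'},\sO_{X_{K'}}) = K'\otimes_K H^q(X_K,\sO_{X_K})$, and this vanishes by hypothesis. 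Hence $\gr^0$ of the Hodge filtration on $H^q(Y\lbul{}_{,K'},\Omega\hbul_{Y\lbul{}_{,K'}/K'})$ vanishes, which is exactly the assertion that its smallest Hodge slope is at least $1$.

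There is essentially no main obstacle here; the only points requiring care are bookkeeping ones. One should check that the added hypotheses in the corollary (properness and smoothness of $X_K$, so of $X_{K'}$) together with conditions a) and b) of Lemma \ref{mtruncres} supply exactly the hypotheses of Proposition \ref{DRsimpl} over the base field $K'$; that the condition $q < m$ is imposed in both statements and so matches; and that "smallest Hodge slope $\geq 1$" is correctly translated into the vanishing of the lowest graded piece $H^q(X_{K'},\sO_{X_{K'}})$ of the Hodge filtration. Once these identifications are in place, the conclusion is immediate. I would also note, for later use in the paper, that in the case $H^q = 0$ outright the statement is vacuous, but in general this is the step that converts the Hodge-theoretic hypothesis on $X_K$ into a statement about the hypercovering $Y\lbul$ whose special fibre will be analysed through $p$-adic Hodge theory and the trace morphism of Theorem \ref{Tworeg}.
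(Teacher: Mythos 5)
Your proposal is correct and follows essentially the same route as the paper, which simply observes that assumptions a) and b) (plus the properness and smoothness of $X_K$) put one in the situation of Proposition \ref{DRsimpl} and declares the corollary "then clear." You have merely made explicit the bookkeeping the paper leaves implicit: the identification of $\gr^0$ of the Hodge filtration with $H^q(X_{K'},\sO_{X_{K'}})$ (via $E_1$-degeneration of Hodge–de Rham in characteristic $0$) and the flat base change from $K$ to $K'$.
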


\begin{proof}
Assumption a) and b) imply that the hypotheses of the proposition are satisfied by 
$Y\lbul{}_{K'} \to X_{K'}$, and the corollary is then clear.
\end{proof}

\subsection{}\label{Cryssimpl}
Let $\uSigma_n, \uSigma$ be as in the proof of Theorem \ref{Semistable}. We now denote
by $\uY\lbul = (Y\lbul, M_{Y\lbul})$ an $m$-truncated simplicial log scheme over
$\uSigma_1$. We assume that each $\uY_r$ is smooth of Cartier type over
$\uSigma_1$, so that, for all $n \geq 1$, its de Rham-Witt complex
$\WC{n}{\sbul}{\uY_r}$ is defined \cite[(4.1)]{HK94}. When $r$ varies, the
functoriality of the de Rham-Witt complex turns the family of complexes
$(\WC{n}{\sbul}{\uY_r})_{0\leq r \leq m}$ into a complex $\WC{n}{\sbul}{\uY\lbul}$ on
$Y\lbul$. One defines its cohomology as in \ref{Cohsimpl}, and one has similar
definitions for the de Rham-Witt complex $\WC{}{\sbul}{\uY\lbul} = \varprojlim_n
\WC{n}{\sbul}{\uY\lbul}$.

For a morphism $\alpha : [r] \to [s]$ in $\Delta[m]$, let $\alpha_{\crys} :
(\uY_s/\uSigma_n)_\crys \to (\uY_r/\uSigma_n)_\crys$ be the morphism between the log
crystalline topos induced by the corresponding morphism $\uY_s \to \uY_r$. One defines
the log crystalline topos $(\uY\lbul/\uSigma_n)_{\crys}$ as being the topos of families
of sheaves $(E^r)_{0 \leq r \leq m}$, where $E^r$ is a sheaf on the log crystalline
site $\Crys(\uY_r/\uSigma_n)$, endowed with a transitive family of morphisms
$\alpha_{\crys}^{-1}E^r \to E^s$ for morphisms $\alpha$ in $\Delta[m]$. In particular,
the family of sheaves $\sO_{\uY_r/\uSigma_n}$ defines the structural sheaf of
$(\uY\lbul/\uSigma_n)_{\crys}$, denoted by $\sO_{\uY\lbul/\uSigma_n}$. There is a
canonical morphism $u_{\uY\lbul/\uSigma_n} : (\uY\lbul/\uSigma_n)_{\crys} \to
Y\lbul{_{\Zar}}$, such that $u_{\uY\lbul/\uSigma_n*}(E\hbul)^r =
u_{\uY_r/\uSigma_n*}(E^r)$ for all $r$. If $E^{\sbul,\sbul}$ is a complex of abelian
sheaves in $(\uY\lbul/\uSigma_n)_{\crys}$, one proceeds as in \ref{Cohsimpl} to define
its log crystalline cohomology $\RR\Gamma_{\crys}(\uY\lbul/\uSigma_n,E^{\sbul,\sbul})$
and its projection on the Zariski topos $\RR u_{\uY\lbul/\uSigma_n*}(E^{\sbul,\sbul})$.
One gives similar definitions for the log crystalline topos
$(\uY\lbul/\uSigma)_{\crys}$ relative to $\uSigma$. By construction, there are
canonical isomorphisms
\eqa{logLerayn}{ \RR\Gamma(Y\lbul,\RR u_{\uY\lbul/\uSigma_n*}(E^{\sbul,\sbul})) & \riso 
& \RR\Gamma_{\crys}(\uY\lbul/\uSigma_n,E^{\sbul,\sbul}),\\
\RR\Gamma(Y\lbul,\RR u_{\uY\lbul/\uSigma\,*}(E^{\sbul,\sbul})) & \riso 
& \RR\Gamma_{\crys}(\uY\lbul/\uSigma,E^{\sbul,\sbul}). \label{logLeray}}

If $\uY\lbul \inj \uP\lbul$ is a closed immersion of the
$m$-truncated simplicial log scheme $\uY\lbul$ into a smooth $m$-truncated simplicial
$\uSigma_n$-log scheme $\uP\lbul$ (resp.~$\uSigma$-formal log scheme), the family of
PD-envelopes $\sP^{\log}_{\uY_r}(\uP_r)$ (resp.~completed PD-envelopes)
\cite[(5.4)]{Ka89} defines a sheaf $\sP^{\log}_{\uY\lbul}(\uP\lbul)$ on $\uY\lbul$, and
one can form the de Rham complex $\sP^{\log}_{\uY\lbul}(\uP\lbul)
\otimes_{\sO_{P\lbul}} \Omega\hbul_{\uP\lbul/\uSigma_n}$
(resp.~$\sP^{\log}_{\uY\lbul}(\uP\lbul) \otimes_{\sO_{P\lbul}}
\Omega\hbul_{\uP\lbul/\uSigma}$), which is supported in $\uY\lbul$. Because the
linearization functor $L$ used in the proof of the comparison theorem between
crystalline and de Rham cohomologies \cite[(6.9)]{Ka89} makes sense simplicially, this
theorem extends to the simplicial case and there is a canonical isomorphism in
$D^+(Y\lbul, W_n)$ (resp.~$D^+(Y\lbul, W)$)
\eqa{logcrysdRn}{ \RR u_{\uY\lbul/\uSigma_n*}(\sO_{\uY\lbul/\uSigma_n}) & \riso & 
\sP^{\log}_{\uY\lbul}(\uP\lbul) \otimes_{O_{\uP\lbul}} \Omega\hbul_{\uP\lbul/\uSigma_n} \\
\text{(resp.}\quad \RR u_{\uY\lbul/\uSigma\,*}(\sO_{\uY\lbul/\uSigma}) &  \riso & 
\sP^{\log}_{\uY\lbul}(\uP\lbul) \otimes_{\sO_{P\lbul}} 
\Omega\hbul_{\uP\lbul/\uSigma}\quad). 
\label{logcrysdR} }

\begin{prop}\label{DRWcrys}
With the hypotheses of \ref{Cryssimpl}, assume that $\uY\lbul$ is split. Then there
exists in $D^+(Y\lbul,W_n)$ \lp resp.~$D^+(Y\lbul,W)$\rp\ canonical isomorphisms
compatible with the transition morphisms and the Frobenius actions
\eqa{dRWncrys}{ \RR u_{\uY\lbul/\uSigma_n*}(\sO_{\uY\lbul/\uSigma_n}) 
& \riso & \WC{n}{\sbul}{\uY\lbul} \\
(\text{resp.}\quad \RR u_{\uY\lbul/\uSigma\,*}(\sO_{\uY\lbul/\uSigma}) 
& \riso & \WC{}{\sbul}{\uY\lbul}\quad).\label{dRWcrys}}
\end{prop}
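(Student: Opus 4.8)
The plan is to reduce to the non-simplicial case treated by Hyodo–Kato. For a single smooth log scheme of Cartier type $\uZ$ over $\uSigma_1$, \cite[Th.~(4.19)]{HK94} provides a canonical isomorphism $\RR u_{\uZ/\uSigma_n*}(\sO_{\uZ/\uSigma_n}) \riso \WC{n}{\sbul}{\uZ}$ in $D^+(Z_{\Zar},W_n)$, compatible with transition maps and Frobenius, and similarly over $\uSigma$. The first step is to recall the construction of this isomorphism, since we need to know it is functorial enough to be assembled simplicially: it is obtained by comparing both sides to the de Rham complex of a local embedding into a smooth lift, via the linearization functor, exactly as in the comparison isomorphism \eqref{logcrysdRn}. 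Applying this degreewise to the $\uY_r$, $0 \leq r \leq m$, and using that all the constructions involved (PD-envelopes, linearization, the de Rham–Witt functor $\WC{n}{\sbul}{-}$, and the maps comparing them) are functorial for the log morphisms $\uY_s \to \uY_r$, one gets a morphism of complexes of families indexed by $\Delta[m]$, hence a morphism in $D^+(Y\lbul,W_n)$ after applying $\RR\varepsilon^m_*$ in the simplicial direction. The hypothesis that $\uY\lbul$ is split is used exactly as in \eqref{logcrysdRn}: it guarantees one can choose the local embeddings $\uY_r \inj \uP_r$ compatibly into a smooth split $m$-truncated simplicial $\uSigma_n$-log scheme $\uP\lbul$, so that $\sP^{\log}_{\uY\lbul}(\uP\lbul)$ and the de Rham complex $\sP^{\log}_{\uY\lbul}(\uP\lbul)\otimes_{\sO_{P\lbul}}\Omega\hbul_{\uP\lbul/\uSigma_n}$ are genuinely defined on $Y\lbul$, as already observed in \ref{Cryssimpl}.

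Next I would check that the morphism just constructed is an isomorphism. This is a pointwise statement: by \eqref{simplspecseq} applied to both sides, it suffices that the map be an isomorphism in each simplicial degree $r$, i.e.\ that $\RR u_{\uY_r/\uSigma_n*}(\sO_{\uY_r/\uSigma_n}) \riso \WC{n}{\sbul}{\uY_r}$, which is precisely \cite[Th.~(4.19)]{HK94}. Compatibility with the transition morphisms $W_{n+1} \to W_n$ and with Frobenius then follows because it holds degreewise by the cited theorem, and $\RR\varepsilon^m_*$ is functorial. This settles the $\uSigma_n$-version.

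For the $\uSigma$-version (the resp.\ statement), the plan is to pass to the limit over $n$. One takes $\RR\varprojlim_n$ of the isomorphisms \eqref{dRWncrys}; on the right-hand side this yields $\WC{}{\sbul}{\uY\lbul} = \varprojlim_n \WC{n}{\sbul}{\uY\lbul}$ by definition, the higher derived limits vanishing because the transition maps $\WC{n+1}{\sbul}{\uY\lbul} \to \WC{n}{\sbul}{\uY\lbul}$ are degreewise surjective (the $\R$-map on de Rham–Witt forms is surjective), so $\RR\varprojlim_n$ agrees with $\varprojlim_n$ termwise; on the left-hand side one uses that $\RR u_{\uY\lbul/\uSigma*}(\sO_{\uY\lbul/\uSigma})$ is identified with $\RR\varprojlim_n \RR u_{\uY\lbul/\uSigma_n*}(\sO_{\uY\lbul/\uSigma_n})$, which is the simplicial analogue of the compatibility of crystalline cohomology with the $\uSigma_n \to \uSigma$ reductions, established by the linearization argument of \cite[(6.9)]{Ka89} together with the completed PD-envelope version \eqref{logcrysdR}. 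I expect the main obstacle to be bookkeeping rather than a genuine difficulty: one must verify that all the identifications used degreewise — the Hyodo–Kato isomorphism, the crystalline–de Rham comparison, and the compatibility with passage from $\uSigma_n$ to $\uSigma$ — are compatible with the cosimplicial structure maps $\alpha_{\crys}$ for $\alpha$ in $\Delta[m]$, so that the whole construction lives in the simplicial derived category and $\RR\varepsilon^m_*$ can be applied; but each of these compatibilities is built into the functoriality of the constructions recalled in \ref{Cohsimpl} and \ref{Cryssimpl}, so no new ideas are needed beyond those already in the excerpt.
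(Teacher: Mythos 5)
Your reduction of the quasi-isomorphism property to the single-scheme case via the spectral sequence \eqref{simplspecseq}, and the passage to the limit over $n$ at the end, both match the paper. But there is a genuine gap at the heart of the argument: the construction of the morphism \eqref{dRWncrys} itself. You treat the assembly of the degreewise Hyodo--Kato isomorphisms into a morphism of $D^+(Y\lbul,W_n)$ as a routine functoriality check, but a family of isomorphisms in the derived categories $D^+(Y_r,W_n)$, each canonical and each compatible with the transition functors $\alpha_{\crys}$, does not automatically define a morphism in the derived category of the (truncated) simplicial topos: one needs a zig-zag of actual morphisms of complexes that is strictly functorial in $[r]\in\Delta[m]$. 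Making all the local choices simultaneously and compatibly is the main difficulty, and the paper spends Lemmas \ref{Nakklem} and \ref{Bisimpl} plus all of \ref{proofdRWcrys} on it.

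Concretely, two of your assertions fail. First, splitness does not give you a closed immersion of $\uY\lbul$ into a smooth $m$-truncated simplicial $\uSigma_n$-log scheme: the $Y_r$ are proper, and the paper instead uses splitness through Nakkajima's Lemma \ref{Nakklem} to produce a simplicially compatible affine open covering, and then the Chiarellotto--Tsuzuki product construction to embed the resulting bisimplicial affine scheme $\uZ\lbbul$ into a smooth bisimplicial formal log scheme $\usT\lbbul$. Second, and more seriously, the comparison morphism \eqref{simpldRWcrys} between the PD-envelope de Rham complex and the de Rham--Witt complex is not produced by ``linearization plus functoriality'': it is induced by the maps $h\lbbul{}_{;n}:W_n(\uZ\lbbul)\to\uT\lbbul{}_{;n}$ from the Witt schemes to the smooth lifts, and these exist only because the pieces $Z_r^\alpha$ are affine (they are constructed inductively from smoothness of the lifts). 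For a proper $Y_r$ no such map exists in general, so working degreewise on the $\uY_r$ cannot produce the required map of complexes; the \v{C}ech/bisimplicial detour is unavoidable. A minor further point: $\varprojlim$-acyclicity of the system $\WC{n}{i}{}$ needs more than degreewise surjectivity of $R$ on sheaves --- the paper uses the finite filtration with coherent subquotients to get the Mittag-Leffler condition on sections over affines.
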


The proof will use the following lemma, due to Nakkajima \cite[Lemma 6.1]{Na09}.

\begin{lem}\label{Nakklem}
Under the assumptions of \ref{DRWcrys}, there exists an $m$-truncated simplicial log
scheme $\uZ\lbul$ and a morphism of $m$-truncated simplicial log schemes $\uZ\lbul \to
\uY\lbul$ such that, for $0 \leq r \leq m$, $Z_r$ is a disjoint union of affine open
subsets of $Y_r$ covering $Y_r$, and the morphism $Z_r \to Y_r$ induces the natural
inclusion on each of these subsets.
\end{lem}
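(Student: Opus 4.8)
The plan is to construct $\uZ\lbul$ by induction on $m$ and to take it split; the point of the split hypothesis on $\uY\lbul$ is that, on its canonical decomposition $Y_r=\coprod_{[r]\surj[l]}N(Y_l)$, the degeneracy operators merely relabel the summands and restrict to the identity on each. Consequently a morphism $\uZ\lbul\to\uY\lbul$ of the required shape is forced to look like $Z_r=\coprod_{[r]\surj[l]}N(Z_l)$ with the split structure, so the real task is only to choose affine open covers of the non-degenerate parts $N(Y_l)$ compatibly enough that the face operators lift. Throughout I would equip the $Z_r$ (and the $N(Z_l)$) with the log structures pulled back from $\uY\lbul$; these are fine and all the maps become morphisms of log schemes for free, so the entire construction takes place with underlying schemes, and I drop the underlining below.

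For $m=0$ one takes an affine open cover of $Y_0$ and lets $Z_0$ be the disjoint union of its members, mapped in by the inclusions. Assuming the lemma in degrees $\le m-1$, we have a split $(m-1)$-truncated $Z'\lbul\to\sk_{m-1}Y\lbul$ with each $Z'_r$ a disjoint union of affine opens covering $Y_r$. Extending a split $(m-1)$-truncated simplicial object to a split $m$-truncated one amounts to choosing the non-degenerate piece $N(Z_m)$ together with a morphism to the matching scheme $M:=\cosk_{m-1}(Z'\lbul)_m$; and for the result to map to $Y\lbul$ one needs $N(Z_m)\to N(Y_m)$ to be a disjoint union of affine opens covering $N(Y_m)$ whose composite $N(Z_m)\to M\to M':=\cosk_{m-1}(\sk_{m-1}Y\lbul)_m$ equals $N(Z_m)\to N(Y_m)\xra{\partial}M'$, where $\partial$ assembles the face operators of $\uY\lbul$ at level $m$ restricted to $N(Y_m)$. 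So, writing $P:=N(Y_m)\times_{M'}M$, the job reduces to covering $N(Y_m)$ by affine opens over each of which $P\to N(Y_m)$ admits a section: then $N(Z_m)$ is the disjoint union of these opens, $N(Z_m)\to M$ is read off from the sections, and the split $m$-truncated $Z\lbul$ together with $Z\lbul\to Y\lbul$ drops out.

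The main obstacle is to produce those local sections, i.e. to lift $N(Y_m)\xra{\partial}M'$ to $M$ Zariski-locally on $N(Y_m)$. Here $M'$ is a finite limit, over the finite category of proper faces $[l]\to[m]$, of the $Y_l$ with $l<m$, and $M$ is the same limit formed from the $Z'_l$. Since each $Z'_l\to Y_l$ is a disjoint union of affine opens covering $Y_l$ — an open immersion, hence a monomorphism, on each component — I would argue that over a connected affine open $U\subseteq N(Y_m)$ small enough that each of the finitely many composite boundary images $\partial_\alpha(U)\subseteq Y_l$ is contained in some component of $Z'_l$, the inclusion $U\hookrightarrow N(Y_m)$ lifts to $P$: fixing for each generating face $\delta^i$ a component of $Z'_{m-1}$ containing the relevant image determines a unique lift there, and the lift of every composite face is then imposed by the (inductively compatible) face operators of $Z'\lbul$, so the family of lifts is automatically a point of $M$ above $U$. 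Such $U$ cover $N(Y_m)$, which is what is needed, and the induction closes.

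The one genuinely delicate bookkeeping point is the coherence of these component choices — that the assignments made for the various faces of a cell fit together so that the simplicial identities for $Z\lbul$ hold on the nose. The safe way to handle it, which I would follow, is to enlarge $N(Z_m)$ to the disjoint union over all pairs consisting of such a connected affine open $U$ and a coherent system of component assignments for its faces; this is still a disjoint union of affine opens of $Y_m$ covering $N(Y_m)$, now with unambiguous face operators. This verification is precisely the content of \cite[Lemma~6.1]{Na09}, which I would invoke for the remaining details; the argument sketched above is why it goes through.
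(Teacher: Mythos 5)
First, a point of comparison: the paper does not prove this lemma at all — it is stated as ``due to Nakkajima'' and the citation \cite[Lemma 6.1]{Na09} stands in for the proof. So your closing appeal to that reference is formally what the paper does. The problem is that the argument you offer as an explanation of why Nakkajima's lemma ``goes through'' does not go through, and the failure is exactly at the point you dismiss as bookkeeping.

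The gap is in the inductive step. You fix the $(m-1)$-truncated $\uZ'\lbul$ produced by the inductive hypothesis and try to add level $m$ by lifting $N(Y_m)\xra{\partial}M':=\cosk_{m-1}(\sk_{m-1}Y\lbul)_m$ Zariski-locally through $M:=\cosk_{m-1}(Z'\lbul)_m\to M'$. Over a small connected affine $U$, such a lift is a choice, for every proper face $\alpha:[l]\to[m]$, of a member of the covering $Z'_l$ containing $\partial_\alpha(U)$, compatible under the face maps of $Z'\lbul$; i.e.\ an element of the limit $\varprojlim_\alpha S_\alpha$ of the finite nonempty sets $S_\alpha$ of admissible components, taken over the poset of proper faces of $[m]$. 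That limit can be empty: already for $m=2$ this poset is a cycle (three edges, three vertices, each vertex under two edges), and nothing in the inductive hypothesis prevents the unique admissible component at each edge from forcing, via the two face maps of $Z'_1$, two \emph{different} members of the covering of $Y_0$ at a common vertex. Your proposed repair — indexing $N(Z_m)$ by pairs $(U,\text{coherent system})$ — silently assumes that a coherent system exists near every point of $N(Y_m)$, which is precisely what can fail; if it fails, your $N(Z_m)$ no longer covers $N(Y_m)$. The fix is not to decorate the top level but to choose the coverings at all levels $\leq m$ compatibly from the outset (for instance, starting from arbitrary affine coverings $\{V_l^i\}$ of the $Y_l$ and replacing them by the refinements $U_r^{(i_\gamma)}:=\bigcap_{\gamma:[l]\to[r]}Y(\gamma)^{-1}(V_l^{i_\gamma})$, the intersection over all $\gamma$ in $\Delta[m]$ with target $[r]$ including the identity; these are affine because the $Y_r$ are separated, and the assignment $(i_\gamma)_\gamma\mapsto(i_{\delta\gamma'})_{\gamma'}$ gives the transition maps functorially), or else to allow the inductive step to refine $Z'\lbul$ on the lower levels. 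That compatibility is the actual content of Nakkajima's lemma, not a detail that can be deferred once the upward induction with frozen lower levels has been set up.
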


\begin{defn}\label{Wittscheme}
Let $X$ be a scheme on which $p$ is locally nilpotent, and $n \geq 1$ an integer. We
denote by $|X|$ the topological space underlying $X$, and by $W_n(X)$ the ringed space
$(|X|,W_n(\sO_X))$, which is a scheme (\cite[0, 1.5]{Il79} and \cite[1.10]{LZ04}). The
ideal $VW_{n-1}(\sO_X)$ carries a canonical PD-structure (\cite[0, 1.4]{Il79} and
\cite[1.1]{LZ04}), which turns the nilpotent immersion $u : X \inj W_n(X)$ into a
PD-thickening of $X$.

If $\uX = (X, M_X)$ is a log scheme, we denote by $W_n(\uX) = (W_n(X), M_{W_n(X)})$ 
the log scheme obtained by sending $M_X$ to $W_n(\sO_X)$ by the Teichm\"uller 
representative map, and taking the associated log structure \cite[Def.~(3.1)]{HK94}. 
The immersion $u$ is then in a natural way an exact closed immersion $u : \uX \inj 
W_n(\uX)$, functorial with respect to $\uX$. 
\end{defn}

\begin{lem}\label{Bisimpl}
Under the assumptions of \ref{DRWcrys}, there exists a bisimplicial log scheme 
$\uZ\lbbul$, $m$-truncated with respect to the first index and augmented towards 
$\uY\lbul$ with respect to the second index, a bisimplicial formal log scheme 
$\usT\lbbul$ over $\uSigma$, $m$-truncated with respect to the first index, and a 
closed immersion of bisimplicial formal log schemes $i\lbbul : \uZ\lbbul \inj 
\usT\lbbul$, such that the following conditions are satisfied:

\alphab For $0 \leq r \leq m$, $Z_{r,0}$ is a disjoint union of affine open subsets of
$Y_r$ covering $Y_r$, the augmentation morphism $Z_{r,0} \to Y_r$ induces the natural
inclusion on each of these subsets, and the canonical morphism $\uZ_{r,\sbul} \to
\cosk^{\uY_r}_0(\sk^{\uY_r}_0(\uZ_{r,\sbul}))$ is an isomorphism.

\alphab For $0 \leq r \leq m$ and $t \geq 0$, the formal log scheme $\usT_{r,t}$ is
smooth over $\uSigma$ \lp i.e., its reduction mod $p^n$ is smooth over $\uSigma_n$ for
all $n$\rp, and the canonical morphism $\usT_{r,\sbul} \to
\cosk^{\uSigma}_0(\sk^{\uSigma}_0(\usT_{r,\sbul}))$ is an isomorphism.

\alphab Let $i\lbbul{}_{;n} : \uZ\lbbul \inj \uT\lbbul{}_{;n}$ be the reduction mod
$p^n$ of $i\lbbul$, and let $u\lbbul{}_{;n} : \uZ\lbbul \inj W_n(\uZ\lbbul)$ denote
the morphism of bisimplicial log schemes defined by the canonical immersions. For
variable $n$, there exists a compatible family of $\uSigma_n$-morphisms of
bisimplicial schemes $h\lbbul{}_{;n} : W_n(\uZ\lbbul) \to \uT\lbbul{}_{;n}$ such that
$h\lbbul{}_{;n} \circ u\lbbul{}_{;n} = i\lbbul{}_{;n}$.
\end{lem}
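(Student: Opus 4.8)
The statement is essentially a simultaneous simplicial/bisimplicial "embedding system" construction, in the style of the usual crystalline formalism (cf.\ \cite[(6.9)]{Ka89}), made functorial in the simplicial direction and combined with a lifting of the Witt-vector PD-thickening. The strategy is to construct $\uZ\lbbul$, $\usT\lbbul$ and the immersion $i\lbbul$ degreewise in the first index $r$ and then assemble them into an $m$-truncated simplicial object using the coskeleton/split decomposition, exactly as in the proof of Lemma \ref{mtruncres}. So the first step is to fix, for each $r$, the disjoint union $\uZ_{r,0}$ of affine opens of $Y_r$ provided by Lemma \ref{Nakklem} (this gives condition a) in the second-index degree $0$, and the requirement that $\uZ_{r,\sbul}$ be the $0$-coskeleton over $\uY_r$ forces the rest of $\uZ_{r,\sbul}$: set $\uZ_{r,\sbul} = \cosk^{\uY_r}_0(\uZ_{r,0})$). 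Since $\uZ_{r,0}$ is a disjoint union of (log) affines which are smooth of Cartier type over $\uSigma_1$, one can choose for each of them a smooth formal lift over $\uSigma$ together with a closed immersion — this is possible locally by \cite[(3.14) and (6.5)]{Ka89} — and then take $\usT_{r,0}$ to be the corresponding disjoint union, with $\uZ_{r,0}\inj\usT_{r,0}$. Setting $\usT_{r,\sbul} = \cosk^{\uSigma}_0(\usT_{r,0})$ and $i\lbbul$ the induced immersion on coskeleta gives conditions a) and b).

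**Assembling in the first index.** To make $r\mapsto \uZ_{r,\sbul}$, $r\mapsto\usT_{r,\sbul}$, $r\mapsto i_{r,\sbul}$ into $m$-truncated simplicial objects compatible with the $\uY\lbul$ structure, I would use that $\uY\lbul$ is split: following Saint-Donat \cite[Vbis, 5.1.3]{SGA 4 II} and Deligne \cite[(6.2.5)]{De74}, one only needs to construct the data on the non-degenerate parts $N(\uY_r)$ and propagate by the degeneracy maps. Concretely, choose the affine covers $\uZ_{r,0}$ and their formal lifts $\usT_{r,0}$ compatibly with the face maps of $\uY\lbul$ (restricting a cover of $Y_s$ to the image of a face map, and refining; for the lifts, use the functoriality of smooth formal schemes with respect to the immersions, enlarging the $\usT_{s,0}$ as needed so that face maps of $\uY\lbul$ lift to maps $\usT_{r,0}\to\usT_{s,0}$ — here one uses the smoothness in condition b) to extend morphisms across the PD-thickenings). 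Passing to $0$-coskeleta in the second index is a functor, so these simplicial maps in the first index propagate automatically to the $\uZ\lbbul$, $\usT\lbbul$, $i\lbbul$. Split $m$-truncation then follows exactly as in Lemma \ref{mtruncres}.

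**The lifting $h\lbbul{}_{;n}$.** For condition c), fix $n$ and work with $\uT\lbbul{}_{;n}$, the reduction mod $p^n$. We need an $\uSigma_n$-morphism $h\lbbul{}_{;n}:W_n(\uZ\lbbul)\to\uT\lbbul{}_{;n}$ with $h\lbbul{}_{;n}\circ u\lbbul{}_{;n}=i\lbbul{}_{;n}$, compatibly for varying $n$. The point is that $u\lbbul{}_{;n}:\uZ\lbbul\inj W_n(\uZ\lbbul)$ is an exact closed immersion which is a PD-thickening (Definition \ref{Wittscheme}), the nilpotency of its ideal being bounded, while $\usT\lbbul{}_{;n}\to\uSigma_n$ is formally smooth (condition b)); hence the immersion $i\lbbul{}_{;n}:\uZ\lbbul\inj\uT\lbbul{}_{;n}$ extends, by the infinitesimal lifting criterion for log-smooth morphisms applied to the PD-thickening $\uZ\lbbul\inj W_n(\uZ\lbbul)$, to a morphism $h\lbbul{}_{;n}$. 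I would carry out this lifting degreewise in both simplicial indices and then make it compatible with the simplicial structure maps and with the transition maps $W_{n+1}\to W_n$ by a standard obstruction/averaging-over-the-skeleton argument (again using splitness to reduce to the non-degenerate parts, and using that the relevant $\Hom$-sheaves of liftings form a torsor under a group which one can make the choices functorial in — the obstruction to simplicial/compatible choices lies in a cohomology group of the simplicial scheme that vanishes because each $\uT_{r,t;n}$ is affine over the base in the relevant range).

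**Main obstacle.** The routine part is the degreewise existence of covers, lifts, and the morphism $h$; the genuine difficulty is \emph{functoriality}: making all of these choices compatible with the simplicial face and degeneracy maps in the first index, with the augmentation and $0$-coskeleton structure in the second index, \emph{and} with the transition maps in $n$, all at once. The clean way to handle this is precisely the coskeleton device: by taking $\uZ_{r,\sbul}$ and $\usT_{r,\sbul}$ to be $0$-coskeleta, the second-index simplicial structure is automatic and functorial, so the only real choices live in degree $t=0$; and by exploiting that $\uY\lbul$ is split, the first-index structure reduces to choices on the non-degenerate components, which can be made inductively on $r$. The compatibility in $n$ is then ensured by choosing the $h\lbbul{}_{;n}$ inductively, lifting $h\lbbul{}_{;n}$ to $h\lbbul{}_{;n+1}$ using that $W_{n+1}(\uZ\lbbul)\to W_n(\uZ\lbbul)$ is again a PD-thickening with the target formally log-smooth. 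This is the bookkeeping heart of the lemma, but no step requires more than the standard log-crystalline lifting machinery of \cite{Ka89} together with Lemma \ref{Nakklem}.
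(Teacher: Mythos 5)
Your overall architecture is right in two respects: you take $0$-coskeleta in the second index so that the only genuine choices live in bidegree $(r,0)$, and you produce the maps $h$ by the infinitesimal lifting criterion for log-smooth morphisms across the nilpotent PD-thickening $\uZ \inj W_n(\uZ)$ (this is exactly how the paper produces, for each affine piece $\uZ_r^\alpha$, a compatible family $g_{r;n}^\alpha : W_n(\uZ_r^\alpha) \to \uZ_{r;n}^\alpha$, inductively in $n$ from the smoothness of the lifts $\uZ_{r;n}^\alpha$ over $\uSigma_n$). Note also that Lemma \ref{Nakklem} already hands you $\uZ\lbul \to \uY\lbul$ as a morphism of $m$-truncated simplicial log schemes, so there is nothing to arrange about the covers themselves; refining them along face maps, as you suggest, is neither needed nor desirable.

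The genuine gap is in how you make the formal embeddings and the liftings \emph{strictly} functorial in the first simplicial index. You defer this to ``enlarging the $\usT_{s,0}$ as needed'' plus an ``obstruction/averaging-over-the-skeleton argument'' in which ``the obstruction lies in a cohomology group that vanishes.'' That is not a proof: what is required is strict commutativity of all diagrams involving faces and degeneracies (and the transition maps in $n$), not existence of lifts one diagram at a time, and torsor/obstruction arguments do not by themselves rigidify a web of independently chosen lifts into a simplicial object. The paper resolves this with the Chiarellotto--Tsuzuki product construction: for each fixed $r$ one sets $\Gamma_s(\usZ_r) = \prod_{\gamma : [r] \to [s]} \usZ_{r,\gamma}$ (product over $\uSigma$, one factor per morphism $\gamma$ in $\Delta[m]$), which is an $m$-truncated simplicial smooth formal log scheme with structure maps given by reindexing the factors, and then defines $\usT\lbul = \prod_{0 \leq r \leq m} \Gamma\lbul(\usZ_r)$. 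The immersion $i\lbul$ and the map $h\lbul$ are then written down explicitly, their components of index $\gamma$ being $\uZ_s \xra{\gamma} \uZ_r \inj \usZ_r$ and $W_n(\uZ_s) \xra{W_n(\gamma)} W_n(\uZ_r) \xra{g_{r;n}} \uZ_{r;n} \inj \usZ_r$ respectively; functoriality is automatic by construction, and only one choice of lift $\usZ_r^\alpha$ and one family $g_{r;n}^\alpha$ per affine piece is ever made, with no compatibility conditions imposed between them. Without this (or an equivalent rigidification device), your argument does not close; with it, the obstruction-theoretic step you invoke disappears entirely.
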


\begin{proof}
Let $j\lbul : \uZ\lbul \to \uY\lbul$ be a morphism of $m$-truncated simplicial log
schemes satisfying the conclusions of Lemma \ref{Nakklem}. One chooses a decomposition
$\uZ_r = \coprod_\alpha \uZ_r^\alpha$, with $Z_r^\alpha \subset Y_r$ open affine such
that $j_r\restr{Z_r^\alpha}$ is the natural inclusion.

Let $\uZ_{r;1}^\alpha = \uZ_r^\alpha$. Since $\uZ_r^\alpha$ is affine and smooth
over $\uSigma_1$, and $\uSigma_{n-1} \inj \uSigma_n$ is a nilpotent exact closed
immersion, there exists for each $r, \alpha$ and each $n \geq 2$ a smooth log scheme
$\uZ_{r;n}^\alpha$ over $\uSigma_n$ endowed with an isomorphism $\uZ_{r;n-1}^\alpha
\xra{\sim} \uSigma_{n-1} \times_{\uSigma_n} \uZ_{r;n}^\alpha$ \cite[Prop.~(3.14)
(1)]{Ka89}. Taking limits when $n \to \infty$, we obtain a smooth formal log scheme
$\usZ_r^\alpha$ over $\uSigma$ and an isomorphism $\uZ_r^\alpha \xra{\sim} \uSigma_1
\times_{\uSigma} \usZ_r^\alpha$. Moreover, the smoothness of $\uZ_{r;n}^\alpha$ over 
$\uSigma_n$ for all $n$ implies that we can find inductively a compatible family of
$\uSigma_n$-morphisms $g_{r;n}^\alpha : W_n(\uZ_r^\alpha) \to \uZ_{r;n}^\alpha$ such
that the composition $\uZ_r^\alpha \inj W_n(\uZ_r^\alpha) \to 
\uZ_{r;n}^\alpha$ is the chosen immersion $\uZ_r^\alpha \inj \uZ_{r;n}^\alpha$. 

Let $\uZ_{r;n} = \coprod_\alpha\uZ_{r;n}^\alpha$, $\usZ_r =
\coprod_\alpha\usZ_r^\alpha$, let $v_{r;n} : \uZ_r \inj \uZ_{r;n}$, $v_r : \uZ_r \inj
\usZ_r$ be defined by the immersions $\uZ_r^\alpha \inj \uZ_{r;n}^\alpha$ and
$\uZ_r^\alpha \inj \usZ_r^\alpha$, and let $g_{r;n} : W_n(\uZ_r) \to \uZ_{r;n}$ be
defined by the morphisms $g_{r;n}^\alpha$. We now use the method of Chiarellotto and
Tsuzuki (\cite[11.2]{CT03}, \cite[7.3]{Tz04}) to deduce from these data a closed
immersion $i\lbul$ of $\uZ\lbul$ into an $m$-truncated simplicial formal log scheme
$\usT\lbul$, smooth over $\uSigma$, with reduction $\uT_{\sbul;n}$ over $\uSigma_n$,
and a compatible family of $\uSigma_n$-morphisms of $m$-truncated simplicial log
schemes $h_{\sbul;n} : W_n(\uZ\lbul) \to \uT_{\sbul;n}$ such that $h_{\sbul;n} \circ
u_{\sbul;n} = i_{\sbul;n}$, where $u_{\sbul;n} : \uZ_{\sbul;n} \inj
W_n(\uZ_{\sbul;n})$ is the canonical morphism, and $i_{\sbul;n}$ is the reduction mod
$p^n$ of $i\lbul$. First, we set for $0 \leq s \leq m$ 
\eqn{ \Gamma_s(\usZ_r) = \prod_{\gamma : [r] \to [s]} \usZ_{r,\gamma}, }
where the product is taken over $\uSigma$ and indexed by the set of morphisms $\gamma :
[r] \to [s]$ in $\Delta[m]$, and $\usZ_{r,\gamma} = \usZ_r$ for all $\gamma$. Then any
morphism $\eta : [s'] \to [s]$ in $\Delta[m]$ defines a morphism $\Gamma_s(\usZ_r) \to
\Gamma_{s'}(\usZ_r)$ having as component of index $\gamma'$ the projection of
$\Gamma_s(\usZ_r)$ to the factor of index $\eta\circ\gamma'$. One obtains in this way
an $m$-truncated simplicial formal log scheme $\Gamma\lbul(\usZ_r)$ over $\uSigma$, the
terms of which are smooth over $\uSigma$.

For each $\gamma : [r] \to [s]$, there is a commutative diagram 
\eqn{ \xymatrix@C=40pt@M=6pt{ 
W_n(\uZ_s) \ar[r]^{W_n(\gamma)} & W_n(\uZ_r) \ar[dr]^{g_{r;n}} \\
\uZ_s \ar@{^{(}->}[u]_{u_{s;n}} \ar[r]^{\gamma} & \uZ_r \ar@{^{(}->}[u]_{u_{r;n}}
\ar@{^{(}->}[r]^{v_{r;n}} & \uZ_{r;n} \ar@{^{(}->}[r] & \usZ_r.
} }
For fixed $r$ and variable $s$, the family of morphisms $\uZ_s \to \Gamma_s(\usZ_r)$
having the composition $\uZ_s \xra{\gamma} \uZ_r \inj \usZ_r$ as component of index
$\gamma$ defines a morphism of $m$-truncated simplicial formal log schemes $\uZ\lbul
\to \Gamma\lbul(\usZ_r)$. We set
\eqn{ \usT\lbul = \prod_{0 \leq r \leq m} \Gamma\lbul(\usZ_r), }
and we define $i\lbul : \uZ\lbul \to \usT\lbul$ as having the previous morphism as
component of index $r$, for $0 \leq r \leq m$. For each $r$, the morphism $\uZ_r \to
\Gamma_r(\usZ_r)$ has the closed immersion $v_r : \uZ_r \inj \usZ_r$ as component of
index $\Id_{[r]}$. It follows that $\uZ_r \to \usT_r$ is a closed immersion for all
$r$.

Similarly, the family of morphisms $W_n(\uZ_s) \to \Gamma_s(\usZ_r)$ having the
composition $W_n(\uZ_s) \xra{W_n(\gamma)} W_n(\uZ_r) \xra{g_{r;n}} \uZ_{r;n} \inj
\usZ_r$ as component of index $\gamma$ defines a morphism of $m$-truncated simplicial
log schemes $W_n(\uZ\lbul) \to \Gamma\lbul(\usZ_r)$. We define $h\lbul : W_n(\uZ\lbul)
\to \usT\lbul$ as having the previous morphism as component of index $r$ for $0 \leq r
\leq m$, and $h_{\sbul;n} : W_n(\uZ\lbul) \to \uT_{\sbul;n}$ as being the reduction of
$h\lbul$ mod $p^n$. It is clear that $h_{\sbul;n} \circ u_{\sbul;n} = i_{\sbul;n}$, and
that the morphisms $h_{\lbul;n}$ form a compatible family when $n$ varies.

We now set $\uZ_{\sbul,0} = \uZ\lbul$, $\usT_{\sbul,0} = \usT\lbul$, and we define 
\eqn{ \uZ\lbbul = \cosk^{\uY\lbul}_0(\uZ_{\sbul,0}), \quad\quad 
\usT\lbbul = \cosk^{\uSigma}_0(\usT_{\sbul,0}), }
the coskeletons being taken respectively in the category of simplicial $m$-truncated
simplicial log schemes over $\uY\lbul$ and of simplicial $m$-truncated simplicial
formal log schemes over $\uSigma$. The augmentation morphism $\uZ_{\sbul,0} \to
\uY\lbul$ is given by $j\lbul$, and the morphism $i\lbbul$ is defined by setting
$i_{\sbul,0} = i\lbul : \uZ_{\sbul,0} \inj \usT_{\sbul,0}$, and extending
$i_{\sbul,0}$ by functoriality to the coskeletons. As seen above, $i_{\sbul,0}$ is a
closed immersion, and it follows from the construction of coskeletons that $i_{\sbul,
t}$ is a closed immersion for all $t$. Since $\cosk^{\uSigma}_0(\usT_{r,0})_{t} =
\usT_r \times_{\uSigma} \times \cdots \times_{\uSigma} \usT_r$ ($t+1$ times),
$\usT_{r,t}$ is smooth over $\uSigma$ for all $r, t$. Finally, we define $h\lbbul{}_{;n} :
W_n(\uZ\lbbul) \to \uT\lbbul{}_{;n}$ as being the composition
\eqn{W_n(\cosk^{\uY\lbul}_0(\uZ_{\sbul,0})) \to
\cosk^{W_n(\uY\lbul)}_0(W_n(\uZ_{\sbul,0})) \to 
\cosk^{\uSigma_n}_0(\uT_{\sbul,0;n}) \simeq 
\uSigma_n \times_{\uSigma} \cosk^{\uSigma}_0(\usT_{\sbul,0}), }
where the first map is defined by the universal property of the coskeleton (and is 
actually an isomorphism), the second one is defined by functoriality by the 
morphism $h_{\sbul;n} : W_n(\uZ_{\sbul,0}) \to \uT_{\sbul;n} = \uT_{\sbul,0;n}$, and the 
last one is the base change isomorphism for coskeletons. The relations $h\lbbul{}_{;n} 
\circ u\lbbul{}_{;n} = i\lbbul{}_{;n}$ and the compatibility for variable $n$ follow 
from the similar properties for the morphisms $h_{\sbul;n}$. Properties a) - c) of the
Lemma are then satisfied.
\end{proof}

\subsection{}\label{proofdRWcrys}\textit{Proof of Proposition \ref{DRWcrys}}. Let 
\eqn{ \xymatrix@M=6pt{ \uZ\lbbul \ar@{^{(}->}[r]^{i\lbbul} \ar[d]_{j\lbbul} & 
\usT\lbbul \ar[d] \\
\uY\lbul \ar[r] & \uSigma } }
be a commutative diagram satisfying the properties of Lemma \ref{Bisimpl}. Since, for
all $r \leq m$, the morphism $j_{r,0}$ is locally an open immersion, the scheme
underlying $\uZ_{r,t}$ is the usual fibred product $Z_{r,0} \times_{Y_r} \cdots
\times_{Y_r} Z_{r,0}$ ($t+1$ times). Keeping the notations of the proof of Lemma
\ref{Bisimpl}, let $\fU_r = (Z_r^\alpha)_\alpha$ be an affine covering of $Y_r$ such
that $Z_{r,0} = \coprod_\alpha Z_r^\alpha$ and $j_{r,0}\restr{Z_r^\alpha}$ is the
natural inclusion. Then, for any abelian sheaf $\sE$ on $Y_r$, the complex
\eqn{ \varepsilon_{r\,*}(j_{r,\sbul\,*}j_{r,\sbul}^{-1}\sE) = 
\Big[j_{r,0\,*}j_{r,0}^{-1}\sE \to 
\cdots \to j_{r,t\,*}j_{r,t}^{-1}\sE \xra{\sum_k (-1)^k\del_k} 
j_{r,t+1\,*}j_{r,t+1}^{-1}\sE \to \cdots \Big] }
is the \v{C}ech resolution of $\sE$ defined by the covering $\fU_r$. If $\sE\hbul$ is
an abelian sheaf on $Y\lbul$, the fact that $j_{\sbul,0}$ is an augmentation morphism
in the category of $m$-truncated simplicial schemes implies that the complex
$\varepsilon_{r\,*}(j_{r,\sbul\,*}j_{r,\sbul}^{-1}\sE^r)$ is functorial with respect to
$[r] \in \Delta[m]$, and we obtain a resolution
$\varepsilon_{\sbul\,*}(j_{\sbul,\sbul\,*}j_{\sbul,\sbul}^{-1}\sE\hbul)$ of $\sE\hbul$
in the category of abelian sheaves on $Y\lbul$. In particular, taking into account that
each $j_{q,q'}$ is locally an open immersion, we obtain for all $n$ a resolution of the
de Rham-Witt complex of $\uY\lbul$ given by
\eq{CechdRW}{ \WC{n}{\sbul}{\uY\lbul} \xra{\text{ qis }} 
\varepsilon_{\sbul\,*}(j\lbbul{}_*\WC{n}{\sbul}{\uZ\lbbul}). }

On the other hand, one can also define for all $r$ a complex on 
$\Crys(\uY_r/\uSigma_n)$ by setting 
\mln{ \varepsilon_{r\,*}(j_{r,\sbul\,\crys\,*}(\sO_{\uZ_{r,\sbul}/\uSigma_n})) = \\
\Big[j_{r,0\,\crys\,*}(\sO_{\uZ_{r,0}/\uSigma_n}) \to \cdots \to 
j_{r,t\,\crys\,*}(\sO_{\uZ_{r,t}/\uSigma_n}) 
\xra{\sum_k (-1)^k\del_k} \cdots \Big]. }
Since $Z_{r,\sbul} \to Y_r$ is the \v{C}ech simplicial scheme defined by an affine open
covering of $Y_r$, this complex is a resolution of $\sO_{\uY_r/\uSigma_n}$ \cite[III,
Prop.~3.1.2 and V, Prop.~3.1.2]{Be74}. Since $\uZ\lbbul$ is a bisimplicial scheme,
these resolutions are functorial with respect to $[r]$ and yield a resolution
$\varepsilon_{\sbul\,*}(j_{\sbul,\sbul\,\crys\,*}(\sO_{\uZ\lbbul/\uSigma_n}))$ of
$\sO_{\uY\lbul/\uSigma_n}$. Let $\uT\lbbul{}_{;n}$ be the reduction mod $p^n$ of
$\usT\lbbul$. The linearization functor $L$ \cite[(6.9)]{Ka89} is functorial with
respect to embeddings, hence it provides a complex
$L(\Omega\hbul_{\uT\lbbul{}_{;n}/\uSigma_n})$ on $\Crys(\uZ\lbbul/\uSigma_n)$. This
complex is a resolution of $\sO_{\uZ\lbbul/\uSigma_n}$ thanks to the log Poincar\'e
lemma which follows from \cite[Prop.~(6.5)]{Ka89}. For each $(r,t)$ and each $i$, one
checks easily that the term $j_{r,t\,\crys\,*}(L(\Omega^i_{\uT_{r,t;n}/\uSigma_n}))$ is
acyclic with respect to $u_{\uY_r/\uSigma_n*}$ (use \cite[V, (2.2.3)]{Be74} and the
equality $u_{\uY_r/\uSigma_n*} \circ j_{r,t\,\crys\,*} = j_{r,t\,*} \circ
u_{\uZ_{r,t}/\uSigma_n*}$). Hence, the complex
$\varepsilon_{\sbul\,*}(j_{\sbul,\sbul\,\crys\,*}(L(\Omega\hbul_{\uT\lbbul{}_{;n}/\uSigma_n})))$
is an $u_{\uY\lbul/\uSigma_n*}$-acyclic resolution of $\sO_{\uY\lbul/\uSigma_n}$.
Moreover, the closed immersion of bisimplicial schemes $i\lbbul$ defines a family of
PD-envelopes $\sP^{\log}_{\uZ\lbbul}(\uT\lbbul{}_{;n})$, supported in $Z\lbbul$. They
provide a de Rham complex $\sP^{\log}_{\uZ\lbbul}(\uT\lbbul{}_{;n}) \otimes
\Omega\hbul_{\uT\lbbul{}_{;n}/\uSigma_n}$, which can be viewed as a complex of abelian
sheaves on $Z\lbbul$, and it follows from \cite[V, (2.2.3)]{Be74} that
\eqn{u_{\uY\lbul/\uSigma_n*}(j_{\sbul,\sbul\,\crys\,*}(L(\Omega\hbul_{\uT\lbbul{}_{;n}/\uSigma_n})))
= j_{\sbul,\sbul\,*}(\sP^{\log}_{\uZ\lbbul}(\uT\lbbul{}_{;n}) \otimes
\Omega\hbul_{\uT\lbbul{}_{;n}/\uSigma_n}). }
As the $j_{r,t}$'s are affine morphisms (as a consequence of 1) in our general
conventions), we finally get in $D^+(Z\lbul, W_n)$ an isomorphism
\eq{Cechcrys}{ \RR u_{\uY\lbul/\uSigma_n*}(\sO_{\uY\lbul/\uSigma_n}) \riso 
\varepsilon_{\sbul\,*}(j_{\sbul,\sbul\,*}(\sP^{\log}_{\uZ\lbbul}(\uT\lbbul{}_{;n}) \otimes
\Omega\hbul_{\uT\lbbul{}_{;n}/\uSigma_n})). }

To prove Proposition \ref{DRWcrys}, it suffices to define a quasi-isomorphism between
the right hand sides of \eqref{CechdRW} and \eqref{Cechcrys}. Note that, for each $r,
t,i$, the sheaves $\WC{n}{i}{\uZ_{r,t}}$ and $\sP^{\log}_{\uZ_{r,t}}(\uT_{r,t;n})
\otimes \Omega^i_{\uT_{r,t;n}/\uSigma_n}$ are $j_{r,t\,*}$-acyclic. Indeed, $Z_{r,t}$
is a disjoint union of affine open subsets of $Y_r$, and on the one hand
$\WC{n}{i}{\uZ_{r,t}}$ has a finite filtration with subquotients which are
coherent over suitable Frobenius pullbacks of $Z_{r,t}$ \cite[Th.~(4.4)]{HK94},
on the other hand $\sP^{\log}_{\uZ_{r,t}}(\uT_{r,t;n}) \otimes
\Omega^i_{\uT_{r,t;n}/\uSigma_n}$ is a quasi-coherent $\sO_{T_{r,t;n}}$-module with
support in $Z_{r,t}$, hence is a direct limit of submodules which have a finite
filtration with subquotients which are coherent over $Z_{r,t}$. Therefore, it
suffices to construct a quasi-isomorphism
\eq{simpldRWcrys}{ \sP^{\log}_{\uZ\lbbul}(\uT\lbbul{}_{;n}) \otimes
\Omega\hbul_{\uT\lbbul{}_{;n}/\uSigma_n} \lra \WC{n}{\sbul}{\uZ\lbbul} }
in the category of complexes of $W_n$-modules over $Z\lbbul$. 

We can now argue as in the proof of \cite[Th.~(4.19)]{HK94}. Since the PD-immersion
$u_{r,t;n} : \uZ_{r,t} \inj W_n(\uZ_{r,t})$ is an exact closed immersion for all $r,
t$, the morphism $h\lbbul{}_{;n} : W_n(\uZ\lbbul) \to \uT\lbbul{}_{;n}$ defines
uniquely a PD-morphism $\sP^{\log}_{\uZ\lbbul}(\uT\lbbul{}_{;n}) \to
W_n(\sO_{\uZ\lbbul})$ in the category of sheaves of $W$-modules on the bisimplicial
scheme $T\lbbul{}_{;n}$. As $h\lbbul{}_{;n}$ is a morphism of bisimplicial log schemes,
it defines by functoriality a morphism of complexes
$\Omega\hbul_{\uT\lbbul{}_{;n}/\uSigma_n} \to \Omega\hbul_{W_n(\uZ\lbbul)/\uSigma_n}$
on $T\lbbul{}_{;n}$. This morphism extends as a morphism of complexes with support in
$Z\lbbul$
\eqn{ \sP^{\log}_{\uZ\lbbul}(\uT\lbbul{}_{;n}) \otimes
\Omega\hbul_{\uT\lbbul{}_{;n}/\uSigma_n} \lra
\Omega\hbul_{W_n(\uZ\lbbul)/\uSigma_n}/\sN\hbul\lbbul, }
where $\sN\hbul\lbbul \subset \Omega\hbul_{W_n(\uZ\lbbul)/\uSigma_n}$ denotes the
graded ideal generated by the sections $d(a^{[i]})-a^{[i-1]}da$ for all sections $a$ of
$VW_{n-1}(\sO_{Z\lbbul})$ and all $i \geq 1$. The differential graded algebra
$\WC{n}{\sbul}{\uZ\lbbul}$ is a quotient of $\Omega\hbul_{W_n(\uZ\lbbul)/\uSigma_n}$
\cite[Prop.~(4.7)]{HK94}, and the generators of $\sN\hbul\lbbul$ vanish in
$\WC{n}{\sbul}{\uZ\lbbul}$ (because $\WC{}{\sbul}{\uZ\lbbul}$ is $p$-torsion free), so we
finally get the morphism \eqref{simpldRWcrys}. To check that it is a quasi-isomorphism,
it suffice to do so on each $Z_{r,t}$, and this follows from \cite[Th.~(4.19)]{HK94}. 
We obtain in this way the isomorphism \eqref{dRWncrys}. 

To construct the isomorphism \eqref{dRWcrys}, it suffices to observe that the 
compatibility of the previous constructions when $n$ varies implies that they make 
sense in the category of inverse systems indexed by $n\in\N$. Then one can apply the 
functor $\RR\varprojlim_n$ to the isomorphism \eqref{dRWncrys} viewed an an 
isomorphism in the derived category of inverse systems of sheaves of $W$-modules on 
$Y\lbul$, and this provides the isomorphism \eqref{dRWcrys}, since the local structure of 
the $\WC{n}{i}{}$ recalled above implies that they form a $\varprojlim$-acyclic inverse system.

The isomorphisms \eqref{dRWncrys} and \eqref{dRWcrys} do not depend upon the choices
made in their construction. If $(\uZ\lbbul,\usT\lbbul,j\lbbul,i\lbbul,h\lbbul{}_{;n})$
and $(\uZ'\lbbul,\usT'\lbbul,j'\lbbul,i'\lbbul,h'\lbbul{}_{;n})$ are two sets of data
provided by Lemma \ref{Bisimpl}, one can construct a third set of data
$(\uZ''\lbbul,\usT''\lbbul,j''\lbbul,i''\lbbul,h''\lbbul{}_{;n})$ mapping to the two
previous ones by setting
\eqn{ \uZ''\lbbul = \uZ\lbbul \times_{\uY\lbul} \uZ'\lbbul, \quad\quad
\usT''\lbbul = \usT\lbbul \times_{\uSigma} \usT'\lbbul,  }
and defining $j''\lbbul$, $i''\lbbul$ and $h''\lbbul{}_{;n}$ by functoriality. Then the
independence property of \eqref{dRWncrys} and \eqref{dRWcrys} follows from the
functoriality of the canonical isomorphisms used in their construction with respect to
the projections from $(\uZ''\lbbul, \usT''\lbbul)$ to $(\uZ\lbbul,\usT\lbbul)$ and
$(\uZ'\lbbul,\usT'\lbbul)$. Moreover, one can also prove the functoriality of
\eqref{dRWncrys} and \eqref{dRWcrys} with respect to $\uY\lbul$ by similar arguments
using the graph construction: for a morphism $\varphi\lbul : \uY'\lbul \to \uY\lbul$
between two $m$-truncated simplicial log schemes satisfying the assumptions of Lemma
\ref{DRWcrys}, one can find sets of data
$(\uZ\lbbul,\usT\lbbul,j\lbbul,i\lbbul,h\lbbul{}_{;n})$ and
$(\uZ'\lbbul,\usT'\lbbul,j'\lbbul,i'\lbbul,h'\lbbul{}_{;n})$ satisfying the conditions
of Lemma \ref{Bisimpl} relatively to $\uY\lbul$ and $\uY'\lbul$, and such that there
exists morphisms of bisimplicial log schemes $\psi\lbbul : \uZ'\lbbul \to \uZ\lbbul$,
$\theta\lbbul : \usT'\lbbul \to \usT\lbbul$ satisfying the obvious compatibilities.
Then the functoriality of \eqref{dRWncrys} and \eqref{dRWcrys} with respect to
$\varphi\lbul$ follows from the functoriality of the canonical isomorphisms used in
their construction with respect to $\varphi\lbul$, $\psi\lbbul$ and $\theta\lbbul$. In
particular, one obtains in this way that the isomorphisms \eqref{dRWncrys} and
\eqref{dRWcrys} are compatible with the Frobenius actions.
\hfill$\Box$
\medskip

\subsection{}\label{ProofWittvanish}
\textit{Proof of Theorem \ref{Wittvanish}, assuming Theorem \ref{Tworeg}}. 
To conclude this section, we prove that Theorem \ref{Tworeg} implies Theorem
\ref{Wittvanish}. We keep the notations of \ref{Main}, and we first observe that if
Theorem \ref{Wittvanish} holds when $R$ is complete, then it holds in general. Indeed,
let $\widehat{R}$ be the completion of $R$, and $\widetilde{X} = X_{\widehat{R}}$. Then
$\widetilde{X}$ is a regular scheme: on the one hand, its generic fibre is smooth over
$\widehat{K} = \Frac(\widehat{R})$; on the other hand, its special fibre is isomorphic
to $X_k$, and the completions of the local rings of $X$ and $\widetilde{X}$ are
isomorphic at any corresponding points of their special fibres. It follows that
$\widetilde{X}$ satisfies the assumptions of Theorem \ref{Wittvanish} relatively to
$\widehat{R}$, and the theorem for $\widetilde{X}$ implies the theorem for $X$.

Therefore, we assume in the rest of the proof that $R$ is complete. We fix an integer
$m > q$. Let $K'$ be a finite extension of $K$, with ring of integers $R'$ and residue
field $k'$, such that there exists an $m$-truncated simplicial log scheme $\uY\lbul$
over $R'$, with an augmentation morphism $u : Y_0 \to X_{R'}$, such that properties a)
- c) of Lemma \ref{mtruncres} are satisfied. Let $W'_n=W_n(k')$, $W'=W(k')$, $K'_0 =
\Frac(W')$, and let $\uSigma'_n$, $\uSigma'$ be the log schemes defined by $W'_n$, $W'$
as in \ref{Semistable}.

Thanks to property a) of Lemma \ref{mtruncres}, the log schemes $(\uY_r)_{k'}$ are
smooth of Cartier type over $\uSigma'_1$. Therefore, we can consider the log crystalline
cohomology of $\uY_{\sbul\,k'}$ 
\eqn{ \RR\Gamma_{\crys}(\uY_{\sbul\,k'}/\uSigma', \sO_{\uY_{\sbul\,k'}/\uSigma'}) :=
\RR\varepsilon^m_*\RR\Gamma\hbul_{\crys}(\uY_{\sbul\,k'}/\uSigma',
\sO_{\uY_{\sbul\,k'}/\uSigma'}), }
as defined in \ref{Cryssimpl}. Using the naive filtration on the functor
$\RR\varepsilon^m_*$ (see \ref{Cohsimpl}), its basic properties follow from those of
the log crystalline cohomology of the proper and smooth log schemes $(Y_r)_{k'}$. In
particular, since $Y_r$ is proper over $\uSigma'_1$ for all $r$, the complex
$\RR\Gamma_{\crys}(\uY_{\sbul\,k'}/\uSigma', \sO_{\uY_{\sbul\,k'}/\uSigma'})$ is a
perfect complex of $W'$-modules, and the cohomology space
$H^q_{\crys}(\uY_{\sbul\,k'}/\uSigma', \sO_{\uY_{\sbul\,k'}/\uSigma'})\otimes K'_0$ is
a finite dimensional $K'_0$-vector space. By functoriality, it is endowed with the 
semi-linear Frobenius action defined by the absolute Frobenius endomorphism of 
$\uY_{\sbul\,k'}$.

From \eqref{logLeray} and \eqref{dRWcrys}, we deduce an isomorphism 
\eqn{ H^q_{\crys}(\uY_{\sbul\,k'}/\uSigma', \sO_{\uY_{\sbul\,k'}/\uSigma'})\otimes 
K'_0 \riso H^q(\uY_{\sbul\,k'}, \WC{}{\sbul}{\uY_{\sbul\,k'}}) \otimes K'_0, }
which is compatible with the Frobenius actions thanks to Proposition \ref{DRWcrys}. 
The filtration of the complex $\WC{}{\sbul}{\uY_{\sbul\,k'}}$ by the subcomplexes 
$\sigma_{\geq i}\WC{}{\sbul}{\uY_{\sbul\,k'}}$ provides a spectral sequence 
\eqn{ E_1^{i,j} = H^j(\uY_{\sbul\,k'}, \WC{}{i}{\uY_{\sbul\,k'}}) \otimes K'_0 
\Longrightarrow H^{i+j}_{\crys}(\uY_{\sbul\,k'}/\uSigma',
\sO_{\uY_{\sbul\,k'}/\uSigma'}) \otimes K'_0, }
which is endowed with a Frobenius action. Using the naive filtration on
$\RR\varepsilon^m_*$, we deduce from the case of a single log scheme that each term
$E_1^{i,j}$ is a finite dimensional $K_0$-vector space on which the Frobenius action 
is bijective with slopes in $[i,i+1[$. Therefore the spectral sequence degenerates at 
$E_1$, and, taking \eqref{dRW0} into account, we get in particular an isomorphism 
\eq{simplslope}{ (H^q_{\crys}(\uY_{\sbul\,k'}/\uSigma', \sO_{\uY_{\sbul\,k'}/\uSigma'})
\otimes K'_0)^{<1} \riso H^q(Y_{\sbul\,k'}, W\sO_{Y_{\sbul\,k'},\Q}). }

Since $\uY\lbul$ satisfies property a) of \ref{mtruncres}, the construction of the
monodromy operator $N$ on log crystalline cohomology can be extended to the case of
$\uY_{\sbul\,k'}$ \cite[(6.3)]{Ts98}. Moreover, the Hyodo-Kato isomorphism $\rho$
can also be extended to the case of $\uY_{\sbul\,k'}$ \cite[(6.3.2)]{Ts98}, providing
an isomorphism
\eq{HKsimpl}{ \rho : H^q_{\crys}(\uY_{\sbul\,k'}/\uSigma',
\sO_{\uY_{\sbul\,k'}/\uSigma'}) \otimes K' \riso H^q(Y_{\sbul\,K'},
\Omega\hbul_{Y_{\sbul\,K'}}). }
Thus, $H^q_{\crys}(\uY_{\sbul\,k'}/\uSigma', \sO_{\uY_{\sbul\,k'}/\uSigma'}) \otimes 
K'$ inherits a filtered $(\varphi,N)$-module structure. 

It follows from \cite[Th.~7.1.1]{Ts98} (generalizing \cite[Th.~0.2]{Ts99}) that,
endowed with this structure, $H^q_{\crys}(\uY_{\sbul\,k'}/\uSigma',
\sO_{\uY_{\sbul\,k'}/\uSigma'}) \otimes K'_0$ is an admissible filtered
$(\varphi,N)$-module, corresponding to the Galois representation $H^q_{\et}(Y_{\sbul\,\oK},
\Q_p)$. Therefore it is weakly admissible. In particular, either
$H^q_{\crys}(\uY_{\sbul\,k'}/\uSigma', \sO_{\uY_{\sbul\,k'}/\uSigma'}) \otimes K'_0 =
0$, or its smallest Newton slope is greater or equal to its smallest Hodge
slope. Since $H^q(X_K, \sO_{X_K}) = 0$, Corollary \ref{Hodgesimpl} implies that the 
smallest Hodge slope is at least $1$. Therefore the part of
Newton slope $< 1$ vanishes. By \eqref{simplslope}, we obtain 
\eq{simplWittvanish}{ H^q(Y_{\sbul\,k'}, W\sO_{Y_{\sbul\,k'},\Q}) = 0. }

As $Y\lbul \to X_{R'}$ satisfies property \ref{mtruncres} c), there exists a
sub-$K$-extension $K_1 \subset K'$, with ring of integers $R_1$ and residue field
$k_1$, a semi-stable scheme $Y$ over $R_1$, a projective $R$-alteration $f : Y \to X$,
and finitely many $R$-embeddings $\sigma_i : R_1 \inj K'$ such that, if $u_1 : Y \to
X_{R_1}$ denotes the $R$-morphism defined by $f$, and if $Y_{\sigma_i}$ (resp.\ $u_{\sigma_i} : Y_{\sigma_i} \to
X_{R'}$) denotes the $R'$-scheme (resp.\ $R'$-morphism) deduced by base change via
$\sigma_i$ from $Y$ (resp.\ $u_1$), then $Y_0 = \coprod_i Y_{\sigma_i}$, and the augmentation
morphism $u : Y_0 \to X_{R'}$ is defined by $u\restr{Y_{\sigma_i}} = u_{\sigma_i}$. This provides a
commutative diagram
\eq{schemediag}{ \xymatrix@R=5pt{
& Y_{\sbul\,k'} \ar[r]^{u_{\sbul\,k'}} & X_{k'} \ar[dr] & \\
Y_{0,k'} = \coprod_i Y_{\sigma_i,k'} \ar[ur]^{s_0} \ar[dr] & & & X_k \\
& Y_{k_1} \ar@{^{(}->}[r] & Y_k \ar[ur]_{f_k} &
} }
in which we identify schemes with their Zariski topos, $Y_k := \Spec k \times_{\Spec 
R} Y$, and: 

\romain the morphism $u_{\sbul\,k'}$ is such that, for any sheaf $E$ on $X_{k'}$,
$u_{\sbul\,k'}^{-1}E$ is the family of sheaves $(u_r)_{k'}^{-1}E$, with $u_r : Y_r \to 
X_{R'}$ defined by the augmentation morphism,

\romain the morphism $s_0$ is such that, for any sheaf $F\hbul$ on $Y_{\sbul\,k'}$, 
$s_0^{-1}F\hbul = F^0$,

\romain the morphism $Y_{\sigma_i,k'} \to Y_{k_1}$ is the projection corresponding to 
$\sigma_i$. 

By functoriality, we obtain a commutative diagram for the corresponding Witt
cohomology spaces 
\eq{Wittdiag}{ \hspace{-0.9cm}\xymatrix@R=15pt@C=5pt@M=5pt{
& H^q(X_{k'}, W\sO_{X_{k'},\Q}) \ar[rr] & & H^q(Y_{\sbul\,k'}, 
W\sO_{Y_{\sbul\,k'},\Q}) \ar[dr] & \\
\hspace{0.8cm} H^q(X_k, W\sO_{X_k,\Q})\hspace{-0.8cm}  \ar[ur] \ar[dr]_{f_k^*} & & & &
\hspace{-1.2cm}\bigoplus_i H^q(Y_{\sigma_i,k'}, W\sO_{Y_{\sigma_i,k'},\Q}).\hspace{1.2cm} \\
& H^q(Y_k, W\sO_{Y_k,\Q}) \ar[rr]^-{\sim} & & H^q(Y_{k_1}, W\sO_{Y_{k_1},\Q})
\ar@{^{(}->}[ur] &
} }
In this diagram, the lower horizontal arrow is an isomorphism because $Y_{k_1} \inj
Y_k$ is a nilpotent immersion \cite[Prop.~2.1 (i)]{BBE07}. The lower right arrow is
injective on each summand, because each $\sigma_i$ turns $k'$ into a finite separable
extension of $k_1$, hence it follows from \cite[0, Prop.~1.5.8]{Il79} that
\eqn{ W(k') \otimes_{W(k_1)} \Gamma(U, W\sO_{Y_{k_1}}) \riso \Gamma(U_{\sigma_i}, 
W\sO_{Y_{\sigma_i,k'}}) }
for any affine open subset $U \subset Y_{k_1}$ with inverse image $U_{\sigma_i} \subset
Y_{\sigma_i,k'}$ ; as one can compute Witt cohomology using \v{C}ech cohomology, this
implies that
\eqn{ W(k') \otimes_{W(k_1)} H^q(Y_{k_1}, W\sO_{Y_{k_1}}) \riso H^q(Y_{\sigma_i,k'}, 
W\sO_{Y_{\sigma_i,k'}}). }
Finally, $f : Y \to X$ is a projective alteration between two flat regular schemes of
finite type over $R$, so Theorem \ref{Tworeg} implies that $f_k^*$ is injective.
Therefore, the functoriality map $H^q(X_k, W\sO_{X_k,\Q}) \to \bigoplus_i 
H^q(Y_{\sigma_i,k'},
W\sO_{Y_{\sigma_i,k'},\Q})$ is injective. But \eqref{simplWittvanish} implies that the
composition of the upper path in the diagram is $0$. It follows that $H^q(X_k,
W\sO_{X_k,\Q}) = 0$.
\hfill$\Box$

\section{An injectivity theorem for coherent cohomology}\label{Injectivity1}

We now begin our preliminary work in view of the proof of Theorem \ref{Tworeg}. 

One of the key ingredients in this proof is a theorem which bounds the order of
elements in the kernel of the functoriality map induced on coherent cohomology by a
proper surjective complete intersection morphism $f : Y \to X$ of virtual relative
dimension $0$. Such a result is a consequence of the existence of a ``trace morphism''
$\tau_f : \RR f_*\sO_Y \to \sO_X$ which satisfies the properties stated in the
following theorem:

\begin{thm}\label{Thtau}
Let $X$ be a noetherian scheme with a dualizing complex, and let $f : Y \to X$ be a
proper complete intersection morphism of virtual relative dimension $0$. There exists a
morphism $\tau_f : \RR f_*\sO_Y \to \sO_X$ which satisfies the following properties:

\romain If $g : Z \to Y$ is a second proper complete intersection morphism of 
virtual relative dimension $0$, then the composed morphism
\eq{transtau}{ \RR (f\circ g)_*\sO_Z \cong \RR f_*\RR g_*\sO_Z
\xra{\RR f_*(\tau_g)} \RR f_*\sO_Y \xra{\tau_f} \sO_X }
is equal to $\tau_{fg}$. 

\romain Let $X'$ be another noetherian scheme with a dualizing complex, $u : X' \to X$
a morphism such that $X'$ and $Y$ are Tor-independent over $X$, and $f' : Y' \to X'$
the pull-back of $f$ by $u$. If $f$ is projective, or if either $f$ is flat, or $u$ is 
residually stable \cite[p.~132]{Co00}, then the morphism
\eq{bctau}{\RR f'_*\sO_{Y'} \cong \LL u^* \RR f_*\sO_Y \xra{\LL u^*(\tau_f)} \sO_{X'},}
defined by the base change isomorphism \eqref{basechange}, is equal to $\tau_{f'}$. 

\romain If $f$ is finite and flat, then, for any section $b \in f_*\sO_Y$, 
\eq{tautrace}{\tau_f(b) = \trace_{f_*\sO_Y/\sO_X}(b).} 
\end{thm}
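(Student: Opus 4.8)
The plan is to define $\tau_f$ inside Grothendieck's coherent duality theory, in the form presented in \cite{Co00}, and then to obtain (i)--(iii) by reducing each to a standard compatibility of the duality trace and of the relative fundamental class. First I would recall the ingredients. For a proper morphism $g$, the twisted inverse image $g^!$ is right adjoint to $\RR g_*$ and carries a duality trace $\Tr_g : \RR g_* g^! \to \id$. For a smooth proper morphism $\pi : P \to S$ of pure relative dimension $n$ there is a canonical isomorphism $\pi^!\sO_S \cong \Omega^n_{P/S}[n]$ and a Grothendieck trace $\RR\pi_*\bigl(\Omega^n_{P/S}[n]\bigr)\to\sO_S$. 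For a regular immersion $i : Y\inj P$ of codimension $d$ with ideal $\sI$, the fundamental local isomorphism of \cite{Co00} identifies $i^!(-)$ with $\omega_{Y/P}[-d]\otimesL\LL i^*(-)$, where $\omega_{Y/P} = \wedge^d\bigl((\sI/\sI^2)^\vee\bigr)$ as in our conventions. Since $f$ has constant virtual relative dimension $0$, a local factorization $f = \pi\circ i$ together with these identifications shows that $f^!\sO_X$ is canonically --- and, as one checks, independently of the factorization --- an invertible $\sO_Y$-module placed in degree $0$; write $\omega_f := f^!\sO_X$.

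The morphism $\tau_f$ is then assembled from two pieces: the duality trace $\Tr_f : \RR f_*\,\omega_f = \RR f_* f^!\sO_X \to \sO_X$, and the relative fundamental class of El Zein \cite[IV]{El78}, a canonical $\sO_Y$-linear morphism $\fc_f : \sO_Y\to\omega_f$ (equivalently, a global section of $\omega_f$; it is the element of $\Gamma(Y,f^!\sO_X)$ corresponding to $\tau_f$ under duality). The class $\fc_f$ is the identity for an étale morphism, is built from the Koszul resolution and the fundamental local isomorphism for a regular immersion, and is compatible with composition, so it does not depend on the factorization and glues for a proper, not necessarily projective, $f$. One sets $\tau_f := \Tr_f\circ\RR f_*(\fc_f)$. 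When $f$ is projective and $f = \pi\circ i$ with $\pi : \P^{\,d}_X\to X$, this is concretely the composite $\RR f_*\sO_Y \xra{\RR f_*(\fc_f)} \RR f_*\,\omega_f = \RR\pi_*(i_*\omega_f) \to \RR\pi_*\bigl(\Omega^d_{P/X}[d]\bigr)\xra{\Tr_\pi}\sO_X$, the middle arrow being $\RR\pi_*$ of the Gysin map $i_*\omega_f\to\Omega^d_{P/X}[d]=\pi^!\sO_X$ of the regular immersion. The precise definition of $\tau_f$ and the check that it is well defined occupy \ref{Deftau}.

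For (i) I would splice two transitivity statements: the counit satisfies $\Tr_f\circ\RR f_*(\Tr_g) = \Tr_{fg}$ under the canonical isomorphism $(fg)^!\cong g^! f^!$, and the fundamental class is multiplicative, $\fc_{fg}$ being the product $\fc_g\cdot g^*(\fc_f)$ under the resulting identification $\omega_{fg}\cong\omega_g\otimes g^*\omega_f$. Combining these, and commuting $\fc$ past $\RR f_*$, yields \eqref{transtau}. For (iii), when $f$ is finite and flat one has $f^!\sO_X = \sHom_{\sO_X}(f_*\sO_Y,\sO_X)$ as an $\sO_Y$-module, $\Tr_f$ is evaluation at the unit section, and $\fc_f$ is the trace pairing $a\mapsto\bigl(b\mapsto\trace_{f_*\sO_Y/\sO_X}(ab)\bigr)$. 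Granting this identification --- the one genuinely explicit computation, carried out through the Koszul complex in the regularly embedded case --- one gets $\tau_f(b) = \fc_f(b)(1) = \trace_{f_*\sO_Y/\sO_X}(b)$, i.e.\ \eqref{tautrace}.

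Property (ii) is where the real work lies, and I expect it to be the main obstacle. It reduces to two compatibilities under a Tor-independent base change $u : X'\to X$: that of $\fc_f$, which is essentially formal because $\fc_f$ is built locally from Koszul data whose formation commutes with base change; and that of the duality trace $\Tr_f$, which is \emph{not} formal --- this is precisely the point where \cite{Ha66} is incomplete and where one must invoke the corrections and complements of \cite{Co00} --- and which holds only under the listed hypotheses ($f$ projective, or $f$ flat, or $u$ residually stable). In the projective case one combines base change for $\Tr_\pi$ on a projective space with base change for the Gysin map of $i$, using that a regular immersion of codimension $d$ remains such after a Tor-independent base change; the flat and residually stable cases follow from the corresponding base-change theorems for $\Tr$ in \cite{Co00}. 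Assembling all of this with the sign conventions of \cite{Co00}, together with the normalization of $\omega_{Y/P}$ fixed via \cite{Bo70}, and matching it with the abstract construction of $\tau_f$ in the non-projective case, is the substance of \ref{Prooftau} and of the Appendix; granting \eqref{bctau} as established there settles (ii).
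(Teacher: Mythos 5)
Your proposal follows essentially the same route as the paper's Appendix: $\tau_f$ is defined as the duality trace $\Tr_f$ composed with $\RR f_*$ of a canonical section of $f^!\sO_X$ built from Koszul data via the fundamental local isomorphism, (i) reduces to transitivity of both ingredients, (ii) to their base-change compatibility with exactly the same case division, and (iii) to the explicit identification of the canonical section for a finite flat map (which the paper carries out via the residue symbol and property (R6) of \cite{Co00}, after a henselization argument to obtain a global regular sequence). The only organizational difference is that the paper first constructs $\omega_{Y/X}$ and its section $\delta_f$ elementarily, without duality theory, and only afterwards identifies $\omega_{Y/X}[0]\riso f^!\sO_X$ via the isomorphism $\lambda_f$, whereas you work with $f^!\sO_X$ from the outset; this does not change the substance of the verifications you correctly identify as the real work.
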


As explained in the introduction, we refer to \ref{Deftau} for the definition of
$\tau_f$, and to \ref{Prooftau} for the proof of the theorem.

It may be worth recalling a few examples of complete intersection morphisms of virtual 
relative dimension $0$ (in short: ci0):

\numero If $X$ and $Y$ are two regular schemes with the same Krull dimension, any 
morphism $f : Y \to X$ which is locally of finite type is ci0. This is the situation 
where we will use Theorem \ref{Thtau} in this article.

\numero If $X$ and $Y$ are smooth over a third scheme $S$, with the same relative 
dimension, any $S$-morphism $Y \to X$ is ci0.

\numero If $X$ is a scheme, $Z \inj X$ a regularly embedded closed subscheme, and $f :
Y \to X$ the blowing up of $X$ along $Z$, then $f$ is ci0 \cite[VII, Proposition
1.8]{SGA 6}.
\medskip

The existence of $\tau_f$ has a remarkable consequence for the functoriality maps 
induced on coherent cohomology.

\begin{thm}\label{Injth}
Let $X$ be a noetherian scheme with a dualizing complex, and $f : Y \to X$ a proper
complete intersection morphism of virtual relative dimension $0$. Assume
that there exists a scheme-theoretically dense open subset $U \subset X$ such that
$f^{-1}(U) \to U$ is finite locally free of constant rank $r \geq 1$. Then, for any
complex $\sE\hbul \in \Dbqc(\sO_X)$ and any $q \geq 0$, the kernel of the functoriality map
\eq{cohcomfunct}{ H^q(X, \sE\hbul) \to H^q(Y, \LL f^*\sE\hbul) }
is annihilated by $r$. In particular, when $r$ is invertible on $X$, the functoriality 
maps are injective.
\end{thm}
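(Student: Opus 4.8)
The plan is to reduce the vanishing of the kernel to a statement about the composition $\sO_X \to \RR f_*\sO_Y \xra{\tau_f} \sO_X$, and to show that over the dense open set $U$ this composition is multiplication by $r$. First I would use the trace morphism $\tau_f$ from Theorem~\ref{Thtau} together with the functoriality (unit) morphism $\eta_f : \sO_X \to \RR f_*\sO_Y$ to form the composition $c := \tau_f \circ \eta_f : \sO_X \to \sO_X$, which is an endomorphism of $\sO_X$ in $\Dbqc(\sO_X)$, hence is given by multiplication by a global section $c \in \Gamma(X,\sO_X)$. The claim will be that $c = r$. Because $X$ is noetherian and $U$ is scheme-theoretically dense, $\Gamma(X,\sO_X) \to \Gamma(U,\sO_U)$ is injective, so it suffices to check the equality after restriction to $U$; there the morphism $f^{-1}(U) \to U$ is finite locally free of rank $r$, and part~(iii) of Theorem~\ref{Thtau} identifies $\tau_{f|_U}$ with the field-trace $\trace_{f_*\sO_{f^{-1}(U)}/\sO_U}$. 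For a finite locally free algebra of rank $r$, the composition of the structural inclusion with the trace sends $1$ to $\trace(1) = r$, so $c|_U = r$ and therefore $c = r$ on all of $X$.

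Next I would leverage the (derived) base change compatibility, part~(ii) of Theorem~\ref{Thtau}, to make the argument work for an arbitrary complex $\sE\hbul \in \Dbqc(\sO_X)$, not just for $\sO_X$. Concretely, tensoring the morphisms $\eta_f$ and $\tau_f$ with $\sE\hbul$ over $\sO_X$ and using the projection formula $\RR f_*(\LL f^*\sE\hbul) \cong \sE\hbul \otimesL_{\sO_X} \RR f_*\sO_Y$, one obtains morphisms
\eqn{ \sE\hbul \xra{\eta} \RR f_*\LL f^*\sE\hbul \xra{\tau} \sE\hbul }
whose composition is $\Id \otimesL c = $ multiplication by $r$ on $\sE\hbul$ (here I use that $c$ is the scalar $r$, so tensoring with it is still multiplication by $r$). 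Applying $H^q(X,-)$ to this composition yields an endomorphism of $H^q(X,\sE\hbul)$ which on the one hand is multiplication by $r$, and on the other hand factors through $H^q(X, \RR f_*\LL f^*\sE\hbul) = H^q(Y, \LL f^*\sE\hbul)$ via the functoriality map \eqref{cohcomfunct} followed by $H^q(X,\tau)$. Consequently any class in the kernel of \eqref{cohcomfunct} is killed by $r$. When $r$ is invertible on $X$, multiplication by $r$ is an automorphism of $H^q(X,\sE\hbul)$, so the kernel is zero and \eqref{cohcomfunct} is injective.

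The main technical point — and the place where I expect the real work to lie — is the clean identification of the composition $\tau_f \circ \eta_f$ with multiplication by $r$, i.e. making precise the interplay of $\tau_f$ with the projection formula and the base change isomorphism \eqref{bctau}. One has to check that the projection formula isomorphism $\RR f_*(\LL f^*\sE\hbul) \cong \sE\hbul \otimesL \RR f_*\sO_Y$ is compatible with $\tau_f$ in the sense that $\tau_f \otimes \Id_{\sE\hbul}$ corresponds under this isomorphism to $\RR f_*$ of the canonical pairing; this is a standard but not entirely trivial coherence verification, and it is really the hypothesis in Theorem~\ref{Thtau}~(ii) that $f$ be projective (so that base change holds without further restriction on the morphisms) which guarantees the needed compatibilities in the generality claimed. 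Once that compatibility is in place, restricting to $U$ and invoking Theorem~\ref{Thtau}~(iii) plus the scheme-theoretic density of $U$ to propagate the scalar identity from $U$ to $X$ is routine, and the injectivity statement is immediate.
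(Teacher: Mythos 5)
Your proposal is correct and follows essentially the same route as the paper: identify the composite $\sO_X \to \RR f_*\sO_Y \xra{\tau_f} \sO_X$ with multiplication by $r$ by checking it over the scheme-theoretically dense open $U$ via Theorem \ref{Thtau}~(iii), then tensor with $\sE\hbul$ using the projection formula and apply $H^q(X,-)$. The compatibility you flag as "the real work" is exactly what the paper's commutative diagram in the proof encodes, so there is no gap.
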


\begin{proof}
By \ref{Thtau} (iii), the composition $\sO_X \to \RR f_*\sO_Y \xra{\tau_f} \sO_X$ is
multiplication by $r$ over $U$. Since $U$ is scheme-theoratically dense in $X$, it is
multiplication by $r$ over $X$.

The complete intersection hypothesis implies that $f$ has finite Tor-dimension, hence
$\LL f^*\sE\hbul$ belongs to $\Dbqc(\sO_Y)$. Moreover, we can apply the projection formula
\cite[III, 3.7]{SGA 6} to obtain a commutative diagram
\eqn{ \xymatrix{ 
\entry{\sE\hbul} \ar[r] \ar[rd]_-{\times\; r} & 
\entry{\RR f_*\sO_Y \otimesL_{\sO_X} \sE\hbul}  \ar[r]^-{\sim} \ar[d]^{\tau_f\otimes\Id} &
\entry{\RR f_*\LL f^*\sE\hbul} \ar[ld] \\
& \entry{\sE\hbul} & \hspace{1.5cm},
}}
in which the upper composed morphism is the adjunction morphism. Applying the
functors $H^q(X,-)$ to the diagram, the theorem follows.
\end{proof}

\section{Koszul resolutions and local description of the trace morphism $\tau_f$}

We recall here some well-known explicit constructions based on the Koszul complex which
enter in the definition of the trace morphism $\tau_f$. Later on, this will allow us to
define generalizations of $\tau_f$ for sheaves of Witt vectors. As in the whole 
article, we follow Conrad's constructions and conventions \cite{Co00}.

\subsection{}\label{KoszulToExt}
Let $P$ be a scheme, and let $\mbt = (t_1,\ldots,t_d)$ be a regular sequence of
sections of $\sO_P$, defining an ideal $\sI \subset \sO_P$. We denote by $Y \subset P$
the closed subscheme defined by $\sI$, and by $i : Y \inj P$ the corresponding closed
immersion. Classically, the Koszul complex $K\lbul(\mbt)$ defined by the sequence
$(t_1,\ldots,t_d)$ is the chain complex concentrated in homological degrees $[0,d]$,
such that $\sE := K_1(\mbt)$ is a free $\sO_P$-module of rank $d$ with basis
$e_1,\ldots,e_d$, $K_k(\mbt) = \Wedge^k \sE$ for all $k$, and such that the
differential is given in degree $k$ by
\eqn{ d_k(e_{i_1}\wedge \ldots \wedge e_{i_k}) = \sum_{j=1}^k
(-1)^{j-1}t_{i_j}e_{i_1}\wedge \ldots \wedge \widehat{e_{i_j}} \wedge \ldots \wedge
e_{i_k}. }
It is often more convenient to consider $K\lbul(\mbt)$ as a cochain complex
concentrated in cohomological degrees $[-d,0]$, by setting $(K\lbul(\mbt))^k =
K_{-k}(\mbt)$ and leaving the differential unchanged \cite[p.~17]{Co00}.

Since $\mbt$ is a regular sequence, $K\lbul(\mbt)$ is a free resolution of $\sO_Y$ 
over $\sO_P$. For any $\sO_P$-module $\sM$, this resolution provides an isomorphism
\eq{psit}{ \sExt^d_{\sO_P}(\sO_Y,\sM) := H^d(\sHom\hbul_{\sO_P}(K\lbul(\mbt),\sM))
\xrightarrow[\sim]{\psi_{\mbt,\sM}} \frac{\sHom_{\sO_P}(\Wedge^d \sE,\sM )}
{\sI\sHom_{\sO_P}(\Wedge^d\sE,\sM)}, }
where $\psi_{\mbt,\sM}$ is the tautological isomorphism multiplied by $(-1)^{d(d+1)/2}$ 
(see \cite[definition of (1.3.28) and (2.5.2)]{Co00}). For any section $m$ of $\sM$, 
we will denote by
\eq{defSymbol}{ \left[\begin{array}{c} m \\ t_1,\ldots,t_d \end{array}\right] \in 
\sExt^d_{\sO_P}(\sO_Y,\sM)
 }
the section corresponding by \eqref{psit} to the class of the homomorphism $u_{\mbt,m}$
which sends $e_1\wedge\ldots\wedge e_d$ to $(-1)^d m$ (the $(-1)^d$ sign being 
needed to obtain relation \eqref{deltaFLI} later). Note that this section is linear with
respect to $m$, only depends on the class of $m$ mod $\sI\sM$, and is functorial with
respect to $\sM$. Its dependence on the regular sequence $\mbt$ is given by the
following lemma.

\begin{lem}\label{Changet}
Let $\mbt' = (t'_1,\ldots,t'_d)$ be another regular sequence of sections of $\sO_P$, 
generating an ideal $\sI'$ such that $\sI' \subset \sI$. Let $C = (c_{i,j})_{1\leq i,j 
\leq d}$ be a matrix with entries in $\sO_P$ such that $t'_i = \sum_{j=1}^d 
c_{i,j}t_j$ for all $i$. If $\alpha : \sExt^d_{\sO_P}(\sO_P/\sI,\sM) \to 
\sExt^d_{\sO_P}(\sO_P/\sI',\sM)$ is the functoriality homomorphism, then
\eq{changet}{ \alpha(\left[\begin{array}{c} m \\ t_1,\ldots,t_d\end{array}\right]) = 
\left[\begin{array}{c} \det(C)m \\ t'_1,\ldots,t'_d \end{array}\right]. }
\end{lem}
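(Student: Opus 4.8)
The plan is to compute the functoriality map $\alpha$ at the level of Koszul resolutions. The matrix $C$ with $t_i' = \sum_j c_{i,j} t_j$ induces a chain map $C_{\kaydot} : K\lbul(\mbt') \to K\lbul(\mbt)$ lifting the canonical surjection $\sO_P/\sI' \surj \sO_P/\sI$: in degree $1$ it is the $\sO_P$-linear map sending the basis vector $e_i'$ of $\sE' = K_1(\mbt')$ to $\sum_j c_{i,j} e_j$, and in general it is its $k$-th exterior power $\Wedge^k C$ on $K_k(\mbt') = \Wedge^k \sE' \to \Wedge^k\sE = K_k(\mbt)$. That this is a chain map is the standard verification that $d_k \circ \Wedge^k C = \Wedge^{k-1} C \circ d_k'$, which follows directly from the relation $t_i' = \sum_j c_{i,j}t_j$ and is exactly the content of \cite[Lemma 1.3.27]{Co00} (or an easy direct check). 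Since both $K\lbul(\mbt)$ and $K\lbul(\mbt')$ are free resolutions of $\sO_P/\sI$ and $\sO_P/\sI'$ respectively, applying $\sHom\hbul_{\sO_P}(-,\sM)$ and passing to $H^d$ computes $\alpha$ as the map induced by precomposition with $C_{\kaydot}$.

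Next I would trace through the identification \eqref{psit}. In top degree $d$, the map $\Wedge^d C : \Wedge^d \sE' \to \Wedge^d\sE$ sends $e_1' \wedge \cdots \wedge e_d'$ to $\det(C)\, e_1 \wedge \cdots \wedge e_d$, by the Cauchy--Binet / determinant formula for exterior powers. Dualizing, precomposition with $\Wedge^d C$ carries the homomorphism $u_{\mbt,m} : \Wedge^d\sE \to \sM$, $e_1\wedge\cdots\wedge e_d \mapsto (-1)^d m$, to the homomorphism $\Wedge^d\sE' \to \sM$ sending $e_1'\wedge\cdots\wedge e_d' \mapsto (-1)^d \det(C)\, m$, which by definition represents $\left[\begin{array}{c}\det(C)m\\ t_1',\ldots,t_d'\end{array}\right]$. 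Because the normalizing sign $(-1)^{d(d+1)/2}$ in $\psi_{\mbt,\sM}$ and $\psi_{\mbt',\sM}$ is the same (the codimension $d$ is unchanged), it cancels on both sides and does not affect the formula; so \eqref{changet} follows. One should also note that $\det(C)m$ is well-defined modulo $\sI'\sM$: replacing $C$ by another matrix $C'$ with $t_i' = \sum_j c_{i,j}' t_j$ changes each row of $C$ by a relation among the $t_j$, i.e.\ by an element of the first syzygy module, so $\det(C) - \det(C') \in \sI$, hence $\det(C)m \equiv \det(C')m \bmod \sI\sM$ and a fortiori the symbol on the right is unchanged — but since it lives in $\sExt^d_{\sO_P}(\sO_P/\sI',\sM)$ we only need it mod $\sI'\sM$, which is weaker.

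The only genuinely delicate point is sign bookkeeping: one must be careful that the sign $(-1)^d$ built into the definition \eqref{defSymbol} of the symbol, the sign $(-1)^{d(d+1)/2}$ in $\psi_{\mbt,\sM}$, and the signs implicit in viewing the Koszul complex as a cochain complex in degrees $[-d,0]$ all match Conrad's conventions, so that the chain map $C_{\kaydot}$ really does induce $\alpha$ with no spurious sign. Since the codimension is the same on both sides, all these signs appear identically for $\mbt$ and $\mbt'$ and cancel, leaving the clean formula $\alpha(\left[\begin{array}{c}m\\ t_1,\ldots,t_d\end{array}\right]) = \left[\begin{array}{c}\det(C)m\\ t_1',\ldots,t_d'\end{array}\right]$; but verifying this cleanly against \cite{Co00} is where the real work lies. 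This is essentially \cite[Lemma 2.5.4 or the discussion around (2.5.2)]{Co00}, and I would cite it there if a fully self-contained argument is not wanted.
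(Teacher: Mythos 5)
Your proposal is correct and follows exactly the paper's argument: the paper also defines the morphism of Koszul resolutions $\phi$ with $\phi_1(e'_i)=\sum_j c_{i,j}e_j$ and $\phi_k=\wedge^k\phi_1$, notes that $\phi_d$ is multiplication by $\det(C)$, and reads off \eqref{changet} from the resulting commutative square. The extra care you take with the signs and the well-definedness of $\det(C)m$ is sound but is left implicit in the paper's (very terse) proof.
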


\begin{proof}
Let $K\lbul(\mbt')$ be the Koszul resolution of $\sO_P/\sI'$, and $\sE' = K_1(\mbt')$, with 
basis $e'_1,\ldots,e'_d$. One defines a morphism of resolutions $\phi : K\lbul(\mbt') 
\to K\lbul(\mbt)$ by setting $\phi_1(e'_i) = \sum_j c_{i,j}e_j$, and $\phi_k = 
\wedge^k\phi_1$ for $0 \leq k \leq d$. Then $\phi$ provides a commutative diagram 
\eqn{
\xymatrix{
\sHom_{\sO_P}(\wedge^d \sE, \sM) \ar[r] \ar[d]_{\phi_d=\det(C)} & 
\sExt^d_{\sO_P}(\sO_P/\sI, \sM) \ar[d]^{\alpha}  \\
\sHom_{\sO_P}(\wedge^d \sE', \sM) \ar[r] & \sExt^d_{\sO_P}(\sO_P/\sI', \sM)  .}
}
The lemma follows.
\end{proof}

\subsection{}\label{Glueing}
Under the assumptions of \ref{KoszulToExt}, the morphism $d_1 : \sE \surj \sI$ defines 
an isomorphism $\sE/\sI\sE \riso \sI/\sI^2$. Using the canonical isomorphisms, this 
provides 
\eqa{dualwedge}{ \frac{\sHom_{\sO_P}(\Wedge^d \sE,\sM )}
{\sI\sHom_{\sO_P}(\Wedge^d \sE,\sM)} & \riso & 
(\Wedge^d \sE)^{\vee} / \sI(\Wedge^d \sE)^{\vee} \otimes_{\sO_Y}\sM/\sI\sM \\
 & \riso & \Wedge^d((\sE / \sI\sE)^{\vee}) \otimes_{\sO_Y}\sM/\sI\sM \notag \\
& \riso & \omega_{Y/P} \otimes_{\sO_Y} i^*\sM. \notag
}
Note that, due to the commutation between dual and exterior power, the composition
\eqref{dualwedge} maps the class of $u_{\mbt,m}$ to $(-1)^d(\bt_d^{\,\vee}\wedge\ldots\wedge
\bt_1^{\,\vee})\otimes i^*(m)$, where $\bt_k$ denotes the class of $t_k$ mod $\sI^2$.

Composing \eqref{psit} and \eqref{dualwedge}, one obtains the \textit{fundamental local 
isomorphism} \cite[III, 7.2]{Ha66} as defined by Conrad \cite[(2.5.2)]{Co00} in the 
local case: 
\eq{fundlociso}{ \eta_{Y/P} : \sExt^d_{\sO_P}(\sO_Y, \sM) \riso 
\omega_{Y/P}\otimes_{\sO_Y}i^*\sM. }
Applying Lemma \ref{Changet} to the case of two regular sequences of generators of the
ideal $\sI$, one sees that the isomorphism $\eta_{Y/P}$ does not depend on the sequence
$\mbt$, so that local constructions can be glued to define $\eta_{Y/P}$ for any regular
immersion $i : Y \inj P$, without assuming that $\sI$ is defined globally by a regular
sequence. One obtains in this way the fundamental local isomorphism in the general 
case \cite[(2.5.1)]{Co00}. 

Let us now recall from \cite[2.5]{Co00} how the isomorphism \eqref{fundlociso} allows 
to define functorially for any $\sM\hbul \in D(\sO_P)$ the isomorphism \cite[(2.5.3)]{Co00}
\eq{derfundlociso}{ \eta_i : \RR\sHom_{\sO_P}(\sO_Y, \sM\hbul) \riso \omega_{Y/P}[-d] 
\otimesL_{\sO_P} \LL i^*(\sM\hbul). }
Applying \cite[Lemma 2.1.1]{Co00} and using the isomorphism of
functors defined by $\eta_{Y/P}$, one gets the isomorphism
\eqn{ \RR\sHom_{\sO_P}(\sO_Y, \sM\hbul) \riso (\omega_{Y/P} \otimesL_{\sO_P} 
\LL i^*(\sM\hbul))[-d]. }
The isomorphism $\eta_i$ is then obtained by composition with the canonical 
isomorphism 
\eqn{ (\omega_{Y/P} \otimesL_{\sO_P} \LL i^*(\sM\hbul))[-d] \riso \omega_{Y/P}[-d] 
\otimesL_{\sO_P} \LL i^*(\sM\hbul) }
defined by the general convention \cite[p.~11]{Co00}. From the discussion p.~53 of 
\cite{Co00}, it follows that $\eta_i$ satisfies the following properties:

\alphab If $\sM\hbul = \sM[0]$ for an $\sO_P$-module $\sM$, then the homomorphism 
induced by $\eta_i$ between the cohomology sheaves in degree $d$ is the isomorphism 
$\eta_{Y/P}$. 

\alphab The isomorphism $\eta_i$ commutes with translations in $D(\sO_P)$ (using the 
general convention \cite[(1.3.6)]{Co00} for the right hand side of 
\eqref{derfundlociso}).

\subsection{}\label{Defgammaf}
Let $\pi : P \to X$ be a smooth morphism of relative dimension $d$, $i : Y \inj P$ a
regular immersion of codimension $d$, and $f = \pi \circ i$. Let 
\eqn{ \zeta'_{i,\pi} :
\omega_{Y/X}\riso \omega_{Y/P}[-d] \otimesL_{\sO_Y} \LL i^*(\omega_{P/X}[d]) }
be the canonical isomorphism \eqref{defzeta}, which induces in degree $0$ the
tautological isomorphism $\zeta'_{i,\pi}$ provided in \eqref{transomega1} by the
construction of $\omega_{Y/X}$ in \ref{Defomega}. Let $\delta_f$ be the canonical
section of $\omega_{Y/X}$ (defined by \eqref{locdeltaf}), and $\varphi_f : \sO_Y \to 
\omega_{Y/X}$ the morphism sending $1$ to $\delta_f$. We define the morphism
\eq{defgammaf}{ \gamma_{f} : \sO_Y \to \omega_{P/X}[d] }
as being the composition 
\eqn{ \xymatrix{
\entry{\sO_Y} \ar[r]^-{\varphi_f} \ar@/_1pc/[rrrd]_-{\gamma_f} & 
\entry{\omega_{Y/X}} \ar[r]^-{\zeta'_{i,\pi}}_-{\sim} &
\entry{\omega_{Y/P}[-d] \otimesL_{\sO_Y}\LL i^*(\omega_{P/X}[d])} 
\ar[r]^-{\eta_i^{-1}}_-{\sim} & 
\entry{\RR\sHom_{\sO_P}(\sO_Y, \omega_{P/X}[d])} \ar[d]_-{\can} \\
 & & & \entry{\omega_{P/X}[d].} 
} }

\begin{prop}\label{DeltaFLI}
Under the assumptions of \ref{Defgammaf}, the isomorphism $\eta_i^{-1} \circ 
\zeta'_{i,\pi}$ entering in the definition of $\gamma_f$ induces in degree $0$ an 
isomorphism
\eqn{ \eta_i^{-1} \circ \zeta'_{i,\pi} : \omega_{Y/X} \riso \sExt^d_{\sO_P}(\sO_Y, 
\omega_{P/X}), }
which is such that 
\eq{deltaFLI}{ \eta_i^{-1} \circ \zeta'_{i,\pi}(\delta_f) = 
\left[\begin{array}{c} dt_1\wedge\cdots\wedge dt_d \\ t_1,\ldots,t_d \end{array}\right]. }
\end{prop}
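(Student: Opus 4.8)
The plan is to unwind the definitions of all the morphisms involved on the level of the Koszul resolution, and to track the images of the relevant generators through each isomorphism, keeping careful track of the signs introduced by Conrad's conventions. First I would recall that $\delta_f \in \omega_{Y/X}$ is, by its local definition \eqref{locdeltaf}, the section $dt_1\wedge\cdots\wedge dt_d$ under the local identification of $\omega_{Y/X}$ arising from the construction in \ref{Defomega}; here $t_1,\dots,t_d$ is a regular sequence of sections of $\sO_P$ generating the ideal $\sI$ of $Y$, so that the $dt_j$ give a local frame of $\Omega^1_{P/X}$ along $Y$. The fact that $\eta_i^{-1}\circ\zeta'_{i,\pi}$ induces an isomorphism in degree $0$ follows from property a) of \ref{Glueing} (which says $\eta_i$ induces $\eta_{Y/P}$ in degree $d$) together with the fact that $\zeta'_{i,\pi}$ induces in degree $0$ the tautological isomorphism \eqref{transomega1}; since $\omega_{P/X}$ is an invertible sheaf concentrated in degree $0$, the complex $\RR\sHom_{\sO_P}(\sO_Y,\omega_{P/X}[d])$ has cohomology only in degree $d-d=0$, equal to $\sExt^d_{\sO_P}(\sO_Y,\omega_{P/X})$.

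The core of the argument is then the explicit computation of the image of $\delta_f$. First I would apply $\zeta'_{i,\pi}$: by \eqref{transomega1}, $\delta_f = dt_1\wedge\cdots\wedge dt_d$ maps to $(\bt_d^\vee\wedge\cdots\wedge\bt_1^\vee)\otimes i^*(dt_1\wedge\cdots\wedge dt_d)$, up to the sign prescribed by the construction of $\omega_{Y/X}$ — and here one must check that this sign matches the $(-1)^d$ appearing in the description of \eqref{dualwedge} in \ref{Glueing}. Then I would invert the degree-$0$ fundamental local isomorphism $\eta_{Y/P}$: by the computation recorded right after \eqref{dualwedge}, $\eta_{Y/P}$ sends the class of $u_{\mbt,m}$ to $(-1)^d(\bt_d^\vee\wedge\cdots\wedge\bt_1^\vee)\otimes i^*(m)$. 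Taking $m = dt_1\wedge\cdots\wedge dt_d$, the two factors of $(-1)^d$ cancel, and we conclude that $\eta_{Y/P}^{-1}$ of our element is exactly the class of $u_{\mbt,m}$ with $m = dt_1\wedge\cdots\wedge dt_d$. By the definition \eqref{defSymbol} of the bracket symbol — which is precisely the section of $\sExt^d_{\sO_P}(\sO_Y,\sM)$ corresponding to the class of $u_{\mbt,m}$ sending $e_1\wedge\cdots\wedge e_d$ to $(-1)^d m$ — we must be slightly careful: $u_{\mbt,m}$ in \ref{Glueing} sends $e_1\wedge\cdots\wedge e_d$ to $m$, whereas the bracket uses $(-1)^d m$. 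Reconciling these two normalizations is exactly the bookkeeping that produces the stated formula \eqref{deltaFLI}, with the right hand side being $\left[\begin{smallmatrix} dt_1\wedge\cdots\wedge dt_d \\ t_1,\ldots,t_d \end{smallmatrix}\right]$ and no residual sign.

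Finally I would check independence of the choice of regular sequence $\mbt$: changing $\mbt$ to $\mbt'$ with $t'_i = \sum_j c_{i,j}t_j$ multiplies $\delta_f$ — and the frame $dt_1\wedge\cdots\wedge dt_d$ modulo $\sI$ — by $\det(C)$, which is exactly compatible with Lemma \ref{Changet}; this is what permits gluing the local formula to a global statement. The main obstacle I anticipate is not conceptual but bookkeeping: the assertion hinges on all of Conrad's sign conventions — the $(-1)^{d(d+1)/2}$ in \eqref{psit}, the $(-1)^d$ in \eqref{defSymbol}, the sign in the convention \cite[(1.3.6)]{Co00} for the shift $[d]$ on the right side of \eqref{derfundlociso}, and the sign built into the construction of $\omega_{Y/X}$ in \ref{Defomega} — cancelling exactly. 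The delicate point is therefore to verify that the $(-1)^d$ inserted into the very definition \eqref{defSymbol} of the bracket symbol is precisely what is forced by requiring \eqref{deltaFLI} to hold with unit coefficient; indeed, as the parenthetical remark after \eqref{defSymbol} already indicates, that sign was chosen for exactly this purpose, so the proof amounts to carrying out the comparison and confirming the advertised cancellation.
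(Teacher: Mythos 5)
Your overall strategy --- unwinding $\zeta'_{i,\pi}$ and the fundamental local isomorphism on the Koszul resolution and matching signs --- is the same as the paper's, but the sign bookkeeping, which is the entire content of the proposition, goes wrong in two places that happen to compensate. First, at the sheaf level: by \eqref{locdeltaf}, $\zeta'_{i,\pi}(\delta_f)=(\bt_1^{\,\vee}\wedge\cdots\wedge\bt_d^{\,\vee})\otimes i^*(dt_d\wedge\cdots\wedge dt_1)$, and rewriting both factors in the opposite order contributes $(-1)^{d(d-1)/2}$ twice, so $\zeta'_{i,\pi}(\delta_f)=(\bt_d^{\,\vee}\wedge\cdots\wedge\bt_1^{\,\vee})\otimes i^*(dt_1\wedge\cdots\wedge dt_d)$ with \emph{no} residual sign; there is no extra $(-1)^d$ ``prescribed by the construction of $\omega_{Y/X}$'' to cancel against the $(-1)^d$ in the image of $u_{\mbt,m}$ under \eqref{dualwedge}. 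Consequently the sheaf-level computation gives $\eta_{Y/P}^{-1}\circ\zeta'_{i,\pi}(\delta_f)=(-1)^d\left[\begin{smallmatrix} dt_1\wedge\cdots\wedge dt_d \\ t_1,\ldots,t_d \end{smallmatrix}\right]$, not the unsigned symbol you claim. Relatedly, the discrepancy you posit between ``$u_{\mbt,m}$ in \ref{Glueing}'' and the bracket normalization does not exist: $u_{\mbt,m}$ is defined once, as the map sending $e_1\wedge\cdots\wedge e_d$ to $(-1)^d m$, and \ref{Glueing} uses that same map (its $(-1)^d$ is already accounted for in the formula quoted there).

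The second, and essential, omission is that the proposition concerns $\eta_i=\eta_{i,\omega_{P/X}[d]}$, the derived fundamental local isomorphism for the \emph{shifted} complex, not the sheaf-level $\eta_{Y/P}$. By \ref{Glueing} b), $\eta_{i,\omega_{P/X}[d]}$ is identified with $\eta_{i,\omega_{P/X}}[d]$ only after inserting the canonical isomorphism $\omega_{Y/P}[-d]\otimesL\LL i^*(\omega_{P/X}[d])\riso(\omega_{Y/P}[-d]\otimesL\LL i^*(\omega_{P/X}))[d]$, which, because $\omega_{Y/P}[-d]$ sits in degree $d$, is multiplication by $(-1)^{d^2}=(-1)^d$ on $\omega_{Y/P}\otimes i^*(\omega_{P/X})$ under the convention \cite[(1.3.6)]{Co00}. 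It is precisely this $(-1)^d$ that absorbs the residual sign from the first stage and yields \eqref{deltaFLI} with unit coefficient. You list this convention among the signs ``that must cancel'' but never carry the step out; as written, your argument reaches the correct formula only because an erroneous sign in the first stage substitutes for the missing second stage.
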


\begin{proof}
Let us consider first the isomorphism 
\eqn{ \eta_{Y/P}^{-1} \circ \zeta'_{i,\pi} : \omega_{Y/X} \riso \sExt^d_{\sO_P}(\sO_Y, 
\omega_{P/X}), }
where $\eta_{Y/P}$ is defined by \eqref{fundlociso}. By definition,
$\left[\begin{array}{c} dt_1\wedge\cdots\wedge dt_d \\ t_1,\ldots,t_d
\end{array}\right]$ is mapped to $u_{\mbt,dt_1\wedge\ldots\wedge dt_d}$ by
\eqref{psit}, and we observed in \ref{Glueing} that $u_{\mbt,dt_1\wedge\ldots\wedge
dt_d}$ is mapped to $(-1)^d(\bt_d^{\,\vee}\wedge\ldots\wedge \bt_1^{\,\vee})\otimes
i^*(dt_1\wedge\ldots\wedge dt_d)$ by \eqref{dualwedge}. Since $\zeta'_{i,\pi}(\delta_f)
= (\bt_1^{\,\vee}\wedge\ldots\wedge \bt_d^{\,\vee})\otimes i^*(dt_d\wedge\ldots\wedge
dt_1)$ by construction, we get the relation
\eq{deltaFLI0}{ \eta_{Y/P}^{-1} \circ \zeta'_{i,\pi}(\delta_f) = (-1)^d
\left[\begin{array}{c} dt_1\wedge\cdots\wedge dt_d \\ t_1,\ldots,t_d \end{array}\right]. }

Let $\eta_{i,\omega_{P/X}}$ and $\eta_{i,\omega_{P/X}[d]}$ be the isomorphisms 
\eqref{derfundlociso} relative to the complexes $\omega_{P/X}[0]$ and 
$\omega_{P/X}[d]$. By \ref{Glueing} a), $\eta_{i,\omega_{P/X}}$ induces in degree $d$ 
the isomorphism $\eta_{Y/P}$. On the other hand, \ref{Glueing} b) shows that 
$\eta_{i,\omega_{P/X}[d]}$ is identified with $\eta_{i,\omega_{P/X}}[d]$ when using the
canonical isomorphisms $\RR \sHom(\sO_Y, \omega_{P/X}[d]) \riso \RR
\sHom(\sO_Y, \omega_{P/X})[d]$ and $\omega_{Y/P}[-d]\otimesL \LL i^*(\omega_{P/X}[d]) 
\riso (\omega_{Y/P}[-d]\otimesL \LL i^*(\omega_{P/X}))[d]$. The first one involves no 
sign, and, as $\omega_{Y/P}[-d]$ is concentrated in degree $d$, the second one is given
by multiplication by $(-1)^{d^2} = (-1)^d$ on $\omega_{Y/P} \otimes 
i^*(\omega_{P/X})$. Thus relation \eqref{deltaFLI0} implies relation \eqref{deltaFLI}. 
\end{proof}

\begin{prop}\label{Taugamma}
Let $X$ be a separated noetherian scheme with a dualizing complex, $P = \P^{\,d}_X$ a
projective space over $X$, $\pi : P \to X$ the structural morphism, $i : Y \inj P$ a
regular immersion of codimension $d$, and $f = \pi \circ i$. Then the trace morphism
$\tau_f : \RR f_*\sO_Y \to \sO_X$ of Theorem \ref{Thtau} is equal to the composition
\eq{taugamma}{ \RR f_*(\sO_Y) \xra{\RR \pi_*(\gamma_f)} \RR \pi_*(\omega_{P/X}[d]) 
\xra{\Trp_{\pi}} \sO_X, }
where $\Trp_{\pi}$ is the trace morphism for the projective space defined in 
\cite[(2.3.1)-(2.3.5)]{Co00}.
\end{prop}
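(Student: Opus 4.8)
The plan is to unwind the definition of $\tau_f$ recalled in \ref{Deftau} and to recognize in it exactly the composition \eqref{taugamma}, by factoring everything through the two constituents $i$ and $\pi$ of $f$ and invoking the transitivity of the Grothendieck trace. Recall that, $f$ being a proper complete intersection morphism of virtual relative dimension $0$, the relative dualizing complex $f^!\sO_X$ is canonically identified with the invertible sheaf $\omega_{Y/X}$ placed in degree $0$, and that $\tau_f$ is constructed (see \ref{Deftau}, after El Zein \cite{El78} and Conrad \cite{Co00}) as the composition $\RR f_*\sO_Y\xra{\RR f_*(\varphi_f)}\RR f_*\omega_{Y/X}\riso\RR f_*f^!\sO_X\xra{\Tr_f}\sO_X$, where $\varphi_f$ sends $1$ to the canonical section $\delta_f$ and $\Tr_f$ is the trace morphism of Grothendieck duality in the form developed by Conrad \cite{Co00}. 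On the other hand, for the chosen factorization $f=\pi\circ i$ one has the standard identifications $\pi^!\sO_X=\omega_{P/X}[d]$ (since $\pi$ is smooth of relative dimension $d$), $i^!(\sM\hbul)=\RR\sHom_{\sO_P}(\sO_Y,\sM\hbul)$ for the closed immersion $i$, and $f^!\sO_X\cong i^!\pi^!\sO_X=i^!(\omega_{P/X}[d])$; moreover, by the very construction of $\omega_{Y/X}$ recalled in \ref{Defomega} (see \eqref{defzeta}), the isomorphism $\omega_{Y/X}\riso f^!\sO_X$ entering the definition of $\tau_f$ is the composition of $\zeta'_{i,\pi}$ with $\eta_i^{-1}$, $\eta_i$ being the derived fundamental local isomorphism \eqref{derfundlociso} applied to $\omega_{P/X}[d]$.

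First I would invoke the transitivity of the trace for the composition of the two proper morphisms $i$ and $\pi$, namely $\Tr_f=\Tr_\pi\circ\RR\pi_*(\Tr_i)$, where $\Tr_i:\RR i_*i^!\to\id$ is evaluated on $\pi^!\sO_X=\omega_{P/X}[d]$. Combining this with the factorization of $\omega_{Y/X}\riso f^!\sO_X$ just recalled and with $\RR f_*=\RR\pi_*\circ\RR i_*$, one rewrites $\tau_f$ as $\Tr_\pi$ precomposed with $\RR\pi_*$ of the composite $\sO_Y\xra{\varphi_f}\omega_{Y/X}\xra{\eta_i^{-1}\circ\zeta'_{i,\pi}}\RR\sHom_{\sO_P}(\sO_Y,\omega_{P/X}[d])\xra{\Tr_i}\omega_{P/X}[d]$, whose first two arrows are precisely $\varphi_f$ and $\eta_i^{-1}\circ\zeta'_{i,\pi}$ from the definition of $\gamma_f$ in \ref{Defgammaf}. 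Now, for the closed immersion $i$, the trace $\Tr_i:\RR i_*i^!(\sM\hbul)=\RR\sHom_{\sO_P}(\sO_Y,\sM\hbul)\to\sM\hbul$ is precisely the canonical ``evaluation at $1$'' morphism, i.e.\ the map $\can$ obtained by restriction along $\sO_P\surj\sO_Y$, which is the last arrow in the definition of $\gamma_f$; hence this composite coincides with $\gamma_f$. It remains to note that, by Conrad's construction, the trace $\Tr_\pi:\RR\pi_*(\omega_{P/X}[d])\to\sO_X$ for the projective space $P=\P^{\,d}_X$ is exactly the morphism $\Trp_\pi$ of \cite[(2.3.1)--(2.3.5)]{Co00}. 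Putting these identifications together yields $\tau_f=\Trp_\pi\circ\RR\pi_*(\gamma_f)$. As a consistency check, the identity may also be verified locally on $X$, on affine opens over which $Y$ is cut out of $\P^{\,d}_X$ by a regular sequence: there Proposition \ref{DeltaFLI} makes $\gamma_f$ explicit in terms of residue symbols, and one matches it against the local cohomology description of $\Trp_\pi$.

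The main obstacle is the bookkeeping of canonical isomorphisms, and specifically the verification that the two transitivity statements used above are mutually compatible: that the isomorphism $\omega_{Y/X}\riso f^!\sO_X$ implicit in \ref{Deftau} indeed factors as $\eta_i^{-1}\circ\zeta'_{i,\pi}$ through the transitivity isomorphism $(\pi\circ i)^!\cong i^!\circ\pi^!$, and that $\Tr_f=\Tr_\pi\circ\RR\pi_*(\Tr_i)$ holds with respect to that same isomorphism. This amounts to carrying Conrad's sign and normalization conventions (notably the factor $(-1)^{d(d+1)/2}$ in \eqref{psit}, the sign hidden in the definition \eqref{defSymbol} of the residue symbol, and the translation conventions of \cite[(1.3.6)]{Co00}) through the whole chain of identifications, and to checking that the trace of the closed immersion $i$ carries no extra sign, which is part of Conrad's normalization of the trace for finite morphisms. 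Once these compatibilities are in place, the stated equality becomes a formal consequence of the transitivity of the Grothendieck trace together with the definitions of $\gamma_f$ and $\Trp_\pi$.
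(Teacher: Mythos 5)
Your proposal is correct and follows essentially the same route as the paper: unwinding $\tau_f=\Tr_f\circ\RR f_*(\lambda_f)\circ\RR f_*(\varphi_f)$, decomposing $\lambda_f$ through $\zeta'_{i,\pi}$, $\eta_i^{-1}$, $d_i$, $e_\pi$ and $c_{i,\pi}$, and then invoking Conrad's (TRA1) for the transitivity $\Tr_f=\Tr_\pi\circ\RR\pi_*(\Tr_i)$, (TRA2) to identify $\Tr_i$ with the evaluation morphism (so that the left column and bottom row assemble into $\RR\pi_*(\gamma_f)$), and (TRA3) to identify $\Tr_\pi\circ\RR\pi_*(e_\pi)$ with $\Trp_\pi$. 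The compatibilities you flag as the "main obstacle" are exactly the ones the paper disposes of by citing these properties together with the functoriality of $\Tr_i$ with respect to $e_\pi$, so no further sign-chasing is needed beyond what is already built into Conrad's normalizations.
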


\begin{proof}
By construction (see \ref{Deftau}), $\tau_f$ is the composition $\Tr_f \circ \RR
f_*(\lambda_f) \circ \RR f_*(\varphi_f)$ in the commutative diagram
\eqn{\xymatrix@C=40pt{
\RR f_*(\sO_Y) \ar[d]^-{\RR f_*(\varphi_f)} \\ 
\RR f_*(\omega_{Y/X}) 
\ar[d]_-{\RR f_*(\zeta'_{i,\pi})}^-{\wr} \ar[r]^-{\RR f_*(\lambda_f)}_-{\sim} & 
\RR f_*(f^!(\sO_X)) \ar[r]^-{\Tr_f} & \sO_X \\
\entry{\RR f_*(\omega_{Y/P}[-d]\otimesl \LL i^*(\omega_{P/X}[d]))} 
\ar[d]_-{\RR f_*(\eta_i^{-1})}^-{\wr} & 
\entry{\RR f_*(i^!\pi^!(\sO_X))} \ar[u]_-{\wr}^-{\RR f_*(c_{i,\pi}^{-1})} 
\ar[r]^-{\RR\pi_*(\Tr_i)} & \entry{\RR\pi_*(\pi^!(\sO_X))} \ar[u]^{\Tr_\pi} \\
\RR f_*(\RR\sHom_{\sO_P}(\sO_Y,\omega_{P/X}[d])) \ar[r]_-{\sim}^-{\RR f_*(d_i)} & 
\RR f_*(i^!(\omega_{P/X}[d])) \ar[u]_-{\wr}^-{\RR f_*(i^!(e_\pi))} \ar[r]^-{\RR\pi_*(\Tr_i)}
 & \RR\pi_*(\omega_{P/X}[d]) \ar[u]^-{\RR\pi_*(e_{\pi})}_-{\wr},
} }
in which the isomorphism $\lambda_f$ is defined by the commutativity of the left
rectangle before applying $\RR f_*$ (cf.~\ref{Deflambda}), and $d_i$, $e_\pi$,
$c_{i,\pi}$ are defined as follows:

\alphab $d_i$ is the canonical isomorphism of functors $i^\flat := \RR\sHom_{\sO_P}(\sO_Y,-) 
\riso i^!$, defined by \cite[(3.3.19)]{Co00};

\alphab $e_\pi$ is the canonical isomorphism of functors 
$\pi^\sharp := \omega_{P/X}[d]\otimesl\pi^*(-) \riso \pi^!$, defined by \cite[(3.3.21)]{Co00}.

\alphab $c_{i,\pi}$ is the transitivity isomorphism $f^! \riso i^!\pi^!$, defined by 
\cite[(3.3.14)]{Co00}.

\noindent Moreover, the upper right square commutes because of the transitivity of the trace 
morphism \cite[3.4.3, (TRA1)]{Co00}, and the lower right square commutes by 
functoriality of the trace morphism $\Tr_i$ with respect to $e_{\pi}$. 

In this diagram, the composition of the right vertical arrows is the projective trace 
morphism $\Trp_{\pi}$ \cite[3.4.3, (TRA3)]{Co00}, and the isomorphism $d_i$ on the 
bottom row identifies $\Tr_i$ with the trace morphism $\Trf_i$ for finite morphisms 
\cite[3.4.3, (TRA2)]{Co00}. As the latter is the canonical morphism 
$i_*\RR\sHom_{\sO_P}(\sO_Y,-) \to \Id$ defined by $\sO_P \surj \sO_Y$, it follows that 
the composition of the left column and the bottom row of the diagram is equal to 
$\RR\pi_*(\gamma_f)$, which proves the proposition.
\end{proof}

\section{Preliminaries on the relative de Rham-Witt complex}\label{DRWprel}

We extend here to the relative de Rham-Witt complex constructed by Langer and Zink
\cite{LZ04} structure theorems which are classical when the base is a perfect scheme of
characteristic $p$ (\cite{Il79}, \cite{IR83}). We begin by recalling some basic facts
from their construction.

From now on, we fix a prime number $p$. We denote by $\Z_{(p)}$ the localization of
$\Z$ at the prime ideal $(p)$. Although many results of \cite{LZ04} are valid for
$\Z_{(p)}$-schemes, we limit our exposition to the case of schemes on which $p$ is
locally nilpotent, which will suffice for our applications.

\subsection{}\label{defDRW}
Let $S$ be a scheme on which $p$ is locally nilpotent, and let $f:X\to S$ be a morphism
of schemes. An {\em \FV-pro-complex of $X/S$} as defined in \cite{LZ04} is a
pro-complex $\{R: E\hbul_{n+1}\to E\hbul_n \}_{n \geq 1}$ of sheaves on $X$, where
$E\hbul_n$ is a differential graded $W_n(\sO_X)/f^{-1}W_n(\sO_S)$-algebra (i.e.,
$E\hbul_n$ is a graded $W_n(\sO_X)$-algebra together with an $f^{-1}W_n(\sO_S)$-linear
map $d: E\hbul_n \to E\hbul_n(1)$ such that $d^2=0$, satisfying $\eta\omega =
(-1)^{\deg\omega\deg\eta}\omega\eta$ and $d(\omega\eta)= (d\omega)\eta+
(-1)^{\deg\omega}\omega d\eta$ for any homogeneous sections $\omega, \eta \in E\hbul_n$),
which is equipped with a map of graded pro-rings
\eqn{ F: E\hbul\lbul{}_{+1} \to E\hbul\lbul, }
called the Frobenius morphism, and with a map of graded abelian groups
\eqn{ V: E\hbul\lbul \to E\hbul\lbul{}_{+1}, }
called the Verschiebung morphism, such that the following properties hold: 

\romain The structure map $W\lbul(\sO_X)\to E^0\lbul$ is compatible with $F$ and $V$.

\romain The following relations hold:
\ga{}{ FV=p, \quad FdV=d,\label{FV} \\
V(\omega F(\eta))=V(\omega)\eta, \quad \text{for all } \omega\in E\hbul_n, \eta\in
E\hbul_{n+1}, n \geq 1,\label{VFlin} \\
F(d[a])= [a]^{p-1}d[a], \quad \text{for all } a\in \sO_X,\label{FTeich}}
where $[a]$ denotes the Teichm\"uller lift of $a$ to $W_{n}(\sO_X)$, for any $n$.

A morphism between two \FV-pro-complexes of $X/S$ is a map of pro-differential graded
$W\lbul(\sO_X)/f^{-1}W\lbul(\sO_S)$-algebras compatible with $F$ and $V$. By
\cite[Prop. 1.6, Rem.~1.10]{LZ04} there exists an initial object in the category of
\FV-pro-complexes of $X/S$, which is called the {\em relative de Rham-Witt complex of
$X/S$} and is denoted by $\{R: \WC{n+1}{\sbul}{X/S}\to \WC{n}{\sbul}{X/S}\}_{n\geq 1}$.
Each sheaf $\WC{n}{q}{X/S}$ is a quasi-coherent sheaf on the scheme $W_n(X) := (|X|,
W_n(\sO_X))$ defined in \ref{Wittscheme}, and the transition morphisms $R$ are
epimorphisms. When $S$ is a perfect scheme of characteristic $p$, the relative de
Rham-Witt complex coincides with the one defined in \cite{Il79}. Notice that we have
the following properties:
\eqn{ \WC{n}{0}{X/S}= W_n(\sO_X), \quad \WC{1}{\sbul}{X/S}=\Omega^{\sbul}_{X/S}, }
and that, by \cite[(1.16), (1.17) and (1.19)]{LZ04}, relations \eqref{FV} and \eqref{VFlin}
imply that
\ga{}{ V(\omega d\eta)=V(\omega)dV(\eta), \quad \text{for all } \omega, \eta \in 
\WC{n}{\sbul}{X/S}, 
n \geq 1,\label{Vxdy} \\
Vd = pdV, \quad dF = pFd.\label{Vd} }
In addition, when $S$ is an $\FF_p$-scheme, the operators $F$ and $V$ satisfy the
relation $VF = p$.

We also recall the behaviour of the de Rham-Witt complex with respect to \'etale 
pull-backs. Let 
\eqn{ \xymatrix{X'\ar[r]^h\ar[d] & X\ar[d] \\ S'\ar[r]^{g} & S} }
be a commutative diagram in which $h$ is \'etale and $g$ unramified. Then, for all
$q\ge 0$ and $r\ge n\ge 1$, $W_n(X')$ is \'etale over $W_n(X)$ and we have the 
$W_r(\sO_{X'})$-linear isomorphisms
\ga{etlocDRW1}{W_{r}(\sO_{X'}) \otimes_{h^{-1}W_r(\sO_{X})} h^{-1}\WC{n}{q}{X/S} \riso 
\WC{n}{q}{X'/S'},\\
W_{r}(\sO_{X'})\otimes_{h^{-1}W_r(\sO_{X})}
h^{-1}(F^{r-n}_*\WC{n}{q}{X/S}) \riso F^{r-n}_*\WC{n}{q}{X'/S'}, \quad
a\otimes\omega\mapsto F^{r-n}(a)\omega, \label{etlocDRW2}}
where, for any $W_n$-module $M$, $F^{r-n}_*M$ denotes $M$ viewed as a $W_r$-module via
$F^{r-n}: W_r\to W_{n}$ \cite[Prop.~1.11, Prop.~A.8 and Cor.~A.11]{LZ04}.

Finally, the completed relative de Rham-Witt complex is defined by $\WC{}{\sbul}{X/S} := 
\varprojlim_n \WC{n}{\sbul}{X/S}$; the canonical morphisms $\WC{}{\sbul}{X/S} \to 
\WC{n}{\sbul}{X/S}$ are still epimorphisms.

\subsection{}\label{affDRW}
Let $S=\Spec A$ be affine. We want to recall the calculation of
$\WC{}{q}{A[x_1,\ldots, x_d]/A} := \Gamma(\A^d_S,\WC{}{q}{\A^d_S/S})$. We need some
notations for this.

A {\em weight} is a function $k: [1,d]=\{1,2,\ldots, d\}\to \Z[\frac{1}{p}]_{\ge 0}$.
We write $k_i:=k(i)$, for $i\in [1,d]$. The support of $k$, ${\rm supp\,} k$, consists
of those $i\in [1,d]$ with $k_i\neq 0$. For any weight $k$ we choose once and for all a
total ordering on the elements of the support of $k$,
\eq{supp}{{\rm supp\,}k=\{i_1,\ldots,i_r\},}
such that: 

\romain $\ord_p\,k_{i_1}\le\ord_p\,k_{i_2}\le\cdots\le \ord_p\,k_{i_r}$.

\romain The ordering on ${\rm supp\,}k$ and on ${\rm supp\,} p^a k$ agree, for any $a\in \Z$.  

We say $k$ is {\em integral} if $k_i\in \Z$, for all $i\in [1,d]$. We say $k$ is {\em
primitive} if it is integral and not all $k_i$ are divisible by $p$. We set
\eq{deftofk}{t(k_i):=-\ord_p\,k_i \quad \text{and}\quad t(k):=
\begin{cases}\max\,\{\,t(k_i)\,|\,i\in {\rm supp\,}k\,\} & 
\text{if } {\rm supp\,}k\neq\emptyset, \\
0 & \text{if } k = 0.
\end{cases}} 
If $k\neq 0$, $t(k)$ is the smallest integer such that $p^{t(k)}k$ is primitive, and we have
\eqn{ t(k)=t(k_{i_1})\ge t(k_{i_2})\ge\cdots\ge t(k_{i_r}). }
We denote by $u(k)$ the smallest non-negative integer such that $p^{u(k)}k$ is
integral, i.e., $u(k)=\max\,\{0, t(k)\}$. Notice that $k$ is integral iff $u(k)=0$ iff
$t(k)\le 0$, and $k$ is primitive iff $t(k)=0$. An {\em interval of the support of $k$}
is by definition a subset $I\subset {\rm supp\,}k$ of the form
\eqn{ I=\{i_s,i_{s+1},\ldots, i_{s+m}\}. }
We denote by $k_I$ the weight which equals $k$ on $I$ and is zero on $[1,d]\setminus
I$. If $k$ is fixed and $I$ is an interval of the support of $k$, we write
$u(I):=u(k_I)$ and $t(I):=t(k_I)$. An {\em admissible partition $\sP$ of length $q$ of
${\rm supp\,} k$} (or just {\em of} $k$) is a tuple of intervals of ${\rm supp\,}k$,
$\sP=(I_0,I_1,\ldots, I_q)$, such that: 

\reset{romain}\romain ${\rm supp\,}k= I_0\sqcup I_1\sqcup \ldots\sqcup I_q.$

\romain The elements in $I_j$ are smaller than the elements in $I_{j+1}$ (with respect
to the ordering \eqref{supp}) for all $j=0,\ldots, q-1$.

\romain The intervals $I_1,\ldots, I_q$ are non-empty (but $I_0$ may be).

\noindent Notice that $u(k)=u(I_0)$ if $I_0\neq\emptyset$ and $u(k)=u(I_1)$ if
$I_0=\emptyset$.

For any $n \leq \infty$, we write $X_i:=[x_i]\in W_n(A[x_1,\ldots,x_d])$. If $k$ is an
integral weight as above, we write $X^k= X_{i_1}^{k_{i_1}}\cdots X_{i_r}^{k_{i_r}}\in
W_n(A[x_1,\ldots,x_d])$.

Let $k$ be any weight and $\eta\in W(A)$. We define 
\eq{defe0}{e^0(\eta,k):= V^{u(k)}(\eta X^{p^{u(k)}k})\in W(A[x_1,\ldots x_d]) }
and 
\eq{defe1}{e^1(\eta,k):=\begin{cases}
		      dV^{u(k)}(\eta X^{p^{u(k)}k}) & \text{if $k$ is not integral}\\
		      \eta F^{-t(k)}d X^{p^{t(k)}k} & \text{if $k$ is integral}
		      \end{cases}\ \ \in \WC{}{1}{A[x_1,\ldots,x_d]/A}.}

\begin{defn}[{Basic Witt differentials \cite[2.2]{LZ04}}]\label{basicWdif}
Let $k$ be a weight, $\sP=(I_0,I_1,\ldots, I_q)$ an admissible partition of $k$, and
$\xi=V^{u(k)}(\eta)\in W(A)$. The basic Witt differential $e(\xi, k, \sP)\in
\WC{}{q}{A[x_1,\ldots,x_d]/A}$ is defined as follows:
\eqn{e(\xi,k,\sP):=\begin{cases} e^0(\eta, k_{I_0})e^1(1,k_{I_1})\cdots e^1(1, k_{I_q})
& \text{if $I_0\neq \emptyset$,}\\
e^1(\eta, k_{I_1})e^1(1,k_{I_2})\cdots e^1(1,k_{I_q})  & \text{if $I_0=\emptyset$.} 
\end{cases} }
\end{defn}

\begin{rules}[{\cite[Prop.~2.5, Prop.~2.6]{LZ04}}]\label{rules}
Let $k$ be a weight, $\sP=(I_0,I_1,\ldots, I_q)$ a partition of $k$ and
$\xi = V^{u(k)}(\eta)\in W(A)$. Note that $u(k)\geq 1$ when $k$ is not integral, so 
that one can then define $V^{-1}\xi := V^{u(k)-1}\eta$. Then:
\vspace{2mm}

\reset{romain}\romain $\rho e(\xi, k,\sP)=e(\rho\xi, k, \sP)$ for all $\rho\in W(A)$.
\vspace{2mm}

\romain $F e(\xi, k, \sP)=\begin{cases} 
e(F\xi, pk, \sP) & \text{if } I_0\neq\emptyset \text{ or } k \text{ integral,}\\
e(V^{-1}\xi, pk,\sP) & \text{if } I_0=\emptyset \text{ and } k \text{ not integral}
			 \end{cases}$\\ 
\vspace{2mm}

\romain $V e(\xi, k, \sP)=\begin{cases} 
e(V\xi, \frac{1}{p}k, \sP) & \text{if } I_0\neq \emptyset \text{ or } \frac{1}{p}k
\text{ integral,}\\
e(pV\xi, \frac{1}{p}k,\sP) &\text{if } I_0=\emptyset \text{ and } \frac{1}{p}k \text{
not integral.}
			 \end{cases}$
\vspace{2mm}

\romain $d e(\xi,k, \sP)=\begin{cases} 
0 & \text{if } I_0=\emptyset,\\
e(\xi, k, (\emptyset,\sP)) & \text{if } I_0\neq\emptyset\text{ and } k \text{ not integral,}\\
p^{-t(k)} e(\xi, k, (\emptyset,\sP)) & \text{if } I_0\neq\emptyset\text{ and } k \text{
integral.}
		       \end{cases}$
\end{rules}

\begin{thm}[{\cite[Thm.~2.8]{LZ04}}]\label{candecomp}
Every $\omega\in \WC{}{q}{A[x_1,\ldots, x_d]/A}$ can uniquely be written as 
\eqn{ \omega=\sum_{k,\sP} e(\xi_{k,\sP},k,\sP), }
where the sum is over all weights $k$ with $|{\rm supp\,}k|\ge q$ and over all
admissible partitions of length $q$ of $k$, and the sum converges in the sense that, for
any $m\ge 0$, we have $\xi_{k,\sP}\in V^m W(A)$ for all but finitely many
$\xi_{k,\sP}$.
\end{thm}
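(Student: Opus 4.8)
This is \cite[Thm.~2.8]{LZ04}; I describe the strategy one would follow. Since the statement is local, one may assume $S = \Spec A$ is affine and work with the completed relative de Rham--Witt complex $\WC{}{q}{A[x_1,\ldots,x_d]/A}$. Two assertions must be established: \emph{existence}, i.e.\ that the basic Witt differentials $e(\xi,k,\sP)$ topologically span, and \emph{uniqueness}, i.e.\ that they are ``free'' in the appropriate completed sense. I would treat them in that order.

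For existence, I would first dispose of the case $q = 0$, where the claim reduces to the classical description of $\WC{}{0}{A[x_1,\ldots,x_d]/A} = W(A[x_1,\ldots,x_d])$ as the set of convergent sums $\sum_k e^0(\eta_k,k) = \sum_k V^{u(k)}(\eta_k X^{p^{u(k)}k})$, which follows from the structure of Witt vectors of a polynomial ring. For general $q$, recall that $\WC{}{\sbul}{A[x_1,\ldots,x_d]/A}$ is, by its universal property, topologically generated over $W(A)$ by $W(A[x_1,\ldots,x_d])$ in degree $0$ together with the differential $d$ (at each finite level it is a quotient of the relative de Rham complex of $W_n(A[x_1,\ldots,x_d])$). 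Using the computation rules of \ref{rules} for $F$, $V$ and $d$ of a basic Witt differential, together with a \emph{product formula} expressing a product $e(\xi_1,k_1,\sP_1)\cdot e(\xi_2,k_2,\sP_2)$ as a convergent sum of basic Witt differentials — obtained by merging the two supports and redistributing the partition blocks, the valuations $t$ and $u$ governing the coefficients — one checks that the closed $W(A)$-submodule spanned by all the $e(\xi,k,\sP)$ is stable under multiplication, $d$, $F$ and $V$ and has $W(A[x_1,\ldots,x_d])$ as its degree-$0$ part. It is therefore a sub-\FV-pro-complex of $\WC{}{\sbul}{A[x_1,\ldots,x_d]/A}$ with the same degree-$0$ ring, and the initiality of the de Rham--Witt complex forces it to be everything. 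It is convenient to run this argument first at finite level, where the spanning set becomes an honest, finitely supported direct sum decomposition of $\WC{n}{q}{A[x_1,\ldots,x_d]/A}$, and then pass to the inverse limit over $n$.

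For uniqueness, I would construct by hand an explicit \FV-pro-complex $\sE\hbul\lbul$ of $\A^d_S/S$ whose degree-$q$ term is, at each level $n$, a free module over a suitable truncated Witt ring with basis indexed by the admissible pairs $(k,\sP)$, the operators $F$, $V$, $d$ and the multiplication being \emph{defined} by the formulas of \ref{rules} and the product formula above; verifying that these formulas really assemble into an \FV-pro-complex (associativity and the graded Leibniz rule for the combinatorial product, relations \eqref{FV}, \eqref{VFlin}, \eqref{FTeich}, and compatibility with the transition maps) is the substance of the argument. Granting this, the universal property yields a morphism $\phi : \WC{}{\sbul}{A[x_1,\ldots,x_d]/A} \to \sE\hbul\lbul$ which, being compatible with $F$, $V$, $d$, the $W(A)$-action and with $X_i \mapsto X_i$, necessarily sends each $e(\xi,k,\sP)$ to $\xi$ times the corresponding basis vector. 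Hence any vanishing convergent sum $\sum_{k,\sP} e(\xi_{k,\sP},k,\sP) = 0$ maps under $\phi$ to $\sum_{k,\sP} \xi_{k,\sP}\cdot(\text{basis vector}) = 0$ in the completed free module $\sE^q\lbul$, which forces every $\xi_{k,\sP}$ to vanish; combined with the existence part this also shows $\phi$ is an isomorphism.

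The main obstacle is precisely the construction and internal consistency of the model $\sE\hbul\lbul$, equivalently the derivation of the product formula for basic Witt differentials: one must organize the combinatorics of intervals, admissible partitions and the $p$-adic valuations $t$, $u$ so that the graded Leibniz rule, associativity and the $F$--$V$--$d$ relations hold on the nose at every truncation level, and so that all the sums that occur converge $V$-adically. This bookkeeping is the content of \cite[\S2]{LZ04}; once it is in place, both halves of the theorem follow as above.
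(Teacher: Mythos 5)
The paper does not prove this statement: it is imported verbatim as \cite[Thm.~2.8]{LZ04}, so there is no in-paper argument to compare against. Your outline is a faithful description of the strategy of the cited Langer--Zink proof (spanning via the $F$-$V$-$d$ rules and the product formula for basic Witt differentials, uniqueness via an explicitly constructed model \FV-pro-complex that is free on the pairs $(k,\sP)$ and the universal property), and deferring the combinatorial verification of that model to \cite[\S 2]{LZ04} is consistent with how the paper itself treats the result.
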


For a weight $k$, $n\ge 1$ and $\eta\in W_{n-u(k)}(A)$ we define $e^0_n(\eta, k)\in
W_n(A[x_1,\ldots,x_d])$ and $e^1_n(\eta, k)\in \WC{n}{1}{A[x_1,\ldots,x_d]/A}$ by the
same formulas as in \eqref{defe0} and \eqref{defe1}. For $\sP$ an admissible partition
of length $q$ of $k$ and $\xi=V^{u(k)}(\eta)\in W_n(A)$, we then define
$e_n(\xi,k,\sP)\in \WC{n}{q}{A[x_1,\ldots,x_d]/A}$ by the same formula as in Definition
\ref{basicWdif} but with $e^i$ replaced by $e^i_n$, $i=0,1$.

\begin{cor}[{\cite[Prop.~2.17]{LZ04}}]\label{candecompn}
Every $\omega\in \WC{n}{q}{A[x_1,\ldots,x_d]/A}$ may uniquely be written as a finite sum
\eqn{ \omega=\sum_{k,\sP} e_n(\xi_{k,\sP}, k, \sP), \quad 
\xi_{k,\sP}\in V^{u(k)}W_{n-u(k)}(A), }
where the sum is over all weights $k$ with $|{\rm supp\,}k|\ge q$ and such that
$p^{n-1}k$ is integral and over all admissible partitions $\sP$ of $k$ of length $q$.
\end{cor}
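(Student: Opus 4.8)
The plan is to deduce the statement from Theorem~\ref{candecomp} by means of the canonical epimorphism $\pi_n : \WC{}{q}{A[x_1,\ldots,x_d]/A} \to \WC{n}{q}{A[x_1,\ldots,x_d]/A}$ (see \ref{affDRW}). The first step is to compute $\pi_n$ on a basic Witt differential. Since $\pi_n$ is a morphism of differential graded algebras compatible with $\F$, $\V$ and the Teichm\"uller representatives, and since $\pi_n\circ\V^j = 0$ and $\pi_n\circ d\circ\V^j = 0$ as soon as $j\geq n$, one reads off from the formulas defining $e^0$, $e^1$ and $e(\xi,k,\sP)$ (using that $u(k) = u(I_0)$ or $u(I_1)$ according to whether $I_0$ is empty) that $\pi_n(e(\xi,k,\sP))$ equals $e_n(\bar\xi,k,\sP)$ when $p^{n-1}k$ is integral, i.e.\ when $u(k)\leq n-1$, and equals $0$ otherwise; here $\bar\xi\in V^{u(k)}W_{n-u(k)}(A)$ denotes the image of $\xi$ in $W_n(A)$. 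Existence of the decomposition follows at once: lift $\omega\in\WC{n}{q}{A[x_1,\ldots,x_d]/A}$ along $\pi_n$, expand the lift by Theorem~\ref{candecomp}, and apply $\pi_n$; by the convergence clause of that theorem applied with $m=n$, all but finitely many coefficients lie in $V^nW(A)$, hence map to $0$, so what remains is a finite sum $\sum e_n(\bar\xi_{k,\sP},k,\sP)$ over weights $k$ with $p^{n-1}k$ integral and $|\mathrm{supp}\,k|\geq q$ (this last inequality being equivalent to the existence of an admissible partition of length $q$), with $\bar\xi_{k,\sP}\in V^{u(k)}W_{n-u(k)}(A)$.

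For uniqueness one has to pin down $\Ker(\pi_n)$. Writing $\WC{}{q}{A[x_1,\ldots,x_d]/A} = \varprojlim_m\WC{m}{q}{A[x_1,\ldots,x_d]/A}$ with surjective transition maps, one gets $\Ker(\pi_n) = \varprojlim_{m\geq n}\Ker(R^{m-n})$, and $\Ker(R^{m-n}) = \V^n\WC{m-n}{q}{A[x_1,\ldots,x_d]/A} + d\V^n\WC{m-n}{q-1}{A[x_1,\ldots,x_d]/A}$ by the relative analogue of Illusie's description of the $R$-filtration (cf.\ Proposition~\ref{KerR}). Using Rules~\ref{rules}(ii)--(iv), $\V^n$ and $d\V^n$ of basic Witt differentials are linear combinations of basic Witt differentials whose weight $k'$ satisfies $u(k')\geq n$, i.e.\ $p^{n-1}k'$ is not integral, and conversely every such basic Witt differential lies in that submodule; combining this with the uniqueness part of Theorem~\ref{candecomp} shows that an element of $\WC{}{q}{A[x_1,\ldots,x_d]/A}$ lies in $\Ker(\pi_n)$ exactly when, in its basic expansion, every coefficient $\xi_{k,\sP}$ indexed by a weight $k$ with $p^{n-1}k$ integral lies in $V^nW(A)$. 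Now given a relation $\sum e_n(\zeta_{k,\sP},k,\sP)=0$ at level $n$, lift each $\zeta_{k,\sP}$ to some $\xi_{k,\sP}\in V^{u(k)}W(A)$; then $\sum e(\xi_{k,\sP},k,\sP)\in\Ker(\pi_n)$ is already written in basic form, involving only weights with $p^{n-1}k$ integral, so each $\xi_{k,\sP}\in V^nW(A)$, that is $\zeta_{k,\sP}=0$.

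The main obstacle is the uniqueness part, and within it the identification of $\Ker(\pi_n)$: it relies on the relative filtration formula $\Ker(R^{m-n}) = \V^n\WC{m-n}{q}{A[x_1,\ldots,x_d]/A} + d\V^n\WC{m-n}{q-1}{A[x_1,\ldots,x_d]/A}$ together with the somewhat delicate bookkeeping of the $p$-power factors produced by Rules~\ref{rules}(ii)--(iv) when $\V^n$ and $d\V^n$ are applied to basic Witt differentials. Should that filtration formula not yet be available at this stage, an alternative is to establish uniqueness directly by induction on $n$: the case $n=1$ is the classical $A$-module basis of $\Omega^q_{A[x_1,\ldots,x_d]/A}$, and the inductive step uses the exact sequence $0\to\Ker(R)\to\WC{n+1}{q}{A[x_1,\ldots,x_d]/A}\to\WC{n}{q}{A[x_1,\ldots,x_d]/A}\to 0$ with $\Ker(R) = \V^n\Omega^q_{A[x_1,\ldots,x_d]/A} + d\V^n\Omega^{q-1}_{A[x_1,\ldots,x_d]/A}$, analyzed by the same rules.
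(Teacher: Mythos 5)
The paper gives no proof of this statement: it is imported verbatim from \cite[Prop.~2.17]{LZ04}, just as Theorem \ref{candecomp} is imported from \cite[Thm.~2.8]{LZ04}, so there is no internal argument to compare yours against. Your deduction of the truncated decomposition from the untruncated one via the projection $\pi_n:\WC{}{q}{A[x_1,\ldots,x_d]/A}\to\WC{n}{q}{A[x_1,\ldots,x_d]/A}$ is the standard route and is correct in outline: $\pi_n(e(\xi,k,\sP))=e_n(\bar\xi,k,\sP)$, which vanishes unless $p^{n-1}k$ is integral; existence follows from surjectivity of $\pi_n$ together with the convergence clause of Theorem \ref{candecomp}; and uniqueness reduces to the inclusion of $\Ker(\pi_n)=V^n\WC{}{q}{A[x_1,\ldots,x_d]/A}+dV^n\WC{}{q-1}{A[x_1,\ldots,x_d]/A}$ into the set of convergent sums all of whose coefficients lie in $V^nW(A)$.

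One sentence in your uniqueness step is false as stated, though the conclusion you extract from it is the correct one. You claim that $V^n$ and $dV^n$ of basic Witt differentials are combinations of basic Witt differentials whose weight $k'$ satisfies $u(k')\ge n$. This fails: for $k=p^nk_0$ with $k_0$ integral and $I_0\neq\emptyset$, Rule \ref{rules}\,(iii) gives $V^ne(\xi,k,\sP)=e(V^n\xi,k_0,\sP)$, whose weight is integral. The invariant that actually propagates is the coefficient, not the weight: iterating Rules \ref{rules}\,(iii) and (iv) and absorbing the powers of $p$ into the coefficient via Rule \ref{rules}\,(i) (using $p^jV^n\zeta=V^n(p^j\zeta)$), one finds that $V^ne(\xi,k,\sP)$ and $dV^ne(\xi,k,\sP)$ are basic Witt differentials whose coefficients lie in $V^nW(A)$, for arbitrary resulting weight. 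That is precisely the one inclusion your lifting argument needs; the converse inclusion you assert is true but unused (and is the more delicate direction). Your caveat about circularity is also well taken: in \cite{LZ04} the filtration formula (their Prop.~2.19, restated as part of Theorem \ref{Structgrn}) comes after Prop.~2.17, so in a self-contained development your fallback induction on $n$ is the safer route; within the present paper both statements are quoted from \cite{LZ04}, so no circularity arises.
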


We now assume that $S$ is an $\FF_p$-scheme; the absolute Frobenius endomorphism of $S$
will then be denoted $F_S$, or $F$ when no confusion can arise. The following
proposition is known if $S$ is perfect (see \cite[II, (1.2.2)]{IR83}); a similar 
result has been proved by M.~Olsson when, \'etale locally, $S$ has a flat lifting over 
$\Z_p$ to which $F_S$ can be lifted \cite[Th.~4.2.15]{Ol07}.

\begin{prop}\label{KerR}
Let $S$ be a locally noetherian $\FF_p$-scheme and $X$ a smooth $S$-scheme. Then
the sequence 
\begin{multline*} 
F^n_* \sO_S\otimes_{W_{n+1}(\sO_S)}\WC{n+1}{q-1}{X/S} 
\xra{(1\otimes F^n, -1\otimes F^n d)} 
F^n_*\Omega^{q-1}_{X/S}\oplus F^n_*\Omega^q_{X/S}\\
\xra{dV^n+V^n} \WC{n+1}{q}{X/S}\lra R_*\WC{n}{q}{X/S}\lra 0
\end{multline*}
is an exact sequence of $W_{n+1}(\sO_S)$-modules.
\end{prop}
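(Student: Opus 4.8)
The plan is to reduce to the affine local case and then argue via the explicit description of the relative de Rham--Witt complex in terms of basic Witt differentials. Since the question is local on $X$ (all sheaves involved are $W_{n+1}(\sO_S)$-quasi-coherent on $W_{n+1}(X)$, and the maps are $W_{n+1}(\sO_S)$-linear), and since $X$ is smooth over $S$, I would first choose an \'etale morphism $X \to \A^d_S$ over $S$, work Zariski-locally on $S$ so that $S = \Spec A$ with $A$ noetherian, and use the \'etale base change isomorphisms \eqref{etlocDRW1} and \eqref{etlocDRW2} to reduce to the case $X = \A^d_S = \Spec A[x_1,\dots,x_d]$. Here one must be a little careful: the middle term of the sequence involves $F^n_* \Omega^{q-1}_{X/S} \oplus F^n_* \Omega^q_{X/S}$, which are $W_{n+1}(\sO_S)$-modules via restriction of scalars along $F^n : W_{n+1}(\sO_S) \to \sO_S = W_1(\sO_S)$; the isomorphism \eqref{etlocDRW2} (applied with $n$ replaced by $1$ and $r = n+1$) is exactly what is needed to see that forming $F^n_*\Omega^\bullet_{X/S}$ commutes with such an \'etale pullback, and similarly \eqref{etlocDRW1} handles $\WC{n+1}{\bullet}{X/S}$ and $\WC{n}{\bullet}{X/S}$. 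So after this reduction it suffices to prove exactness of the sequence of $A$-modules (more precisely $W_{n+1}(A)$-modules) obtained by taking global sections over $\A^d_A$.

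Next I would verify exactness at the last spot and identify the kernel of $R$. Surjectivity of $R : \WC{n+1}{q}{X/S} \to \WC{n}{q}{X/S}$ is already recorded in \ref{defDRW}. For the kernel, I would use Corollary \ref{candecompn}: an element $\omega = \sum_{k,\sP} e_{n+1}(\xi_{k,\sP},k,\sP)$ lies in $\ker R$ iff each $\xi_{k,\sP}$ maps to $0$ under $R : V^{u(k)}W_{n+1-u(k)}(A) \to V^{u(k)}W_{n-u(k)}(A)$, i.e. iff $\xi_{k,\sP} \in V^{u(k)}\!\left(V^{n-u(k)}W_1(A)\right) = V^n W_1(A)$ when $u(k) \le n$ and $\xi_{k,\sP}$ arbitrary in $V^{u(k)}W_{n+1-u(k)}(A)$ when $u(k) = n+1$ (the latter happens precisely when $p^n k$ is not integral but $p^{n+1}k$ is). Thus I want to show that the image of $dV^n + V^n$ is exactly the span of the basic Witt differentials $e_{n+1}(\xi,k,\sP)$ with the above constraints on $\xi$. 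This is the computational heart of the argument, and it is where Rules \ref{rules}(iii) and (iv) come in: $V^n$ applied to a basic Witt differential $e_1(\cdot)$ on $\Omega^q_{X/S} = \WC{1}{q}{X/S}$ produces, via iterated application of Rule (iii), a basic Witt differential of the form $e_{n+1}(\xi, p^{-n}k, \sP)$ with $\xi \in V^n W_1(A)$ (up to the powers-of-$p$ bookkeeping in the two cases of Rule (iii)), and $dV^n$ on $\Omega^{q-1}_{X/S}$ similarly lands among the $e_{n+1}(\xi,k,\sP)$ with $I_0 = \emptyset$ by Rule (iv). Conversely every such $e_{n+1}(\xi,k,\sP)$ with $\xi \in V^n W_1(A)$ arises this way.

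Finally I would verify exactness in the middle, i.e. that the kernel of $dV^n + V^n$ equals the image of $(1\otimes F^n, -1 \otimes F^n d)$. Using again the canonical decomposition of elements of $\Omega^{q-1}_{X/S} \oplus \Omega^q_{X/S}$ as (finite, since $n=1$) sums of basic Witt differentials $e_1(\cdot)$, a pair $(\alpha,\beta)$ lies in the kernel iff $dV^n\alpha = -V^n\beta$; applying the relation $V d = p\, dV$ from \eqref{Vd} together with $FV = p$ on an $\FF_p$-scheme (so that $V^n$ is injective after the appropriate twist) one matches basic Witt differentials term by term and finds that the condition forces $\beta = F^n d(\text{something})$ with the same "something" $= \alpha$ coming from $F^n$ applied to an element of $\sO_S \otimes_{W_{n+1}(\sO_S)} \WC{n+1}{q-1}{X/S}$. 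The combinatorics of admissible partitions (the cases $I_0 = \emptyset$ versus $I_0 \ne \emptyset$, and whether $k$ is integral) must be tracked carefully here, and matching up weights under multiplication by $p^n$ is the part most likely to hide sign and power-of-$p$ subtleties. I expect this middle-exactness bookkeeping — reconciling Rules \ref{rules}(ii)--(iv) with the relations \eqref{Vd} across all the partition cases — to be the main obstacle; everything else is either already in \cite{LZ04} or a formal consequence of the \'etale-localization reduction. Once the affine statement is proved, the general case follows by the sheafification / gluing implicit in the \'etale base change isomorphisms, completing the proof.
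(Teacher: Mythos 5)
Your overall strategy is the same as the paper's: localize, reduce to $X=\A^d_S$ via the \'etale base change isomorphisms, and then compute with basic Witt differentials using Corollary \ref{candecompn} and Rules \ref{rules}. (One cosmetic difference: the paper does not re-prove exactness at $\WC{n+1}{q}{X/S}$ and surjectivity of $R$ as you propose, but simply quotes \cite[Prop.~2.19]{LZ04} for that part; your re-derivation is harmless but unnecessary.) Two points in your outline, however, are genuine gaps rather than routine bookkeeping.

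First, the \'etale reduction is not only a matter of the individual terms commuting with pullback. The maps $(1\otimes F^n,-1\otimes F^nd)$ and $dV^n+V^n$ are not linear over any Witt ring of $\sO_X$ in the naive sense, since $F^n$ drops the level from $n+1$ to $1$ and $V^n$ raises it back; so one cannot simply tensor the complex along $W_{n+1}(\sO_{\A^d_S})\to W_{n+1}(\sO_X)$. The paper's fix is to let $W_{2n+2}(B)$ act on every term through $F^{n+1}$, observe that the differentials become linear for this action because $dF^{n+1}=p^{n+1}F^{n+1}d=0$ in $W_{n+1}$, and only then use \eqref{etlocDRW2} to identify the complex for $B$ with $F^{n+1}_*(*_{B_1/A})\otimes_{W_{2n+2}(B_1)}W_{2n+2}(B)$. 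Your appeal to \eqref{etlocDRW1}--\eqref{etlocDRW2} takes care of the objects but not of the compatibility of the maps, which is the actual content of the reduction step.

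Second, the exactness in the middle is precisely the part you defer as ``bookkeeping most likely to hide subtleties,'' and it is the entire substance of the proposition beyond what is already in \cite{LZ04}. Given $\alpha\in\Omega^q_{B/A}$, $\beta\in\Omega^{q-1}_{B/A}$ with $V^n(\alpha)=-dV^n(\beta)$, one must expand both by Corollary \ref{candecompn}, push $V^n$ and $dV^n$ through Rules \ref{rules}(iii),(iv) (keeping track of the cases $I_0=\emptyset$ or not, $k/p^n$ integral or $k/p^i$ primitive for $0\le i\le n-1$, and the resulting powers of $p$), use injectivity of $V^n:A\to W_{n+1}(A)$ and uniqueness of the decomposition to extract the relations between the coefficients $\xi_{k,\sP}$ and $\eta_{k,\sQ}$, and then write down an explicit $\gamma$ and check $(1\otimes F^n)(\gamma)=\beta$ and $-(1\otimes F^nd)(\gamma)=\alpha$ via Rule \ref{rules}(ii). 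Until that element is exhibited and verified, you have a correct plan but not a proof; this construction is where the paper spends essentially all of its effort.
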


\begin{proof}
The question is local, we thus assume $S=\Spec A$, $X=\Spec B$ and $B$ is \'etale over
$B_1=A[x_1,\ldots, x_d]$. As $\WC{}{\sbul}{X/S} \to \WC{n+1}{\sbul}{X/S}$ is an
epimorphism, \cite[Prop.~2.19]{LZ04} provides the exactness of the second line, and we
only have to show that
\eqn{ (*_{B/A}):\ \ F^n_*A\otimes\WC{n+1}{q-1}{B/A}\xra{(1\otimes F^n,
-1\otimes F^n d)} F^n_*\Omega^{q-1}_{B/A}\oplus
F^n_*\Omega^q_{B/A} \xra{dV^n+V^n} \WC{n+1}{q}{B/A} }
is exact. Notice that it is a complex, as for $a\in A$ and $\omega\in
\WC{n+1}{q-1}{B/A}$ we have
\[dV^n(aF^n\omega)-V^n(aF^nd\omega)=0.\] 
Notice also that, if we let $W_{2n+2}(B)$ act through $F^{n+1}:W_{2n+2}(B) \to
W_{n+1}(B)$, the differentials of this complex are $W_{2n+2}(B)$-linear, since 
$dF^{n+1}=p^{n+1}F^{n+1}d=0$ in $W_{n+1}$. We claim
\eq{starBA}{(*_{B/A})= F^{n+1}_*(*_{B_1/A})\otimes_{W_{2n+2}(B_1)} W_{2n+2}(B).}
Indeed we have the following diagrams (where the tensor products with $W_{2n+2}(B)$ 
are taken over $W_{2n+2}(B_1)$):
\eqn{\xymatrix@C=57pt{ 
F^{n+1}_*(F^n_*A\otimes\WC{n+1}{q-1}{B/A}) \ar[r]^-{1\otimes F^n} &
F^{2n+1}_*\Omega^{q-1}_{B/A} \\
F^{n+1}_*(F^n_*A\otimes\WC{n+1}{q-1}{B_1/A})\otimes
W_{2n+2}(B)\ar[r]^-{(1\otimes F^n)\otimes 1}\ar[u] & 
F^{n+1}_*(F^n_*\Omega^{q-1}_{B_1/A})\otimes W_{2n+2}(B),\ar[u]
} }
\eqn{\xymatrix@C=57pt{ 
F^{n+1}_*(F^n_*A\otimes\WC{n+1}{q-1}{B/A}) \ar[r]^-{-1\otimes F^n d} & 
F^{2n+1}_*\Omega^{q}_{B/A} \\
F^{n+1}_*(F^n_*A\otimes\WC{n+1}{q-1}{B_1/A}) \otimes
W_{2n+2}(B)\ar[r]^-{(-1\otimes F^nd)\otimes 1 }\ar[u] & 
F^{n+1}_*(F^n_*\Omega^{q}_{B_1/A})\otimes W_{2n+2}(B),\ar[u]
} }
both with vertical maps
\eqn{ (a\otimes\omega)\otimes b \mapsto a\otimes F^{n+1}(b)\omega, \hspace{1cm} 
\eta\otimes b \mapsto F^{2n+1}(b)\eta, }
and
\eqn{\xymatrix@C=43.8pt{
F^{2n+1}_*\Omega^{q-1}_{B/A}\oplus F^{2n+1}_*\Omega^q_{B/A} \ar[r]^{dV^n+V^n} & 
F^{n+1}_*\WC{n+1}{q}{B/A}\\
F^{n+1}_*(F^n_*\Omega^{q-1}_{B_1/A}\oplus
F^n_*\Omega^q_{B_1/A})\otimes W_{2n+2}(B)\ar[r]^-{(dV^n+V^n)\otimes 1}\ar[u] &
F^{n+1}_*\WC{n+1}{q}{B_1/A}\otimes W_{2n+2}(B),\ar[u]
} }
with vertical maps
\eqn{ (\eta,\omega)\otimes b \mapsto (F^{2n+1}(b)\eta,F^{2n+1}(b)\omega), \hspace{1cm}
\omega\otimes b \mapsto F^{n+1}(b)\omega. }
Using again the relation $dF^{n+1}=p^{n+1}F^{n+1}d=0$ in $W_{n+1}$, one checks
immediately that all three diagrams commute. Now the claim \eqref{starBA} follows,
since the vertical maps are isomorphisms by \eqref{etlocDRW2}. As $W_{2n+2}(B_1)\to
W_{2n+2}(B)$ is \'etale \cite[Prop.~A.8]{LZ04}, we are thus reduced to the case
$B=B_1=A[x_1,\ldots, x_d]$.

Now take $\alpha\in \Omega^q_{B/A}$ and $\beta\in \Omega^{q-1}_{B/A}$ with
$V^n(\alpha)=-dV^n(\beta)$. We have to show that there exists an element $\gamma\in
F^n_*A\otimes\WC{n+1}{q-1}{B/A}$ with
\eq{gamma}{-(1\otimes F^n d)(\gamma)=\alpha \quad \text{and} \quad (1\otimes F^n)(\gamma)=\beta.}
By Corollary \ref{candecompn} (and keeping the notation used there), we can write
$\alpha$ and $\beta$ uniquely as finite sums 
\eq{alphabeta}{\alpha=\sum_{k, \sP} e_1(\xi_{k,\sP}, k,\sP),\quad
\beta=\sum_{k,\sQ}e_1(\eta_{k,\sQ},k,\sQ),\quad \text{with }
\xi_{k,\sP},\eta_{k,\sQ}\in A,}
where the sums are over all integral weights $k$ and all admissible partitions
$\sP=(I_0,\ldots, I_q)$ of length $q$ (resp.~over all admissible partitions
$\sQ=(J_0,\ldots,J_{q-1})$ of length $q-1$). Using the rules \ref{rules} (iii) and
(iv), we obtain
\ml{applyrules1}{V^n(\alpha)= \sum_{i=0}^{n-1}\sum_{\genfrac{}{}{0pt}{}{\frac{k}{p^i} \text{
primitive}}{\text{and } I_0=\emptyset}} e_{n+1}(p^{n-i}V^n(\xi_{k,\sP}),
\tfrac{k}{p^n}, \sP)\\
 + \sum_{\genfrac{}{}{0pt}{}{\frac{k}{p^n} \text{ integral }}
{\text{or } I_0\neq\emptyset}} e_{n+1}(V^n(\xi_{k,\sP}),
\tfrac{k}{p^n}, \sP) }
and
\ml{applyrules2}{-dV^n(\beta)= \sum_{\genfrac{}{}{0pt}{}{\frac{k}{p^n} \text{ integral
} }{\text{and } J_0\neq\emptyset}} -p^{t(\frac{k}{p^n})}e_{n+1}(V^n(\eta_{k,\sQ}),
\tfrac{k}{p^n}, (\emptyset,\sQ)) \\
+ \sum_{\genfrac{}{}{0pt}{}{\frac{k}{p^n} \text{ not integral } }{\text{and }
J_0\neq\emptyset}} -e_{n+1}(V^n(\eta_{k,\sQ}), \tfrac{k}{p^n}, (\emptyset,\sQ)), }
where $t(k/p^n)$ is defined as in \eqref{deftofk}. By the uniqueness of this
presentation, and since $V^n: A\to W_{n+1}(A)$ is injective, the equality
$V^n(\alpha)=-dV^n(\beta)$ thus gives the following set of equations:
\begin{align}\label{relation-xi-eta}
\xi_{k,\sP} & =  -p^{-t(\frac{k}{p^n})}\eta_{k,\sQ}, \quad \text{if } \frac{k}{p^n}
\text{ is integral, } \sP=(\emptyset,\sQ) \text{ and } J_0\neq\emptyset,\\
\eta_{k,\sQ} & =  -p^{n-i}\xi_{k,\sP}, \quad \text{if } \frac{k}{p^i} \text{ is
primitive, } \sP=(\emptyset,\sQ), \, J_0\neq \emptyset\text{ and } 0\le i\le n-1, \nonumber\\
\xi_{k,\sP} & =  0,  \quad\text{if } I_0\neq \emptyset\nonumber.
\end{align}
We claim that \eqref{gamma} holds for the following choice of
$\gamma\in F^n_*A\otimes_{W_{n+1}(A)} \WC{n+1}{q-1}{B/A}$:
\begin{multline*}
\gamma:=\sum_{i=0}^{n-1}\left(\sum_{\genfrac{}{}{0pt}{}{\frac{k}{p^i} \text{
primitive}}{\text{and } J_0\neq\emptyset} } \left(-\xi_{k,(\emptyset,\sQ)}\otimes
e_{n+1}(V^{n-i}(1),\tfrac{k}{p^n},\sQ)\right)\right. \\
\left. +\sum_{\genfrac{}{}{0pt}{}{\frac{k}{p^i} \text{ primitive}}{\text{and }
J_0=\emptyset} } \left(\eta_{k,\sQ}\otimes
e_{n+1}(V^{n-i}(1),\tfrac{k}{p^n},\sQ)\right) \right)\\
+ \sum_{\frac{k}{p^n} \text{ integral, } \sQ} \eta_{k\sQ}\otimes e_{n+1}(1,
\tfrac{k}{p^n},\sQ).
\end{multline*}
Indeed the rules \ref{rules} (ii) and (iv) yield the following formulas for $k$ an integral weight, $\xi\in V^{u(\frac{k}{p^n})}W_{n+1-u(\frac{k}{p^n})}(A)$ and 
$\sQ=(J_0,\ldots, J_{q-1})$ a partition of length $q-1$ of ${\rm supp\,}k$\,:
\eqn{F^n e_{n+1}(\xi,\tfrac{k}{p^n},\sQ)=\begin{cases} e_1(F^n(\xi), k, \sQ) & \text{if $J_0\neq \emptyset$ or $\frac{k}{p^n}$ integral,}\\
                                                         e_1(F^i V^{-(n-i)}(\xi), k, \sQ) & \text{if $J_0=\emptyset$ and $\frac{k}{p^i}$ is primitive,}\\
                                                                                               & \text{\phantom{if} for $0\le i\le n-1$}
                                                                           \end{cases} }
and
\eqn{F^n d e_{n+1}(\xi, \tfrac{k}{p^n},\sQ)=\begin{cases}        0             & \text{if $J_0=\emptyset$,}\\
                                                          p^{-t(\frac{k}{p^n})} e_1(F^n(\xi), k, (\emptyset, \sQ)) & \text{if $J_0\neq \emptyset$ and $\tfrac{k}{p^n}$ integral,}\\
                                                          e_1(F^i V^{-(n-i)}(\xi), k, (\emptyset,\sQ)) & \text{if $J_0\neq\emptyset$ and $\frac{k}{p^i}$ is primitive,}\\
                                                                                                                                & \text{\phantom{if} for $0\le i\le n-1$.}
                                            \end{cases} }
Using this, the rule \ref{rules} (i) and the relations \eqref{relation-xi-eta} we obtain
\eqna{
(-1\otimes F^n d)(\gamma) & = & \sum_{i=0}^{n-1}\sum_{\genfrac{}{}{0pt}{}{\frac{k}{p^i} \text{primitive}}{\text{and } J_0\neq\emptyset} } e_1(\xi_{k,(\emptyset,\sQ)}, k, (\emptyset, \sQ))\\
                         &  &    + \sum_{\genfrac{}{}{0pt}{}{\frac{k}{p^n} \text{integral}}{\text{and } J_0\neq\emptyset}} e_1(-p^{-t(\frac{k}{p^n})}\eta_{k,\sQ}, k, (\emptyset,\sQ)) \\
                      & = & \alpha }
and 
\eqna{
(1\otimes F^n)(\gamma) & = & \sum_{i=0}^{n-1}\left(\sum_{\genfrac{}{}{0pt}{}{\frac{k}{p^i} \text{primitive}}{\text{and } J_0\neq\emptyset} } e_1(-p^{n-i}\xi_{k,(\emptyset,\sQ)},k,\sQ)\right. 
                         \left. +\sum_{\genfrac{}{}{0pt}{}{\frac{k}{p^i} \text{ primitive}}{\text{and } J_0=\emptyset} } e_1(\eta_{k,\sQ},k,\sQ) \right)\\
                     &  & + \sum_{\frac{k}{p^n} \text{ integral, } \sQ} e_1(\eta_{k\sQ},k,\sQ)\\
                     & = & \beta. }
This proves the proposition.
\end{proof}

\subsection{}\label{Cartier}
We now recall some facts from \cite[0, 2]{Il79} about the Cartier operator and its iterates.

Let $S$ be an $\FF_p$-scheme, $X\to S$ a smooth morphism, and set
$X^{(p^n)}:= S\times_{S,F_S^n} X$. We have the usual diagram, which defines the
iterates $F^n_{X/S}$ of the relative Frobenius morphism (we write $F_{X/S}=F^1_{X/S}$, 
$W = W^1$):
\eqn{ \xymatrix{
X\ar[r]^{F^n_{X/S}}\ar[dr]\ar@/^3pc/[rr]^{F^n_X}  & X^{(p^n)}\ar[d]\ar[r]^{W^n}  & X\ar[d]\\
			 &                    S\ar[r]^{F^n_S} &   S.
} } 
Notice that 
\eqn{F^n_{X/S} = F_{X^{(p^{n-1})}/S}\circ\ldots\circ F_{X/S}. }
For an $S$-morphism $f:X'\to X$ we denote by $f^{(p^n)}$ the base-change morphism
$f^{(p^n)} = \Id_S \times f : {X'}^{(p^n)}\to X^{(p^n)}$.

The inverse Cartier operator is a homomorphism of graded $\sO_{X^{(p)}}$-algebras
\eqn{ C^{-1}_{X/S}: \Omega\hbul_{X^{(p)}/S}\to \sH\hbul(\Omega\hbul_{X/S}), }
which is uniquely determined by 
\eq{defCartier}{{C^{-1}_{X/S}}\restr{\sO_{X^{(p)}}} = F_{X/S}^* \quad \text{and}\quad
C^{-1}_{X/S}(W^*dx)=x^{p-1}dx, \quad \text{for all } x\in \sO_X.}
The inverse Cartier operator is an isomorphism (since $X/S$ is smooth). For $n\ge 0$,
one defines abelian subsheaves of $\Omega^q_{X/S}$ 
\eq{defBZ}{B_n\Omega^q_{X/S}\subset Z_n\Omega^q_{X/S}\subset \Omega^q_{X/S}}
via
\eqn{B_0\Omega^q_{X/S}=0, \quad  Z_0\Omega^q_{X/S}=\Omega^q_{X/S}, }
\eqn{B_1\Omega^q_{X/S}=B\Omega^q_{X/S}=d\Omega^{q-1}_{X/S}, \quad
Z_1\Omega^q_{X/S}=Z\Omega^q_{X/S}={\rm Ker}(d:\Omega^q_{X/S}\to \Omega^{q+1}_{X/S}), }
and, for $n\ge 1$, 
\eq{defBn}{ C^{-1}_{X/S} : B_n\Omega^q_{X^{(p)}/S}\stackrel{\simeq}{\lra}
B_{n+1}\Omega^q_{X/S}/B_1\Omega^q_{X/S}, }
\eq{defZn}{ C^{-1}_{X/S} : Z_n\Omega^q_{X^{(p)}/S}\stackrel{\simeq}{\lra}
Z_{n+1}\Omega^q_{X/S}/B_1\Omega^q_{X/S}. }
We obtain a chain of inclusions
\ml{inclBnZn}{0\subset B_1\Omega^q_{X/S}\subset \ldots\subset B_n\Omega^q_{X/S}\subset
B_{n+1}\Omega^q_{X/S}\subset \ldots \\
\subset Z_{n+1}\Omega^q_{X/S}\subset Z_n\Omega^q_{X/S}\subset\ldots\subset
Z_1\Omega^q_{X/S}\subset \Omega^q_{X/S}.}

\begin{prop}[{\cite[0, (2.2.7), Prop.~2.2.8]{Il79}}]\label{propBnZn}
Let $S$ be an $\FF_p$-scheme and $X$ a smooth $S$-scheme. Then, for all $q\ge 0$
and $n\ge 1$, the sheaves $Z_n\Omega^q_{X/S}$ and $B_n\Omega^q_{X/S}$ satisfy the 
following properties.

\romain $Z_n\Omega^q_{X/S}$ and $B_n\Omega^q_{X/S}$ are locally free
$\sO_{X^{(p^n)}}$-modules of finite type, and, for any $h: S'\to S$, we have
\eqn{ h_X^{(p^n)*}Z_n\Omega^q_{X/S} \riso Z_n\Omega^q_{X'/S'},\quad
h_X^{(p^n)*}B_n\Omega^q_{X/S} \riso B_n\Omega^q_{X'/S'}, }
where $h_X : X' := S'\times_S X\to X$ is the base-change map.

\romain  If $f: X'\to X$ is an \'etale $S$-morphism, then there are natural isomorphisms
\eqn{{f^{(p^n)}}{}^*Z_n\Omega^q_{X/S} \riso Z_n\Omega^q_{X'/S},\quad
{f^{(p^n)}}{}^*B_n\Omega^q_{X/S} \riso B_n\Omega^q_{X'/S}. }

\romain $B_n\Omega^q_{X/S}$ is the sub-$\sO_S$-module of $\Omega^q_{X/S}$ locally
generated by sections of the form $a_1^{p^r-1}\cdots a_q^{p^r-1}da_1\cdots da_q,$ with
$a_i\in \sO_X$ and $0\le r\le n-1$.

\romain $Z_n\Omega^q_{X/S}$ is the sub-$\sO_S$-module of $\Omega^q_{X/S}$ locally
generated by $B_n\Omega^q_{X/S}$ and sections of the form $ba_1^{p^n-1}\cdots
a_q^{p^n-1}da_1\cdots da_q$, with $a_i\in \sO_X$ and $b\in \sO_{X^{(p^n)}}$.
\end{prop}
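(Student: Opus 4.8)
The plan is to reduce everything to the case where $X$ is affine space over an affine base, where the relative Frobenius is simply an inclusion of polynomial rings and the de Rham complex can be written down explicitly; this is the computation carried out by Illusie \cite[0, 2]{Il79} over a perfect field, and the point is that the base ring enters only as inert scalars once one is in this situation.

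\textbf{Localization and assertion (ii).} Since all four statements are local on $X$, smoothness lets me assume $X$ is étale over $\A^d_S$ with $S=\Spec A$ affine. I would first prove (ii): for $f:X'\to X$ étale over $S$, $f$ is flat and $\Omega\hbul_{X'/S}=f^*\Omega\hbul_{X/S}$, so $f^{(p^n)*}$ is exact and commutes with the de Rham differential, hence with $Z_1=\Ker d$ and $B_1=\Im d$; for $n\ge 2$ one inducts on $n$, applying the (again flat) functor $f^{(p^{n+1})*}$ to the short exact sequences
\begin{gather*}
0\to B_1\Omega^q_{X/S}\to B_{n+1}\Omega^q_{X/S}\to B_n\Omega^q_{X^{(p)}/S}\to 0,\\
0\to B_1\Omega^q_{X/S}\to Z_{n+1}\Omega^q_{X/S}\to Z_n\Omega^q_{X^{(p)}/S}\to 0
\end{gather*}
furnished by \eqref{defBn} and \eqref{defZn}, and using that $C^{-1}_{X/S}$ commutes with étale pull-back, which is immediate from \eqref{defCartier}. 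Granting (ii), assertions (i), (iii), (iv) for $X$ reduce to the same assertions for $\A^d_S$; for the base-change clause of (i) along an arbitrary $h:S'\to S$ one also uses that $X':=S'\times_S X$ is again étale over $\A^d_{S'}$ and that $X'^{(p^n)}=S'\times_S X^{(p^n)}$.

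\textbf{The case $X=\A^d_S$.} Here $F^n_{X/S}$ identifies $\sO_{X^{(p^n)}}$ with the subring $A[x_1^{p^n},\ldots,x_d^{p^n}]$ of $\sO_X=A[x_1,\ldots,x_d]$, over which $\sO_X$ is free on the monomials $x^I$, $0\le I<p^n$. Thus, as a complex of $\sO_{X^{(p^n)}}$-modules, $\Omega\hbul_{X/S}$ is the tensor product over $\sO_{X^{(p^n)}}$ of the $d$ one-variable complexes
\eqn{\Big[\textstyle\bigoplus_{j=0}^{p^n-1}\sO_{X^{(p^n)}}\,x_i^{\,j}\ \xra{\ d\ }\ \bigoplus_{j=0}^{p^n-1}\sO_{X^{(p^n)}}\,x_i^{\,j}\,dx_i\Big],\qquad d(x_i^{\,j})=j\,x_i^{\,j-1}dx_i,}
which consist of free $\sO_{X^{(p^n)}}$-modules in which the $B_m$ and $Z_m$ ($m\le n$) are free direct summands with explicit monomial bases. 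Since everything in sight is free over $\sO_{X^{(p^n)}}$, the Künneth decomposition is exact and exhibits $B_n\Omega^q_{X/S}$ and $Z_n\Omega^q_{X/S}$ as finitely generated free $\sO_{X^{(p^n)}}$-direct summands of $\Omega^q_{X/S}$ with explicit monomial bases; reading off these bases gives (i) and identifies the generators required in (iii) and (iv) (the fact that the listed elements \emph{lie} in $B_n$, resp.\ $Z_n$, for arbitrary local sections $a_i$ holds on any smooth $X$, directly from \eqref{defBn}, \eqref{defZn} and \eqref{defCartier}). Alternatively, once (i) is known over $\A^d_S$, the general case of (i) follows by induction on $n$ from the two exact sequences above, using that an extension of a finite free module by a finite free module with finite free quotient splits, and (iii), (iv) follow by induction on $n$ from \eqref{defBn}, \eqref{defZn} upon computing $C^{-1}_{X/S}$ on the generators.

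The main obstacle is not conceptual but organizational: keeping the various $\sO_{X^{(p^n)}}$-module structures (through the iterated relative Frobenius) consistent, and matching the explicit monomial bases in the polynomial case with the coordinate-free descriptions in (iii) and (iv). The only genuinely new feature compared with the perfect-base case of \cite{Il79} is that $X^{(p^n)}$ is no longer obtained from $X$ by twisting the coefficient ring; this is precisely what the étale-localization in the first step circumvents.
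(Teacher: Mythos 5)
The paper offers no proof of this proposition: it is stated as a recall from \cite[0, (2.2.7), Prop.~2.2.8]{Il79}, so there is nothing internal to compare against. Your argument is correct and is essentially the one in the cited reference, namely \'etale localization to $\A^d_S$ followed by the explicit monomial computation; the useful extra observation for a non-perfect base is that over $\A^d_S$ the complex $\Omega\hbul_{A[x]/A}$ with its $A[x^{p^n}]$-module structure is the flat base change of $\Omega\hbul_{\FF_p[x]/\FF_p}$ over $\FF_p[x^{p^n}]$, which yields local freeness and arbitrary base change in $S$ simultaneously. Two points deserve a word when writing this up: in step (ii) one must use that the relative Frobenius square is cartesian for \'etale $f$ (so that $f^{(p^n)*}$, not $f^*$, is the pullback that commutes with $d$ and with $C^{-1}$), and for $n\ge 2$ the ``K\"unneth'' identification of $B_n$ and $Z_n$ of the tensor product is not the literal K\"unneth theorem for cycles and boundaries but the induction through \eqref{defBn}--\eqref{defZn} using multiplicativity of $C^{-1}$ — both are routine.
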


\begin{prop}[cf.~{\cite[I, Prop.~3.3]{Il79}}] \label{GenCartier}
For $X/S$ smooth as above, there is a unique map of $W_n(\sO_S)$-modules
\eqn{ C^{-1}_n: F_{S*}W_n(\sO_S)\otimes_{W_{n}(\sO_S)}\WC{n}{q}{X/S}\lra
\frac{\WC{n}{q}{X/S}}{dV^{n-1}\Omega^q_{X/S}}, }
which makes  the following diagram commutative
\eqn{ \xymatrix{
F_*W_n(\sO_S)\otimes_{W_{n+1}(\sO_S)}\WC{n+1}{q}{X/S}\ar[r]^-{1\otimes
F}\ar[d]^-{1\otimes R} & \WC{n}{q}{X/S}\ar[d]\\
F_*W_n(\sO_S)\otimes_{W_n(\sO_S)}\WC{n}{q}{X/S}\ar[r]^-{C^{-1}_n} &
\frac{\WC{n}{q}{X/S}}{dV^{n-1}\Omega^q_{X/S}}.
} }
For $n=1$ we have $F_{S*}\sO_S\otimes_{\sO_S}\Omega^q_{X/S}=\Omega^q_{X^{(p)}/S}$, and
$C^{-1}_n=C^{-1}:\Omega^q_{X^{(p)}/S}\lra \frac{\Omega^q_{X/S}}{d\Omega^q_{X/S}}$ is
the inverse Cartier operator.
\end{prop}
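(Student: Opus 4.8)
The plan is to adapt the argument of \cite[I, Prop.~3.3]{Il79} to the relative setting, with Proposition \ref{KerR} playing the role of the classical description of $\Ker R$ over a perfect base. Since the morphism to be constructed is canonical, I would work Zariski-locally on $S$ (so that $S$ may be assumed locally noetherian and Proposition \ref{KerR} applies) and argue entirely at the level of sheaves. Uniqueness is then immediate: the transition morphism $R:\WC{n+1}{q}{X/S}\to\WC{n}{q}{X/S}$ is an epimorphism (see \ref{defDRW}), hence so is the left vertical map $1\otimes R$ of the square, so there is at most one $C^{-1}_n$ making it commute, and any such map is automatically $W_n(\sO_S)$-linear.

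For existence, I would show that the composite $F_{S*}W_n(\sO_S)\otimes_{W_{n+1}(\sO_S)}\WC{n+1}{q}{X/S}\xra{1\otimes F}\WC{n}{q}{X/S}\to\WC{n}{q}{X/S}/dV^{n-1}\Omega^{q-1}_{X/S}$ annihilates $\Ker(1\otimes R)$; it then factors uniquely through $1\otimes R$ and defines $C^{-1}_n$. (One first notes that $dV^{n-1}\Omega^{q-1}_{X/S}$ is a $W_n(\sO_S)$-submodule of $\WC{n}{q}{X/S}$: since $d_{X/S}$ is $W_n(\sO_S)$-linear and $\xi\cdot V^{n-1}(\alpha)=V^{n-1}(F^{n-1}(\xi)\alpha)$ by \eqref{VFlin}, one has $\xi\cdot dV^{n-1}(\alpha)=dV^{n-1}(F^{n-1}(\xi)\alpha)$.) Tensoring the exact sequence of Proposition \ref{KerR} with $F_{S*}W_n(\sO_S)$ over $W_{n+1}(\sO_S)$ and using right exactness of $\otimes$ — together with the identification $F_{S*}W_n(\sO_S)\otimes_{W_{n+1}(\sO_S)}R_*\WC{n}{q}{X/S}\cong F_{S*}W_n(\sO_S)\otimes_{W_n(\sO_S)}\WC{n}{q}{X/S}$, valid because $R$ is surjective once the Frobenius twist encoded by $F_{S*}$ is matched against $R:W_{n+1}(\sO_S)\to W_n(\sO_S)$ — identifies $\Ker(1\otimes R)$ with the image of $1\otimes(dV^n+V^n)$. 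So it remains to compute $F$ on elements $dV^n(\eta)+V^n(\theta)$ with $\eta\in\Omega^{q-1}_{X/S}$, $\theta\in\Omega^q_{X/S}$: by \eqref{FV}, $F\,dV^n(\eta)=(FdV)V^{n-1}(\eta)=dV^{n-1}(\eta)\in dV^{n-1}\Omega^{q-1}_{X/S}$, while $F\,V^n(\theta)=p\,V^{n-1}(\theta)=V^{n-1}(p\theta)=0$, using $FV=p$ and the fact that over an $\FF_p$-scheme $p=VF$ commutes with $V$ and kills the $\sO_X$-module $\Omega^q_{X/S}$. Hence $F(\Ker R)\subseteq dV^{n-1}\Omega^{q-1}_{X/S}$, and since the latter is a $W_n(\sO_S)$-submodule the composite above kills $\Ker(1\otimes R)$, as wanted.

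For the final assertion, I would take $n=1$, where the square reads $F_{S*}\sO_S\otimes_{\sO_S}\WC{2}{q}{X/S}\xra{1\otimes F}\Omega^q_{X/S}\to\Omega^q_{X/S}/d\Omega^{q-1}_{X/S}$ over $F_{S*}\sO_S\otimes_{\sO_S}\Omega^q_{X/S}=\Omega^q_{X^{(p)}/S}\xra{C^{-1}_1}\Omega^q_{X/S}/d\Omega^{q-1}_{X/S}$, and check on algebra generators: $F([a])=a^p$ for $a\in\sO_X$ shows that $C^{-1}_1$ restricts to $F_{X/S}^*$ on $\sO_{X^{(p)}}$, and $F(d[x])=[x]^{p-1}d[x]=x^{p-1}dx$ in $\Omega^1_{X/S}$ by \eqref{FTeich} shows $C^{-1}_1(W^*dx)=x^{p-1}dx$ modulo $d\Omega^0_{X/S}$; comparing with the characterization \eqref{defCartier} of the inverse Cartier operator gives $C^{-1}_1=C^{-1}_{X/S}$. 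I expect the only real difficulty to be the bookkeeping of the $W_n(\sO_S)$- versus $W_{n+1}(\sO_S)$-module structures and of the Frobenius twist hidden in the various $F_{S*}$'s — enough to make the tensoring of the sequence of Proposition \ref{KerR} legitimate and to identify its target with the bottom-left corner of the square; once that is set up, the operator identities follow at once from \eqref{FV} and \eqref{VFlin}.
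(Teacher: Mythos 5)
Your argument is correct and follows essentially the same route as the paper's: uniqueness from the surjectivity of $1\otimes R$, existence by identifying $\Ker(1\otimes R)$ with the image of $1\otimes(dV^n+V^n)$ and computing $F\,dV^n=dV^{n-1}$ and $F\,V^n=pV^{n-1}=0$, and the case $n=1$ by checking on the multiplicative generators $[a]$ and $d[x]$. One small remark: Zariski-localizing on $S$ does not make $S$ locally noetherian, but this is harmless here, since the only input you need from Proposition \ref{KerR} is the exactness of its second line (i.e.\ $\Ker R=\mathrm{Im}(dV^n+V^n)$), which comes from \cite[Prop.~2.19]{LZ04} and holds for arbitrary $\FF_p$-schemes $S$.
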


\begin{proof}
Since $1\otimes R$ is surjective, it is enough to see that the kernel of $1\otimes R$
is mapped to $dV^{n-1}\Omega^q_{X/S}$ under $1\otimes F$. But an element in the kernel
of $1\otimes R$ is a sum of elements of the form $a\otimes V^n\omega$ and $a\otimes
dV^n\eta$, with $a\in W_n(\sO_S)$, $\omega\in\Omega^q_{X/S}$ and $\eta\in
\Omega^{q-1}_{X/S}$. We have in $\WC{n}{q}{X/S}$
\eqn{ (1\otimes F)(a\otimes V^n\omega)=a V^{n-1}(p\omega)=0,\quad (1\otimes F)(a\otimes
dV^n\eta)=dV^{n-1}(F^{n-1}(a)\eta). }
This gives the existence and the uniqueness of $C^{-1}_n$. The second statement follows
from the fact that $1\otimes F$ is compatible with products, and from the formula
$1\otimes F(a\otimes d[x])=ax^{p-1}dx$, for $a\in \sO_S$, $x\in\sO_X$.
\end{proof}

\begin{cor}[cf.~{\cite[I, Prop.~3.11]{Il79}}] \label{ImFn}
Let $X/S$ be as above. Then:

\romain $\image(1\otimes F^n: F^n_*\sO_S\otimes_{W_{n+1}(\sO_S)}\WC{n+1}{q}{X/S}\to
\Omega^q_{X/S})=Z_n\Omega^q_{X/S}. $

\romain $\image(1\otimes F^{n-1}d:
F^n_*\sO_S\otimes_{W_{n+1}(\sO_S)}F_*\WC{n}{q-1}{X/S}\to
\Omega^q_{X/S})=B_n\Omega^q_{X/S}. $
\end{cor}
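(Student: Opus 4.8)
The plan is to prove the two equalities in tandem by reducing to an explicit local situation and then comparing the relevant images with the standard generators of $Z_n\Omega^q_{X/S}$ and $B_n\Omega^q_{X/S}$ supplied by Proposition~\ref{propBnZn}(iii)--(iv). The formation of all sheaves involved is local and, by Proposition~\ref{propBnZn}(i)--(ii) together with the étale base-change isomorphisms \eqref{etlocDRW1}, compatible with étale $X'\to X$; since moreover $\WC{}{\sbul}{X/S}$ surjects onto $\WC{n+1}{\sbul}{X/S}$, I would first reduce to $S=\Spec A$, $X=\A^d_S$, where Corollary~\ref{candecompn} provides a canonical normal form $\omega=\sum_{k,\sP}e_{n+1}(\xi_{k,\sP},k,\sP)$ for the sections of $\WC{n+1}{q}{X/S}$. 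I would also remark at the outset that both sides of (i) are sub-$\sO_S$-modules of $\Omega^q_{X/S}$ (the left one because $1\otimes F^n$ is $\sO_S$-linear), and similarly for (ii), so it suffices to check each inclusion on a generating set.

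For the inclusions ``$\supseteq$'' one exhibits the local generators of Proposition~\ref{propBnZn}(iii)--(iv) inside the respective images. Iterating \eqref{FTeich} gives $F^m(d[a])=[a]^{p^m-1}d[a]$; since $F^n$ is multiplicative, $1\otimes F^n$ sends $1\otimes[g]\,d[a_1]\wedge\dots\wedge d[a_q]$ to $g^{p^n}a_1^{p^n-1}\cdots a_q^{p^n-1}\,da_1\cdots da_q$, and, as a section $b$ of $\sO_{X^{(p^n)}}$ acts on $\Omega^q_{X/S}$ through the relative Frobenius (whose image is $\sO_S$-generated by $p^n$-th powers), this accounts for all the extra generators of $Z_n$. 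On the other hand $F^n(dV^i(\eta))=F^{n-i}(d\eta)$ for $0\le i\le n$ (from $FdV=d$), so $V^{n-1-r}\bigl([a_1]\,d[a_2]\wedge\dots\wedge d[a_q]\bigr)\in\WC{n}{q-1}{X/S}$ is carried by $1\otimes F^{n-1}d$ to $a_1^{p^r-1}\cdots a_q^{p^r-1}\,da_1\cdots da_q$ for $0\le r\le n-1$, giving $B_n\Omega^q_{X/S}\subseteq\image(1\otimes F^{n-1}d)$; the same $dV$-trick ($F^{n-1}(d\omega)=F^n(dV(\omega))$) shows $\image(1\otimes F^{n-1}d)\subseteq\image(1\otimes F^n)$, so also $Z_n\Omega^q_{X/S}\subseteq\image(1\otimes F^n)$.

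For the reverse inclusions I would argue by induction on $n$ using the Cartier-operator square of Proposition~\ref{GenCartier}: since $1\otimes R$ is surjective, that square shows that, modulo $dV^{n-1}\Omega^{q-1}_{X/S}$, the image of $1\otimes F^n$ is the image of the $n$-fold iterate of the inverse Cartier operator applied to $\Omega^q_{X^{(p^n)}/S}$, which by the very definitions \eqref{defZn} and \eqref{defBn} is $Z_n\Omega^q_{X/S}/B_1\Omega^q_{X/S}$ (respectively $B_n\Omega^q_{X/S}/B_1\Omega^q_{X/S}$ for $\image(1\otimes F^{n-1}d)$); as $B_1\Omega^q_{X/S}\subseteq B_n\Omega^q_{X/S}\subseteq Z_n\Omega^q_{X/S}$, this yields $\image(1\otimes F^n)\subseteq Z_n\Omega^q_{X/S}$ and $\image(1\otimes F^{n-1}d)\subseteq B_n\Omega^q_{X/S}$. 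I expect this reverse inclusion to be the main obstacle: one must handle carefully the Frobenius twists $X^{(p^i)}$ and the iteration of the non-canonical operators $C^{-1}_m$ of Proposition~\ref{GenCartier}, whose targets are the successive quotients $\WC{m}{q}{X/S}/dV^{m-1}\Omega^{q-1}_{X/S}$. A more computational alternative, once $X=\A^d_S$ has been reached, is to evaluate $F^n e_{n+1}(\xi,k,\sP)$ and $F^{n-1}d\,e_n(\xi,k,\sP)$ directly by Rules~\ref{rules}(ii) and (iv) and to match the basic Witt differentials so obtained against the weight-and-partition description of $Z_n$ and $B_n$ coming from Proposition~\ref{propBnZn}(iii)--(iv), invoking the uniqueness in Corollary~\ref{candecompn}; this proceeds exactly in the spirit of the proof of Proposition~\ref{KerR}, the delicate point being precisely this combinatorial matching.
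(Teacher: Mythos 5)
Your proof is correct, and its decisive step --- the reverse inclusions, obtained by induction on $n$ from the commutative square of Proposition \ref{GenCartier} together with the relation $d = F^{n+1}dV^{n+1}$ to absorb $B_1\Omega^q_{X/S}$ --- is exactly the paper's argument. The remaining material in your proposal (the reduction to $\A^d_S$, the explicit generator check for the forward inclusions via Proposition \ref{propBnZn} (iii)--(iv), and the alternative computation with basic Witt differentials) is superfluous: since \eqref{defZn} and \eqref{defBn} are isomorphisms onto $Z_{n+1}\Omega^q_{X/S}/B_1\Omega^q_{X/S}$ and $B_{n+1}\Omega^q_{X/S}/B_1\Omega^q_{X/S}$, the induction propagates the full equality (not merely one inclusion), and no localization or normal-form computation is needed.
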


\begin{proof}
We do induction on $n$. For $n=1$, (i) follows from Proposition \ref{GenCartier} and
the relation $d = FdV$, and (ii) holds by definition. Now assume the statements are
proven for $n$. To prove (i) for $n+1$, we consider the following commutative diagram 
of abelian sheaves on $X$:
\eqn{ \xymatrix{ F^{n+1}_*\sO_S\otimes_{W_{n+2}(\sO_S)}
\WC{n+2}{q}{X/S}\ar[r]^-{1\otimes F^n}\ar[d]^{1\otimes R} &
F_*\sO_S\otimes_{W_2(\sO_S)} \WC{2}{q}{X/S}\ar[r]^-{1\otimes F}\ar[d]^{1\otimes R} &
\Omega^q_{X/S}\ar[d]\\
F^{n+1}_*\sO_S \otimes_{W_{n+1}(\sO_S)}\WC{n+1}{q}{X/S}\ar[r]^-{1\otimes F^n} &
F_*\sO_S\otimes_{\sO_S}\Omega^{q}_{X/S}=\Omega^{q}_{X^{(p)}/S}\ar[r]^-{C^{-1}}
& \frac{\Omega^q_{X/S}}{d\Omega^{q-1}_{X/S}}.
} }
By induction hypothesis we have 
\eqn{\image \Big( (1\otimes R)\circ (1\otimes F^n) \Big) =\image\Big( (1\otimes
F^n)\circ(1\otimes R)\Big) = F_{S*}\sO_S\otimes_{\sO_S} Z_n\Omega^q_{X/S}=
Z_n\Omega^q_{X^{(p)}/S}, }
where the last equality follows from the compatibility with base-change. Now, thanks to
the relation $d = F^{n+1}dV^{n+1}$, (i) follows from the definition of
$Z_{n+1}\Omega^q_{X/S}$. The proof of (ii) is similar.
\end{proof}

\begin{lem}\label{ImFnd}
Let $X/S$ be as above. The sheaf $B_n\Omega^q_{X/S}$ is given by
\begin{multline*}
\image(1\otimes F^{n-1}d:
F^n_*\sO_S\otimes_{W_{n+1}(\sO_S)}F_*\WC{n}{q-1}{X/S}\to \Omega^q_{X/S})\\
= \{(1\otimes F^n d)(\alpha)\,|\, \alpha\in
F^n_*\sO_S\otimes_{W_{n+1}(\sO_S)}\WC{n+1}{q-1}{X/S} \mathrm{\ with\ }
(1\otimes F^n)(\alpha)=0\}.
\end{multline*}
\end{lem}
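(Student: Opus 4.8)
The plan is to notice first that the displayed first equality is nothing but Corollary~\ref{ImFn}(ii), so that it remains only to prove the second one: that $B_n\Omega^q_{X/S}$ equals the set $\sS$ of all $(1\otimes F^nd)(\alpha)$ with $\alpha\in F^n_*\sO_S\otimes_{W_{n+1}(\sO_S)}\WC{n+1}{q-1}{X/S}$ and $(1\otimes F^n)(\alpha)=0$. Proposition~\ref{KerR} identifies $\sS$ concretely: it consists of the $-\gamma\in\Omega^q_{X/S}$ for which $(0,-\gamma)$ lies in the image of $(1\otimes F^n,-1\otimes F^nd)$, and since that image equals $\ker(dV^n+V^n)$, the pair $(0,-\gamma)$ lies in it exactly when $V^n\gamma=0$; hence $\sS=\ker\bigl(V^n\colon\Omega^q_{X/S}\to\WC{n+1}{q}{X/S}\bigr)$. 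Thus the Lemma amounts to the equality $B_n\Omega^q_{X/S}=\ker(V^n\colon\Omega^q_{X/S}\to\WC{n+1}{q}{X/S})$, which I would establish by two inclusions.

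For $B_n\Omega^q_{X/S}\subseteq\ker V^n$: by relation~\eqref{VFlin} the Verschiebung $V\colon F_*\WC{n}{q-1}{X/S}\to\WC{n+1}{q-1}{X/S}$ is $W_{n+1}(\sO_S)$-linear, so it induces a map $\iota$ from $F^n_*\sO_S\otimes_{W_{n+1}(\sO_S)}F_*\WC{n}{q-1}{X/S}$ to $F^n_*\sO_S\otimes_{W_{n+1}(\sO_S)}\WC{n+1}{q-1}{X/S}$; using $FV=p$ together with $p=0$ on de Rham complexes, and $FdV=d$ (all from~\eqref{FV}), one checks that $(1\otimes F^n)\circ\iota=0$ and $(1\otimes F^nd)\circ\iota=1\otimes F^{n-1}d$, the latter being the map of Corollary~\ref{ImFn}(ii). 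Given $\beta\in B_n\Omega^q_{X/S}$, that corollary writes $\beta=(1\otimes F^{n-1}d)(\theta)$, whence $\beta=(1\otimes F^nd)(\iota\theta)$ with $(1\otimes F^n)(\iota\theta)=0$; the reformulation above then gives $V^n\beta=0$.

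The opposite inclusion $\ker V^n\subseteq B_n\Omega^q_{X/S}$ is the heart of the matter. It is local on $X$, so I may take $X=\Spec B$, $S=\Spec A$; since $B$ may be taken \'etale over $B_1:=A[x_1,\dots,x_d]$ and both $B_n\Omega^q_{X/S}$ (Proposition~\ref{propBnZn}) and the de Rham--Witt sheaves (\eqref{etlocDRW1}--\eqref{etlocDRW2}) behave well under such \'etale base change, one reduces---exactly as in the proof of Proposition~\ref{KerR}, via the flat base change $-\otimes_{W_{2n+2}(B_1)}W_{2n+2}(B)$---to $B=A[x_1,\dots,x_d]$. Writing $\beta=\sum_{k,\sP}e_1(\xi_{k,\sP},k,\sP)$ in the canonical form of Corollary~\ref{candecompn} and computing $V^n\beta$ via Rules~\ref{rules}(iii) as in~\eqref{applyrules1}, the contributions of the $(k,\sP)$ with $I_0=\emptyset$ and $\ord_p k\le n-1$ acquire a coefficient $p^{\,n-\ord_p k}V^n\xi_{k,\sP}$, which vanishes because $\ord_p k<n$ and $p\cdot V^n(a)=0$ in $W_{n+1}(A)$ for $a\in A$ (a one-line Witt-vector computation, valid since $pa=0$). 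By the uniqueness in Corollary~\ref{candecompn} and the injectivity of $V^n\colon A\hookrightarrow W_{n+1}(A)$, the equation $V^n\beta=0$ forces $\xi_{k,\sP}=0$ whenever $I_0\neq\emptyset$ or $\ord_p k\ge n$, so that $\beta$ is a sum of basic Witt differentials $e_1(\xi,k,\sP)$ with $I_0=\emptyset$ and $\ord_p k\le n-1$.

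It then remains to see that each such $e_1(\xi,k,\sP)$ lies in $B_n\Omega^q_{X/S}$. Put $s=\ord_p k\le n-1$, write $k_{I_j}=p^{s_j}\kappa_j$ with $\kappa_j$ primitive (so $s_j\ge s$ and $\min_j s_j=s$) and $Y_j=\prod_{l\in I_j}x_l^{(\kappa_j)_l}$; from $F([a])=[a]^p$ and~\eqref{FTeich} one computes $e^1_1(\eta,k_{I_j})=\eta\,Y_j^{\,p^{s_j}-1}\,dY_j$ in $\Omega^1_{X/S}$, so $e_1(\xi,k,\sP)=\pm\,\xi\,\bigwedge_j Y_j^{\,p^{s_j}-1}dY_j$, a wedge of $d$-closed $1$-forms. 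If some $s_j=0$, the corresponding factor is the exact form $dY_j$, and the product is $d$ of $Y_j$ times the remaining closed factors, hence lies in $B_1\Omega^q_{X/S}\subseteq B_n\Omega^q_{X/S}$. If every $s_j\ge 1$, choose $j_0$ with $s_{j_0}=s$: then $\xi\,Y_{j_0}^{\,p^s-1}dY_{j_0}\in B_{s+1}\Omega^1_{X/S}$ by Proposition~\ref{propBnZn}(iii), while each remaining factor lies in $Z_{s+1}\Omega^1_{X/S}$ (by Proposition~\ref{propBnZn}(iv) if $s_j\ge s+1$, and because $B_{s+1}\Omega^1_{X/S}\subseteq Z_{s+1}\Omega^1_{X/S}$ by~\eqref{inclBnZn} if $s_j=s$); hence $e_1(\xi,k,\sP)\in B_{s+1}\Omega^1_{X/S}\wedge Z_{s+1}\Omega^{q-1}_{X/S}\subseteq B_{s+1}\Omega^q_{X/S}\subseteq B_n\Omega^q_{X/S}$, using that $Z_{s+1}\Omega\hbul_{X/S}$ is a subalgebra and that $B_m\Omega^1_{X/S}\wedge Z_m\Omega\hbul_{X/S}\subseteq B_m\Omega\hbul_{X/S}$---standard facts about $B_m$, $Z_m$ (Illusie, Ch.~0, \S2), in any case obtainable by an easy induction on $m$ from~\eqref{defBn},~\eqref{defZn} and the fact that the inverse Cartier operator is a homomorphism of graded algebras. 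I expect this last, computational half---the reduction to the polynomial algebra and the bookkeeping with basic Witt differentials---to be the main obstacle; the two inclusions involving $\ker V^n$ are formal consequences of Proposition~\ref{KerR} and Corollary~\ref{ImFn}.
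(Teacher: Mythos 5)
Your proof is correct, and while the easy inclusion ($B_n\Omega^q_{X/S}$ is contained in the right-hand side, via the map $1\otimes V$ and the identities $F^nV=pF^{n-1}=0$, $F^ndV=F^{n-1}d$) coincides with the paper's argument, your treatment of the hard inclusion is genuinely different. The paper keeps working in $F^n_*\sO_S\otimes\WC{n+1}{q-1}{X/S}$ and proves the containment \eqref{BinA}, namely $\Ker(1\otimes F^n)\subset\image(1\otimes(V+dV))$, by decomposing a level-$(n+1)$ element into basic Witt differentials and checking three cases; the desired inclusion then follows formally. You instead use the exactness of the sequence in Proposition \ref{KerR} to identify the right-hand side once and for all with $\Ker\bigl(V^n:\Omega^q_{X/S}\to\WC{n+1}{q}{X/S}\bigr)$, then decompose a level-$1$ form, read off from Rules \ref{rules}(iii) and the uniqueness in Corollary \ref{candecompn} exactly which basic Witt differentials survive the condition $V^n\beta=0$ (those with $I_0=\emptyset$ and $\ord_p k\le n-1$), and finally place each survivor in $B_n\Omega^q_{X/S}$ by an explicit multiplicative computation with the $B_m$/$Z_m$ filtration. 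Your route buys a clean intermediate statement ($B_n\Omega^q_{X/S}=\Ker V^n$, a known characterization in the absolute case) and arguably more transparent bookkeeping, at the price of invoking the multiplicative properties $Z_m\cdot Z_m\subset Z_m$ and $B_m\cdot Z_m\subset B_m$, which the paper nowhere states; your sketch of their proof by induction on $m$ from \eqref{defBn}, \eqref{defZn} and the multiplicativity of $C^{-1}_{X/S}$ is adequate, but you should make it explicit that the induction also needs the base cases $Z_1\cdot Z_1\subset Z_1$ and $B_1\cdot Z_1\subset B_1$ to absorb the $B_1$-ambiguity in \eqref{defBn} and \eqref{defZn}. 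The paper's route avoids these facts entirely but requires the slightly more delicate case analysis at level $n+1$. Both the \'etale reduction to $B=A[x_1,\dots,x_d]$ and the use of $pV^n(a)=0$ for $a\in A$ are handled correctly in your argument.
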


\begin{proof}
We call the left hand side $\sA$, and the right hand side $\sB$. We know from the
previous corollary that $B_n\Omega^q_{X/S} = \sA$, and we want now to show that $\sA =
\sB$. In the following, all non-specified tensor products are over $W_{n+1}(\sO_S)$. We
have the commutative diagram
\eqn{\xymatrix{ 
F^n_*\sO_S\otimes F_*\WC{n}{q-1}{X/S} \ar[rr]^-{1\otimes V} \ar[dr]_{1\otimes F^{n-1}
d} & & F^n_*\sO_S\otimes \WC{n+1}{q-1}{X/S} \ar[dl]^{1\otimes F^n d}\\
& \Omega^q_{X/S} & \hspace{3.5cm}.
} }
Since we also have $(1\otimes F^n)\circ(1\otimes V)=0$ it follows that $\sA\subset
\sB$. It remains to show
\begin{multline}\label{BinA}
\Ker\left(1\otimes F^n : F^n_*\sO_S\otimes\WC{n+1}{q-1}{X/S}\to
F^n_*\Omega^{q-1}_{X/S}\right)\\
\subset\image\left(F^n_*\sO_S\otimes (F_*\WC{n}{q-1}{X/S}\oplus F_*\WC{n}{q-2}{X/S})
\xra{1\otimes(V+ dV)} F^n_*\sO_S\otimes\WC{n+1}{q-1}{X/S} \right).
\end{multline}
Indeed, if we take an element $\alpha$ in the kernel on the left hand side and we write
it as an element in the right hand side $\alpha = (1\otimes V)(\beta) + (1\otimes
dV)(\gamma)$, then $(1\otimes F^n d)(\alpha)= (1\otimes F^{n-1}d)(\beta)$, i.e.,
$\sB\subset\sA$. The question is local in $X$, we may thus assume $X$ is \'etale over
$\A^d_S$. For a $W_n(\sO_X)$-module $\sM$ we write $F^r_*\sM_*F^s$ for $\sM$ viewed as
a left $W_{n+r}(\sO_S)$-module via $F^r$ and as a right $W_{n+s}(\sO_X)$-module via
$F^s$. Then we have the following commutative diagram, in which the most right tensor
product in the upper line is over $W_{2n+2}(\sO_{\A^d_S})$:
\eqn{\xymatrix{ \bigg(F^n_*\sO_S\otimes F_*(\WC{n}{q-2}{\A^d_S/S})_*F^{n+2}
\ar[r]^-{1\otimes dV} \ar[d]_-{(1\otimes \can)\otimes 1} &  F^n_*\sO_S\otimes
(\WC{n+1}{q-1}{\A^d_S/S})_*F^{n+1} \bigg)\otimes W_{2n+2}(\sO_X)
\ar[d]^{(1\otimes \can)\otimes 1}\\
F^n_*\sO_S \otimes F_*(\WC{n}{q-2}{X/S})_*F^{n+2} \ar[r]^-{1\otimes dV} &
F^n_*\sO_S \otimes (\WC{n+1}{q-1}{X/S})_*F^{n+1}.
} }
If we write $V$ instead of $dV$ and $q-1$ on the left hand side instead of $q-2$, we
obtain again a commutative diagram. Since $X/\A^d_S$ is \'etale, the vertical maps are
isomorphisms (in both diagrams). Thus if we denote the image in \eqref{BinA} by
$\Im(X/S)$ we obtain
\eqn{\Im(X/S)\cong\Im(\A^d_S/S)_*F^{n+1}\otimes_{W_{2n+2}(\sO_{\A^d_S})}W_{2n+2}(\sO_X).
}
Similarly, denoting the kernel in \eqref{BinA} by $\Ker(X/S)$ one finds
\eqn{\Ker(X/S)\cong
\Ker(\A^d_S/S)_*F^{n+1}\otimes_{W_{2n+2}(\sO_{\A^d_S})}W_{2n+2}(\sO_X). }
And, since $W_{2n+2}(\sO_X)$ is \'etale over $W_{2n+2}(\sO_{\A^d_S})$ 
\cite[Prop.~A.8]{LZ04}, it is thus enough to prove \eqref{BinA} in the case $S=\Spec
A$, with $A$ an $\FF_p$-algebra, and $X=\Spec B$, with $B=A[x_1,\ldots, x_d]$.

Now, using the notation of Corollary \ref{candecompn}, any element $\alpha\in
F^n_*A\otimes \WC{n+1}{q-1}{B/A}$ can be written as a finite sum
\eq{candecompn2}{\alpha= \sum_i\sum_{\genfrac{}{}{0pt}{}{p^n k \text{ integral}}{\sP
=(I_0,\ldots, I_{q-1})}} a_i\otimes e_{n+1}(V^{u(k)}(\eta_{k,\sP,i}), k,\sP),\quad
\eta_{k,\sP,i}\in W_{n+1-u(k)}(A). }
By the rule \ref{rules}, (ii) we have 
\eqn{F^n e_{n+1}(V^{u(k)}(\eta),k,\sP) = 
\begin{cases}
e_1(F^{n-u(k)}(\eta), p^nk,\sP) & \text{if } I_0=\emptyset \text{ or }
(I_0\neq\emptyset, k \text{ integral),}\\
0 & \text{if } I_0\neq\emptyset \text{ and } k \text{ not integral.}
\end{cases} } 
It follows that an element $\alpha$ as in \eqref{candecompn2} lies in ${\rm
Ker}(1\otimes F^n)={\rm Ker}(B/A)$ iff it satisfies 
\eq{kercond}{\sum_i a_i F^{n-u(k)}(\eta_{k,\sP,i})=0, \quad \text{for } I_0=\emptyset
\text{ or } (I_0\neq\emptyset, k \text{ integral).} }
We consider the following three cases:

\numero {\em $k$ is integral, i.e., $u(k)=0$.} Then, by Definition \ref{basicWdif},
$e_{n+1}(\eta, k, \sP)=\eta e_{n+1}(1,k, \sP)$. By \eqref{kercond}, we get 
\eqn{\sum_i a_i\otimes e_{n+1}(\eta_{i,k,\sP},k,\sP)=\left(\sum_i
a_i F^n(\eta_{i,k,\sP})\right)\otimes e_{n+1}(1,k,\sP)=0. }

\numero {\em $k$ is not integral and $I_0=\emptyset$.} In this case
$e_{n+1}(\eta,k,\sP)\in\image(dV)$ by Definition \ref{basicWdif}. Thus
\eqn{\sum_i a_i\otimes e_{n+1}(\eta_{i,k,\sP},k,\sP)\in\image(1\otimes dV). }

\numero {\em $k$ is not integral and $I_0\neq\emptyset$.} Now $e_{n+1}(\eta,k,
\sP)\in\image(V)$ by Definition \ref{basicWdif}. Hence
\eqn{\sum_i a_i\otimes e_{n+1}(\eta_{i,k,\sP},k,\sP)\in\image(1\otimes V). }
Putting the three cases together, we see that $\alpha\in \Ker(1\otimes F^n) $
implies $\alpha\in \image(1\otimes V +1\otimes dV)=\Im(B/A)$. This gives the
statement.
\end{proof}

\begin{thm}[cf.~{\cite[I, Cor.~3.9]{Il79}}, {\cite[Th.~4.2.15]{Ol07}}]\label{Structgrn}
Let $S$ be an $\FF_p$-scheme and let $X$ be a smooth $S$-scheme. For $n,q\ge 0$, denote
by $\gr^n\WC{}{q}{X/S}$ the $n$-th graded piece of the canonical filtration
\eqn{ \Fil^n\WC{}{q}{X/S} = 
V^n \WC{}{q}{X/S}+dV^n \WC{}{q}{X/S} = \Ker(\WC{}{q}{X/S} \to \WC{n}{q}{X/S}). }
Then we have an exact sequence of $\sO_X$-modules
\eq{structgrn}{ 0\lra F^{n+1}_{X*}\frac{\Omega^q_{X/S}}{B_n\Omega^q_{X/S}}\xra{V^n} 
\gr^n\WC{}{q}{X/S} \xra{U_n} F^{n+1}_{X*}\frac{\Omega^{q-1}_{X/S}}{Z_n\Omega^{q-1}_{X/S}} \lra
0, }
where the map $U_n$ is given by $V^n(\alpha)+dV^n(\beta) \mapsto \beta$ and the
$\sO_X$-module structure on ${\rm gr}^n\WC{}{q}{X/S}$ is given via
\eqn{\sO_X=\frac{W_n\sO_X}{VW_{n-1}\sO_X} \xra{\ F\ } \frac{W_{n+1}\sO_X}{pW_n\sO_X}. }
Furthermore $F^n_{X/S*}\frac{\Omega^q_{X/S}}{B_n\Omega^q_{X/S}}$ and
$F^n_{X/S*}\frac{\Omega^{q-1}_{X/S}}{Z_n\Omega^{q-1}_{X/S}}$ are locally free
$\sO_{X^{(p^n)}}$-modules.
\end{thm}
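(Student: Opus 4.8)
The substantive input has already been packaged into Proposition~\ref{KerR}, Corollary~\ref{ImFn} and Lemma~\ref{ImFnd}, so the plan is to obtain the exactness of \eqref{structgrn} by pure diagram chasing, and then to deal separately with the $\sO_X$-module structure, the Frobenius twists and the local freeness of the quotient sheaves. First I would identify $\gr^n\WC{}{q}{X/S}$. Since $\WC{}{\sbul}{X/S}\to\WC{m}{\sbul}{X/S}$ is an epimorphism, $\WC{}{q}{X/S}/\Fil^m\WC{}{q}{X/S}=\WC{m}{q}{X/S}$ canonically for every $m$, whence
\[ \gr^n\WC{}{q}{X/S}=\Fil^n/\Fil^{n+1}=\Ker\bigl(R:\WC{n+1}{q}{X/S}\to\WC{n}{q}{X/S}\bigr). \]
By the exact sequence of Proposition~\ref{KerR} this kernel is the image of $dV^n+V^n$ on $F^n_*\Omega^{q-1}_{X/S}\oplus F^n_*\Omega^q_{X/S}$, and the kernel of $dV^n+V^n$ there is the image of $\theta:=(1\otimes F^n,\,-1\otimes F^n d)$. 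So $\gr^n\WC{}{q}{X/S}$ is canonically the cokernel of $\theta$, the quotient map $F^n_*\Omega^{q-1}_{X/S}\oplus F^n_*\Omega^q_{X/S}\to\gr^n\WC{}{q}{X/S}$ sending $(\beta,\alpha)$ to $dV^n(\beta)+V^n(\alpha)$.

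Next I would read off the two maps from this presentation. Composing $\alpha\mapsto(0,\alpha)$ with the quotient gives $V^n:F^n_*\Omega^q_{X/S}\to\gr^n\WC{}{q}{X/S}$; its kernel is the set of $\alpha$ with $(0,\alpha)\in\image(\theta)$, i.e.\ of the forms $\alpha=-(1\otimes F^nd)(\xi)$ with $(1\otimes F^n)(\xi)=0$, which by Lemma~\ref{ImFnd} is precisely $B_n\Omega^q_{X/S}$. Thus $V^n$ induces an injection $F^n_*\Omega^q_{X/S}/B_n\Omega^q_{X/S}\inj\gr^n\WC{}{q}{X/S}$. Composing the first projection with the quotient by $Z_n\Omega^{q-1}_{X/S}=\image(1\otimes F^n)$ (Corollary~\ref{ImFn}~(i)) kills $\image(\theta)$, hence defines a surjection $U_n:\gr^n\WC{}{q}{X/S}\to F^n_*\Omega^{q-1}_{X/S}/Z_n\Omega^{q-1}_{X/S}$ with $U_n(V^n(\alpha)+dV^n(\beta))=\bar\beta$, so $U_n\circ V^n=0$. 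For exactness in the middle: if $U_n$ kills the class of $(\beta,\alpha)$ then $\beta=(1\otimes F^n)(\xi)$ for some $\xi$, hence $(\beta,\alpha)-\theta(\xi)=(0,\alpha+(1\otimes F^nd)(\xi))$ and the class of $(\beta,\alpha)$ lies in the image of $V^n$. This yields the short exact sequence \eqref{structgrn}.

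It then remains to match this with the stated form. Using \eqref{VFlin} together with \eqref{Vxdy}, \eqref{Vd} and \eqref{FTeich} one checks that $V^n$ and $U_n$ are linear for the $W_{n+1}(\sO_X)$-actions that factor through $F^n:W_{n+1}(\sO_X)\to W_1(\sO_X)=\sO_X$ on both sides, via $a\cdot V^n(\alpha)=V^n(F^n(a)\alpha)$ and the Leibniz rule; this is exactly the $\sO_X$-module structure of the statement, and translating the Witt-vector Frobenius $F^n$ into the geometric one identifies $F^n_*\Omega^q_{X/S}/B_n\Omega^q_{X/S}$ and $F^n_*\Omega^{q-1}_{X/S}/Z_n\Omega^{q-1}_{X/S}$ with $F^{n+1}_{X*}(\Omega^q_{X/S}/B_n\Omega^q_{X/S})$ and $F^{n+1}_{X*}(\Omega^{q-1}_{X/S}/Z_n\Omega^{q-1}_{X/S})$. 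This is the relative analogue of \cite[I, Cor.~3.9]{Il79}, carried out as there.

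Finally, for the local freeness assertion: $F^n_{X/S*}\Omega^q_{X/S}$ is locally free over $\sO_{X^{(p^n)}}$ (as $F_{X/S}$ is finite locally free, $X/S$ being smooth), while $B_n\Omega^q_{X/S}$ and $Z_n\Omega^{q-1}_{X/S}$ are locally free $\sO_{X^{(p^n)}}$-submodules whose formation commutes with any base change $S'\to S$ (Proposition~\ref{propBnZn}~(i)); hence the quotients are $\sO_{X^{(p^n)}}$-modules of finite presentation whose formation commutes with base change, and they are flat over $S$ because the inclusions $B_n\inj\Omega^q$, $Z_n\inj\Omega^{q-1}$ remain injective after every base change. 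The fibrewise criterion for local freeness then reduces the claim to the case where $S$ is a perfect field, which is \cite[0, (2.2.8)]{Il79}; alternatively one argues directly by induction on $n$ from the Cartier isomorphisms \eqref{defBn}, \eqref{defZn} as in \cite[0, 2.2]{Il79}. The main obstacle is not the exactness of \eqref{structgrn}, which is essentially formal once Proposition~\ref{KerR}, Corollary~\ref{ImFn} and Lemma~\ref{ImFnd} are available, but rather keeping the Frobenius twists and the twisted $\sO_X$-module structure straight (both are forced but must be stated with care) and the reduction argument for local freeness.
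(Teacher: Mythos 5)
Your argument for the exactness of \eqref{structgrn} is correct and is exactly the route the paper takes: its proof consists of the single sentence that exactness ``follows from Proposition \ref{KerR}, Corollary \ref{ImFn} and Lemma \ref{ImFnd}'', and your identification of $\gr^n\WC{}{q}{X/S}$ with $\Ker(R:\WC{n+1}{q}{X/S}\to\WC{n}{q}{X/S})$, presented as the cokernel of $(1\otimes F^n,-1\otimes F^nd)$, together with the diagram chase computing $\Ker(V^n)=B_n\Omega^q_{X/S}$ (Lemma \ref{ImFnd}) and $\Ker(U_n)=\Im(V^n)$ via $Z_n\Omega^{q-1}_{X/S}=\Im(1\otimes F^n)$ (Corollary \ref{ImFn}), is precisely the intended chase. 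Your treatment of the twisted module structure (cited back to \cite[I, Cor.~3.9]{Il79}) also matches the paper.

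Where you genuinely diverge is the local freeness of the two quotients. The paper does \emph{not} reduce to the fibres of $S$: it first uses \'etale base change (Proposition \ref{propBnZn}~(ii)) to reduce to $X=\A^d_S$, then uses base-change compatibility in $S$ (Proposition \ref{propBnZn}~(i)) to reduce to $S=\Spec\FF_p$ and finally to $S=\Spec k$ with $k$ algebraically closed, and there it concludes by a homogeneity argument: the quotient sheaves are coherent on $(\A^d_k)^{(p^n)}\cong\A^d_k$, hence locally free on a dense open, and they are translation-invariant, so locally free everywhere. This is self-contained and needs no input beyond Proposition \ref{propBnZn}. Your route instead invokes the fibrewise flatness criterion: you establish $S$-flatness of the quotients from the base-change injectivity of $B_n$ and $Z_n$, and then need the statement over a field as an external input. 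That is a perfectly valid alternative (and arguably more conceptual), but two small points need tightening: the residue fields $k(s)$ of $S$ need not be perfect, so before quoting \cite[0, 2.2.8]{Il79} you should pass to an algebraic closure of $k(s)$ (local freeness descends along the faithfully flat map $X^{(p^n)}_{\overline{k(s)}}\to X^{(p^n)}_{k(s)}$); and what you really need over a field is that $B_n$ and $Z_n$ are locally \emph{direct summands} of $F^n_{X/S*}\Omega^q_{X/S}$, which is the form of Illusie's result over a perfect field but is not among the properties the paper extracts in Proposition \ref{propBnZn}, so it must be imported explicitly (or replaced by your inductive Cartier argument, which does work). Neither point is a real gap, but both must be said.
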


\begin{proof}
The exactness of the sequence follows from Proposition \ref{KerR}, Corollary \ref{ImFn}
and Lemma \ref{ImFnd}. The second statement is proven as in \cite[I, Cor.~3.9]{Il79}.
By \'etale base change (Proposition \ref{propBnZn}, (ii)), we reduce the question of
the local freeness of the two extreme $\sO_{X^{(p^n)}}$-modules in the exact sequence
to the case $X=\A^d_S$. Since everything is compatible with arbitrary base change in
the base $S$ (by Proposition \ref{propBnZn}, (i)), we may also assume $S=\Spec
\FF_p$, and even $S=\Spec k$ with $k$ algebraically closed. But now the sheaves
in question are coherent on $(\A^d_k)^{(p^n)}\cong\A^d_k$, hence locally free in some
non-empty open subset, whose translates under certain closed points cover the whole of
$(\A^d_k)^{(p^n)}$. As they are invariant under translation, this gives the statement.
\end{proof}

\section{The Hodge-Witt trace morphism for projective spaces}\label{HWtrp}

Let $X$ be a noetherian $\FF_p$-scheme with a dualizing complex, and let $f : Y \to X$
be a projective complete intersection morphism of virtual relative dimension $0$. Our
goal in the next two sections is to prove that, given a factorization $f = \pi \circ i$,
where $\pi : P = \P^{\,d}_X \to X$ is the structural morphism of some projective space over
$X$, and $i : Y \inj P$ is a closed immersion, one can define for all $n \geq 1$ a
morphism
\eqn{ \tau_{i,\pi,n} : \RR f_*W_n\sO_Y \lra W_n\sO_X }
so as to satisfy the following properties:

\romain For $n = 1$, $\tau_{i,\pi,n}$ is the morphism $\tau_f$ of Theorem 
\ref{Thtau};

\romain For variable $n$, $\tau_{i,\pi,n}$ commutes with $R$, $F$ and $V$. 

Our construction of $\tau_{i,\pi,n}$ will be based on a generalization for arbitrary
$n$ of the description of $\tau_f$ given in Proposition \ref{Taugamma}: we will
construct on the one hand a trace morphism $\RR \pi_*\WC{n}{d}{P/X}[d] \to W_n\sO_X$,
which will be a generalization of the trace morphism $\Trp_{\pi}$ for the projective
space, and on the other hand a morphism $i_*W_n\sO_Y \to \WC{n}{d}{P/X}[d]$ which will be
a generalization of the morphism $\gamma_f : \sO_Y \to \omega_{P/X}[d]$ defined in 
\eqref{defgammaf}.

We begin with the trace morphism for projective spaces.

\subsection{}\label{Ordi}
We recall first from \cite[D\'ef.~1.1]{Il90} that a smooth proper
$\FF_p$-morphism $f:X\to S$ is called {\em ordinary}, if it satisfies
\eqn{ R^if_*B\Omega^q_{X/S}=0,\quad \text{for all } i,q\ge 0.}
This notion is compatible with arbitrary base-change in the base $S$, and
$\P^{\,d}_{\FF_p}$ is ordinary over $\Spec \FF_p$ \cite[Prop.~1.2,
Prop.~1.4]{Il90}. Hence if $\sE$ is a locally free $\sO_X$-module of finite rank on
some $\FF_p$-scheme $X$, then $\P(\sE)=\Proj(\Sym_{\sO_X}\sE) $ is
ordinary over $X$.

\begin{lem}\label{cohord}
Let $f:X\to S$ be ordinary. Then, for all $n\ge 1$ and $q\ge 0$, 
\eqn{ V^n : F^{n+1}_{S*}\RR f_*\Omega^q_{X/S}\riso \RR f_*\gr^n\WC{}{q}{X/S} }
is an isomorphism in the derived category of quasi-coherent $\sO_S$-modules \lp where the
$\sO_S$-module structure on the right hand side comes from the $\sO_X$-module structure
defined in Theorem \ref{Structgrn}\,\rp.
\end{lem}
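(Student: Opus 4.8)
The statement we want is that, for an ordinary morphism $f : X \to S$, the map $V^n$ induces an isomorphism $F^{n+1}_{S*}\RR f_*\Omega^q_{X/S} \riso \RR f_*\gr^n\WC{}{q}{X/S}$ in the derived category of quasi-coherent $\sO_S$-modules. The natural approach is to apply $\RR f_*$ to the short exact sequence of Theorem \ref{Structgrn},
\eqn{ 0\lra F^{n+1}_{X*}\frac{\Omega^q_{X/S}}{B_n\Omega^q_{X/S}} \xra{V^n} \gr^n\WC{}{q}{X/S} \xra{U_n} F^{n+1}_{X*}\frac{\Omega^{q-1}_{X/S}}{Z_n\Omega^{q-1}_{X/S}} \lra 0, }
and to exploit the ordinariness hypothesis to kill the outer terms after a suitable twist, and to identify the inner term with $\RR f_*\Omega^q_{X/S}$ up to Frobenius pushforward.

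First I would rewrite the two outer terms using the $B_n$, $Z_n$ filtrations. The chain of inclusions \eqref{inclBnZn} gives finite filtrations of $B_n\Omega^q_{X/S}$ with successive quotients $B_{i+1}\Omega^q/B_i\Omega^q$; by \eqref{defBn} each such quotient is $(C^{-1})$-isomorphic (hence as an $\sO_{X^{(p)}}$-module, after one Frobenius twist) to $B_1\Omega^q_{X^{(p^i)}/S}= B\Omega^q$ on an appropriate Frobenius base-change of $X$. Since $f$ is ordinary and ordinariness is stable under base change in $S$ (and $X^{(p^i)} \to S$ is again ordinary as it is the base change of $f$ along $F_S^i$), we get $\RR f^{(p^i)}_* B\Omega^q_{X^{(p^i)}/S} = 0$, hence $\RR f_* B_n\Omega^q_{X/S} = 0$ by dévissage along the filtration. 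The short exact sequence $0 \to B_n\Omega^q \to \Omega^q \to \Omega^q/B_n\Omega^q \to 0$ then yields $\RR f_*(\Omega^q/B_n\Omega^q) \riso \RR f_*\Omega^q_{X/S}$, and taking the (exact, since $F_S$ is flat on the relevant local models — in fact we only need $F^{n+1}_{S*}$ to be exact, which is automatic as it is a pushforward along a homeomorphism) Frobenius pushforward, $\RR f_*\bigl(F^{n+1}_{X*}(\Omega^q/B_n\Omega^q)\bigr) \riso F^{n+1}_{S*}\RR f_*\Omega^q_{X/S}$, using that $f \circ F^{n+1}_X = F^{n+1}_S \circ f$ so that $\RR f_* F^{n+1}_{X*} = F^{n+1}_{S*}\RR f_*$. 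Next, for the third term: $Z_n\Omega^{q-1}/B_n\Omega^{q-1}$ computes the cohomology sheaf $\sH^{q-1}$ under iterated Cartier, and one checks via \eqref{defZn} and the $B_\bullet$-filtration argument that $\RR f_*(\Omega^{q-1}/Z_n\Omega^{q-1})$ vanishes: indeed $\Omega^{q-1}/Z_1\Omega^{q-1} \cong B\Omega^q$ (the image of $d$), which has vanishing $\RR f_*$ by ordinariness, and inductively $Z_n/Z_{n+1} \cong B_{n+1}\Omega^{q-1}/B_n\Omega^{q-1}$ has vanishing $\RR f_*$ as well, so $\RR f_*(\Omega^{q-1}/Z_n\Omega^{q-1}) = 0$ and hence the third term of the sequence has vanishing $\RR f_*$ after Frobenius pushforward.

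Putting these together: applying $\RR f_*$ to the short exact sequence of Theorem \ref{Structgrn} gives a distinguished triangle whose first and third vertices become, respectively, $F^{n+1}_{S*}\RR f_*\Omega^q_{X/S}$ (via the identification above) and $0$. Hence the middle arrow $V^n : F^{n+1}_{S*}\RR f_*\Omega^q_{X/S} \riso \RR f_*\gr^n\WC{}{q}{X/S}$ is an isomorphism, which is exactly the claim. I expect the main technical obstacle to be the bookkeeping in the dévissage: one must make sure the filtration steps $B_{i+1}\Omega^q/B_i\Omega^q$ and $Z_i\Omega^{q-1}/Z_{i+1}\Omega^{q-1}$ are genuinely pushforwards of $B\Omega^\bullet$-type sheaves on the correct Frobenius-twisted copies of $X$ over $S$, and that ordinariness of $f$ really does propagate to all those twists — this is where Propositions \ref{propBnZn} and the base-change compatibility, together with the stability of ordinariness under base change in $S$, do the work. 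Care is also needed that all identifications are $\sO_S$-linear for the module structure specified in Theorem \ref{Structgrn} (the one obtained by twisting $\sO_X = W_n\sO_X/VW_{n-1}\sO_X \xra{F} W_{n+1}\sO_X/pW_n\sO_X$), but this is a formal check once the vanishing is in place.
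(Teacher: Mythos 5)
Your proposal is correct and follows essentially the same route as the paper: one applies $\RR f_*$ to the exact sequence of Theorem \ref{Structgrn} and uses the iterated Cartier isomorphisms \eqref{defBn}, \eqref{defZn} together with ordinariness of the Frobenius twists $X^{(p^i)}/S$ to obtain $\RR f_*B_n\Omega^q_{X/S}=0$ and $\RR f_*Z_n\Omega^{q-1}_{X/S}\riso \RR f_*\Omega^{q-1}_{X/S}$ (the paper merely packages your d\'evissage as an induction on $n$ via a commutative diagram). One harmless slip: the graded piece $Z_i\Omega^{q-1}_{X/S}/Z_{i+1}\Omega^{q-1}_{X/S}$ is isomorphic to $B\Omega^{q}_{X^{(p^i)}/S}$ (via $d$ and iterated $C^{-1}$), not to $B_{i+1}\Omega^{q-1}_{X/S}/B_{i}\Omega^{q-1}_{X/S}$, but since both are $B\Omega$-type sheaves on Frobenius twists the vanishing you invoke still holds.
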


\begin{proof}
This follows immediately from Theorem \ref{Structgrn} and the following claim:
\eq{imdirZB}{R^if_*Z_n\Omega^q_{X/S} \riso R^if_*\Omega^q_{X/S}, \quad
R^if_*B_n\Omega^q_{X/S}=0,\quad\text{for all } i,q\ge 0, n\ge 1.}
We prove this by induction on $n$. The statement for $B_1$ holds by definition of
ordinarity and for $Z_1$ follows from the exact sequence
\eqn{ 0\lra Z\Omega^q_{X/S}\lra \Omega^q_{X/S}\stackrel{d}{\lra} B\Omega^{q+1}_{X/S}\lra 0. }
Now for the general case consider the following commutative diagram (in which $f_*$ is
viewed as a functor on the category of abelian sheaves for the Zariski topology on $|X|
= |X^{(p)}|$)
\eqn{ \xymatrix{
R^i f_* Z_n\Omega^q_{X^{(p)}/S}\ar[r]^-{C^{-1}_{X/S}}\ar[d] & 
R^i f_*\frac{Z_{n+1}\Omega^q_{X/S}}{B_1\Omega^q_{X/S}}\ar[d] & 
R^if_*Z_{n+1}\Omega^q_{X/S}\ar[d]\ar[l] \\
R^i f_*\Omega^q_{X^{(p)}/S}\ar[r]^-{C^{-1}_{X/S}} &  
R^i f_* \frac{Z_1\Omega^q_{X/S}}{B_1\Omega^q_{X/S}} & 
R^if_* Z_1\Omega^q_{X/S}\hspace{3mm}.\hspace{-3mm}\ar[l]
} }
The horizontal maps are isomorphisms as is the vertical map on the left by induction
(notice that $X^{(p)}/S$ is also ordinary). Hence all maps in the diagram are
isomorphisms, which yields the claim for $Z_{n+1}$. To prove the statement for
$B_{n+1}$ it is enough to consider the upper line in the diagram, with $Z$ replaced by
$B$, and one immediately obtains the statement.
\end{proof}

\subsection{}\label{Dlog}
Let $S$ be a scheme on which $p$ is locally nilpotent, and $X$ an $S$-scheme. As in the
classical case \cite[I, 3.23]{Il79}, we define for any $n \geq 1$ the log derivation
$\dlog_n$ to be the morphism of abelian sheaves 
\eqn{ \dlog_n: \sO^\times_X \lra \WC{n}{1}{X/S},\quad a\mapsto
\dlog_n(a):=\frac{d[a]}{[a]}. }
We may write simply $\dlog$ if $n$ is fixed. 

For variable $n$, the maps $\dlog_n$ satisfy the following relations:
\eq{dlog}{ R(\dlog_n(a)) = \dlog_{n-1}(a), \quad\quad F(\dlog_n(a)) = \dlog_{n-1}(a). }

The maps $\dlog_n$ allow to define Chern classes for line bundles, and to prove for 
relative Hodge-Witt cohomology the analog of the classical theorem on the cohomology of 
projective bundles (cf.~\cite[XI, Thm.~1.1]{SGA 7 II}).

\begin{thm}\label{HWcoh}
Let $X$ be an $\FF_p$-scheme, $\sE$ a locally free $\sO_X$-module of rank $d+1$,
$P = \P(\sE)$, and let $\pi: P\to X$ be the canonical projection. Denote by $\eta_n \in
H^0(X, R^1\pi_*\WC{n}{1}{P/X})$ the image under $\dlog_n$ of the class of $\sO_P(1)$ in
$R^1\pi_*\sO^\times_P$, and by $\eta^q_n \in H^0(X, R^q\pi_*\WC{n}{q}{P/X})$ its
$q$-fold cup product. Then, for all $n \ge 1$ and all $q$ such that $0 \leq q \leq d$,
we have
\eq{hwcoh1}{ R^j\pi_*\WC{n}{q}{P/X} = 0 \quad \text{for }j\neq q, }
and multiplication with $\eta^q_n$ induces an isomorphism in the derived category of
$W_n(\sO_X)$-modules
\eq{hwcoh2}{ W_n(\sO_X)[-q] \riso \RR\pi_*\WC{n}{q}{P/X}. } 
Furthermore these isomorphisms are compatible with restriction, Frobenius and
Verschiebung on both sides.
\end{thm}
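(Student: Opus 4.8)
The plan is to mimic the classical proof of the projective bundle theorem for de Rham--Witt cohomology, using the structural results already established. First I would reduce to a local situation on $X$: since the statement concerns higher direct images, which can be computed locally, and since $\RR\pi_*$ commutes with flat base change on $X$, I may assume $\sE$ is free, so $P = \P^{\,d}_X$. I would then proceed by induction on $n$, with the case $n = 1$ being the classical computation of $R^j\pi_*\Omega^q_{P/X}$ (e.g. via \cite{Il79} or the standard projective bundle computation), which gives \eqref{hwcoh1} and \eqref{hwcoh2} for $n=1$, with the isomorphism realized by cup product with $\eta_1^q = (\dlog_1[\sO_P(1)])^q$.

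For the induction step from $n$ to $n+1$, the key tool is the short exact sequence of $\sO_X$-modules from Theorem \ref{Structgrn},
\eq{grnseq}{ 0 \lra F^{n+1}_{P*}\frac{\Omega^q_{P/X}}{B_n\Omega^q_{P/X}} \xra{V^n} \gr^n\WC{}{q}{P/X} \xra{U_n} F^{n+1}_{P*}\frac{\Omega^{q-1}_{P/X}}{Z_n\Omega^{q-1}_{P/X}} \lra 0, }
combined with the short exact sequence $0 \to \Fil^n\WC{}{q}{P/X}/\Fil^{n+1} \to \WC{n+1}{q}{P/X} \to \WC{n}{q}{P/X} \to 0$, i.e. $0 \to \gr^n\WC{}{q}{P/X} \to \WC{n+1}{q}{P/X} \to \WC{n}{q}{P/X} \to 0$. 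Since $\pi : P = \P^{\,d}_X \to X$ is ordinary (as recalled in \ref{Ordi}), Lemma \ref{cohord} tells us that $\RR\pi_*\gr^n\WC{}{q}{P/X} \cong F^{n+1}_{X*}\RR\pi_*\Omega^q_{P/X}$; in particular $R^j\pi_*\gr^n\WC{}{q}{P/X} = 0$ for $j \neq q$ and is a locally free $\sO_X$-module (a twist of $F^{n+1}_{X*}\sO_X$) concentrated in degree $q$. Feeding this and the inductive hypothesis $R^j\pi_*\WC{n}{q}{P/X} = 0$ for $j \neq q$ into the long exact sequence of $R^\bullet\pi_*$ associated to $0 \to \gr^n \to \WC{n+1}{q}{} \to \WC{n}{q}{} \to 0$ immediately yields \eqref{hwcoh1} for $n+1$, and shows $\RR\pi_*\WC{n+1}{q}{P/X}$ is concentrated in degree $q$.

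It then remains to identify this single cohomology module, together with the map given by cup product with $\eta_{n+1}^q$, with $W_{n+1}(\sO_X)$. Here I would argue as follows: cup product with $\eta_{n+1}^q$ gives a morphism $W_{n+1}(\sO_X)[-q] \to \RR\pi_*\WC{n+1}{q}{P/X}$ which is compatible, via $R$, with the corresponding morphism for $n$ (using $R(\eta_{n+1}) = \eta_n$ from \eqref{dlog}) and, via the inclusion $\gr^n \inj \WC{n+1}{q}{}$, with the degree-$n$ piece. Concretely one gets a map of short exact sequences from $0 \to V^nW_n(\sO_X) \to W_{n+1}(\sO_X) \to W_n(\sO_X) \to 0$ to $0 \to R^q\pi_*\gr^n\WC{}{q}{} \to R^q\pi_*\WC{n+1}{q}{} \to R^q\pi_*\WC{n}{q}{} \to 0$; by the inductive hypothesis the right vertical arrow is an isomorphism, and the left vertical arrow must be checked to be an isomorphism. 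For the latter, $V^nW_n(\sO_X) \cong F^{n+1}_{X*}(\sO_X \cdot \text{something})$ and one matches it against $R^q\pi_*$ of the sub-object $F^{n+1}_{P*}(\Omega^q_{P/X}/B_n\Omega^q_{P/X})$ in \eqref{grnseq}; since $\pi$ is ordinary, $R^q\pi_*$ of the quotient term $F^{n+1}_{P*}(\Omega^{q-1}_{P/X}/Z_n\Omega^{q-1}_{P/X})$ vanishes by Lemma \ref{cohord} (it sits in degree $q-1$), so $R^q\pi_*\gr^n\WC{}{q}{} \cong F^{n+1}_{X*}R^q\pi_*(\Omega^q_{P/X}/B_n\Omega^q_{P/X}) \cong F^{n+1}_{X*}R^q\pi_*\Omega^q_{P/X} \cong F^{n+1}_{X*}\sO_X$ (again by ordinarity, $R^q\pi_*B_n\Omega^q_{P/X}=0$), and the $n=1$ case identifies the cup-product generator. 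The five lemma then forces the middle arrow $W_{n+1}(\sO_X)[-q] \to \RR\pi_*\WC{n+1}{q}{P/X}$ to be an isomorphism. Finally, compatibility with $R$ is built into the construction; compatibility with $F$ and $V$ follows from $F(\eta_{n+1}) = \eta_n$ (from \eqref{dlog}) together with the projection formula $F(x \cdot \pi^*y) $-type identities and the $F$-$V$ relations on $\WC{\bullet}{\bullet}{P/X}$, checked degree by degree using the inductive structure.

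The main obstacle I anticipate is the careful bookkeeping in the last step: making the cup-product map with $\eta_{n+1}^q$ genuinely compatible with \emph{both} the transition map $R$ to level $n$ \emph{and} the graded-piece inclusion, so that the five-lemma argument applies, and then nailing down that the induced map on the $\gr^n$-term is precisely the standard isomorphism (rather than off by a unit or a Frobenius twist). This requires tracking the $\sO_X$-module structure on $\gr^n\WC{}{q}{P/X}$ — which Theorem \ref{Structgrn} defines via $\sO_X = W_n\sO_X/VW_{n-1}\sO_X \xra{F} W_{n+1}\sO_X/pW_n\sO_X$ — and checking that under this twisted structure the cup product with $\eta_{n+1}^q$ restricts on the sub-object to $V^n$ applied to cup product with $\eta_1^q$, using the relation \eqref{Vxdy}, i.e. $V(\omega d\eta) = V(\omega)dV(\eta)$, and $V(\omega F\eta) = V(\omega)\eta$ from \eqref{VFlin}. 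Once the degree-$1$ normalization and these $F$-$V$ compatibilities are in hand, the rest is formal.
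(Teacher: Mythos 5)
Your proposal is correct and follows essentially the same route as the paper: induction on $n$ via the graded pieces $\gr^n\WC{}{q}{P/X}$, ordinarity of $\P(\sE)/X$ together with Lemma \ref{cohord} to control $\RR\pi_*$ of the graded piece, the relations \eqref{dlog} (giving $R(w\eta^q_{n+1})=R(w)\eta^q_n$, $F(w\eta^q_{n+1})=F(w)\eta^q_n$ and hence $V(w)\eta^q_n = V(wF(\eta^q_n))$) for the $R$-, $F$-, $V$-compatibilities, and a five-lemma argument reducing to the classical $n=1$ case of SGA~7, XI. The "obstacle" you flag at the end — matching the cup product on the graded sub-object with $V^n$ of the $n=1$ isomorphism — is resolved exactly by the $V$-compatibility you already derived, which is how the paper handles it.
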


\begin{proof}
To prove \eqref{hwcoh1}, we can argue by induction using the exact sequences 
\eqn{ 0 \lra \gr^n \WC{n+1}{q}{P/X} \lra \WC{n+1}{q}{P/X} \lra \WC{n}{q}{P/X} \lra 0. }
For $n=1$, the claim follows from \cite[XI, Thm.~1.1]{SGA 7 II}, and, since $\P(\sE)$ is
ordinary over $X$, Lemma \ref{cohord} implies similarly the claim for all $n$. 

Therefore, we obtain a canonical isomorphism
\eq{hwisom}{ \RR\pi_*\WC{n}{q}{P/X} \riso R^q\pi_*\WC{n}{q}{P/X}[-q], }
and we can define the morphism \eqref{hwcoh2} as corresponding via \eqref{hwisom} and 
translation to the morphism
\eq{hwcoh3}{ W_n(\sO_X) \lra R^q\pi_*\WC{n}{q}{P/X}, \quad w \mapsto w\eta^q_n. } 
This reduces the proof of the theorem to proving that \eqref{hwcoh3} is an 
isomorphism, compatible with $R$, $F$ and $V$.

From \eqref{dlog}, we get for all $w \in W_{n+1}(\sO_X)$ the relations
\eq{dlogRF}{ R(w\eta^q_{n+1})=R(w)\eta^q_n, \quad\quad F(w\eta^q_{n+1}) = F(w)\eta^q_n }
in $R^q\pi_*\WC{n}{q}{P/X}$. From the second relation, we also get 
\eq{dlogV}{ V(w\eta^q_{n-1}) = V(wF(\eta^q_n)) = V(w)\eta^q_n }
for all $w \in W_{n-1}(\sO_X)$. So the homomorphisms \eqref{hwcoh3} satisfy the 
required compatibilities.

To prove that the homomorphisms \eqref{hwcoh3} are isomorphisms, we may now again argue
by induction on $n$, using the compatibility with $R$ and $V$. Then Lemma \ref{cohord}
reduces the proof to the case $n=1$, which is known by \cite[Exp. XI, Thm.~1.1]{SGA 7
II}.
\end{proof}

\begin{defn}\label{DefHWtrp}
Under the assumptions of Theorem \ref{HWcoh}, we define the Hodge-Witt trace morphism
for the projective space $\P(\sE)$ to be the $W_n\sO_X$-linear map
\eq{drwTrp}{ \Trp_{\pi,n}: \RR\pi_*\WC{n}{d}{\P(\sE)/X}[d] \riso W_n\sO_{X} }
obtained by inverting the isomorphism \eqref{hwcoh2}, shifting by $d$ and multiplying
by $(-1)^{d(d-1)/2}$. Theorem \ref{HWcoh} implies that $\Trp_{\pi,n}$ is compatible with
restriction, Frobenius and Verschiebung. 
\end{defn}

\begin{prop}\label{CompTrp}
With the hypotheses of Theorem \ref{HWcoh}, assume in addition that $X$ is locally
noetherian. Then the morphism
\eq{W1Trp}{ \Trp_{\pi,1} : \RR\pi_*\Omega^d_{\P(\sE)/X}[d] \riso \sO_X }
defined by \eqref{drwTrp} for $n=1$ is equal to the morphism $\Trp_{\pi}$ defined by 
\cite[(2.3.5)]{Co00} for $\sO_X$. 
\end{prop}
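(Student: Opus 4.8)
The plan is to reduce the claim to the known case $d=1$ (a single projective line) by using the multiplicativity of the Chern class $\eta^q$ and the compatibility of both trace morphisms with respect to the natural stratification of $\P(\sE)$, and then to verify the case $d=1$ directly. First I would unwind the two definitions: on the one hand, $\Trp_{\pi,1}$ is by construction obtained by inverting the isomorphism $\sO_X \riso \RR\pi_*\Omega^d_{\P(\sE)/X}[d]$ given by cup product with $\eta^d$ (where now $\eta = \eta_1 \in H^0(X,R^1\pi_*\Omega^1_{\P(\sE)/X})$ is $\dlog$ of the class of $\sO_P(1)$), shifted by $d$ and twisted by $(-1)^{d(d-1)/2}$; on the other hand, Conrad's $\Trp_\pi$ from \cite[(2.3.5)]{Co00} is the normalized trace built from the residue/cup-product isomorphism for $\P^d$, characterized by the property that it sends the Čech cohomology class of $dx_1/x_1 \wedge \cdots \wedge dx_d/x_d$ (in appropriate coordinates) to $1$. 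So the statement is really the assertion that the $\dlog$-Chern-class description of the fundamental generator of $H^d(\P(\sE),\Omega^d)$ agrees, up to the sign $(-1)^{d(d-1)/2}$, with Conrad's normalization.

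The cleanest route is to localize: the formation of both morphisms commutes with base change on $X$ and is local on $X$, so I may assume $X$ affine and $\sE$ free, i.e.\ $P = \P^d_X$. Moreover, since both $\Trp_{\pi,1}$ and $\Trp_\pi$ are $\sO_X$-linear isomorphisms $\RR\pi_*\Omega^d_{P/X}[d] \riso \sO_X$, hence differ by multiplication by a global unit $c \in \Gamma(X,\sO_X)^\times$, and since both are compatible with base change, it suffices to compute $c$ after base change to $\Spec \mathbb{Z}$, i.e.\ to treat $X = \Spec \mathbb{Z}$ (or any reduced base), where $c$ is a constant. Now I would proceed by induction on $d$, using the factorization $\P^d_X \hookrightarrow$ compatibility: more precisely, use the standard open/closed stratification or the projection $\P^d \dashrightarrow \P^{d-1}$. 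The key structural input is that both trace morphisms are transitive for compositions of projections of projective bundles (for Conrad's $\Trp$ this is \cite[3.4.3, (TRA3)]{Co00}; for the Hodge-Witt/$\dlog$ side it follows from the multiplicativity $\eta^{(d)}_{\P^d} = (\text{pullback of }\eta^{(d-1)}) \cup \eta^{(1)}$ together with the projection formula and Theorem \ref{HWcoh}). The sign $(-1)^{d(d-1)/2}$ is exactly what makes the $\dlog$-normalization transitive, since $(-1)^{d(d-1)/2} = (-1)^{(d-1)(d-2)/2}\cdot(-1)^{d-1}$ and the Koszul/transposition sign picked up when splitting off one $\P^1$-factor is $(-1)^{d-1}$; I would track this carefully.

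This reduces everything to the base case $d = 1$: I must check that for $\pi : \P^1_X \to X$, cup product with $\dlog$ of the class of $\sO(1)$ gives, after the shift and with trivial sign $(-1)^{0} = 1$, the same generator of $\RR\pi_*\Omega^1_{\P^1/X}[1] \cong \sO_X$ as Conrad's residue-normalized $\Trp_\pi$. Concretely, in coordinates $x$ on $\A^1 \subset \P^1$, the class of $\sO(1)$ in $H^1(\P^1,\sO^\times)$ is represented by the Čech cocycle $x$ (transition function between the two standard charts), so $\dlog$ of it is represented by $dx/x \in \Gamma(U_0 \cap U_1, \Omega^1)$; Conrad's normalization is precisely that the trace of this class is $1$ (this is the content of the residue formula $\Res(dx/x) = 1$, which is how $\Trp$ is pinned down in \cite[(2.3.1)--(2.3.5)]{Co00}). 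So the two agree in degree $d=1$ on the nose. The main obstacle I anticipate is not any deep geometry but the bookkeeping: matching Conrad's sign conventions (his identifications $\RR\sHom(\sO_Y,-) \riso \Id$, the conventions $(1.3.6)$, $(1.3.28)$, $(2.5.2)$, and the definition of $\Trp$ via $(2.3.1)$--$(2.3.5)$) with the $\dlog$-cup-product description, and confirming that the claimed twist $(-1)^{d(d-1)/2}$ is exactly the discrepancy — i.e.\ doing the sign computation in the $d=1$ case and in the inductive transposition step without error. A secondary technical point is justifying that both morphisms are determined by their values on the explicit Čech generator, which follows from the fact (Theorem \ref{HWcoh} for $n=1$, and \cite[XI, Thm.~1.1]{SGA 7 II}) that $R^d\pi_*\Omega^d_{P/X}$ is free of rank one generated by $\eta^{(d)}$.
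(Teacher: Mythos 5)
Your localization to $P=\P^{\,d}_X$ and the observation that both morphisms are pinned down by their value on the \v{C}ech class $dx_1\wedge\cdots\wedge dx_d/x_1\cdots x_d$ (since $R^d\pi_*\Omega^d_{P/X}$ is free of rank one on $\eta_1^d$) agree with the paper's opening moves. But the mechanism you propose for general $d$ --- induction by ``splitting off one $\P^1$-factor'' and invoking transitivity of both traces for compositions of projective-bundle projections --- does not exist: for $d\geq 2$ there is no morphism $\P^{\,d}_X\to\P^{\,d-1}_X$ exhibiting $\P^{\,d}$ as a $\P^1$-bundle (the projection from a point is only a rational map, and $\P(\sO\oplus\sO(1))\to\P^{\,d-1}$ is the blow-up of $\P^{\,d}$ at a point, not $\P^{\,d}$ itself; cohomologically, $\P^{\,d}$ is not an iterated $\P^1$-bundle). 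So \cite[3.4.3, (TRA3)]{Co00} and the multiplicativity of $\eta$ cannot be chained the way you describe, and the inductive step collapses.

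The paper instead handles all $d$ at once by a direct sign computation, which is in fact the entire content of the proof and which your proposal defers (``I would track this carefully'') even in the base case. Concretely: $\Trp_{\pi,1}$ sends the class of $dx_1\wedge\cdots\wedge dx_d/x_1\cdots x_d$ to $(-1)^{d(d-1)/2}$ by Definition \ref{DefHWtrp}, so one must verify \eqref{ConradTrp}, i.e.\ that Conrad's $\Trp_\pi$ sends the same class to $(-1)^{d(d-1)/2}$. This follows from two inputs: Conrad's isomorphism $\gamma:R^d\pi_*(\omega_{P/X})\riso\sO_X$ of \cite[(2.3.1)]{Co00} sends that class to $(-1)^{d(d+1)/2}$ by \cite[(2.3.3)]{Co00} (note this is $-1$, not $+1$, already for $d=1$, contrary to your assertion that ``Conrad's normalization is precisely that the trace of this class is $1$''); and the map induced in degree $0$ by $\Trp_\pi$ differs from $\gamma$ by an extra $(-1)^d$, because the isomorphism $\psi:\RR\pi_*\riso\LL(R^d\pi_*)[-d]$ satisfies $\psi_{\sF\hbul[m]}=(-1)^{md}\psi_{\sF\hbul}[m]$, hence contributes $(-1)^{d^2}=(-1)^d$ when applied to $\omega_{P/X}[d]$. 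Since $(-1)^d(-1)^{d(d-1)/2}=(-1)^{d(d+1)/2}$, the two normalizations match. Without this translation-sign analysis your argument is not complete even for $d=1$; with it, no induction is needed.
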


\begin{proof}
By \eqref{hwcoh2}, it suffices to prove the proposition locally on $X$. So we may
assume that $\P(\sE) = \P^{\,d}_X$. Let $X_0,\ldots,X_d$ be the standard homogeneous
coordinates on $\P^{\,d}_X$, $x_i = X_i/X_0$, $U_i = D_+(X_i)$, and let $\fU =
(U_i)_{i=0,\ldots,d}$ be the corresponding covering of $\P^{\,d}_X$. Using \v{C}ech
cohomology relative to $\fU$, $\eta_1$ is defined by the 1-cocycle 
$(\dlog(X_j/X_i))_{i<j} = (d(X_j/X_i)/(X_j/X_i))_{i<j}$, and $\eta_1^d$ by the
$d$-cocycle given by 
\mln{ \dlog(X_1/X_0) \wedge \cdots \wedge \dlog(X_d/X_{d-1}) = \\  
dx_1/x_1 \wedge (dx_2/x_2 - dx_1/x_1) \wedge \ldots \wedge (dx_d/x_d - dx_{d-1}/x_{d-1}) = \\
dx_1\wedge\cdots\wedge dx_d/x_1\cdots x_d }
on $U_0 \cap \ldots \cap
U_d$. Thus $\Trp_{\pi,1}$ is the only morphism which induces on degree $0$ cohomology
the isomorphism mapping the class $dx_1\wedge\cdots\wedge dx_d/x_1\cdots x_d$ to
$(-1)^{d(d-1)/2}$.

To prove the proposition, it suffices to check that, with Conrad's definitions, the map
induced by $\Trp_{\pi} : \RR f_*(f^\sharp(\sO_X)) = \RR f_*(\omega_{P/X}[d]) \to \sO_X$
on degree $0$ cohomology is such that
\eq{ConradTrp}{ \Trp_\pi(dx_1\wedge\cdots\wedge dx_d/x_1\cdots x_d) = (-1)^{d(d-1)/2}. }
As $(-1)^d(-1)^{d(d-1)/2} = (-1)^{d(d+1)/2}$, this follows from the definition of the
isomorphism \cite[(2.3.1)]{Co00}
\eq{2.3.1}{ \gamma : R^d\pi_*(\omega_{P/X}) \riso \sO_X, } 
which sends $dx_1\wedge\cdots\wedge dx_d/x_1\cdots x_d$ to $(-1)^{d(d+1)/2}$ 
\cite[(2.3.3)]{Co00}, and from the discussion on pp.~35-36 of \cite{Co00}, which
explains that an additional $(-1)^d$ sign is required to recover \eqref{2.3.1} from the
map induced in degree $0$ by $\Trp_\pi$ (note that by ``induced'', we mean  
that we use here as we always do the standard identifications \cite[(1.3.1), (1.3.4)]{Co00}
to compute the cohomology objects of a translated complex).

This ends the proof of the proposition, but, as formula \eqref{ConradTrp} is only 
implicit in the discussion [Co00, p.~35-36], it may be worth adding a few lines to 
give a proof explaining where this extra $(-1)^d$ sign comes from. Conrad's 
construction of the projective trace $\Trp_{\pi}$ is the same as Hartshorne's in 
\cite[III, 4.3]{Ha66}, but using \cite[Lemma 2.1.1]{Co00} instead of \cite[I, Proposition 
7.4]{Ha66}. Because $\pi_*$ has cohomological dimension $d$ on the category of 
quasi-coherent $\sO_P$-modules, and any quasi-coherent $\sO_P$-module can be written 
as a quotient of modules for which the functors $R^i\pi_*$ vanish for $i \neq d$ 
\cite[III, Lemmas 4.1 and 4.2]{Ha66}, Lemma 2.1.1 of \cite{Co00} provides an
isomorphism of functors on $D(\Qcoh(\sO_P))$
\eqn{ \psi : \RR \pi_* \riso \LL(R^d\pi_*)[-d]. }
For complexes of the form $\sF\hbul = \sF[0]$, where $\sF$ is a quasi-coherent 
$\sO_P$-module, $\psi_{\sF\hbul}$ induces in degree $d$ the identity of 
$R^d\pi_*(\sF)$ \cite[Cor.~2.1.2]{Co00}. Moreover, the compatiblity of $\psi$ with 
translations, given by \cite[(2.1.1)]{Co00}, implies that, for any $m \in \Z$, we 
have 
\eqn{ \psi_{\sF\hbul[m]} = (-1)^{md}\psi_{\sF\hbul}[m]. }
In particular, $\psi_{\omega_{P/X}[d]}$ induces in degree $0$ multiplication by
$(-1)^{d^2} = (-1)^d$ on $R^d\pi_*(\omega_{P/X})$. Now, for $\sG\hbul \in
\Dpqc(\sO_X)$, the trace morphism for $\sG\hbul$ is the composition
\eqn{ \xymatrix{
 \entry{\RR \pi_*(\pi^\sharp\sG\hbul)} 
\ar[d]^-{\Trp_{\pi,\sG}\raisebox{-.4mm}{\hspace{-.4mm}$\cdot$}} \ar@{=}[r] &
\entry{\RR\pi_*(\omega_{P/X}[d]\otimes_{\sO_P}\pi^*\sG\hbul)} 
\ar[r]_-{\sim}^-{\psi_{\pi^\sharp\sG}\raisebox{-.75mm}{\hspace{-.4mm}$\cdot$}} & 
\entry{\LL(R^d\pi_*)(\omega_{P/X}[d] \otimes_{\sO_P} \pi^*\sG\hbul)[-d]}
\ar[d]_-{\sim} \\
\entry{\sG\hbul} & \entry{(R^d\pi_*)(\omega_{P/X}) \otimes_{\sO_X} \sG\hbul}  
\ar[l]^-{\sim}_-{\gamma\otimes\Id} & 
\entry{(R^d\pi_*)(\omega_{P/X} \otimes_{\sO_P} \pi^*\sG\hbul)} \ar[l]^-{\sim}
} }
(see \cite[III, 4.3]{Ha66} for details). Taking $\sG\hbul = \sO_X[0]$, and applying the 
previous remark to $\pi^\sharp\sO_X = \omega_{P/X}[d]$, we obtain that $\Trp_{\pi,\sO_X}$ 
induces $(-1)^d\gamma$ in degree $0$, which gives \eqref{ConradTrp}.
\end{proof}

\begin{rem}
Because of the differences in sign conventions between Hartshorne \cite[III,
Th~3.4]{Ha66} and Conrad \cite[2.3]{Co00}, our trace morphism $\Trp_{\pi,n}$ differs by
$(-1)^{d(d-1)/2}$ from the trace morphism defined by Ekedahl \cite[I, Lemma 3.2]{Ek84}
when $X = \Spec k$, $k$ being a perfect field.
\end{rem}

\section{The Hodge-Witt fundamental class of a regularly embedded 
subscheme}\label{HWclass}

In this section, we assume that $X$ is a locally noetherian scheme of characteristic
$p$, and we consider a regular immersion $i : Y \inj P$ of codimension $d$, where $P$
is a smooth $X$-scheme. Under these assumptions, we want to associate to $Y$ a
canonical class $\gamma_Y \in \Gamma(P, \sH^d_Y(\WC{n}{d}{P/X}))$, for each $n \geq 1$.

\begin{prop}\label{Immersion} Under the previous assumptions:

\romain If $t_1,\ldots,t_d$ is a regular sequence of sections of $\sO_P$, then, for all
$n\geq 1$ and all $r\geq 1$, $[t_1]^r,\ldots,[t_d]^r$ is a regular sequence of sections of
$W_n(\sO_P)$.

\romain For all $n\geq 1$ and all $q$, $\sH^j_Y(\WC{n}{q}{P/X}) = 0$ for $j \neq d$.
\end{prop}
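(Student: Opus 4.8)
The plan is to reduce both statements to a local computation on affine pieces of $P$, then use the explicit description of the relative de Rham--Witt complex of affine space from Section~\ref{DRWprel}. For (i), the question is Zariski-local on $P$, so we may assume $P = \Spec B$ and that $t_1,\dots,t_d$ is a regular sequence in $B$. I would argue by induction on $n$, using the canonical filtration on $W_n(\sO_P)$ with graded pieces $V^j\sO_P \cong \sO_P$ (via $a \mapsto V^j([a])$ modulo $V^{j+1}$), together with the surjection $R\colon W_n(\sO_P)\surj W_{n-1}(\sO_P)$ whose kernel is $V^{n-1}\sO_P$. The key point is to check that multiplication by $[t_i]^r$ on $W_n(\sO_P)$ is compatible with the filtration in a controlled way: on the graded piece $V^j(\sO_P)$ it acts, up to the identification with $\sO_P$, as multiplication by $t_i^{rp^j}$ (because $[t_i]^r V^j([a]) = V^j(F^j([t_i]^r)[a]) = V^j([t_i^{rp^j}][a]) = V^j([t_i^{rp^j}a])$). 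Since $t_1^{rp^j},\dots,t_d^{rp^j}$ remains a regular sequence in $B$ for every $j$ (powers of a regular sequence in a Noetherian ring form a regular sequence), each graded piece of the Koszul complex $K_\bullet([t_1]^r,\dots,[t_d]^r)$ on $W_n(\sO_P)$ is, up to these identifications, a Koszul complex on a regular sequence in $B$, hence exact in positive degrees; a standard spectral sequence / dévissage argument along the filtration then gives that $[t_1]^r,\dots,[t_d]^r$ is $W_n(\sO_P)$-regular. (Equivalently, one can do the induction on $n$ directly: by the induction hypothesis the sequence is $W_{n-1}(\sO_P)$-regular, and one checks regularity passes through the extension $0 \to V^{n-1}\sO_P \to W_n(\sO_P) \to W_{n-1}(\sO_P) \to 0$ using that it is regular on $V^{n-1}\sO_P \cong \sO_P$ too.)

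For (ii), I would again localize: the statement $\sH^j_Y(\WC{n}{q}{P/X}) = 0$ for $j \neq d$ is local on $P$, so we may assume $P = \Spec B$ with $B$ smooth over $A = \sO_X$ (locally, étale over $A[x_1,\dots,x_N]$), and that $Y$ is cut out by a regular sequence $t_1,\dots,t_d$. Then $\sH^j_Y(\WC{n}{q}{P/X})$ is computed as the $j$-th local cohomology $H^j_{(t_1,\dots,t_d)}(\WC{n}{q}{B/A})$, which can be computed by the Čech (stable Koszul) complex on $[t_1],\dots,[t_d]$ — or, what amounts to the same and is cleaner here, on $[t_1]^r,\dots,[t_d]^r$ for large $r$ since $\sqrt{([t_1],\dots,[t_d])} = \sqrt{([t_1]^r,\dots,[t_d]^r)}$ in $W_n(B)$. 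By part (i) these are regular sequences on $W_n(\sO_P)$. The essential input is then that $\WC{n}{q}{B/A}$, as a module over $W_n(B)$, has a finite filtration whose graded pieces are, up to restriction of scalars along Frobenius, coherent modules over (Frobenius twists of) $B$; this is exactly the structure one extracts from the Langer--Zink canonical decomposition (Theorem~\ref{candecompn}) and the graded-piece structure theorem (Theorem~\ref{Structgrn}), analogous to \cite[Th.~(4.4)]{HK94} in the logarithmic case. On each such coherent graded piece, the local cohomology with respect to the image of $(t_1,\dots,t_d)$ is that of a coherent sheaf with support in $Y$, and $Y \hookrightarrow P$ being a regular immersion of codimension $d$ into a (locally) Cohen--Macaulay scheme $P$ forces the local cohomology to be concentrated in degree $d$ (depth $\geq d$ along $Y$). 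A dévissage along the finite filtration then yields the vanishing for $\WC{n}{q}{P/X}$ itself.

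\textbf{Main obstacle.} The delicate part is not the regular-sequence bookkeeping in (i) — that is routine dévissage — but rather pinning down in (ii) the precise structural statement about $\WC{n}{q}{P/X}$ over $W_n(\sO_P)$ that makes the depth argument go through: one needs that it is built by finitely many extensions out of sheaves that are coherent over suitable Frobenius pullbacks of $P$, \emph{and} that the $W_n(\sO_P)$-module structure on those pieces restricts, along the relevant Frobenius, to the evident $\sO_{P^{(p^j)}}$-structure, so that ``$t_i$ acts by $t_i^{\text{power}}$'' and the Cohen--Macaulay/depth estimate for $Y$ in $P$ applies. This is precisely what Theorem~\ref{Structgrn} and Corollary~\ref{candecompn} are set up to provide (and what Proposition~\ref{Immersion}(i) is needed for, to know the Čech complex on $[t_i]^r$ genuinely computes local cohomology). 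Once that dictionary is in place, the vanishing for $j \neq d$ is immediate from the fact that a regular immersion of codimension $d$ has the property that local cohomology of any coherent sheaf supported on it, computed on a Cohen--Macaulay ambient scheme, lives in degree $d$; I would cite \cite{Be74} or standard local-cohomology references for this last point rather than reprove it.
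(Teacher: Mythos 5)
Your proof follows essentially the same route as the paper: for (i), dévissage along the $V$-filtration of $W_n(\sO_P)$, on whose graded pieces $[t_i]^r$ acts as $t_i^{rp^j}$; for (ii), dévissage along the canonical filtration of $\WC{n}{q}{P/X}$ using Theorem \ref{Structgrn} to control the graded pieces. Two small corrections. First, your closing claim that ``local cohomology of any coherent sheaf supported on it, computed on a Cohen--Macaulay ambient scheme, lives in degree $d$'' is false as stated (take $i_*\sO_Y$ itself); what makes the argument work is precisely that Theorem \ref{Structgrn} exhibits each graded piece as an extension of modules that are \emph{locally free} over $\sO_{P^{(p^n)}}$, so that $\sH^j_Y$ of each is a sum of copies of $\sH^j_Y(\sO_{P^{(p^n)}})$, which vanishes for $j\neq d$ because $Y$ is still cut out by a regular sequence there --- your parenthetical ``depth $\geq d$ along $Y$'' is the right condition, but coherence alone is not. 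Second, part (i) is not actually needed for part (ii): the paper never passes through the \v{C}ech complex on the $[t_i]^r$, since local cohomology with supports in $Y$ depends only on the underlying abelian sheaf and can be computed directly from the $\sO_{P^{(p^n)}}$-module structure of the graded pieces; part (i) is used later (in Theorem \ref{Indep}) for comparing $\sExt$ groups, not here.
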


\begin{proof} We proceed by induction on $n$. In the exact sequence of 
$W_{n+1}(\sO_P)$-modules 
\eqn{ 0 \lra F^n_*\sO_P \xra{V^n} W_{n+1}(\sO_P) \xra{\ R\ } W_n(\sO_P) \lra 0, }
the action of $[t_i]^r$ on $F^n_*\sO_P$ is given by multiplication by $t_i^{rp^n}$ on
$\sO_P$. As $P$ is a locally noetherian scheme, the sequence 
$t_1^{rp^n},\ldots,t_d^{rp^n}$ is regular in $\sO_P$, and the first claim follows 
easily.

For $n = 1$, the second one is a well known consequence of the regularity of the
sequence $t_1,\ldots,t_d$. As $\sO_P$ is locally free of finite rank over
$\sO_{P^{(p^n)}}$, we also have $\sH^j_Y(\sO_{P^{(p^n)}}) = 0$ for $j \neq d$. In the
exact sequence
\eqn{ 0 \lra \gr^n\WC{n+1}{q}{P/X} \lra \WC{n+1}{q}{P/X} \xra{\ R\ } \WC{n}{q}{P/X}
\lra 0, }
Theorem \ref{Structgrn} allows to endow the kernel $\gr^n\WC{n+1}{q}{P/X}$ with an 
$\sO_P$-module structure for which it is an extension of two $\sO_P$-modules which are 
locally free over $\sO_{P^{(p^n)}}$. Therefore, $\sH^j_Y(\gr^n\WC{n+1}{q}{P/X}) = 0$ 
for $j \neq d$. The second claim follows by induction. 
\end{proof}

\begin{thm}\label{Indep}
Under the assumptions of this section, let $\mbt = (t_1,\ldots,t_d)$ and $\mbt' =
(t'_1,\ldots,t'_d)$ be two regular sequences of sections of $\sO_P$ generating the
ideal $\sI$ of $Y$ in $P$. Let $n \geq 1$ be an integer, and let $\sJ =
([t_1],\ldots,[t_d])$, $\sJ' = ([t'_1],\ldots,[t'_d])$ be the ideals of $W_n(\sO_P)$
generated by the Teichm\"uller representatives of these generators. If
\eqn{ \beta_{\sJ} : \sExt^d_{W_n(\sO_P)}(W_n(\sO_P)/\sJ,\WC{n}{d}{P/X})
\lra \sH^d_Y(\WC{n}{d}{P/X}) }
is the canonical homomorphism \lp and similarly for $\beta_{\sJ'}$\rp, then, with the 
notations of \ref{KoszulToExt}, 
\eq{indep}{ \beta_{\sJ}(\left[\begin{array}{c} d[t_1]\cdots d[t_d] \\ {[t_1],\ldots,[t_d]}
\end{array}\right]) = \beta_{\sJ'}(\left[\begin{array}{c} d[t'_1]\cdots d[t'_d] \\
{[t'_1],\ldots,[t'_d]} \end{array}\right]). }
\end{thm}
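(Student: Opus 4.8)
\textbf{Proof strategy for Theorem \ref{Indep}.}
The plan is to reduce the independence statement to the change-of-regular-sequence formula of Lemma \ref{Changet}, applied over the Witt-vector structure sheaf $W_n(\sO_P)$ rather than over $\sO_P$. First I would observe that, by Proposition \ref{Immersion} (i), both $([t_1],\ldots,[t_d])$ and $([t'_1],\ldots,[t'_d])$ are regular sequences in $W_n(\sO_P)$, so that the Koszul-complex machinery of \ref{KoszulToExt} applies verbatim with $\sO_P$ replaced by $W_n(\sO_P)$ and the target module $\sM = \WC{n}{d}{P/X}$: the symbols $\left[\begin{smallmatrix} m \\ {[t_1],\ldots,[t_d]}\end{smallmatrix}\right]$ make sense, are linear and functorial in $m$, depend only on $m$ modulo $\sJ\sM$, and transform under Lemma \ref{Changet}. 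Moreover, Proposition \ref{Immersion} (ii) gives $\sH^j_Y(\WC{n}{d}{P/X}) = 0$ for $j \neq d$, so by the standard computation of local cohomology via a Koszul (or \v{C}ech) resolution the canonical maps $\beta_{\sJ}$ and $\beta_{\sJ'}$ are \emph{isomorphisms} onto $\sH^d_Y(\WC{n}{d}{P/X})$; this reduces the claimed equality to an equality of Ext-symbols after transporting both sides to a common Ext group via the functoriality maps for $\sJ, \sJ' \subset \sJ''$, where $\sJ''$ is a suitable third ideal.

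The next step is to produce the change-of-basis matrix at the level of Teichm\"uller lifts. Since $\mbt$ and $\mbt'$ generate the same ideal $\sI$ of $\sO_P$, there is a matrix $A = (a_{ij})$ over $\sO_P$ with $t'_i = \sum_j a_{ij} t_j$, and symmetrically $t_i = \sum_j b_{ij} t'_j$; because both are regular sequences generating $\sI$, the classes of $A$ and $B$ modulo $\sI$ are mutually inverse, so $\det(A)$ is a unit modulo $\sI$, and likewise over $W_n(\sO_P)$ the class of $\det(A)$ is a unit modulo $\sJ$ (using that $V W_{n-1}(\sO_P) + [\,\sI\,]$-type elements are nilpotent/contained in the radical relative to $\sJ$). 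However $[t'_i] \neq \sum_j [a_{ij}][t_j]$ in general — the Teichm\"uller map is not additive — so I cannot use $A$ directly. Instead I would write, in $W_n(\sO_P)$,
\begin{equation*}
[t'_i] \ =\ [t'_i] - [\textstyle\sum_j a_{ij} t_j] \ +\ [\textstyle\sum_j a_{ij} t_j],
\end{equation*}
and note that the first difference lies in $V W_{n-1}(\sO_P)$ applied to $\sO_P$, while $[\sum_j a_{ij} t_j]$, expanded by multiplicativity of Teichm\"uller on products, can be rewritten so that each $[t'_i]$ becomes an $W_n(\sO_P)$-linear combination $\sum_j c_{ij} [t_j]$ modulo higher Verschiebung terms; iterating (Verschiebung lowers the level) one builds an honest matrix $C = (c_{ij})$ over $W_n(\sO_P)$ with $[t'_i] = \sum_j c_{ij}[t_j]$ and with $\det(C)$ congruent to the Teichm\"uller lift of $\det(A)$ modulo $\sJ$, hence a unit modulo $\sJ$.

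Given such a $C$, Lemma \ref{Changet} applied over $W_n(\sO_P)$ yields
\begin{equation*}
\alpha\!\left(\left[\begin{array}{c} m \\ {[t_1],\ldots,[t_d]}\end{array}\right]\right) \ =\ \left[\begin{array}{c} \det(C)\, m \\ {[t'_1],\ldots,[t'_d]}\end{array}\right],
\end{equation*}
for the functoriality map $\alpha$ between the two Ext groups. It then remains to compare $\det(C)\cdot d[t_1]\cdots d[t_d]$ with $d[t'_1]\cdots d[t'_d]$ \emph{modulo $\sJ' \WC{n}{d}{P/X}$}: from $[t'_i] = \sum_j c_{ij}[t_j]$ one gets by Leibniz $d[t'_i] = \sum_j c_{ij}\, d[t_j] + \sum_j d(c_{ij})\,[t_j]$, and the terms carrying a factor $[t_j]$ die modulo $\sJ$; wedging the $d$ resulting one-forms multiplies by $\det(C)$ exactly, so $d[t'_1]\cdots d[t'_d] \equiv \det(C)\, d[t_1]\cdots d[t_d]$ mod $\sJ'$, which is precisely what is needed. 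Pushing both symbols into $\sH^d_Y$ via $\beta_{\sJ}, \beta_{\sJ'}$ and using the compatibility of these with the functoriality map $\alpha$ gives \eqref{indep}. The main obstacle is the middle step: the Teichm\"uller map is not additive, so constructing the matrix $C$ over $W_n(\sO_P)$ with controlled determinant (rather than merely over $\sO_P$) requires a careful induction on $n$ peeling off Verschiebung layers and keeping track of how $\det$ interacts with the non-linear correction terms; once $C$ exists, the rest is a formal consequence of Lemma \ref{Changet} and the Leibniz rule.
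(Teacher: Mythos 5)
There is a genuine gap at the crucial middle step. Your plan hinges on producing a matrix $C$ over $W_n(\sO_P)$ with $[t'_i] = \sum_j c_{ij}[t_j]$, so that Lemma \ref{Changet} applies directly between $\sJ$ and $\sJ'$. But such a matrix does not exist in general: the element $[t'_i]$ need not lie in the ideal $\sJ = ([t_1],\ldots,[t_d])$ of $W_n(\sO_P)$ at all. The point is that the Verschiebung correction $h = [t_1]+[c][t_2]-[t_1+ct_2] \in VW_{n-1}(\sO_P)$ cannot be absorbed into $\sJ$, because the rule $V^i(w)\,[t_j] = V^i\bigl(w\,[t_j]^{p^i}\bigr)$ forces any element of $\sJ \cap V^iW_{n-i}(\sO_P)$ to have its level-$i$ part divisible by $t_j^{p^i}$ rather than by $t_j$; the components of $h$ are divisible only by $t_2$, not by $t_2^{p^i}$. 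So the "iteration peeling off Verschiebung layers" you propose cannot terminate with an honest linear relation, and $\sJ$ and $\sJ'$ are genuinely distinct subideals of $W_n(\sI)$, neither containing the other. The non-additivity of the Teichm\"uller lift is thus a structural obstruction, not a removable technicality.

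The paper's proof gets around this differently. After localizing and reducing (by permutations, unit rescalings, and a linear-algebra argument on $\sI_y/\fm_y\sI_y$) to the single elementary change $t'_1 = t_1 + ct_2$, $t'_i = t_i$ for $i\ge 2$, it does not compare $\sJ$ and $\sJ'$ directly but instead exhibits a common sub-ideal: using the divided-power structure on $VW_{n-1}(\sO_P)$ and the fact that $p^{n-1}$ kills $VW_{n-1}(\sO_P)$, the binomial formula gives the exact identity $[t_1]^{p^{n-1}} = ([t'_1]-[c][t'_2])^{p^{n-1}}$, whence $\sJ^{(p^{n-1})} := ([t_1]^{p^{n-1}},\ldots,[t_d]^{p^{n-1}})$ is contained in both $\sJ$ and $\sJ'$. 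Both symbols are then pushed into $\sExt^d_{W_n(\sO_P)}(W_n(\sO_P)/\sJ^{(p^{n-1})},\WC{n}{d}{P/X})$ via Lemma \ref{Changet} (with explicit triangular matrices whose determinants are $\prod_i [t_i]^{p^{n-1}-1}$, resp.\ $\prod_i[t'_i]^{p^{n-1}-1}$), and the remaining identity is a congruence of de Rham--Witt forms modulo $\sJ^{(p^{n-1})}\WC{n}{d}{P/X}$. That congruence is itself not formal: it is reduced, by applying $F^{n-1}$ and working in $\WC{2n-1}{2}{P/X}$, to showing $dh\,d[t_2]\equiv 0$, which requires writing $h = \sum_i V^i([z_i][t_2])$ via the universal Witt addition polynomials and manipulating with $V(xF(y))=V(x)y$ and $F(d[t_2]) = [t_2]^{p-1}d[t_2]$. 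Your final Leibniz-rule step would be fine if $C$ existed, but the heart of the theorem is precisely the mechanism that replaces the nonexistent $C$.
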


\begin{proof}
It suffices to prove \eqref{indep} in a neighbourhood of each point $y \in Y$.
Localizing, one can reduce the proof of Theorem \ref{Indep} to the case of a very
simple change of generators in $\sI$, thanks to the following remarks (see also 
\cite[Cycle, Lemme 2.2.3]{SGA 412}).

\alphab If the sequence $(t'_1,\ldots,t'_d)$ is deduced from $(t_1,\ldots,t_d)$ by 
permutation, then $\sJ = \sJ'$, and formula \eqref{changet} implies the theorem.

\alphab If there exists invertible sections $a_1,\ldots,a_d \in \sO_P^{\times}$ such that
$t'_i = a_i t_i$ for all $i$, then $[t'_i] = [a_i][t_i]$ for all $i$. So $\sJ = \sJ'$, we can
apply Lemma \ref{Changet}, and we can choose the matrix $C$ to be the diagonal matrix
with entries $[a_i]$. Then the theorem follows from formula \eqref{changet}, because
an element such as \eqref{defSymbol} only depends upon the class of $m$ mod
$(t_1,\ldots,t_d)M$, and here we have the congruence
\eqn{ d[t'_1]\cdots d[t'_d] \equiv (\prod_{i=1}^d [a_i])\,d[t_1]\cdots d[t_d] \mod
\sJ\WC{n}{d}{P/X}. }

\alphab Given $y \in Y$, there exists a permutation $\sigma \in \mathfrak{S}_d$ such
that, for any $i$, $1 \leq i \leq d$, the sequence $\mbt^{(i)} = (t'_{\sigma(1)},
\ldots, t'_{\sigma(i)}, t_{i+1},\ldots,t_d)$ is a regular sequence of generators of $\sI$
around $y$. Indeed, a sequence of elements of $\sI_y$ is a regular sequence of
generators if and only if it gives a basis of $\sI_y/\mathfrak{m}_y\sI_y$, and this reduces
the claim to an elementary result in linear algebra over a field. If we set
$\mbt^{(0)}=(t_1,\ldots,t_d)$, then $\mbt^{(0)}=\mbt$, and $\mbt^{(d)}$ is deduced from
$\mbt'$ by permutation. So, using remark a), it suffices to prove the theorem for
the couple of sequences $\mbt^{(i-1)}$ and $\mbt^{(i)}$, for all $i$, $1 \leq i \leq d $.

This reduces the proof to the case where there exists an integer $i_0 \in 
\{1,\ldots,d\}$ such that
\eqn{t'_i = t_i \quad\mbox{for $i\neq i_0$,}\quad\quad t'_{i_0} = \sum_{j=1}^d 
c_{i_0,j}t_j.} 
Using remark a), we may assume that $i_0=1$. Moreover, the fact that $\mbt$ and $\mbt'$ 
induce bases of the vector space $\sI_y/\mathfrak{m}_y\sI_y$ implies that the 
coefficient $c_{1,1}$ is invertible around $y$. 

\alphab In this last case, we define inductively elements $t_1^{(j)}$ for $0 \leq j
\leq d$ by setting
\eqn{t_1^{(0)} = t_1, \quad t_1^{(1)}=c_{1,1}t_1^{(0)}, \quad t_1^{(j)} = 
t_1^{(j-1)}+c_{1,j}t_j \quad \mbox{for $1 < j$}.}
If, for $0 \leq j \leq d$, we define $\mbt^{(j)} = (t_1^{(j)},t_2,\ldots,t_d)$, then 
$\mbt^{(0)} = \mbt$, $\mbt^{(d)}=\mbt'$, and it suffices to prove the theorem for each 
of the couples $\mbt^{(j-1)}$, $\mbt^{(j)}$, for $1 \leq j \leq d$. The theorem is 
true for $\mbt^{(0)}$, $\mbt^{(1)}$, thanks to remark b), and, applying again remark 
a), we can write all the remaining couples as changes of generators of the form 
\eq{lastpair}{ t'_1 = t_1 + ct_2, \quad \mbox{for some $c \in \sO_P$}, \quad\quad t'_i
= t_i \quad \mbox{for $i \geq 2$}. }

Thus it suffices to prove the theorem for the change of generators of $I$ given by
\eqref{lastpair}. Let $h \in VW_{n-1}(\sO_P)$ be defined by setting 
\eq{defh}{ [t_1] + [c][t_2] = [t_1+ct_2] + h = [t'_1] + h }
in $W_n(\sO_P)$. Since $[t'_2]=[t_2]$, this can be rewritten as
\eq{defh2}{ [t_1] = [t'_1] - [c][t'_2] + h.}
The binomial formula gives
\eq{binomial}{[t_1]^{p^{n-1}} = ([t'_1]-[c][t'_2])^{p^{n-1}} + \sum_{i=1}^{p^{n-1}} 
\frac{p^{n-1}!}{(p^{n-1}-i)!i!}h^i ([t'_1]-[c][t'_2])^{p^{n-1}-i}.} 
Because the ideal $VW_{n-1}(\sO_P) \subset W_n(\sO_P)$ is a PD-ideal, we can write $h^i
= i!h^{[i]}$, with $h^{[i]} \in VW_{n-1}(\sO_P)$ when $i \geq 1$. Therefore the
numerical coefficient of $h^{[i]}$ in the $i$-th term of the sum is divisible by
$p^{n-1}$ for all $i \geq 1$. Since $p^{n-1}$ kills $VW_{n-1}(\sO_P)$, equation
\eqref{binomial} reduces to
\eq{binomial2}{ [t_1]^{p^{n-1}} = ([t'_1]-[c][t'_2])^{p^{n-1}}. } 

If, for all $k \geq 1$, we denote by $\sJ^{(k)}$ the ideal $([t_1]^k,\ldots,[t_d]^k)$,
this shows that $\sJ^{(p^{n-1})} \subset \sJ'$. So we can apply Lemma \ref{Changet} to
the sequences $([t'_1],\ldots,[t'_d])$ and $([t_1]^{p^{n-1}},\ldots,[t_d]^{p^{n-1}})$,
which are regular by Lemma \ref{Immersion}. Moreover, we can write equation
\eqref{binomial2} as
\eqn{ [t_1]^{p^{n-1}} = [t'_1]{}^{p^{n-1}-1}\cdot[t'_1] + c_{1,2}\cdot[t'_2],}
so that we can use as matrix $C$ in Lemma \ref{Changet} an upper triangular matrix with
diagonal entries $[t'_1]{}^{p^{n-1}-1}, \ldots, [t'_d]{}^{p^{n-1}-1}$ (since
$[t_i]^{p^{n-1}} = [t'_i]{}^{p^{n-1}-1}\cdot[t'_i]$ for $i \geq 2$). In particular,
$\det(C) = [t'_1]{}^{p^{n-1}-1}\cdots[t'_d]{}^{p^{n-1}-1}$. Thus, formula
\eqref{changet} provides the equality
\eq{change1}{ 
\alpha'(\left[\begin{array}{c} d[t'_1]\cdots d[t'_d] \\ {[t'_1],\ldots,[t'_d]}
\end{array}\right]) = \left[\begin{array}{c}
[t'_1]{}^{p^{n-1}-1}\cdots[t'_d]{}^{p^{n-1}-1}\,d[t'_1]\cdots d[t'_d] \\
{[t_1]^{p^{n-1}},\ldots,[t_d]^{p^{n-1}}} \end{array}\right], 
}
where $\alpha'$ is the canonical homomorphism
\eqn{ \sExt^d_{W_n(\sO_P)}(W_n(\sO_P)/\sJ',\WC{n}{d}{P/X}) \lra 
\sExt^d_{W_n(\sO_P)}(W_n(\sO_P)/\sJ^{(p^{n-1})},\WC{n}{d}{P/X}). } 

On the other hand, we also have $\sJ^{(p^{n-1})} \subset \sJ$. So we can also apply
Lemma \ref{Changet} to the regular sequences $([t_1],\ldots,[t_d])$ and
$([t_1]^{p^{n-1}},\ldots,[t_d]^{p^{n-1}})$, using now for $C$ the diagonal matrix with
entries $[t_1]^{p^{n-1}-1},\ldots,[t_d]^{p^{n-1}-1}$. If we denote by
\eqn{ \alpha : \sExt^d_{W_n(\sO_P)}(W_n(\sO_P)/\sJ,\WC{n}{d}{P/X}) \lra 
\sExt^d_{W_n(\sO_P)}(W_n(\sO_P)/\sJ^{(p^{n-1})},\WC{n}{d}{P/X}) }
the canonical homomorphism, formula \eqref{changet} provides the second
equality 
\eq{change2}{ 
\alpha(\left[\begin{array}{c} d[t_1]\cdots d[t_d] \\ {[t_1],\ldots,[t_d]} \end{array}\right]) 
= \left[\begin{array}{c} [t_1]^{p^{n-1}-1}\cdots[t_d]^{p^{n-1}-1}\,d[t_1]\cdots d[t_d] \\
{[t_1]^{p^{n-1}},\ldots,[t_d]^{p^{n-1}}} \end{array}\right]. 
 }
As $\beta_{\sJ} = \beta_{\sJ^{(p^{n-1})}} \circ \alpha$ and $\beta_{\sJ'} = 
\beta_{\sJ^{(p^{n-1})}} \circ \alpha'$, relation \eqref{indep} will follow if we prove 
the equality 
\eq{indep2}{ \left[\begin{array}{c} [t'_1]{}^{p^{n-1}-1}\cdots[t'_d]{}^{p^{n-1}-1}\,
d[t'_1]\cdots d[t'_d] \\
{[t_1]^{p^{n-1}},\ldots,[t_d]^{p^{n-1}}} \end{array}\right] =
\left[\begin{array}{c} [t_1]^{p^{n-1}-1}\cdots[t_d]^{p^{n-1}-1}\,d[t_1]\cdots d[t_d] \\
{[t_1]^{p^{n-1}},\ldots,[t_d]^{p^{n-1}}} \end{array}\right] }
in $\sExt^d_{W_n(\sO_P)}(W_n(\sO_P)/\sJ^{(p^{n-1})},\WC{n}{d}{P/X})$. To prove it, it 
suffices to prove in $\WC{n}{d}{P/X}$ the congruence 
\ml{indepcong1}{ [t'_1]{}^{p^{n-1}-1}[t'_2]{}^{p^{n-1}-1}\cdots[t'_d]{}^{p^{n-1}-1} 
\,d[t'_1]\,d[t'_2]\cdots d[t'_d] \\ 
\equiv [t_1]^{p^{n-1}-1}[t_2]^{p^{n-1}-1}\cdots[t_d]^{p^{n-1}-1}
\,d[t_1]\,d[t_2]\cdots d[t_d] }
mod $([t_1]^{p^{n-1}}, [t_2]^{p^{n-1}},\ldots,[t_d]^{p^{n-1}})\WC{n}{d}{P/X}$. As $t_i
= t'_i$ for $i > 2$, it suffices by multiplicativity to prove in $\WC{n}{2}{P/X}$ the
congruence
\eqn{ [t'_1]{}^{p^{n-1}-1}[t'_2]{}^{p^{n-1}-1} \,d[t'_1]\,d[t'_2] \equiv 
[t_1]^{p^{n-1}-1}[t_2]^{p^{n-1}-1}\,d[t_1]\,d[t_2] }
mod $([t_1]^{p^{n-1}}, [t_2]^{p^{n-1}})\WC{n}{2}{P/X}$, and, thanks to \eqref{FTeich},
the latter will follow by applying $F^{n-1}$ if we prove the congruence
\eq{indepcong2}{ d[t'_1]\,d[t'_2] \equiv d[t_1]\,d[t_2] \mod ([t_1],
[t_2])\WC{2n-1}{2}{P/X}. }

So let us prove \eqref{indepcong2}. We still denote by $h \in VW_{2n-2}\sO_P$ the
difference $h = [t_1] + [c][t_2] - [t'_1] = [t_1] + [ct_2] - [t_1+ct_2]$ computed in
$W_{2n-1}\sO_P$. Since $t'_2 = t_2$, it suffices to prove the congruence
\eq{indepcong3}{ dh\,d[t_2] \equiv 0 \mod ([t_1], [t_2])\WC{2n-1}{2}{P/X}. }
For all $i$, let 
\eqn{ S_i(X_0,\ldots,X_i,Y_0,\ldots,Y_i) \in \Z[X_0,\ldots,X_i,Y_0,\ldots,Y_i] }
be the universal polynomial defining the $i$-th component of the sum of two Witt 
vectors, and 
\eq{smallsum}{ s_i(X_0,Y_0) = S_i(X_0,0,\ldots,0,Y_0,0,\ldots0) \in \Z[X_0,Y_0].}
Note that, for $i \geq 1$, the polynomial $s_i(X_0,Y_0)$ is divisible by $X_0Y_0$,
since $(0,\ldots,0)$ is the zero element in a Witt vector ring. By definition, we have
\eqn{ [t_1] + [ct_2] = (t_1+ct_2, s_1(t_1,ct_2),\ldots,s_{2n-2}(t_1,ct_2)), }
and
\eqn{ h = (0,  s_1(t_1,ct_2),\ldots,s_{2n-2}(t_1,ct_2)).}
Since $s_i(X_0,Y_0)$ is divisible by $Y_0$, we can write $s_i(t_1,ct_2)=z_it_2$ for 
some section $z_i\in \sO_P$. We obtain
\eqn{ h = (0,z_1t_2,\ldots,z_{2n-2}t_2), }
which we can write as
\eqn{ h = \sum_{i=1}^{2n-2}V^i([z_i][t_2]). }
For each $i$, $1 \leq i \leq 2n-2$, we now obtain the relations 
\gan{ dV^i([z_i][t_2])\,d[t_2] = dV^i([z_i][t_2]\,F^i(d[t_2])) =  
dV^i([z_i][t_2]^{p^i}d[t_2]) \\
= dV^i([z_i]F^i([t_2])d[t_2]) = d([t_2]V^i([z_i]d[t_2])), 
}
\eqn{ d([t_2]V^i([z_i]d[t_2])) \equiv d[t_2]\,V^i([z_i]d[t_2]) \mod 
[t_2]\WC{2n-1}{2}{P/X}, }
\eqn{ d[t_2]\,V^i([z_i]d[t_2]) = V^i(F^i(d[t_2])[z_i]d[t_2]) = 
V^i([t_2]^{p^i-1}d[t_2][z_i]d[t_2]) = 0, }
which imply \eqref{indepcong3}. 
\end{proof}

\begin{defn}\label{Deffundclass}
Under the assumptions of this section, we define the \textit{$n$-th Hodge-Witt
fundamental class} $\gamma_{Y,n}$ of $Y$ in $P$ relatively to $X$ as being the section
of $\sH^d_Y(\WC{n}{d}{P/X})$ obtained by glueing the sections
$\beta_{\sJ}(\left[\begin{array}{c} d[t_1]\cdots d[t_d] \\
{[t_1],\ldots,[t_d]} \end{array}\right])$ defined locally by regular sequences of
generators of the ideal $\sI$ of $Y$ in $P$.
\end{defn}

\begin{prop}\label{Propfundclass}
For $n \geq 1$, let 
\gan{ R : \sH^d_Y(\WC{n+1}{d}{P/X}) \lra \sH^d_Y(\WC{n}{d}{P/X}), \\
F : \sH^d_Y(\WC{n+1}{d}{P/X}) \lra \sH^d_Y(\WC{n}{d}{P/X}), \\
V : \sH^d_Y(\WC{n}{d}{P/X}) \lra \sH^d_Y(\WC{n+1}{d}{P/X})}
be the homomorphisms defined by functoriality. Then 
\eq{propfundclass}{ R(\gamma_{Y,n+1}) = \gamma_{Y,n}, \quad\quad
F(\gamma_{Y,n+1}) = \gamma_{Y,n}, \quad\quad
V(\gamma_{Y,n}) = p\gamma_{Y,n+1}. }
\end{prop}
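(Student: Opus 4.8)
The strategy is to reduce the three identities in \eqref{propfundclass} to local computations with the explicit symbols $\left[\begin{array}{c} d[t_1]\cdots d[t_d] \\ {[t_1],\ldots,[t_d]} \end{array}\right]$, exploiting that $\gamma_{Y,n}$ is glued from such symbols (Definition \ref{Deffundclass}) and that by Theorem \ref{Indep} the class is independent of the chosen regular sequence of generators. So it suffices to fix, locally on $P$, one regular sequence $\mbt=(t_1,\ldots,t_d)$ of generators of $\sI$, to write $\gamma_{Y,n} = \beta_{\sJ_n}\!\left(\left[\begin{array}{c} d[t_1]\cdots d[t_d] \\ {[t_1],\ldots,[t_d]} \end{array}\right]\right)$ with $\sJ_n=([t_1],\ldots,[t_d])\subset W_n(\sO_P)$, and to track what $R$, $F$, $V$ do to this symbol. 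The key book-keeping point is that all three operators act on $\sH^d_Y(\WC{n}{d}{P/X})$ \emph{via the functoriality of the de Rham--Witt complex}, i.e.\ they are induced by the maps $R,F:\WC{n+1}{d}{P/X}\to\WC{n}{d}{P/X}$ and $V:\WC{n}{d}{P/X}\to\WC{n+1}{d}{P/X}$ together with the corresponding transition maps on the Teichmüller-generated ideals $W_n(\sO_P)/\sJ_n$; so the computation is essentially functoriality of the $\sExt$-symbol notation of \ref{KoszulToExt} applied to these maps.

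\textbf{The $R$-identity.} For restriction this is immediate: $R[t_i]=[t_i]$ in passing from $W_{n+1}$ to $W_n$, and $R$ is a morphism of differential graded algebras, hence $R(d[t_i])=d[t_i]$. Thus $R$ sends the $(n{+}1)$-symbol to the $n$-symbol, and since $R$ is compatible with the formation of the canonical maps $\beta_{\sJ}$ and with glueing, $R(\gamma_{Y,n+1})=\gamma_{Y,n}$.

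\textbf{The $F$-identity.} Here one uses $F[t_i]=[t_i]^p$ (from the definition of $F$ on $W(\sO_P)$), that $F$ is multiplicative, and the key relation \eqref{FTeich}, $F(d[t_i]) = [t_i]^{p-1}d[t_i]$. Hence $F$ applied to the numerator $d[t_1]\cdots d[t_d]$ (read in $W_{n+1}$) produces $[t_1]^{p-1}\cdots[t_d]^{p-1}\, d[t_1]\cdots d[t_d]$ in $W_n$, while the denominator $([t_1],\ldots,[t_d])$ in $W_{n+1}$ is carried by $F$ to $([t_1]^p,\ldots,[t_d]^p)$ in $W_n$. So $F(\gamma_{Y,n+1})$ is represented, after applying the appropriate functoriality map between $\sExt$'s, by $\left[\begin{array}{c} [t_1]^{p-1}\cdots[t_d]^{p-1}\,d[t_1]\cdots d[t_d] \\ {[t_1]^p,\ldots,[t_d]^p} \end{array}\right]$, which one recognizes via Lemma \ref{Changet} applied to the sequences $([t_1],\ldots,[t_d])$ and $([t_1]^p,\ldots,[t_d]^p)$ (regular by Proposition \ref{Immersion}(i)) with diagonal change-of-basis matrix $C=\mathrm{diag}([t_1]^{p-1},\ldots,[t_d]^{p-1})$, whose determinant is exactly $[t_1]^{p-1}\cdots[t_d]^{p-1}$. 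Hence this symbol equals the image of $\left[\begin{array}{c} d[t_1]\cdots d[t_d] \\ {[t_1],\ldots,[t_d]} \end{array}\right]$ under the functoriality map, and pushing down through $\beta_{\sJ_n}$ one gets $\gamma_{Y,n}$. (This is the same mechanism already used in the proof of Theorem \ref{Indep}, formula \eqref{change2}.)

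\textbf{The $V$-identity.} This is the delicate one, and will be the main obstacle. Now $V$ goes the other way, $V:\WC{n}{d}{P/X}\to\WC{n+1}{d}{P/X}$, it is \emph{not} multiplicative, and one cannot simply ``apply $V$ to each factor''. The plan is to use the projection formula \eqref{VFlin} / \eqref{Vxdy}, $V(\omega\, dV(\eta))=V(\omega)\,dV(\eta)$ and $V(\omega\,d\eta)=V(\omega)dV(\eta)$, together with $VF=p$ (valid since $X$, hence $P$, is an $\FF_p$-scheme), to transport the symbol. Concretely: in $W_{n+1}(\sO_P)$ one has $[t_i] = [t_i]$ (the Teichmüller lift is compatible with $R$, not with $V$), but the honest computation is to observe that the functoriality map $\sExt^d_{W_n}(W_n/\sJ_n,\WC{n}{d}{P/X})\to \sExt^d_{W_{n+1}}(W_{n+1}/?,\WC{n+1}{d}{P/X})$ induced by $V$ lands in the $\sExt$ for the ideal generated by $V$-images, and one must identify the image of the generator. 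The cleanest route is: represent $\gamma_{Y,n}$ by a Koszul cocycle, push it through $V$ on coefficients, and use the de Rham--Witt relations to rewrite the result as $p$ times the Koszul cocycle representing $\gamma_{Y,n+1}$; the factor $p$ appears precisely because $VF=p$ is needed $d$ times in moving $V$ past the $d$ Teichmüller factors/differentials, but each such move via \eqref{Vxdy} only yields the factor once per ``$dV$'', and an overall single $p$ survives after using that $p^{?}$ kills part of the Verschiebung ideal — so one has to count carefully which powers of $p$ are absorbed. I expect the argument to mirror closely the computations in Rules \ref{rules}(iii) and in the proof of Theorem \ref{Indep} (the binomial/PD manipulations around \eqref{binomial}--\eqref{binomial2}), and to invoke that $\WC{}{\sbul}{P/X}$ is $p$-torsion free on the relevant open (Theorem \ref{Structgrn}, or the structure of $\gr^n$) to cancel and to justify the equality in $\sH^d_Y$ rather than just up to $p$-torsion. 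The main obstacle is thus purely computational: correctly tracking the powers of $p$ and the change-of-generators determinant under $V$, which does not respect the algebra structure. Once the local identities are established, they glue by Theorem \ref{Indep} and by the functoriality of $\beta_{\sJ}$, giving \eqref{propfundclass}.
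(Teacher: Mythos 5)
Your treatment of the $R$- and $F$-identities follows the paper's proof: locally fix a regular sequence $\mbt$ generating $\sI$, note $R([t_i])=[t_i]$, $R(d[t_i])=d[t_i]$ for the first, and $F([t_i])=[t_i]^p$, $F(d[t_i])=[t_i]^{p-1}d[t_i]$ for the second, then compare $\left[\begin{smallmatrix} [t_1]^{p-1}\cdots[t_d]^{p-1}\,d[t_1]\cdots d[t_d] \\ {[t_1]^p,\ldots,[t_d]^p} \end{smallmatrix}\right]$ with the original symbol via Lemma \ref{Changet} and $\beta_{\sJ_n^{(p)}}\circ\alpha=\beta_{\sJ_n}$. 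The one thing you gloss over is the compatibility of $\beta_{\sJ}$ with the change of base ring $W_{n+1}(\sO_P)\to W_n(\sO_P)$ along $R$ (resp.\ $F$): the paper devotes the diagrams built from the isomorphisms \eqref{isoExtR} and \eqref{isoExtF} to this, using that the Koszul complex for $([t_1],\ldots,[t_d])$ base-changes to the Koszul complex for $([t_1],\ldots,[t_d])$ (resp.\ $([t_1]^p,\ldots,[t_d]^p)$) because these sequences remain regular (Proposition \ref{Immersion}~(i)). This is a verification you should at least name, but it is routine.

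The genuine gap is in the $V$-identity, which you flag as ``the main obstacle'' and then do not prove: your sketch ends with unresolved bookkeeping (``an overall single $p$ survives after using that $p^{?}$ kills part of the Verschiebung ideal''), and the direct Koszul-cocycle computation you propose is never carried out. You have missed that no computation is needed at all. Since the base is an $\FF_p$-scheme, $V\circ F=p$ holds on $\WC{n+1}{d}{P/X}$ (you even quote this fact), hence by functoriality also on $\sH^d_Y(\WC{n+1}{d}{P/X})$; combined with the $F$-identity you have just established, this gives
\begin{equation*}
V(\gamma_{Y,n}) \;=\; V\bigl(F(\gamma_{Y,n+1})\bigr) \;=\; p\,\gamma_{Y,n+1},
\end{equation*}
which is exactly how the paper concludes. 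So the third identity is a formal consequence of the second, and the delicate analysis of how $V$ interacts with the symbols, the change-of-generators determinant, and the PD-structure on $VW_n(\sO_P)$ is unnecessary. As written, your proposal does not establish the $V$-identity; replacing the last paragraph by the two-line argument above closes the gap.
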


\begin{proof}
We may assume that there exists a regular sequence $t_1,\ldots,t_d$ such that $\sI =
(t_1,\ldots,t_d)$. For each $n \geq 1$, let $\sJ_n$ be the ideal of $W_n(\sO_P)$
generated by the Teichm\"uller representatives $[t_i]$ of the $t_i$'s, and let
$K\lbul([\mbt]_n)$ be the Koszul complex defined by the $[t_i]$'s over 
$W_n(\sO_P)$. Since $R([t_i]) = [t_i]$, scalar
extension through $R$ yields an isomorphism
\eqn{W_n(\sO_P) \otimes_{W_{n+1}(\sO_P)} K\lbul([\mbt]_{n+1}) \riso
K\lbul([\mbt]_n). }
Using the fact that the $[t_i]$'s form a regular sequence both in $W_{n+1}(\sO_P)$ and
in $W_n(\sO_P)$, it can be seen in the derived category of $W_n(\sO_P)$-modules as an
isomorphism
\eq{torindR}{ W_n(\sO_P) \otimesL_{W_{n+1}(\sO_P)} W_{n+1}(\sO_P)/\sJ_{n+1} \riso 
W_n(\sO_P)/\sJ_n. }
By adjunction, \eqref{torindR} defines for any $W_n(\sO_P)$-module $\sM$ and any $q \geq 
0$ an isomorphism
\eq{isoExtR}{ \sExt^q_{W_n(\sO_P)}(W_n(\sO_P)/\sJ_n, \sM) \riso 
\sExt^q_{W_{n+1}(\sO_P)}(W_{n+1}(\sO_P)/\sJ_{n+1}, \sM), }
and we obtain the diagram 
\eq{functR}{ \xymatrix@R=15pt@C=30pt{
\sH^d(\sHom\hbul_{W_{n+1}(\sO_P)}(K\lbul([\mbt]_{n+1}), 
\WC{n+1}{d}{P/X}))\hspace{-3cm} 
\ar[dd]_-{R} \ar[dr]^-{\sim} \\
& \hspace{-3cm}\sExt^d_{W_{n+1}(\sO_P)}(W_{n+1}(\sO_P)/\sJ_{n+1}, \WC{n+1}{d}{P/X}) 
\ar[r]^-{\beta_{\sJ_{n+1}}} \ar[dd]^-{R} & \sH^d_Y(\WC{n+1}{d}{P/X}) \ar[dd]^-{R} \\
\sH^d(\sHom\hbul_{W_{n+1}(\sO_P)}(K\lbul([\mbt]_{n+1}), 
\WC{n}{d}{P/X}))\hspace{-3cm} \ar[dr]^-{\sim} \\
 & \hspace{-3cm}\sExt^d_{W_{n+1}(\sO_P)}(W_{n+1}(\sO_P)/\sJ_{n+1}, \WC{n}{d}{P/X}) 
 \ar[r]^-{\beta_{\sJ_{n+1}}}  & \sH^d_Y(\WC{n}{d}{P/X}) \ar@{=}[dd] \\
\sH^d(\sHom\hbul_{W_n(\sO_P)}(K\lbul([\mbt]_n), 
\WC{n}{d}{P/X}))\hspace{-3cm} \ar[uu]^-{\wr} \ar[dr]^-{\sim} \\
& \hspace{-3cm}\sExt^d_{W_n(\sO_P)}(W_n(\sO_P)/\sJ_n, \WC{n}{d}{P/X}) 
\ar[uu]^-{\wr}_{\eqref{isoExtR}}
\ar[r]^-{\beta_{\sJ_n}} & \sH^d_Y(\WC{n}{d}{P/X}), \\
} }
in which the lower left hand square commutes by construction. On the other
hand, \eqref{isoExtR} implies that injective $W_n(\sO_P)$-modules are acyclic for the
functor $\sHom_{W_{n+1}(\sO_P)}(W_{n+1}(\sO_P)/\sJ_{n+1}, -)$. Replacing
$\WC{n}{d}{P/X}$ by an injective resolution over $W_n(\sO_P)$, it is then easy to
check that the lower right square commutes. As the upper part of the diagram 
commutes by functoriality, and $R(d[t_1]\cdots d[t_d]) =
d[t_1]\cdots d[t_d]$, the first relation of \eqref{propfundclass} follows.

Viewing now $W_n(\sO_P)$ as a $W_{n+1}(\sO_P)$-algebra via $F$, one proceeds similarly
to prove the second one. Since $F([t_i]) = [t_i^p] = [t_i]^p$, and the sequence
$[t_i]^p,\ldots,[t_d]^p$ is a regular sequence in $W_n(\sO_P)$, we obtain for any 
$W_n(\sO_P)$-module $\sM$ and any $q \geq 0$ isomorphisms
\ga{}{ W_n(\sO_P) \otimes_{W_{n+1}(\sO_P)} K\lbul([\mbt]_{n+1}) \riso
K\lbul([\mbt]_n^p), \notag\\
W_n(\sO_P) \otimesL_{W_{n+1}(\sO_P)} W_{n+1}(\sO_P)/\sJ_{n+1} \riso 
W_n(\sO_P)/\sJ_n^{(p)}, \label{torindF}\\
\sExt^q_{W_n(\sO_P)}(W_n(\sO_P)/\sJ_n^{(p)}, \sM) \riso 
\sExt^q_{W_{n+1}(\sO_P)}(W_{n+1}(\sO_P)/\sJ_{n+1}, \sM). \label{isoExtF} }
They provide a commutative diagram similar to \eqref{functR}:
\eq{functF}{ \xymatrix@R=15pt@C=30pt{
\sH^d(\sHom\hbul_{W_{n+1}(\sO_P)}(K\lbul([\mbt]_{n+1}), 
\WC{n+1}{d}{P/X}))\hspace{-3cm} 
\ar[dd]_-{F} \ar[dr]^-{\sim} \\
& \hspace{-3cm}\sExt^d_{W_{n+1}(\sO_P)}(W_{n+1}(\sO_P)/\sJ_{n+1}, \WC{n+1}{d}{P/X}) 
\ar[r]^-{\beta_{\sJ_{n+1}}} \ar[dd]^-{F} & \sH^d_Y(\WC{n+1}{d}{P/X}) \ar[dd]^-{F} \\
\sH^d(\sHom\hbul_{W_{n+1}(\sO_P)}(K\lbul([\mbt]_{n+1}), 
\WC{n}{d}{P/X}))\hspace{-3cm} \ar[dr]^-{\sim} \\
& \hspace{-3cm}\sExt^d_{W_{n+1}(\sO_P)}(W_{n+1}(\sO_P)/\sJ_{n+1}, \WC{n}{d}{P/X}) 
\ar[r]^-{\beta_{\sJ_{n+1}}} & \sH^d_Y(\WC{n}{d}{P/X}) \ar@{=}[dd]\\
\sH^d(\sHom\hbul_{W_n(\sO_P)}(K\lbul([\mbt]_n^p), 
\WC{n}{d}{P/X}))\hspace{-3cm} \ar[uu]^-{\wr} \ar[dr]^-{\sim} \\
& \hspace{-3cm}\sExt^d_{W_n(\sO_P)}(W_n(\sO_P)/\sJ_n^{(p)}, \WC{n}{d}{P/X}) 
\ar[uu]^-{\wr}_{\eqref{isoExtF}} \ar[r]^-{\beta_{\sJ_n^{(p)}}} & \sH^d_Y(\WC{n}{d}{P/X}). \\
} }
Since $F(d[t_1]\cdots d[t_d]) = [t_1]^{p-1}\cdots[t_d]^{p-1}d[t_1]\cdots d[t_d]$, it 
follows that 
\eqn{ F(\beta_{\sJ_{n+1}}(\left[\begin{array}{c} d[t_1]\cdots d[t_d] \\
{[t_1],\ldots,[t_d]} \end{array}\right])) = 
\beta_{\sJ_n^{(p)}}(\left[\begin{array}{c} [t_1]^{p-1}\cdots[t_d]^{p-1}\,d[t_1]\cdots
d[t_d] \\
{[t_1]^p,\ldots,[t_d]^p} \end{array}\right]). }
On the other hand, if $\alpha$ denotes the canonical homomorphism 
\eqn{ \alpha : \sExt^d_{W_n(\sO_P)}(W_n(\sO_P)/\sJ_n, \WC{n}{d}{P/X}) \lra 
\sExt^d_{W_n(\sO_P)}(W_n(\sO_P)/\sJ_n^{(p)}, \WC{n}{d}{P/X}), } 
we have by \eqref{changet} 
\eqn{ \alpha(\left[\begin{array}{c} d[t_1]\cdots d[t_d] \\
{[t_1],\ldots,[t_d]} \end{array}\right]) = 
\left[\begin{array}{c} [t_1]^{p-1}\cdots[t_d]^{p-1}\,d[t_1]\cdots d[t_d] \\
{[t_1]^p,\ldots,[t_d]^p} \end{array}\right]. }
As $\beta_{\sJ_n^{(p)}} \circ \alpha = \beta_{\sJ_n}$, it follows that 
$F(\gamma_{Y,n+1}) = \gamma_{Y,n}$.

The last relation of \eqref{propfundclass} follows formally, because $V(\gamma_{Y,n}) 
= V(F(\gamma_{Y,n+1})) = p\gamma_{Y,n+1}$. 
\end{proof}

\begin{prop}\label{Defgamipi}
Let $n \geq 1$ be an integer, and let $\gamma_{Y,n} \in \sH^d_Y(\WC{n}{d}{P/X})$ be the
Hodge-Witt fundamental class of $Y$ in $P$ relatively to $X$, as defined in 
\ref{Deffundclass}.

\romain The linear homomorphism $W_n(\sO_P) \to
\sH^d_Y(\WC{n}{d}{P/X})$ sending $1$ to $\gamma_{Y,n}$ vanishes on $W_n(\sI) :=
\Ker(W_n(\sO_P) \surj i_*W_n(\sO_Y))$.

\romain Let $\gamma_{i,\pi,n}$ be the composition
\eq{defgamipi}{ \gamma_{i,\pi,n} : i_*W_n(\sO_Y) \lra \sH^d_Y(\WC{n}{d}{P/X})  \riso 
\RR\uGamma_Y(\WC{n}{d}{P/X}[d]) \lra \WC{n}{d}{P/X}[d], }
where the first morphism is defined thanks to the previous assertion. Then
$\gamma_{i,\pi,n}$ commutes with $R$, $F$ and $V$.

\romain For $n=1$, we have $\gamma_{i,\pi,1} = \gamma_f$, where $\gamma_f$ is the 
morphism defined by \eqref{defgammaf}.
\end{prop}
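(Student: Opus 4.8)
All three assertions are local on $P$, so I would work near a point of $Y$ at which $\sI$ is generated by a regular sequence $\mbt=(t_1,\ldots,t_d)$ and set $\sJ=([t_1],\ldots,[t_d])\subset W_n(\sO_P)$, so that by Definition \ref{Deffundclass} $\gamma_{Y,n}$ is locally $\beta_{\sJ}$ applied to the symbol $\left[\begin{smallmatrix} d[t_1]\cdots d[t_d] \\ {[t_1],\ldots,[t_d]}\end{smallmatrix}\right]$. The plan is to prove (i) by induction on $n$, using the identities of Proposition \ref{Propfundclass} to trade a multiplication by an element of $W_n(\sI)$ for a multiplication at level $n-1$; to deduce (ii) from those same identities together with the functoriality of local cohomology; and to obtain (iii) by unwinding the definition \eqref{defgamipi} for $n=1$ and appealing to Proposition \ref{DeltaFLI}.

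\textbf{Part (i).} First I note that, because $\beta_{\sJ}$ is $W_n(\sO_P)$-linear and the symbol lies in the $W_n(\sO_P)/\sJ$-module $\sExt^d_{W_n(\sO_P)}(W_n(\sO_P)/\sJ,\WC{n}{d}{P/X})$, one has $\sJ\gamma_{Y,n}=0$, in particular $[t_i]\gamma_{Y,n}=0$ for all $i$. I would then induct on $n$, the case $n=1$ being exactly this statement (there $\sJ=\sI=W_1(\sI)$). For $a\in W_n(\sI)$ with first Witt component $a_0\in\sI$, elementary Witt vector algebra gives $a=[a_0]+V(a')$ with $a'\in W_{n-1}(\sI)$; writing $a_0=\sum_ic_it_i$ and using the identity $[x+y]=[x]+[y]-V(\,\cdot\,)$ with correction term a Witt vector all of whose components are divisible by $xy$, one gets $[a_0]=\sum_i[c_i][t_i]+V(w)$ for some $w\in W_{n-1}(\sI)$. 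Then, invoking $[t_i]\gamma_{Y,n}=0$, relation \eqref{VFlin}, and $F(\gamma_{Y,n})=\gamma_{Y,n-1}$ from Proposition \ref{Propfundclass},
\[ a\gamma_{Y,n}=\sum_i[c_i]\bigl([t_i]\gamma_{Y,n}\bigr)+V\bigl((a'+w)F(\gamma_{Y,n})\bigr)=V\bigl((a'+w)\gamma_{Y,n-1}\bigr)=0 \]
by the induction hypothesis applied to $a'+w\in W_{n-1}(\sI)$. Hence $1\mapsto\gamma_{Y,n}$ factors through $i_*W_n(\sO_Y)=W_n(\sO_P)/W_n(\sI)$, which is (i) and legitimizes the definition of $\gamma_{i,\pi,n}$.

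\textbf{Parts (ii) and (iii).} For (ii), I would observe that the isomorphism $\sH^d_Y(\WC{n}{d}{P/X})\riso\RR\uGamma_Y(\WC{n}{d}{P/X}[d])$ of Proposition \ref{Immersion}(ii) and the forget-supports morphism to $\WC{n}{d}{P/X}[d]$ are functorial in $\WC{n}{d}{P/X}$, hence commute with $R$, $F$ and $V$, while on the first arrow of \eqref{defgamipi} Proposition \ref{Propfundclass} gives $R(b\gamma_{Y,n+1})=R(b)\gamma_{Y,n}$, $F(b\gamma_{Y,n+1})=F(b)\gamma_{Y,n}$, and $V(a)\gamma_{Y,n+1}=V(a\gamma_{Y,n})$ (the last from \eqref{VFlin} and $F(\gamma_{Y,n+1})=\gamma_{Y,n}$); composing yields the claim. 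For (iii), setting $n=1$ gives $[t_i]=t_i$, $\sJ=\sI$, and $\gamma_{Y,1}$ locally equal to $\beta_{\sI}\bigl(\left[\begin{smallmatrix} dt_1\cdots dt_d \\ t_1,\ldots,t_d\end{smallmatrix}\right]\bigr)$; on the other hand, Proposition \ref{DeltaFLI} identifies the composite $\eta_i^{-1}\circ\zeta'_{i,\pi}\circ\varphi_f$ from \eqref{defgammaf} with the map sending $1$ to $\left[\begin{smallmatrix} dt_1\cdots dt_d \\ t_1,\ldots,t_d\end{smallmatrix}\right]\in\sExt^d_{\sO_P}(\sO_Y,\omega_{P/X})$. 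It then remains to check that the morphism $\can\colon\RR\sHom_{\sO_P}(\sO_Y,\omega_{P/X}[d])\to\omega_{P/X}[d]$ closing \eqref{defgammaf} factors as $\RR\sHom_{\sO_P}(\sO_Y,\omega_{P/X}[d])\to\RR\uGamma_Y(\omega_{P/X}[d])\to\omega_{P/X}[d]$, where the first map is the canonical one (inducing $\beta_{\sI}$ on $\sH^d$) and the second is the forget-supports morphism used in \eqref{defgamipi} --- the standard compatibility of the two ``evaluation at $1\in\sO_P\surj\sO_Y$'' maps (\cite{Co00}). Granting this, $\gamma_f$ and $\gamma_{i,\pi,1}$ are both the composite of $1\mapsto\left[\begin{smallmatrix} dt_1\cdots dt_d \\ t_1,\ldots,t_d\end{smallmatrix}\right]$ with this augmentation, hence equal.

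\textbf{Main obstacle.} I expect the real work to lie in two places. In (i) it is the jump from ``the symbol is killed by $\sJ$'' to ``$\gamma_{Y,n}$ is killed by the strictly larger ideal $W_n(\sI)$'': this fails before applying $\beta_{\sJ}$ and only holds because of the relation $F(\gamma_{Y,n})=\gamma_{Y,n-1}$, so the delicate point is getting the Witt vector bookkeeping ($a=[a_0]+V(a')$ and $[a_0]\equiv\sum_i[c_i][t_i]$ modulo $V W_{n-1}(\sI)$) right so that the induction closes. In (iii) the difficulty is purely in the signs --- the symbol carries the normalization $(-1)^{d(d+1)/2}$ built into $\psi_{\mbt,\sM}$, and $\eta_i$, $\zeta'_{i,\pi}$ and the identifications of translated complexes contribute further ones; Proposition \ref{DeltaFLI} was established precisely to absorb them, so once it is invoked (iii) reduces to matching two sign-free augmentation morphisms.
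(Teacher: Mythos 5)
Your proposal is correct and follows essentially the same route as the paper: part (i) rests on $F(\gamma_{Y,n})=\gamma_{Y,n-1}$, the projection formula $V(a)\omega=V(aF(\omega))$, and the linearity of the symbol in its upper entry; part (ii) on Proposition \ref{Propfundclass} plus functoriality of the local-cohomology maps; and part (iii) on Proposition \ref{DeltaFLI} together with the compatibility of the two augmentations. The only cosmetic difference is that in (i) you organize the Witt-vector bookkeeping as an induction on $n$, whereas the paper directly writes any section of $W_n(\sI)$ as $\sum_i V^i([a_{i,1}][t_1]+\cdots+[a_{i,d}][t_d])$ and kills each term --- your induction is in effect a proof of that decomposition.
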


\begin{proof}
To prove assertion (i), we may again assume that $\sI$ is generated by a regular
sequence $t_1,\ldots,t_d$. Any section $w$ of $W_n(\sI)$ can then be written as a sum
\eqn{ w = \sum_{i=0}^{n-1}V^i([a_{i,1}][t_1]+\cdots+[a_{i,d}][t_d]), }
with $a_{i,j} \in \sI$ and $[a_{i,j}], [t_j] \in W_{n-i}(\sO_P)$. By functoriality, we
have $V(a)\omega = V(aF(\omega))$ for any $a \in W_i(\sO_P)$, $\omega \in
\sH^d_Y(\WC{i+1}{d}{P/X})$, $i \geq 1$. Using \eqref{propfundclass}, we obtain
\eqn{ V^i([a_{i,j}][t_j])\gamma_{Y,n} = V^i([a_{i,j}][t_j]F^i(\gamma_{Y,n})) =
V^i([a_{i,j}][t_j]\gamma_{Y,n-i}). }
The symbol \eqref{defSymbol} is linear with respect to $m$, therefore we have 
\eqn{ [a_{i,j}][t_j]\gamma_{Y,n-i} = 
\beta_{\sJ}(\left[\begin{array}{c} [a_{i,j}][t_j]\,d[t_1]\cdots d[t_d] \\
{[t_1],\ldots,[t_d]} \end{array}\right]) = 0 }
since the upper entry in the symbol belongs to $([t_1],\ldots,[t_d])\WC{n-i}{d}{P/X}$. 

In the definition of $\gamma_{i,\pi,n}$, the last two arrows commute with $R$, $F$ and 
$V$ by functoriality. Relations \eqref{propfundclass} imply that the first one also 
commutes with $R$, $F$ and $V$, since $R(1) = F(1) = 1$, and $V(1) = p$.

Let us assume that $n = 1$, and check assertion (iii). By construction,
$\gamma_{i,\pi,1}$ is the composition of the morphism $i_*\sO_Y \to
\sH^d_Y(\Omega^d_{P/X})$ sending $1$ to $\gamma_{Y,1}$ with the canonical morphism
\eqn{ \sH^d_Y(\Omega^d_{P/X})  \riso 
\RR\uGamma_Y(\Omega^d_{P/X}[d]) \lra \Omega^d_{P/X}[d]. }
Comparing with the definition of $\gamma_f$ in \ref{Defgammaf}, and using the same 
notations, it suffices to show that the composed morphism 
\eqn{ \sO_Y \xra{\varphi_f} \omega_{Y/X} \xra{\eta_i^{-1}\,\circ\,\zeta'_{i,\pi}}
\sExt^d_{\sO_P}(\sO_Y, \Omega^d_{P/X}) \xra{\beta_{\sI}} \sH^d_Y(\Omega^d_{P/X}) }
sends $1$ to $\gamma_{Y,1}$. Since this is a morphism of sheaves (rather than 
complexes in the derived category), it is a local verification, which is provided by 
Proposition \ref{DeltaFLI}.
\end{proof}

\begin{defn}\label{Deftauipi}
Let $X$ be a noetherian $\FF_p$-scheme with a dualizing complex, $\sE$ a locally free
$\sO_X$-module of rank $d+1$, $P=\P(\sE)$, $\pi : P \to X$ the canonical projection, $i
: Y \inj P$ a regular closed immersion of codimension $d$. For each integer $n \geq 1$,
we define a trace morphism $\tau_{i,\pi,n}$ by
\eq{deftauipi}{ \tau_{i,\pi,n} : \RR f_*(W_n(\sO_Y)) \xra{\RR\pi_*(\gamma_{i,\pi,n})} 
\RR\pi_*(\WC{n}{d}{P/X}[d]) \xra{\Trp_{\pi,n}} W_n(\sO_X), }
where $\gamma_{i,\pi,n}$ is the morphism \eqref{defgamipi}, and $\Trp_{\pi,n}$ is the 
Hodge-Witt trace morphism defined in \eqref{drwTrp}. 
\end{defn}

\begin{rem}
As mentioned in the introduction, we expect that $\tau_{i,\pi,n}$ depends only on $f$, 
and not on the factorization $f = \pi \circ i$. We also expect that the analog of 
Theorem \ref{Thtau} holds for the trace morphisms $\tau_{f,n}$ that would be thus 
defined. More generally, one can hope that these constructions are part of a theory of 
canonical classes for relative de Rham-Witt cohomology (see \cite{El78}, \cite{Ek84}, 
\cite{Gs85} for such results over a field). In order to develop this program, 
generalizations and non-trivial properties of our constructions are needed (even for 
the independence statement), which would lead to expand too much this article. As most 
of them are not needed for the proof of our main results, we do not include them here, 
and we hope to return to these questions elsewhere. However, we will give in the next 
section a partial generalization of Theorem \ref{Thtau}, (iii), which is the key to the 
injectivity property of Theorem \ref{Tworeg}. 
\end{rem}

\begin{prop}\label{Proptauipi}
Under the assumptions of \ref{Deftauipi}, the morphisms $\tau_{i,\pi,n}$ satisfy the 
following properties.

\romain For variable $n$, $\tau_{i,\pi,n}$ commutes with $R$, $F$ and $V$. 

\romain For $n=1$, $\tau_{i,\pi,1} = \tau_f$. 
\end{prop}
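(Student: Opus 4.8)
The plan is to prove the two assertions of Proposition \ref{Proptauipi} directly from the definition \eqref{deftauipi} of $\tau_{i,\pi,n}$ as the composition $\Trp_{\pi,n} \circ \RR\pi_*(\gamma_{i,\pi,n})$, using the compatibility properties that have already been established for the two factors.

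For assertion (i), the point is that $\tau_{i,\pi,n}$ is built out of two morphisms, each of which is already known to commute with $R$, $F$ and $V$ for variable $n$: the morphism $\gamma_{i,\pi,n}$ by Proposition \ref{Defgamipi} (ii), and the Hodge-Witt trace morphism $\Trp_{\pi,n}$ by Definition \ref{DefHWtrp} (whose compatibility was recorded as a consequence of Theorem \ref{HWcoh}). Hence I would simply form the evident commutative ladders: for $R$, apply $\RR\pi_*$ to the square expressing compatibility of $\gamma_{i,\pi,n+1}$ with $R$, paste it next to the square expressing compatibility of $\Trp_{\pi,n+1}$ with $R$, and conclude that the outer rectangle commutes; likewise for $F$ and $V$. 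One slightly delicate point worth spelling out is that $\RR\pi_*$ must be known to commute with the transition maps $R$, $F$, $V$ acting on the complexes $\WC{n}{d}{P/X}[d]$ and on $W_n(\sO_Y)$, $W_n(\sO_X)$; this is a functoriality statement for $\RR\pi_*$ applied to morphisms of $W_n$-modules under the ring maps $R\colon W_{n+1}\to W_n$, $F\colon W_{n+1}\to W_n$, $V\colon W_n\to W_{n+1}$, and it causes no trouble since these are just morphisms in appropriate derived categories of sheaves of abelian groups. So assertion (i) is essentially a diagram-chase bookkeeping exercise, with no real obstacle.

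For assertion (ii), I would specialize everything to $n=1$. By Proposition \ref{Defgamipi} (iii), the morphism $\gamma_{i,\pi,1}$ coincides with the morphism $\gamma_f\colon \sO_Y \to \omega_{P/X}[d]$ of \eqref{defgammaf} (using $\WC{1}{d}{P/X} = \Omega^d_{P/X} = \omega_{P/X}$, and the identification of $\sH^d_Y$-sections with the corresponding morphism into $\WC{1}{d}{P/X}[d]$). By Proposition \ref{CompTrp}, the Hodge-Witt trace $\Trp_{\pi,1}$ coincides with Conrad's projective trace morphism $\Trp_\pi$ of \cite[(2.3.5)]{Co00} (here I must use the hypothesis that $X$ is noetherian, which holds since $X$ carries a dualizing complex). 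Therefore $\tau_{i,\pi,1} = \Trp_\pi \circ \RR\pi_*(\gamma_f)$. But Proposition \ref{Taugamma} identifies exactly this composition with the trace morphism $\tau_f$ of Theorem \ref{Thtau}, in the case $P = \P^{\,d}_X$, $f = \pi\circ i$. Hence $\tau_{i,\pi,1} = \tau_f$.

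The only place where any care is needed is the matching of sign conventions and of the identifications $\sH^d_Y(\WC{1}{d}{P/X}) \riso \RR\uGamma_Y(\omega_{P/X}[d]) \to \omega_{P/X}[d]$ used in \eqref{defgamipi} with the chain $\varphi_f$, $\zeta'_{i,\pi}$, $\eta_i^{-1}$, $\can$ used in \eqref{defgammaf}; but this matching is precisely what Proposition \ref{Defgamipi} (iii) asserts, and it was in turn reduced to the local computation of Proposition \ref{DeltaFLI}. So I expect assertion (ii) to follow by simply concatenating Propositions \ref{Defgamipi} (iii), \ref{CompTrp} and \ref{Taugamma}, with essentially no new argument; the ``main obstacle'', such as it is, has already been absorbed into those three propositions. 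I would write the proof in roughly four lines, citing these results in order.

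\begin{proof}
Assertion (i): by Proposition \ref{Defgamipi} (ii), the morphism $\gamma_{i,\pi,n}$ commutes with $R$, $F$ and $V$ when $n$ varies; applying $\RR\pi_*$, so does $\RR\pi_*(\gamma_{i,\pi,n})$. By Definition \ref{DefHWtrp}, the Hodge-Witt trace morphism $\Trp_{\pi,n}$ also commutes with $R$, $F$ and $V$. As $\tau_{i,\pi,n}$ is the composition $\Trp_{\pi,n}\circ\RR\pi_*(\gamma_{i,\pi,n})$, pasting the corresponding commutative squares shows that $\tau_{i,\pi,n}$ commutes with $R$, $F$ and $V$ as well.

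Assertion (ii): for $n=1$, we have $\WC{1}{d}{P/X} = \Omega^d_{P/X} = \omega_{P/X}$, and Proposition \ref{Defgamipi} (iii) gives $\gamma_{i,\pi,1} = \gamma_f$, with $\gamma_f$ as in \eqref{defgammaf}. Since $X$ is noetherian, Proposition \ref{CompTrp} gives $\Trp_{\pi,1} = \Trp_{\pi}$, the projective trace morphism of \cite[(2.3.5)]{Co00}. Therefore
\eqn{ \tau_{i,\pi,1} = \Trp_{\pi}\circ\RR\pi_*(\gamma_f). }
By Proposition \ref{Taugamma}, applied to $P = \P^{\,d}_X$ and $f = \pi\circ i$, this composition equals the trace morphism $\tau_f$ of Theorem \ref{Thtau}. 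Hence $\tau_{i,\pi,1} = \tau_f$.
\end{proof}
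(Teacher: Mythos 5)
Your proof is correct and follows exactly the paper's own argument: both assertions are obtained by combining Proposition \ref{Taugamma} with the corresponding properties of $\gamma_{i,\pi,n}$ (Proposition \ref{Defgamipi}) and of $\Trp_{\pi,n}$ (Definition \ref{DefHWtrp} and Proposition \ref{CompTrp}). The extra detail you supply (the pasting of commutative squares and the functoriality of $\RR\pi_*$) is harmless elaboration of the same reasoning.
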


\begin{proof}
Taking into account Proposition \ref{Taugamma}, both assertions follow from the similar
properties of $\gamma_{i,\pi,n}$ and $\Trp_{\pi,n}$ proved in \ref{Defgamipi} and
\ref{CompTrp}.
\end{proof}

\begin{defn}\label{Deftauipi2}
Under the assumptions of \ref{Deftauipi}, we can use the previous constructions 
to define a morphism $\tau_{i,\pi} : \RR f_*(W(\sO_Y)) \lra W(\sO_X)$
which commutes with $F$ and $V$, and is such that $R_n \circ \tau_{i,\pi} =
\tau_{i,\pi,n} \circ R_n$ for all $n$, $R_n$ denoting both restriction maps $W(\sO_X)
\to W_n(\sO_X)$ and $W(\sO_Y) \to W_n(\sO_Y)$. 

To construct $\tau_{i,\pi}$, we first recall that, for any scheme $X$, the inverse
system $(W_n(\sO_X))_{n\geq 0}$ is $\varprojlim$-acyclic, as the cohomology of each
term vanishes on affine open subsets, and the inverse system of sections on such a
subset has surjective transition maps. So, if $f_{\sbul\,*}$ denotes the obvious
extension of the direct image functor to the category of inverse systems, it suffices
to define a morphism
\eq{tauipibul}{ \tau_{i,\pi,\sbul} : \RR f_{\sbul\,*}(W\lbul(\sO_Y)) \lra W\lbul(\sO_X) }
in the derived category of inverse systems on $X$, and to apply the functor
$\RR\varprojlim$ and the canonical isomorphism $\RR f_* \circ \RR\varprojlim \simeq
\RR\varprojlim\,\circ\,\RR f_{\sbul\,*}$. On the one hand, the relations
$R(\gamma_{Y,n+1}) = \gamma_{Y,n}$ imply that, for variable $n$, the fundamental
classes define a morphism of inverse systems $i_{\sbul\,*}(W\lbul(\sO_Y)) \to
\sH_Y^d(\WC{\sbul}{d}{P/X})$. As the canonical morphisms
\eqn{ \sH^d_Y(\WC{\sbul}{d}{P/X}) \riso \RR\uGamma_Y(\WC{\sbul}{d}{P/X}[d]) \lra 
\WC{\sbul}{d}{P/X}[d] }
make sense in the derived category of inverse systems, we can define in this derived
category a morphism $\gamma_{i,\pi,\sbul} : i_{\sbul\,*}(W\lbul(\sO_Y)) \to
\WC{\sbul}{d}{P/X}[d]$ which has the morphisms $\gamma_{i,\pi,n}$ defined in
\eqref{defgamipi} as components. On the other hand, the homomorphisms $\dlog_n$ used to
define Chern classes for invertible bundles form an inverse system of homomorphisms,
hence, for variable $n$, the powers of the Chern classes of $\sO_P(1)$ define a
morphism $W\lbul(\sO_P)[-d] \to \RR\pi_{\sbul\,*}(\WC{\sbul}{d}{P/X})$, which is an
isomorphism of the derived category of inverse systems. Composing its inverse with the
projection by $\RR\pi_{\sbul\,*}$ of $\gamma_{i,\pi,\sbul}$ provides
$\tau_{i,\pi,\sbul}$. It is clear that $\tau_{i,\pi,\sbul}$ has the morphisms 
$\tau_{i,\pi,n}$ as components, and commutes with $F$ and $V$. Then the morphism 
\eq{deftauipi2}{ \tau_{i,\pi} : \RR f_*(W(\sO_Y)) \riso \RR\varprojlim \RR 
f_{\sbul\,*}(W\lbul(\sO_Y)) \xra{\RR\varprojlim(\tau_{i,\pi,\sbul})} W(\sO_X) }
has the required properties. 

Finally, as $f$ is a morphism of noetherian schemes, $f_*$ and $\RR f_*$ commute with
tensorisation with $\Q$. So we can define a morphism again denoted $\tau_{i,\pi} : \RR
f_*(W\sO_{Y,\Q}) \lra W\sO_{X,\Q}$ by
\eq{deftauipiQ}{ \tau_{i,\pi} :  \RR f_*(W\sO_{Y,\Q}) \riso \RR f_*(W\sO_Y)\otimes\Q 
\xra{\tau_{i,\pi}\otimes\Q} W\sO_{X,\Q}. }
This morphism also commutes with $F$ and $V$.
\end{defn}

\section{Proof of the injectivity theorem for Witt vector 
cohomology}\label{Injectivity2}

The main result of this section is Theorem \ref{ThinjWitt} below, which gives an
injectivity property for the functoriality morphisms induced on Witt vector cohomology
by some complete intersection morphisms of virtual relative dimension $0$. As explained
in Remark \ref{Implies}, Theorem \ref{Tworeg} is a particular case of this result.

\begin{thm}\label{ThinjWitt}
Let $f : Y \to X$ be a projective morphism between two flat noetherian
$\Z_{(p)}$-schemes with dualizing complexes, which is complete intersection of virtual
relative dimension $0$. We assume that there exists a scheme-theoretically dense open
subscheme $U \subset X$ such that $f^{-1}(U) \to U$ is finite locally free of constant
rank $r \geq 1$. Let $f_n : Y_n \to X_n$ be the reduction of $f$ mod $p^{n+1}$. 

\romain For all $q \geq 0$, the kernels of the functoriality homomorphisms 
\ga{}{ f^* : H^q(X, \sO_X) \lra H^q(Y, \sO_Y), \label{functO} \\
f_n^* : H^q(X_n, \sO_{X_n}) \lra H^q(Y_n, \sO_{Y_n}), \label{functOn} \\
f_0^* : H^q(X_0, W_n(\sO_{X_0})) \lra H^q(Y_0, W_n(\sO_{Y_0})), \label{functWn} \\
f_0^* : H^q(X_0, W(\sO_{X_0})) \lra H^q(Y_0, W(\sO_{Y_0})), \label{functW} }
are annihilated by $r$.

\romain For all $q \geq 0$, the functoriality homomorphism
\eq{functWQ}{ f_0^* : H^q(X_0, W\sO_{X_0,\Q}) \lra H^q(Y_0, W\sO_{Y_0,\Q}) }
is injective.
\end{thm}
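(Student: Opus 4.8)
The plan is to derive Theorem \ref{ThinjWitt} from the trace morphisms constructed in the previous sections, exactly in parallel with the proof of Theorem \ref{Injth}. The essential input is the composition formula: if $f = \pi \circ i$ is a factorization with $\pi : P = \P^{\,d}_X \to X$ the projection and $i : Y \inj P$ a regular closed immersion of codimension $d$ (which exists because $f$ is a ci0 projective morphism, hence $i$ is automatically a regular immersion of codimension $d = $ relative dimension of $\pi$), then we have the morphisms $\tau_{i,\pi,n} : \RR f_*(W_n(\sO_Y)) \to W_n(\sO_X)$ of Definition \ref{Deftauipi}, compatible with $R$, $F$, $V$ by Proposition \ref{Proptauipi}, with $\tau_{i,\pi,1} = \tau_f$, and their limit $\tau_{i,\pi} : \RR f_*(W(\sO_Y)) \to W(\sO_X)$ of Definition \ref{Deftauipi2}. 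First I would establish the key normalization statement: \emph{the composition $W_n(\sO_X) \to \RR f_*(W_n(\sO_Y)) \xra{\tau_{i,\pi,n}} W_n(\sO_X)$ of the functoriality morphism with the trace is multiplication by $r$.} Over the dense open $U$ where $f$ is finite locally free of rank $r$, this should follow from Theorem \ref{Thtau}(iii) for $n = 1$ (it is $\trace_{f_*\sO_Y/\sO_X}(1) = r$) and, for general $n$, from the compatibility of $\tau_{i,\pi,n}$ with $F$ and $V$ together with the fact that $W_n(\sO_Y)$ is finite locally free over $W_n(\sO_X)$ over $U$ (so that the same trace computation applies componentwise, using a section telescoping argument over Teichmüller-free parts). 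Since $U$ is scheme-theoretically dense and all the relevant sheaves are $p$-torsion-free quasi-coherent $W_n(\sO_X)$-modules, the identity ``multiplication by $r$'' propagates from $U$ to all of $X$.

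The main obstacle I anticipate is precisely this normalization step for $n \geq 2$: one must check that the trace morphism $\tau_{i,\pi,n}$, which is defined via the Hodge-Witt fundamental class and the projective Hodge-Witt trace rather than via a naive ``sum over fibre'' formula, really does compute the expected multiplicity $r$ over $U$. The cleanest route is probably to observe that over $U$ the morphism $f$ is finite flat of rank $r$, so $\RR f_*(W_n(\sO_Y))|_U = f_*(W_n(\sO_Y))|_U$ is a locally free $W_n(\sO_X)|_U$-module of rank $r$ (by \cite[Prop.~1.5.8]{Il79}-type statements on compatibility of Witt vectors with étale extensions, plus flatness), and then to use the $F$-$V$ compatibility of $\tau_{i,\pi,n}$ together with the $n = 1$ case to reduce the computation to the Witt-component level; alternatively one can invoke the last assertion of Proposition \ref{Proptauipi} and a dévissage along $0 \to \gr^{n-1}W_n \to W_n \to W_{n-1} \to 0$. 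I would also double-check whether the paper intends to deduce this from a separate lemma (not in the excerpt); if so, I would cite it, but the argument sketched above is self-contained given \ref{Thtau} and \ref{Proptauipi}.

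Granting the normalization, the rest is formal and follows the template of Theorem \ref{Injth}. For part (i), consider the functoriality morphism $\sO_X \to \RR f_*(\sO_Y)$ (resp.\ its analogues mod $p^{n+1}$, resp.\ with $W_n(\sO_{X_0})$, resp.\ $W(\sO_{X_0})$). In each case, composing with the appropriate trace ($\tau_f$ on $X$; $\tau_{f_n}$ on $X_n$, noting $f_n$ is again ci0 over the flat noetherian $\Z/p^{n+1}$-scheme $X_n$ and $U_n \subset X_n$ is scheme-theoretically dense with $f_n^{-1}(U_n) \to U_n$ finite locally free of rank $r$ by flatness and base change via \ref{Thtau}(ii); $\tau_{i_0,\pi_0,n}$ and $\tau_{i_0,\pi_0}$ on $X_0$, where $i_0, \pi_0$ is the reduction mod $p$ of the factorization of $f$) gives multiplication by $r$. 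Then, using the projection formula \cite[III, 3.7]{SGA 6} (valid since $f$, being ci0, has finite Tor-dimension, so $\LL f^*$ preserves $\Dbqc$), one gets a commutative triangle
\eqn{
\xymatrix{
\sO_X \ar[r] \ar[rd]_-{\times\, r} & \RR f_*\LL f^*\sO_X \ar[ld] \\
& \sO_X,
}
}
and similarly in the three Witt-theoretic cases (there using that $\RR f_*$ and $\LL f^*$ make sense for the rings $W_n(\sO_{X_0})$, $W(\sO_{X_0})$ and that a projection-formula identity holds; the $W$-case follows by $\RR\varprojlim$ from the $W_n$-cases, the inverse system $(W_n)$ being $\varprojlim$-acyclic as recalled in \ref{Deftauipi2}). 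Applying $H^q(X,-)$ shows the functoriality map composed with the trace on cohomology is $\times r$, hence its kernel is annihilated by $r$. Finally, part (ii): tensor \eqref{functWn} (for all $n$) or \eqref{functW} with $\Q$; since $W\sO_{X_0,\Q} = W(\sO_{X_0}) \otimes \Q$ and cohomology of noetherian schemes commutes with $\otimes \Q$, the kernel of \eqref{functWQ} is annihilated by $r$, but $r \geq 1$ is invertible in $\Q$, so the kernel is zero. This proves injectivity. The reduction of Theorem \ref{Tworeg} to this is then immediate (Remark \ref{Implies}): two flat regular $R$-schemes of the same dimension connected by a projective surjective morphism give a ci0 morphism which, being dominant between integral-enough pieces, is generically finite locally free of some rank $r \geq 1$ on a dense open.
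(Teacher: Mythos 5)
The overall architecture of your argument (establish that trace composed with pullback is multiplication by $r$, then read off (i) by applying $H^q$ and (ii) by inverting $r$ in $\Q$) is exactly the paper's, and your treatment of \eqref{functO} (special case of Theorem \ref{Injth}), of \eqref{functOn} (via Tor-independence and Theorem \ref{Thtau}(ii)), and of the passage from \eqref{functWn} to \eqref{functW} and \eqref{functWQ} (via $\RR\varprojlim$ and $\otimes\,\Q$) is sound. The gap is where you suspected it: the normalization of $\tau_{i_0,\pi_0,n}\circ f_0^*$ for $n\geq 2$, and neither of your proposed routes closes it. First, the ``compute over $U$ and propagate by density'' strategy fails twice over: (a) for $f$ merely finite \emph{flat} (not \'etale) of rank $r$, the sheaf $W_n(\sO_{Y_0})$ is \emph{not} locally free of rank $r$ over $W_n(\sO_{X_0})$ — the compatibility \cite[0, Prop.~1.5.8]{Il79} is an \'etale statement; and (b) scheme-theoretic density of $U$ in $X$ does not imply scheme-theoretic density of $U_0$ in $X_0$ (take $X=\Spec \Z_{(p)}[x,y]/(xy-p)$, $U=D(x)$: then $X_0=\Spec \FF_p[x,y]/(xy)$ and $\sO_{X_0}\to j_*\sO_{U_0}$ kills $y$), so an identity of endomorphisms of $W_n(\sO_{X_0})$ verified on $U_0$ does not propagate. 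Second, the $F$-$V$/d\'evissage fallback is insufficient: writing $\tau_{i_0,\pi_0,n}\circ f_0^*$ as multiplication by $\kappa_n\in\Gamma(X_0,W_n(\sO_{X_0}))$, the compatibilities with $R$, $F$, $V$ only give $R(\kappa_n)=\kappa_{n-1}$ and $F(\kappa_n)=\kappa_{n-1}$, which by induction pin $\kappa_n$ down only up to $\kappa_n=r+V^{n-1}(\epsilon)$ with $\epsilon\in\sO_{X_0}$ arbitrary (the constraint $F(V^{n-1}\epsilon)=pV^{n-2}\epsilon=0$ is automatic in $W_{n-1}$).

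What the paper actually does (Proposition \ref{Mult}) is to manufacture the missing constraint from characteristic $0$: it constructs ring maps $\tF^n:W_n(\sO_{X_0})\to\sO_{X_{n-1}}$ factoring $F^n$ (Proposition \ref{DefFtilde}), proves the rather delicate commutativity of the diagram \eqref{WtoO} comparing the Witt-vector trace with the coherent trace $\tau_{f_{n-1}}$ through $\tF^n$ (this needs Lemma \ref{OmBOm} and a comparison of the Hodge--Witt fundamental class $\gamma_{Y_0,n}$ with the de Rham class of $Y_{n-1}$), deduces $\tF^n_X(\kappa_n)=r$ from the already-established identity \eqref{compfn}, and then concludes $\kappa=r$ from the key algebraic lemma \eqref{zerointer}, namely $\Ker(F-\Id)\cap\bigcap_n\Ker(\tF^n\circ R_n)=0$, which is precisely where the flatness of $X$ over $\Z_{(p)}$ enters. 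You should regard this entire mechanism — not just a routine lemma to be cited — as the mathematical core of the theorem; without it the argument does not go through.
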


\begin{rmk}\label{Implies}
Theorem \ref{ThinjWitt} implies Theorem \ref{Tworeg}. Indeed, let $f : Y \to X$ be as
in \ref{Tworeg}. The morphisms $X_k \inj X_0$ and $Y_k \inj Y_0$ are nilpotent
immersions, hence the canonical homomorphisms
\eqn{ H^q(X_0, W\sO_{X_0,\Q}) \lra H^q(X_k, W\sO_{X_k,\Q}), \quad  
H^q(Y_0, W\sO_{Y_0,\Q}) \lra H^q(Y_k, W\sO_{Y_k,\Q}) }
are isomorphisms \cite[Prop.~2.1]{BBE07}. Therefore it suffices to check that $f$
satisfies the hypotheses of Theorem \ref{ThinjWitt}. We may assume that $X$ is
connected, and replace $Y$ by one of its connected components mapping surjectively to
$X$, so that $X$ and $Y$ are integral schemes. At any closed point $y \in Y$, with
image $x = f(y)$, we may choose a closed immersion $Y \inj P$ around $y$, with $P$
smooth over $X$. If $\dim \sO_{X,x} = n$, then $\sO_{P,y}$ is a regular local ring of
dimension $n+d$ for $d = \dim(P/X)$, and $\sO_{Y,y}$ is a regular quotient of
$\sO_{P,y}$ of dimension $n$. Therefore, the ideal $\sI$ of $Y$ in $P$ is regular of
codimension $d$ around $y$, and it follows that $f$ is complete intersection of virtual
relative dimension $0$. Moreover, the function field extension $K(X) \inj K(Y)$ is
finite, hence $f$ is finite and locally free of constant rank $\geq 1$ above a non
empty open subset $U$. As $X$ is integral, $U$ is scheme-theoretically dense and the
hypotheses of Theorem \ref{ThinjWitt} are satisfied.
\end{rmk}

In order to prove Theorem \ref{ThinjWitt}, we will choose a factorization $f = \pi\circ
i$, where $i : Y \inj P = \P^{\,d}_X$ is a closed immersion, and $\pi : P \to X$ the
structural morphism. Let $i_0, \pi_0$ be the reductions mod $p$ of $i, \pi$. The key
point will be to relate the trace morphisms $\tau_{i_0,\pi_0,n}$ constructed in
\ref{Deftauipi} to the trace morphism $\tau_f$ given by Theorem \ref{Thtau}, and this
is made possible by the following constructions.

\begin{lem}\label{QuotdRW}
Let $X$ be a scheme on which $p$ is locally nilpotent, $P$ a smooth $X$-scheme, $\fa
\subset \sO_X$ a quasi-coherent ideal, $X' \inj X$ the closed subscheme defined by
$\fa$, $P' = X' \times_X P$. For each $n \geq 1$, let $\sN\hbul_n \subset
\WC{n}{\sbul}{P/X}$ be the additive subgroup generated by sections of the form
\eq{genker1}{ V^r([a]\omega), \quad dV^r([a]\omega), \quad \text{with\ \ }a \in \fa, 
\ \ \omega \in \WC{n-r}{\sbul}{P/X},\ \ 0 \leq r \leq n-1. }
Then, for variable $n$, the canonical homomorphisms $\WC{n}{\sbul}{P/X} \to
\WC{n}{\sbul}{P'/X'}$ induce a transitive family of isomorphisms
\eq{reddRW}{ \WC{n}{\sbul}{P/X}/\sN\hbul_n \riso \WC{n}{\sbul}{P'/X'}. }
\end{lem}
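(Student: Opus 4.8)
## Proof Proposal for Lemma \ref{QuotdRW}

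The plan is to reduce to the universal affine case and then invoke the explicit basis of Witt differentials provided by the Langer–Zink structure theory recalled in Section \ref{DRWprel}. First I would observe that the statement is local on $X$, so I may assume $X = \Spec A$ with $p$ nilpotent in $A$, $\fa$ corresponds to an ideal $I \subset A$, and $A' = A/I$. Since the de Rham–Witt complex commutes with \'etale base change on $P$ (see \eqref{etlocDRW1}), and since any smooth $X$-scheme is \'etale-locally a polynomial algebra, I would further reduce to the case $P = \A^d_X$, i.e. to computing $\WC{n}{\sbul}{A[x_1,\dots,x_d]/A}$. Here one has to be slightly careful: the reduction $P' = X' \times_X P$ is $\A^d_{X'}$, and an \'etale cover of $\A^d_X$ restricts to an \'etale cover of $\A^d_{X'}$, with the $\sN\hbul_n$ subgroups being compatible under this base change thanks to \eqref{etlocDRW2} and the fact that Teichm\"uller lifts of elements of $\fa$ go to Teichm\"uller lifts of elements of $\fa\sO_{\A^d_X}$; so the reduction is legitimate.

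In the affine polynomial case, I would use Corollary \ref{candecompn}: every $\omega \in \WC{n}{q}{A[\ux]/A}$ is uniquely a finite sum $\sum_{k,\sP} e_n(\xi_{k,\sP}, k, \sP)$ with $\xi_{k,\sP} \in V^{u(k)}W_{n-u(k)}(A)$. The target $\WC{n}{q}{A'[\ux]/A'}$ has the analogous unique presentation with coefficients in $V^{u(k)}W_{n-u(k)}(A')$, and the canonical reduction map sends $e_n(\xi_{k,\sP},k,\sP)$ over $A$ to $e_n(\bar\xi_{k,\sP},k,\sP)$ over $A'$ by functoriality of the basic Witt differentials (Rules \ref{rules} (i), applied with $\rho$ running over reductions, shows the construction is compatible with ring maps). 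Since $W_{n-u(k)}(A) \to W_{n-u(k)}(A')$ is surjective with kernel $W_{n-u(k)}(I)$ (where I use that $W_m$ of a surjection is a surjection with kernel the Witt vectors of the kernel ideal), the reduction map on de Rham–Witt complexes is surjective, and its kernel is exactly the span of the $e_n(\xi_{k,\sP},k,\sP)$ with $\xi_{k,\sP} \in V^{u(k)}W_{n-u(k)}(I)$. It then remains to identify this kernel with $\sN\hbul_n$. The inclusion $\sN\hbul_n \subseteq \ker$ is clear since $[a] \mapsto 0$ in $W_n(\sO_{P'})$ for $a \in \fa$, hence $V^r([a]\omega)$ and $dV^r([a]\omega)$ reduce to zero. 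For the reverse inclusion, a coefficient $\xi = V^{u(k)}(\eta)$ with $\eta \in W_{n-u(k)}(I)$ can itself be written, using the description of $W_m(I) = VW_{m-1}(I) + [I]\cdot$(stuff) — more precisely $\eta = \sum_{j\ge 0} V^j([a_j])$ with $a_j \in I$ — so that $e_n(\xi, k, \sP)$ becomes a $W_n(A)$-linear combination (via Rules \ref{rules} (i)–(iii)) of terms of the shape $V^r([a]\,e_{n-r}(\cdots))$ and $dV^r([a]\,e_{n-r}(\cdots))$, which are of the form \eqref{genker1}. Handling the derivative/Verschiebung interplay here uses \eqref{Vxdy} and \eqref{Vd}, which let one push $d$ and $V$ past each other at the cost of powers of $p$ that are harmless.

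The transitivity of the isomorphisms for varying $n$ follows from the compatibility of Corollary \ref{candecompn} with the restriction maps $R$ (the presentation of $R\omega$ is obtained from that of $\omega$ by reducing coefficients from $W_{n-u(k)}$ to $W_{n-1-u(k)}$ and discarding weights $k$ with $p^{n-1}k$ integral but $p^{n-2}k$ not), together with the fact that $R$ carries $\sN\hbul_{n}$ into $\sN\hbul_{n-1}$ by inspection of \eqref{genker1}. I expect the main obstacle to be the bookkeeping in the last step: making precise, via Rules \ref{rules}, that the kernel subgroup described in terms of the canonical decomposition coincides on the nose with the concretely-defined $\sN\hbul_n$, including verifying that multiplying a basic Witt differential $e_{n-r}(\cdots)$ over the polynomial ring by $[a]$ with $a \in I$ and applying $V^r$ or $dV^r$ still lands in the span of the $e_n(\xi,k,\sP)$ with $\xi \in V^{u(k)}W_{n-u(k)}(I)$ — this is where one genuinely uses that $\fa$-multiples of Teichm\"uller products behave well under the Langer–Zink formulas, and it is the computation I would write out carefully rather than the reductions, which are formal.
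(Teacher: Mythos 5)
Your argument is correct, but it is genuinely different from the one in the paper. The paper never touches the basic Witt differentials: it checks, using \eqref{VFlin}, \eqref{Vd} and \eqref{FV}, that $\sN\hbul_n$ is a differential graded ideal stable under $V$ and (for the system) under $F$, so that $\{\WC{n}{\sbul}{P/X}/\sN\hbul_n\}$ is again an \FV-pro-complex; it then identifies $\sN^{\,0}_n$ with $\Ker(W_n(\sO_P)\to W_n(\sO_{P'}))$ in degree $0$, concludes that the quotient pro-system is an \FV-pro-complex over $P'/X'$, and invokes the universal (initial-object) property of $\WC{n}{\sbul}{P'/X'}$ to get the isomorphism. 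That route is shorter, works directly for arbitrary smooth $P/X$ with no localization, and sidesteps both the \'etale reduction and the combinatorics of Corollary \ref{candecompn}. Your route, by contrast, computes the kernel of the reduction map on the nose: after reducing to $\A^d_A$ via \eqref{etlocDRW1}--\eqref{etlocDRW2} (and your use of \eqref{etlocDRW2} is exactly what is needed to see that $\sN\hbul_n$ is compatible with \'etale base change, since it lets you absorb coefficients of $W_{n-r}(B)$ into $F^r$ of elements of $W_n(B)$ and then apply \eqref{VFlin}), the uniqueness in Corollary \ref{candecompn} identifies the kernel with the span of the $e_n(\xi,k,\sP)$ with $\xi\in V^{u(k)}W_{n-u(k)}(I)$, and writing $\xi=\sum_j V^{u(k)+j}([a_j])$ and pushing $V^{u(k)+j}$ and $d$ through the product via Rules \ref{rules} does land each term in $\sN\hbul_n$ (the three cases $I_0\neq\emptyset$; $I_0=\emptyset$ with $k$ non-integral, where one Leibniz step converts $dV^{u+j}(\alpha)\beta$ into a sum of a $dV$-generator and a $V$-generator; and $I_0=\emptyset$ with $k$ integral all check out). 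So your proof costs more bookkeeping but yields as a by-product an explicit basis-level description of the kernel, which the paper's universal-property argument does not provide.
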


\begin{proof}
Thanks to \eqref{VFlin}, one first notices that $\sN\hbul_n$ is a differential graded
ideal of $\WC{n}{\sbul}{P/X}$. Using \eqref{Vd}, one sees that, for all $n \geq 1$,
$V(\sN\hbul_n) \subset \sN\hbul_{n+1}$. Using \eqref{FV} (and a direct computation for
$r = 0$), one sees that $F(\sN\hbul_{n+1}) \subset \sN\hbul_n$. Therefore, the
projective system $\{\WC{n}{\sbul}{P/X}/\sN\hbul_n\}$ is an \FV-procomplex over $P/X$.
In degree $0$, it is easy to see by induction on $n$ that the ideal $\sN^{\,0}_n \subset
W_n(\sO_P)$ is the kernel of $W_n(\sO_P) \to W_n(\sO_{P'})$. It follows that 
$\{\WC{n}{\sbul}{P/X}/\sN\hbul_n\}$ is actually an \FV-procomplex over $P'/X'$. It is 
then clear that it satisfies the universal property which defines 
$\{\WC{n}{\sbul}{P'/X'}\}$, which implies that \eqref{reddRW} is an isomorphism of 
\FV-procomplexes.
\end{proof}

\begin{prop}[{see also \cite[Th.~4.2.3]{Ol07}}]\label{DefFtilde}
Let $X$ be a $\Z_{(p)}$-scheme and denote $X_n = X\otimes_{\Z_{(p)}} \Z_{(p)}/p^{n+1}$.

\romain For all $n \geq 1$, there exists a unique homomorphism of sheaves of rings 
\eqn{ \tF^n: W_n(\sO_{X_0}) \lra \sO_{X_{n-1}} }
making the following diagram commute 
\eqn{ \xymatrix{   W_{n+1}(\sO_{X_{n-1}}) \ar[r]^-{F^n} \ar[d] & \sO_{X_{n-1}}\\
W_n(\sO_{X_0}) \ar[ur]_{\tF^n} & \hspace{1.2cm},
} }
where the vertical map is the natural reduction map. Furthermore, if we assume $X$ to
be flat over $\Z_{(p)}$ and denote by $R_n: W(\sO_{X_0}) \to W_n(\sO_{X_0})$ the
natural reduction map, then
\eq{zerointer}{ \Ker(F-\Id: W(\sO_{X_0}) \to W(\sO_{X_0})) \cap\bigcap_{n \geq 1}
\Ker( \tF^n \circ R_n)\ =\ 0. }

\romain Let $P$ be a smooth $X$-scheme and denote $P_n = P \times_X X_n$. For all $n
\geq 1$, there exists a unique homomorphism of sheaves of graded algebras
\eqn{ \tF^n:\WC{n}{\sbul}{P_0/X_0} \lra \sH\hbul(\Omega\hbul_{P_{n-1}/X_{n-1}}) }
making the following diagram commute 
\eqn{ \xymatrix{ \WC{n+1}{\sbul}{P_{n-1}/X_{n-1}} \ar[r]^-{F^n} \ar[d] & 
Z\Omega\hbul_{P_{n-1}/X_{n-1}} \ar[d] \\
\WC{n}{\sbul}{P_0/X_0} \ar[r]^-{\tF^n} & \sH\hbul(\Omega\hbul_{P_{n-1}/X_{n-1}}).
} }
Furthermore, for all $a \in \sO_{P_0}^{\times}$ and all $\tilde{a} \in
\sO^{\times}_{P_{n-1}}$ lifting $a$, we have
\eq{Ftidlog}{ \tF^n(\dlog([a])) = \cl(d\tilde{a}/\tilde{a}).  }
When $X_0$ is a perfect scheme and $X_{n-1} = W_n(X_0)$, $\tF^n$ is the isomorphism 
\eq{IRiso}{ \theta_n : \WC{n}{\sbul}{P_0/X_0} \riso
\sH\hbul({\Omega\hbul_{P_{n-1}/X_{n-1}}}) }
defined by Illusie-Raynaud \cite[III, (1.5)]{IR83}. 
\end{prop}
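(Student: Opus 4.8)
The plan is to prove Proposition \ref{DefFtilde} in two stages, treating part (i) first since part (ii) will follow the same pattern one graded degree at a time.

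\textbf{Part (i).} For the existence and uniqueness of $\tF^n$, I would work locally and use the explicit description of the Witt vector functor. The vertical reduction map $W_{n+1}(\sO_{X_{n-1}}) \to W_n(\sO_{X_0})$ is surjective, so uniqueness of $\tF^n$ is automatic once existence is known; the content is that $F^n \colon W_{n+1}(\sO_{X_{n-1}}) \to \sO_{X_{n-1}}$ kills the kernel of that reduction map. That kernel is generated by $V^n(\sO_{X_{n-1}})$ together with the terms coming from the ideal $p\sO_{X_{n-1}}$ in the Witt components; one computes $F^n V^n = p^n$ which is zero in $\sO_{X_{n-1}}$ (as $p^n\sO_{X_{n-1}} = 0$), and similarly that $F^n$ applied to a Witt vector whose entries lie in $p\sO_{X_{n-1}}$ lands in $p^n\sO_{X_{n-1}} = 0$ because each ghost-type polynomial contribution is divisible by a sufficiently high power of $p$. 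These divisibility checks are the routine part. For the displayed relation \eqref{zerointer}, I would take a section $\xi$ of $W(\sO_{X_0})$ with $F\xi = \xi$ and $\tF^n(R_n\xi) = 0$ for all $n$. The second condition means the $F^n$-image of any lift of $R_n\xi$ to $W_{n+1}(\sO_{X_{n-1}})$ vanishes; using flatness of $X$ over $\Z_{(p)}$ to identify $\sO_{X_{n-1}}$ with $\sO_X/p^{n+1}$ and chasing the ghost components, the condition $F\xi = \xi$ forces each ghost coordinate $w_m(\xi)$ to satisfy $w_m(\xi) = w_{m-1}(\xi)^p \pmod{p^{\,?}}$-type recursions, while $\tF^n(R_n\xi) = 0$ pins these ghost coordinates to be divisible by higher and higher powers of $p$; the intersection over all $n$ then gives $\xi = 0$. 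I expect this to be the main obstacle: it requires a careful bookkeeping of $p$-adic valuations in the ghost coordinates combined with the fixed-point equation, rather than a one-line argument.

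\textbf{Part (ii).} Here I would first reduce to the case where $P$ is affine and admits étale coordinates over $\A^d_X$, using the étale base-change compatibility \eqref{etlocDRW1} of the relative de Rham--Witt complex and the corresponding compatibility of the sheaves $Z\Omega\hbul$, $\sH\hbul(\Omega\hbul)$. In degree $0$ the map $\tF^n$ is the one built in part (i) (after noting $\sH^0(\Omega\hbul_{P_{n-1}/X_{n-1}}) = \sO_{X_{n-1}}$-algebra generated appropriately — more precisely, in degree $0$ the target is $\Omega^0 = \sO_{P_{n-1}}$ and one uses part (i) applied with $P$ in place of $X$). For the general degree, the key is that $F^n \colon \WC{n+1}{q}{P_{n-1}/X_{n-1}} \to Z\Omega^q_{P_{n-1}/X_{n-1}}$ already exists (Frobenius on the de Rham--Witt complex lands in closed forms by \eqref{Vd}, $dF = pFd$, and $p^{n+1} = 0$ downstairs gives $dF^{n+1} = 0$), and I must show it descends along $\WC{n+1}{q}{P_{n-1}/X_{n-1}} \surj \WC{n}{q}{P_0/X_0}$, i.e. kills the subgroup $\sN\hbul_n$ of Lemma \ref{QuotdRW} (with $\fa = p\sO_X$, $X' = X_0$), composed with passing to cohomology classes. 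By Lemma \ref{QuotdRW}, $\WC{n}{\sbul}{P_0/X_0} = \WC{n}{\sbul}{P_{n-1}/X_{n-1}}/\sN\hbul_n$, so I just check $F^n(V^r([a]\omega))$ and $F^n(dV^r([a]\omega))$ become exact (boundaries) for $a \in p\sO_{P_{n-1}}$: using $F^n V^r = p^r F^{n-r}$ and $F^n d V^r = F^{n-r} d \cdot (\text{unit})$ and the fact that $[a] = [p\tilde a\,'] $ has $F^{n-r}[a]$ divisible by $p^{\,p^{n-r}}$ hence by $p^{n-r+1}$ when things land in $\Omega\hbul_{P_{n-1}/X_{n-1}}$, one gets classes that vanish or are manifestly boundaries. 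Uniqueness again follows from surjectivity. For the $\dlog$ formula \eqref{Ftidlog}, I would compute directly: $F^n(\dlog([a])) = F^n([a]^{-1}d[a])$; iterating \eqref{FTeich} ($F(d[a]) = [a]^{p-1}d[a]$) gives $F^n(\dlog_{n+1}([a])) = \dlog_1(a^{p^n}) = p^n a^{-1}da$ in $\Omega^1_{X_0}$-type terms — but one must instead lift to $\sO_{P_{n-1}}$: choosing $\tilde a$ a unit lift, $[\tilde a] \in W_{n+1}(\sO_{P_{n-1}})$ has $F^n[\tilde a] = \tilde a^{p^n} \pmod{p^{n+1}}$-corrections, and $F^n(d[\tilde a]/[\tilde a])$ computes to $\cl(d\tilde a/\tilde a)$ after absorbing the error terms, which die because the de Rham differential of a $p$-divisible quantity is a $p$-divisible closed form hence zero in $\sH\hbul$ modulo $p^{n+1}$. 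Finally, the identification with the Illusie--Raynaud isomorphism $\theta_n$ when $X_0$ is perfect and $X_{n-1} = W_n(X_0)$ is a matter of observing that $\theta_n$ is characterized by exactly the same commutative square and $\dlog$-normalization in \cite[III, (1.5)]{IR83}, so the uniqueness clause forces $\tF^n = \theta_n$. The main technical obstacle in part (ii) is verifying that $F^n$ annihilates $\sN\hbul_n$ after passing to cohomology classes — i.e. the interplay between the "kill the ideal $p$" descent of Lemma \ref{QuotdRW} and the fact that the target is a cohomology sheaf, not a module of forms — and I would organize that verification around the generators \eqref{genker1} exactly as listed.
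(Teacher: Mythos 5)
Your overall strategy coincides with the paper's: in (i) you identify generators of the kernel of $W_{n+1}(\sO_{X_{n-1}})\to W_n(\sO_{X_0})$ and check that $F^n$ kills them, and in (ii) you do the same for the de Rham--Witt reduction map modulo exact forms, with the same divisibility computations ($F^nV^n=p^n=0$, $F^nV^r([p]\,\cdot)$ divisible by $p^{p^{n-r}+r}$ with $p^{n-r}+r\geq n$, $F^ndV^r=F^{n-r}d$ producing boundaries), and you settle \eqref{Ftidlog} and the comparison with $\theta_n$ by direct computation and uniqueness. This is the paper's route.

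There is, however, one step that would not close as written: the proof of \eqref{zerointer}. Observe that $\bigcap_{n\geq 1}\Ker(\tF^n\circ R_n)$ alone is far from zero: if $a\in\sO_{X_0}$ satisfies $a^p=0$ and $\tilde a$ lifts it, then $\tilde a^p\in p\sO_X$, hence $\tilde a^{p^n}\in p^{p^{n-1}}\sO_X\subset p^n\sO_X$, so the nonzero Teichm\"uller vector $[a]$ lies in every $\Ker(\tF^n\circ R_n)$. Thus no bookkeeping of divisibility by ``higher and higher powers of $p$'' can give the conclusion by itself, and the condition $F\xi=\xi$ does not enter through ghost-coordinate recursions: on a Witt vector over the $\FF_p$-algebra $\sO_{X_0}$, $F$ simply raises each Teichm\"uller component to the $p$-th power. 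The actual mechanism is: write $w=\sum_{i\geq s}V^i([a_i])$ with $a_s\neq 0$; then $\tF^{s+1}(R_{s+1}w)=p^s\tilde a_s^{\,p}$ in $\sO_{X_s}=\sO_X/p^{s+1}$, so flatness gives $a_s^p=0$, while $Fw=w$ gives $a_s^p=a_s$, whence $a_s=0$, a contradiction. Two smaller repairs: in (ii) the kernel of $\WC{n+1}{q}{P_{n-1}/X_{n-1}}\to\WC{n}{q}{P_0/X_0}$ contains, besides the generators $V^r([p]\omega)$ and $dV^r([p]\eta)$ you list, the terms $V^n(\alpha)$ and $dV^n(\beta)$ coming from the level drop $n+1\to n$ (these are the easy ones: $F^nV^n=p^n=0$ and $F^ndV^n(\beta)=d\beta$ is a boundary); and your identification $\tF^n=\theta_n$ via the uniqueness clause presupposes that $\theta_n$ fits into the defining commutative square, which is not part of its construction in Illusie--Raynaud --- the paper instead checks that both maps are morphisms of differential graded algebras agreeing in degree $0$ (both factor the $n$-th ghost component) and uses that $\WC{n}{\sbul}{P_0/X_0}$ is generated in degree $0$.
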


Note that, in formula \eqref{Ftidlog}, the class of $d\tilde{a}/\tilde{a}$ does not 
depend upon the choice of the liftng $\tilde{a}$: if $\tilde{b} = \tilde{a} + pw$, then 
\eqn{ d\tilde{b}/\tilde{b}  =  d\tilde{a}/\tilde{a} + d(\log(1 +p\frac{w}{\tilde{a}})), }
where $\log(1+pw/\tilde{a})$ is defined thanks to the canonical divided powers of $p$. 

\begin{proof}
(i) We may assume $X$ is affine. The kernel of the vertical map in the diagram is
locally generated (as an abelian group) by elements of the form $V^n([a])$ and
$V^r([pb])$ for some $a, b\in \sO_{P_{n-1}}$ and $0 \leq r \leq n$. As these elements
are clearly mapped to $0$ under $F^n$, this gives the unique existence of $\tF^n$.

To prove $\eqref{zerointer}$, let $w \in \Ker(F-\Id) \cap \bigcap_n
\Ker(\tF^n\circ R_n)$. If $w \neq 0$, we can write 
\eqn{ w =\sum_{i\ge s}^\infty V^i([a_i]),\quad \text{with } a_i\in \sO_{X_0} 
\text{ and } a_s\neq 0. }
Then $R_{s+1}(w) = V^s([a_s]) \in W_{s+1}(\sO_{X_0})$. If $\tilde{a}_s\in \sO_{X_s}$ is 
any lifting of $a_s$, and if $[\tilde{a}_s]$ is the Teichm\"uller representative of 
$\tilde{a}_s$ in $W_2(\sO_{X_s})$, so that $V^s([\tilde{a}_s])$ is a lifting of 
$V^s([a_s])$ in $W_{s+2}(\sO_{X_s})$, we have by construction 
\eqn{ \tF^{s+1}(V^s([a_s])) = F^{s+1}(V^s([\tilde{a}_s])) = p^s F([\tilde{a}_s]) = p^s 
\tilde{a}_s^p }
in $\sO_{X_s}$. Thus $\tF^{s+1}(R_{s+1}(w)) = 0$ if and only if $p^s{\tilde{a}_s}^p = 0$ 
in $\sO_{X_s}$. Since $X_s$ is flat over $\Z/p^{s+1}\Z$, we obtain $\tilde{a}_s^p \in
p\sO_{X_s}$, in particular $a_s^p=0\in \sO_{X_0}$. But by assumption we have
\eqn{ F(w)=\sum_{i\ge s} V^i([a_i^p]) = \sum_{i\ge s} V^i([a_i]) = w. }
Hence $a_s = a_s^p = 0$, a contradiction. 

(ii) First of all, since $d F^n = p^n F^n d$, the image of $F^n :
\WC{n+1}{\sbul}{P_{n-1}/X_{n-1}} \to \Omega\hbul_{P_{n-1}/X_{n-1}}$ is clearly
contained in $Z\Omega\hbul_{P_{n-1}/X_{n-1}}$. Thus the diagram makes sense. Now, 
Lemma \ref{QuotdRW} and \cite[Prop.~2.19]{LZ04} imply that, in
degree $q$, the kernel of the vertical map on the left hand side is locally generated
(as an abelian group) by sections of the following form
\eq{genker}{ V^n(\alpha),\quad dV^n(\beta),\quad V^r([p]\omega), \quad
dV^r([p]\eta), }
with $\alpha \in \Omega^q_{P_{n-1}/X_{n-1}}$, $\beta \in
\Omega^{q-1}_{P_{n-1}/X_{n-1}}$, $0 \leq r \leq n$, $\omega \in
\WC{n+1-r}{q}{P_{n-1}/X_{n-1}}$ and $\eta \in \WC{n+1-r}{q-1}{P_{n-1}/X_{n-1}}$. One
immediately sees that, via $F^n$, the first two sections are mapped to $0$ in
$\sH^q(\Omega\hbul_{P_{n-1}/X_{n-1}})$. Since, for any $m\geq 1$ and any Teichm\"uller
representative $[a]_m$ in $W_m(\sO_{P_{n-1}})$, we have $F([a]_m) = [a]_{m-1}^p \in
W_{m-1}(\sO_{P_{n-1}})$, we obtain for the last two sections 
\gan{ F^nV^r([p]\omega) = p^r F^{n-r}([p]\omega) = p^{p^{n-r}+r}F^{n-r}(\omega) = 0,\\
F^ndV^r([p]\eta) = F^{n-r}d([p]\eta) = F^{n-r}([p]d\eta) = 
p^{p^{n-r}-(n-r)}d(F^{n-r}(\eta)) = 0 }
in $\sH^q(\Omega\hbul_{P_{n-1}/X_{n-1}})$. Thus $F^n$ maps all elements in the kernel
of the vertical map to $0$ in $\sH^q(\Omega\hbul_{P_{n-1}/X_{n-1}})$. Since the
vertical map is surjective, this yields the statement.

If $\tilde{a} \in \sO^\times_{P_{n-1}}$ lifts $a$, we get by construction 
\eqn{ \tF^n(d[a]/[a]) = \cl(F^n(d[\tilde{a}]/[\tilde{a}])) = 
\cl([\tilde{a}]^{p^n-1}d[\tilde{a}]/[\tilde{a}]^{p^n}), }
which gives \eqref{Ftidlog}. 

Finally, let us assume that $X_0$ is perfect and $X_{n-1} = W_n(X_0)$. By \cite[III,
(1.5)]{IR83}, $\sH\hbul(\Omega\hbul_{P_{n-1}/X_{n-1}})$ has the structure of a
differential graded algebra (dga) with the differential $d :
\sH^i(\Omega\hbul_{P_{n-1}/X_{n-1}}) \to \sH^{i+1}(\Omega\hbul_{P_{n-1}/X_{n-1}})$
given by the boundary of the long exact cohomology sequence coming from the short exact
sequence
\eqn{ 0 \lra \Omega\hbul_{P_{n-1}/X_{n-1}} \xra{\;p^n\,} \Omega\hbul_{P_{2n-1}/X_{2n-1}} 
\lra \Omega\hbul_{P_{n-1}/X_{n-1}} \lra 0. }
The isomorphism $\theta_n$ is compatible with the differential and the product, and
induces thus an isomorphism of dga's $\theta_n : \WC{n}{\sbul}{P_0/X_0} \riso
\sH\hbul(\Omega\hbul_{P_{n-1}/X_{n-1}})$. On the other hand, it follows from the
relation $d F^n=p^n F^n d$ that the morphism $\tF^n$ is compatible with the
differentials. Therefore $\tF^n$ also induces a morphism of dga's $\tF^n :
\WC{n}{\sbul}{P_0/X_0} \riso \sH\hbul(\Omega\hbul_{P_{n-1}/X_{n-1}})$. In degree $0$, 
$\theta_n$ is defined by 
\eqn{ \theta_n(a_0, \ldots, a_{n-1}) = \tilde{a}_0^{p^n} + p\tilde{a}_1^{p^{n-1}} +\cdots + 
p^{n-1}\tilde{a}_{n-1}^p, }
where $\tilde{a}_0,\ldots,\tilde{a}_{n-1}$ are liftings to $\sO_{P_{n-1}}$ of
$a_0,\ldots,a_{n-1}$ \cite[p.~142, l.~8]{IR83}. This definition shows that, in degree
$0$, $\theta_n$ is the factorization of the $n$-th ghost component $w_n:
W_{n+1}(\sO_{P_{n-1}}) \to \sO_{P_{n-1}}$, given by $w_n(a_0,\ldots,a_n) = \sum_{i=0}^n
p^ia_i^{p^{n-i}}$, with $p^na_n=0$ in $\sO_{P_{n-1}}$. From the definition of the
morphism of functors $F^n : W_{n+1} \to W_1$, we also get that, in degree $0$, $\tF^n$
is the factorization of the $n$-th ghost component. Since $\tF^n = \theta_n$ in degree
$0$ and $\WC{n}{\sbul}{P_{n-1}/X_{n-1}}$ is generated as dga by its sections in degree
$0$, $\tF^n$ and $\theta_n$ have to be equal.
\end{proof}

\begin{lem}\label{OmBOm}
Let $S$ be $\Spec \Z_{(p)}$, $X$ an $S$-scheme, $\pi : P := \P_X^{\,d} \to X$ the
structural morphism of a projective space over $X$. For $n \geq 0$, denote by $S_n,
X_n, P_n, \pi_n$ the reductions modulo $p^{n+1}$, and let $B\Omega^d_{P_n/X_n} \subset
\Omega^d_{P_n/X_n}$ be the subsheaf of exact differential forms.

\romain For all $n \geq 0$, the canonical homomorphism 
\eq{highcoh}{ b^d_n : R^d\pi_{n\,*}(\Omega^d_{P_n/X_n}) \lra 
R^d\pi_{n\,*}(\Omega^d_{P_n/X_n}/B\Omega^d_{P_n/X_n}) }
is an isomorphism.

\romain Assume that $X$ is flat over $S$, and let $Y_0 \inj P_0$ be a regular closed
immersion of codimension $m$. Then,
\eq{purity}{ \forall\ j \neq m,\ \forall\ n \geq 0, \quad
\sH_{Y_0}^j(\Omega^d_{P_n/X_n}/B\Omega^d_{P_n/X_n}) = 0. }
\end{lem}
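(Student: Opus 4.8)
The plan is to treat the two assertions separately.

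For (i), the key observation is that $\Omega^d_{P_n/X_n}/B\Omega^d_{P_n/X_n}$ is nothing but the top cohomology sheaf $\sH^d(\Omega\hbul_{P_n/X_n})$ of the relative de Rham complex, so that $(\Omega^d_{P_n/X_n}/B\Omega^d_{P_n/X_n})[-d]$ is its canonical truncation $\tau_{\geq d}\Omega\hbul_{P_n/X_n}$, and that $b^d_n$ is obtained by applying $R^{2d}\pi_{n*}$ to the composite $\sigma_{\geq d}\Omega\hbul_{P_n/X_n} \inj \Omega\hbul_{P_n/X_n} \surj \tau_{\geq d}\Omega\hbul_{P_n/X_n}$, after the identifications $R^{2d}\pi_{n*}(\sigma_{\geq d}\Omega\hbul_{P_n/X_n}) = R^d\pi_{n*}\Omega^d_{P_n/X_n}$ (as $\sigma_{\geq d}\Omega\hbul_{P_n/X_n} = \Omega^d_{P_n/X_n}[-d]$) and $R^{2d}\pi_{n*}(\tau_{\geq d}\Omega\hbul_{P_n/X_n}) = R^d\pi_{n*}(\Omega^d_{P_n/X_n}/B\Omega^d_{P_n/X_n})$. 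First I would recall the classical computation that $R^j\pi_{n*}\Omega^i_{P_n/X_n}$ is $\sO_{X_n}$ when $i = j \leq d$ and vanishes otherwise (valid over an arbitrary base by reduction to $\P^d_\Z$), so that the Hodge--de Rham spectral sequence degenerates and $\RR\Gamma_{\dR}(P_n/X_n) \cong \bigoplus_{i=0}^{d} \sO_{X_n}[-2i]$. Using that $R^q\pi_{n*}$ vanishes for $q > d$ on \emph{every} abelian sheaf on $P_n$ (the fibres of $\pi_n$ being of dimension $d$), a straightforward count on the naive-filtration spectral sequences shows $R^q\pi_{n*}(\sigma_{\leq d-1}\Omega\hbul_{P_n/X_n}) = 0$ for $q \geq 2d-1$ and $R^q\pi_{n*}(\tau_{\leq d-1}\Omega\hbul_{P_n/X_n}) = 0$ for $q \geq 2d$. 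Feeding this into the triangles $\sigma_{\geq d}\Omega\hbul \to \Omega\hbul \to \sigma_{\leq d-1}\Omega\hbul$ and $\tau_{\leq d-1}\Omega\hbul \to \Omega\hbul \to \tau_{\geq d}\Omega\hbul$ shows that both $R^{2d}\pi_{n*}(\sigma_{\geq d}\Omega\hbul) \to R^{2d}\pi_{n*}(\Omega\hbul)$ and $R^{2d}\pi_{n*}(\Omega\hbul) \to R^{2d}\pi_{n*}(\tau_{\geq d}\Omega\hbul)$ are isomorphisms; composing them gives (i).

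For (ii) I would exploit the flatness hypothesis and the $p$-adic filtration on $\sF := \Omega^d_{P_n/X_n}/B\Omega^d_{P_n/X_n}$. Since $P_n$ is flat over $\Z/p^{n+1}$, multiplication by $p^i$ identifies $\Omega^d_{P_n/X_n}/p\Omega^d_{P_n/X_n} = \Omega^d_{P_0/X_0}$ with $p^i\Omega^d_{P_n/X_n}/p^{i+1}\Omega^d_{P_n/X_n}$, and an elementary diagram chase identifies the $i$-th graded piece of $\sF$ with $\Omega^d_{P_0/X_0}/\bar B_i$, where $\bar B_i$ denotes the image in $\Omega^d_{P_0/X_0}$ of $p^i\Omega^d_{P_n/X_n} \cap B\Omega^d_{P_n/X_n}$. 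The technical heart --- and, I expect, the main obstacle --- is the identity $\bar B_i = B_{i+1}\Omega^d_{P_0/X_0}$ for $0 \leq i \leq n$ (where $B_{i+1}$ is the $(i+1)$-st iterated ``$B$'' of Section \ref{DRWprel}); this can be checked on $\A^d$ by describing explicitly which forms $p^i x_1^{a_1}\cdots x_d^{a_d}\, dx_1\cdots dx_d$ are exact over a $\Z/p^{n+1}$-algebra, combined with the generation of $B_{i+1}\Omega^d$ recalled in Proposition \ref{propBnZn}(iii); alternatively it follows from the comparison between $p$-divisibility on $\Omega\hbul_{P_n/X_n}$ and $F$-divisibility on the relative de Rham--Witt complex encoded in Proposition \ref{DefFtilde}. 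Granting it, the last assertion of Theorem \ref{Structgrn} shows that the $i$-th graded piece $\Omega^d_{P_0/X_0}/B_{i+1}\Omega^d_{P_0/X_0}$ of $\sF$ is a locally free $\sO_{P_0^{(p^{i+1})}}$-module via the relative Frobenius $F^{i+1}_{P_0/X_0}$. As $F^{i+1}_{P_0/X_0}$ is faithfully flat ($P_0$ being smooth over $X_0$), and the $p^{i+1}$-st powers of a regular sequence cutting $Y_0$ out of $\sO_{P_0}$ form a regular sequence in $\sO_{P_0^{(p^{i+1})}}$ (powers of a regular sequence are regular, and one descends along the faithfully flat extension $\sO_{P_0^{(p^{i+1})}} \to \sO_{P_0}$), the subscheme $Y_0$ is regularly embedded of codimension $m$ in $P_0^{(p^{i+1})}$; hence $\sH^j_{Y_0}$ of each graded piece vanishes for $j \neq m$. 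The $p$-adic filtration being finite, the long exact sequences of local cohomology then yield $\sH^j_{Y_0}(\sF) = 0$ for $j \neq m$; the case $n = 0$ is just the step $i = 0$, and amounts, via the Cartier isomorphism, to local cohomology of an invertible $\sO_{P_0^{(p)}}$-module along the regular immersion $Y_0 \inj P_0^{(p)}$.

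Apart from the identity $\bar B_i = B_{i+1}\Omega^d_{P_0/X_0}$ in (ii), the only delicate point is the vanishing, used in (i), of $R^q\pi_{n*}$ for $q > d$ on non-quasi-coherent abelian sheaves such as $Z\Omega^{d-1}_{P_n/X_n}$; this is classical, and in any event in the situations where the lemma is applied $X$ is finite-dimensional, so it is already covered by Grothendieck's vanishing theorem applied to $\pi_n^{-1}(U)$ together with the relative-dimension bound for $\P^d$.
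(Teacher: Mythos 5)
Your strategy is genuinely different from the paper's for both assertions and is viable, but each half rests on a step you have not actually supplied. For (i), the paper runs no spectral sequence at all: it pushes forward along the endomorphism $\phi^{n+1}$ of $\P^{\,d}$ raising the homogeneous coordinates to their $p^{n+1}$-st powers, which makes the differential of $\Omega\hbul_{P_n/X_n}$ $\sO_{P_n}$-linear; then $\Omega^{d-1}_{P_n/X_n}\to\Omega^d_{P_n/X_n}\to\Omega^d_{P_n/X_n}/B\Omega^d_{P_n/X_n}\to 0$ is a right-exact sequence of quasi-coherent modules, $R^d\pi_{n*}$ is right exact on quasi-coherent sheaves, and it kills $\Omega^{d-1}$. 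Your version instead needs $R^q\pi_{n*}=0$ for $q>d$ on the non-quasi-coherent sheaves $\sH^i(\Omega\hbul_{P_n/X_n})$, and the justification you give is not correct as stated: Grothendieck vanishing applied to $\pi_n^{-1}(U)$ only yields vanishing above $\dim U+d$, not above $d$; the relative bound requires a genuinely relative vanishing theorem (or, more simply, the observation that these $\sH^i$ become quasi-coherent after $\phi_*^{N}$ for suitable $N$ --- i.e., exactly the paper's trick, after which your truncation argument is an unnecessary detour).

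For (ii), your filtration is the $p$-adic one, and everything hinges on the identity $\bar B_i=B_{i+1}\Omega^d_{P_0/X_0}$, which you correctly isolate as the technical heart but do not prove; as written this is the genuine gap. (The identity is true --- on $\A^d$ the coefficient ideal of a monomial $x^J$ in $B\Omega^d_{\A^d_{\Z/p^{n+1}}}$ is generated by the integers $j_k+1$, and comparing $p$-adic valuations with the monomial description of $B_{i+1}$ from Proposition \ref{propBnZn} (iii) gives the claim, which then globalizes by \'etale localization --- so your argument can be completed.) The paper avoids this computation entirely: it reduces to $Q=\P^{\,d}_{\Z_{(p)}}$ by flatness of $P\to Q$ and transports the canonical filtration of $\WC{n+1}{d}{Q_0/S_0}$ through the Illusie--Raynaud isomorphism $\tF^{n+1}=\theta_{n+1}$ of Proposition \ref{DefFtilde}, so that the local freeness of the graded pieces is Illusie's structure theorem and no identification with the $p$-adic filtration is needed. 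Note also that the paper's graded pieces come out locally free over $\sO_{P_0}$ itself, making the vanishing of $\sH^j_{Y_0}$ immediate from the regular sequence cutting out $Y_0$, whereas yours are locally free over $\sO_{P_0^{(p^{i+1})}}$ and require the additional (correct but to-be-justified) step about regular sequences in the Frobenius twist.
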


\begin{proof}
Let $Q = \P_S^{\,d}$, and let $T_0,\ldots,T_d$ be homogeneous coordinates on $Q$. We
define an $S$-endomorphism $\phi : Q \to Q$ by sending $T_i$ to $T_i^p$, $0 \leq i
\leq d$. By base change by $u : X \to S$, we obtain an $X$-endomorphism of $P$, for
which we will keep the notation $\phi$, as well as for its reduction mod $p^{n+1}$.

Let us fix $n \geq 0$. We can use the morphism $\phi^{n+1}$ and view
$\phi^{n+1}_*\Omega\hbul_{P_n/X_n}$ as a complex of quasi-coherent $\sO_{P_n}$-modules,
the differential of which is then $\sO_{P_n}$-linear. But $P_n$ has an open covering by
$d+1$ open subsets which are relatively affine with respect to $X_n$, and therefore
$R^d\pi_{n\,*}$ is a right exact functor on the category of quasi-coherent
$\sO_{P_n}$-modules. As $R^d\pi_{n\,*}(\Omega^{d-1}_{P_n/X_n}) = 0$, 
assertion (i) follows.

To prove assertion (ii), we use $\phi^{n+2}$ to view
$\phi^{n+2}_*\Omega\hbul_{P_n/X_n}$ as a complex of quasi-coherent $\sO_{P_n}$-modules
with an $\sO_{P_n}$-linear differential, and we claim that the sheaf of
$\sO_{P_n}$-modules
\eqn{ \sH^d(\phi^{n+2}_*\Omega\hbul_{P_n/X_n}) = 
\phi_*(\phi^{n+1}_*\Omega^d_{P_n/X_n}/B\phi^{n+1}_*\Omega^d_{P_n/X_n}) }
has a filtration by sub-$\sO_{P_n}$-modules, the graded of which is locally free over 
$\sO_{P_0}$. As $Y_0$ is locally defined in $P_0$ by a regular sequence of $m$ 
sections, the claim clearly implies assertion (ii).  

To prove the existence of this filtration, we may replace $X$, $P$ by $S$, $Q$, 
because the projection $v : P \to Q$ is flat, and 
\eqn{ v^*(\phi^{n+2}_*\Omega\hbul_{Q_n/S_n}) \riso \phi^{n+2}_*\Omega\hbul_{P_n/X_n}. }
Now $S_0$ is a perfect scheme, and $S_n = W_{n+1}(S_0)$. Thanks to the last assertion
of Proposition \ref{DefFtilde} (ii), $F^{n+1}$ defines an isomorphism of graded
algebras
\eqn{ \tF^{n+1} : \WC{n+1}{\sbul}{Q_0/S_0} \riso \sH\hbul(\Omega\hbul_{Q_n/S_n}). }
We may view $\tF^{n+1}$ as an $\sO_{Q_n}$-linear isomorphism by endowing
$\sH\hbul(\Omega\hbul_{Q_n/S_n})$ with the $\sO_{Q_n}$-module structure provided by the
homomorphism $\sO_{Q_n} \to \sH^0(\Omega\hbul_{Q_n/S_n})$ defined by $\phi^{n+2}$, and
$\WC{n+1}{\sbul}{Q_0/S_0}$ with the structure corresponding to the previous one via
$(\tF^{n+1})^{-1} : \sH^0(\Omega\hbul_{Q_n/S_n}) \riso W_{n+1}(\sO_{Q_0})$. The canonical
filtration of $\WC{n+1}{d}{Q_0/S_0}$ is then a filtration by sub-$\sO_{Q_n}$-modules, 
which can be transported to $\sH^d(\Omega\hbul_{Q_n/S_n})$ via $\tF^{n+1}$. As we know 
by \cite[I, Cor.~3.9]{Il79} that the corresponding graded pieces are locally free 
$\sO_{Q_0}$-modules for the structure defined by the homomorphism 
\eq{factorF}{ \oF : \sO_{Q_0} \lra W_{n+1}(\sO_{Q_0})/pW_{n+1}(\sO_{Q_0}) }
factorizing $F : W_{n+1}(\sO_{Q_0}) \to W_{n+1}(\sO_{Q_0})$, the proof will be 
complete if we check the commutativity of the diagram 
\eq{diagoF}{ \xymatrix@C=45pt{ \sO_{Q_n} \ar[r]^-{\phi^{n+2\,*}} \ar@{>>}[d] & 
\sH^0(\Omega\hbul_{Q_n/S_n}) \ar[r]_-{\sim}^-{(\tF^{n+1})^{-1}} & 
W_{n+1}(\sO_{Q_0}) \ar@{>>}[d] \\
\sO_{Q_0} \ar[rr]^-{\oF} & & W_{n+1}(\sO_{Q_0})/pW_{n+1}(\sO_{Q_0}).
} }

It is enough to check that the diagram induced on sections over $D_+(T_i) \subset
Q_n$ commutes, for $0 \leq i \leq d$. So we may replace $\sO_{Q_n}$ by $A =
(\Z/p^{n+1}\Z)[\ux]$, with $\ux = (x_1,\ldots,x_d$) and $\phi^*(x_j)= x_j^p$, $1 \leq j
\leq d$. Take $f=\sum_I a_I \ux^I \in A$, with $a_I \in \Z/p^{n+1}\Z$. Then
\eqn{ (\tF^{n+1})^{-1} \circ \phi^{n+2\,*}(f)\ =\ 
\sum_I a_I (\tF^{n+1})^{-1}(\ux^{p^{n+2}I}). }
As $\tF^{n+1}$ is the factorization of the $(n+2)$-th ghost component $w_{n+1} :
W_{n+2}(A) \to A$, we see that $(\tF^{n+1})^{-1}(x_j^{p^{n+1}}) = [x_j]$, $1 \leq j
\leq d$. Therefore, we obtain
\eqn{ (\tF^{n+1})^{-1} \circ \phi^{n+2\,*}(f)\ =\ \sum_I a_I [\ux]^{pI}. }
Since $\oF$ is given by lifting an element of $A_0$ to $W_{n+1}(A_0)$, applying 
Frobenius and reducing modulo $p$, this gives the commutativity of \eqref{diagoF}. 
\end{proof}

\begin{prop}\label{Mult}
Under the assumptions of Theorem \ref{ThinjWitt}, let $f = \pi \circ i$ be a
factorization of $f$ as the composition of a regular closed immersion $i : Y \inj P =
\P^{\,d}_X$ of $Y$ into a projective space on $X$, followed by the canonical projection
$\pi : P \to X$. For all $n \geq 1$, let $f_n, i_n, \pi_n$ be the reductions of $f, i,
\pi$ modulo $p^{n+1}$. Then the compositions
\ga{}{ \sO_X \xra{f^*} \RR f_*(\sO_Y) \xra{\tau_f} \sO_X, \label{compf}\\
\sO_{X_n} \xra{f_n^*} \RR f_{n\,*}(\sO_{Y_n}) \xra{\tau_{f_n}} \sO_{X_n}, \label{compfn}\\
W_n(\sO_{X_0}) \xra{f_0^*} \RR f_{0\,*}(W_n(\sO_{Y_0})) \xra{\tau_{i_0,\pi_0,n}} 
W_n(\sO_{X_0}), \label{compWnf} \\
W(\sO_{X_0}) \xra{f_0^*} \RR f_{0\,*}(W(\sO_{Y_0})) \xra{\tau_{i_0,\pi_0}} 
W(\sO_{X_0}), \label{compWf} \\
W\sO_{X_0,\Q} \xra{f_0^*} \RR f_{0\,*}(W\sO_{Y_0,\Q}) \xra{\tau_{i_0,\pi_0}} 
W\sO_{X_0,\Q}, \label{compWfQ} }
are given by multiplication by $r$. 
\end{prop}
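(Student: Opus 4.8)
The plan is to handle the five compositions in turn, the first two being essentially known. For \eqref{compf} I would repeat the argument of the proof of Theorem \ref{Injth}: over the scheme-theoretically dense open $U\subset X$ the morphism $f^{-1}(U)\to U$ is finite locally free, hence flat, so $Y$ and $U$ are Tor-independent over $X$ and Theorem \ref{Thtau}(ii) gives $\tau_f|_U=\tau_{f|_U}$; then Theorem \ref{Thtau}(iii) identifies the composition \eqref{compf} over $U$ with multiplication by $\trace_{\sO_{f^{-1}(U)}/\sO_U}(1)=r$, and since \eqref{compf} is an element of $\Gamma(X,\sO_X)$ while $\Gamma(X,\sO_X)\to\Gamma(U,\sO_U)$ is injective, it is multiplication by $r$ on all of $X$. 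For \eqref{compfn} I would apply Theorem \ref{Thtau}(ii) to the closed immersion $u:X_n\inj X$: flatness of $X$ and $Y$ over $\Z_{(p)}$ gives $\sO_{X_n}\otimesL_{\sO_X}\sO_Y\cong \sO_Y\otimesL_{\Z_{(p)}}\Z/p^{n+1}\Z$, which is concentrated in degree $0$ because $p$ is a non-zero-divisor on $\sO_Y$, so $X_n$ and $Y$ are Tor-independent over $X$; as $f$ is projective, $\tau_{f_n}=\LL u^*(\tau_f)$ and \eqref{compfn} is the pullback of \eqref{compf}, hence multiplication by $r$. The same flatness, applied to the local Koszul resolution of $\sO_Y$ over $\sO_P$, shows that for every $n$ the reduction $i_n:Y_n\inj P_n$ is still a regular immersion of codimension $d$, so all the morphisms $\tau_{f_n}$ and $\tau_{i_n,\pi_n,\bullet}$ occurring below are defined.

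The substance of the proposition is \eqref{compWnf}; granting it, \eqref{compWf} follows by applying $\RR\varprojlim_n$ (the system $(W_n(\sO))_n$ being $\varprojlim$-acyclic and $f_0^*$, $\tau_{i_0,\pi_0}$ being the corresponding limits), and \eqref{compWfQ} by then tensoring with $\Q$, since $\RR f_*$ commutes with $-\otimes\Q$. Write $c_n$ for the composition \eqref{compWnf}; it is a $W_n(\sO_{X_0})$-linear endomorphism of the free rank-one module $W_n(\sO_{X_0})$, hence multiplication by a section $\gamma_n\in\Gamma(X_0,W_n(\sO_{X_0}))$. Both $f_0^*$ and $\tau_{i_0,\pi_0,n}$ commute with $R$ and $F$ --- the former because $R$, $F$ are functorial in the scheme, the latter by Proposition \ref{Proptauipi}(i) --- so the $c_n$ are compatible with $R$ and $F$, which forces $R(\gamma_{n+1})=\gamma_n$ and $F(\gamma_{n+1})=\gamma_n$. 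Thus the $\gamma_n$ assemble to a section $\gamma\in\Gamma(X_0,W(\sO_{X_0}))$ with $F(\gamma)=\gamma$, so that $\gamma-r\in\Ker(F-\Id)$. By Proposition \ref{DefFtilde}(i), which applies because $X$ is flat over $\Z_{(p)}$, it then suffices to show $\tF^n(\gamma_n)=r$ for every $n\geq 1$.

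To prove this I would compare, through the ring homomorphisms $\tF^n$ of Proposition \ref{DefFtilde}, the level-$n$ Hodge--Witt trace modulo $p$ with the coherent trace modulo $p^n$. By Definition \ref{Deftauipi}, $\tau_{i_0,\pi_0,n}=\Trp_{\pi_0,n}\circ\RR\pi_{0*}(\gamma_{i_0,\pi_0,n})$, while Proposition \ref{Taugamma} gives $\tau_{f_{n-1}}=\Trp_{\pi_{n-1}}\circ\RR\pi_{n-1\,*}(\gamma_{f_{n-1}})$. The plan is to fit $c_n$ and the composition \eqref{compfn} for $f_{n-1}$ (which is multiplication by $r$ by the already-proved case of \eqref{compfn}) into the two horizontal faces of a diagram whose columns are induced by the maps $\tF^n:W_n(\sO_{Y_0})\to\sO_{Y_{n-1}}$, $\tF^n:\WC{n}{d}{P_0/X_0}\to\sH^d(\Omega\hbul_{P_{n-1}/X_{n-1}})=\Omega^d_{P_{n-1}/X_{n-1}}/B\Omega^d_{P_{n-1}/X_{n-1}}$ and $\tF^n:W_n(\sO_{X_0})\to\sO_{X_{n-1}}$. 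There are three squares to check. The one built from $f_0^*$ commutes by naturality of $\tF^n$ in the scheme. The one built from the fundamental classes is local: choosing a regular sequence $t_1,\ldots,t_d$ generating the ideal of $Y$ in $P$, one computes that $F^n$ carries the Hodge--Witt symbol $\bigl[d[\tilde t_1]\cdots d[\tilde t_d]\,/\,[\tilde t_1],\ldots,[\tilde t_d]\bigr]$ to $\bigl[(\tilde t_1\cdots\tilde t_d)^{p^n-1}\,d\tilde t_1\cdots d\tilde t_d\,/\,\tilde t_1^{p^n},\ldots,\tilde t_d^{p^n}\bigr]$, which by Lemma \ref{Changet} (diagonal change-of-generators matrix of determinant $(\tilde t_1\cdots\tilde t_d)^{p^n-1}$) represents the coherent fundamental class $\bigl[d\tilde t_1\cdots d\tilde t_d\,/\,\tilde t_1,\ldots,\tilde t_d\bigr]$ of Proposition \ref{DeltaFLI}; Lemma \ref{OmBOm}(ii) furnishes the purity needed to identify $\RR\uGamma_{Y_0}$ of $(\Omega^d/B\Omega^d)[d]$ with its degree-$d$ local cohomology sheaf, so all these maps make sense. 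The square built from the projective traces commutes because $\Trp_{\pi_0,n}$ and $\Trp_{\pi_{n-1}}$ are characterized (Definition \ref{DefHWtrp}, Proposition \ref{CompTrp}) by sending the $d$-th power of the hyperplane class to $(-1)^{d(d-1)/2}$, $\tF^n$ sends $\dlog([\sO_{P_0}(1)])$ to the de Rham Chern class $\cl(d\tilde a/\tilde a)$ of $\sO_{P_{n-1}}(1)$ by \eqref{Ftidlog}, and Lemma \ref{OmBOm}(i) allows the coherent projective trace to factor through $\Omega^d/B\Omega^d$. Composing the three squares gives $\tF^n\circ c_n=r\cdot\tF^n$; evaluating at $1$ yields $\tF^n(\gamma_n)=r$, which completes the argument. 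The main obstacle is this middle square: matching the relative de Rham--Witt fundamental class with the coherent one through the Frobenius factorizations, which requires the explicit symbol manipulations in the spirit of the proofs of Theorem \ref{Indep} and Proposition \ref{Propfundclass}, together with the cohomological input of Lemma \ref{OmBOm}.
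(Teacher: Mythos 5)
Your proposal is correct and follows essentially the same route as the paper: deduce \eqref{compf} from \eqref{tautrace} and scheme-theoretic density, get \eqref{compfn} by Tor-independent base change, prove \eqref{compWnf} by comparing the Witt-level composition with the mod-$p^n$ coherent one through the maps $\tF^n$ of Proposition \ref{DefFtilde} (checking the same three squares, with the fundamental-class square handled by the symbol computation via Lemma \ref{Changet} and the Chern-class square via \eqref{Ftidlog} and Lemma \ref{OmBOm}), and then pin down the multiplier using $F$-invariance together with \eqref{zerointer}. The passage to \eqref{compWf} and \eqref{compWfQ} by $\RR\varprojlim$ and $-\otimes\Q$ also matches the paper.
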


\begin{proof}
Since the restriction of $f$ above $U$ is finite locally free of rank $r$, it follows
from \eqref{tautrace} that the endomorphism of $\sO_U$ induced by $\tau_f \circ f^*$ is
mutiplication by $r$. But $U$ is scheme-theoretically dense in $X$, therefore the same
relation holds on $X$ itself. So \eqref{compf} is multiplication by $r$.

Thanks to the flatness of $X$ and $Y$ over $\Z_{(p)}$, the spectral sequence for the
composition of $\Tor$'s implies that, for all $n \geq 1$, $X_n$ and $Y$ are
Tor-independent over $X$. Therefore, by Theorem \ref{Thtau}, (ii), the morphism
$\tau_{f_n} \circ f_n^*$ is deduced from $\tau_f \circ f^*$ by base change from $X$ to
$X_n$, and \eqref{compfn} is also multiplication by $r$.

We want to deduce from this result that \eqref{compWnf} is also multiplication by $r$. 
We observe first that the homomorphisms $\tF^n$ defined by Lemma \ref{DefFtilde} 
provide morphisms 
\gan{ \tF^n_X : W_n(\sO_{X_0}) \lra \sO_{X_{n-1}}, \\
f_*(\tF^n_Y) :  f_{0\,*}(W_n(\sO_{Y_0})) \lra f_{n-1\,*}(\sO_{Y_{n-1}}), \\
R^d\pi_*(\tF^n_P) : R^d\pi_{0\,*}(\WC{n}{d}{P_0/X_0}) \lra 
R^d\pi_{n-1\,*}(\Omega^d_{P_{n-1}/X_{n-1}}/B\Omega^d_{P_{n-1}/X_{n-1}}). }
Moreover, we can use the isomorphism \eqref{highcoh} and define 
\eqn{ \tG^n_P := (b_n^d)^{-1} \circ R^d\pi_*(\tF^n_P) : R^d\pi_{0\,*}(\WC{n}{d}{P_0/X_0}) 
\lra R^d\pi_{n-1\,*}(\Omega^d_{P_{n-1}/X_{n-1}}). }
We consider the diagram 
\eq{WtoO}{ \xymatrix{
W_n(\sO_{X_0}) \ar[d]_{\tF^n_X} \ar[r]^-{f_0^*} & f_{0\,*}(W_n(\sO_{Y_0})) 
\ar[d]_-{f_*(\tF^n_Y)} \ar[rr]^-{\pi_{0\,*}(\gamma_{i_0,\pi_0,n})} & & 
R^d\pi_{0\,*}(\WC{n}{d}{P_0/X_0}) \ar[d]^-{\tG^n_P} \ar[r]^-{\Trp_{\pi_0,n}}_-{\sim} & 
W_n(\sO_{X_0}) \ar[d]^-{\tF^n_X} \\
\sO_{X_{n-1}} \ar[r]^-{f_{n-1}^*} & f_{n-1\,*}(\sO_{Y_{n-1}}) 
\ar[rr]^-{\pi_{n-1\,*}(\gamma_{f_{n-1}})} & & R^d\pi_{n-1\,*}(\Omega^d_{P_{n-1}/X_{n-1}}) 
\ar[r]^-{\Trp_{\pi_{n-1}}}_-{\sim} & \sO_{X_{n-1}},
} }
where the compositions of the upper and lower rows are respectively the maps induced by
\eqref{compWnf} and \eqref{compfn} on degree $0$ cohomology. Let us prove that this
diagram is commutative. The left square commutes because the morphism $\tF^n_X$ is
functorial with respect to $X$. To prove that the right square commutes, it suffices to
show that, if $\xi_{\dRW}$ and $\xi_{\dR}$ are the de Rham-Witt and de Rham Chern
classes of $\sO_P(1)$, then $\xi_{\dRW}^d$ and $\xi_{\dR}^d$ have same image in
$R^d\pi_{n-1\,*}(\Omega^d_{P_{n-1}/X_{n-1}}/B\Omega^d_{P_{n-1}/X_{n-1}})$. As
$R\hbul\pi_{n-1\,*}(\tF^n_P)$ and $b_n\hbul$ are compatible with cup-products, it
suffices to show that the diagram
\eqn{ \xymatrix{
R^1\pi_{0\,*}(\sO^{\times}_{P_0}) \ar[r]^-{\dlog} & 
R^1\pi_{0\,*}(\WC{n}{1}{P_0/X_0}) \ar[d]^-{R^1\pi_*(\tF^n_P)}  \\
R^1\pi_{n-1\,*}(\sO^{\times}_{P_{n-1}}) \ar[u] \ar[r]^-{\dlog} & 
R^1\pi_{n-1\,*}(\sH^1(\Omega\hbul_{P_{n-1}/X_{n-1}})) 
} }
is commutative, which follows from \eqref{Ftidlog}.

To simplify notations, we drop the base scheme from the indices, and denote
$C^d_{P_{n-1}} = \Omega^d_{P_{n-1}}/B\Omega^d_{P_{n-1}}$. To prove the commutativity of
the central square of \eqref{WtoO}, it suffices to prove the commutativity of the
diagram
\eqn{ \xymatrix{
i_{0\,*}(W_n(\sO_{Y_0})) \ar[r] \ar[dd]_-{i_*(\tF^n_Y)} & 
\sH^d_{Y_0}(\WC{n}{d}{P_0}) \ar[d]_-{\sH^d_Y(\tF^n_P)} &
\RR\uGamma_{Y_0}(\WC{n}{d}{P_0})[d] \ar[l]_-{\sim} \ar[d]^-{\RR\uGamma_Y(\tF^n_P)[d]} 
\ar[r] & \WC{n}{d}{P_0}[d] \ar[d]^-{\tF^n_P[d]} \\
& \sH^d_{Y_{n-1}}(C^d_{P_{n-1}}) & \RR\uGamma_{Y_{n-1}}(C^d_{P_{n-1}})[d] 
\ar[l]_-{\sim} \ar[r] & C^d_{P_{n-1}}[d] \\
i_{n-1\,*}(\sO_{Y_{n-1}}) \ar[r] & \sH^d_{Y_{n-1}}(\Omega^d_{P_{n-1}}) \ar[u] & 
\RR\uGamma_{Y_{n-1}}(\Omega^d_{Y_{n-1}})[d] \ar[l]_-{\sim} \ar[u] \ar[r] & 
\,\Omega^d_{Y_{n-1}}[d]\,, \ar[u] 
} }
to apply the functor $\RR\pi_{n-1\,*}$, and to pass to cohomology sheaves in degree
$0$. In this diagram, the upper left (resp.\ lower left) horizontal arrow maps $1$ to
$\gamma_{Y_0,n}$ (resp.\ $\gamma_{Y_{n-1},1}$), and the middle horizontal arrow is an
isomorphism thanks to Lemma \ref{OmBOm} (ii). The middle and right squares commute by
functoriality, and it suffices to prove that the left rectangle commutes. This part of
the diagram comes from a diagram of morphisms of sheaves, therefore the verification is
local on $P$. Thus we may assume that $Y$ is defined by a regular sequence
$t_1,\ldots,t_d$ in $P$. Then, since $Y$ and $P$ are flat over $\Z_{(p)}$, the images
of this sequence in $\sO_{P_{n-1}}$ and $\sO_{P_0}$ (still denoted $t_1,\ldots,t_d$)
are regular sequences defining $Y_{n-1}$ and $Y_0$. It is enough to show that the
symbols
\eqn{ \begin{split} \left[\begin{array}{c} d[t_1]\cdots d[t_d] \\
{[t_1],\ldots,[t_d]} \end{array}\right] \in \sExt^d_{W_n(\sO_{P_0})}(W_n(\sO_{Y_0}), 
\WC{n}{d}{P_0}) \hspace{3cm} \\
\text{and\quad} 
\left[\begin{array}{c} dt_1\cdots dt_d \\
{t_1,\ldots,t_d} \end{array}\right] \in 
\sExt^d_{\sO_{P_{n-1}}}(\sO_{Y_{n-1}}, \Omega^d_{P_{n-1}}) \end{split} }
have same image in $\sH^d_Y(C^d_{P_{n-1}})$. By functoriality, the image of
$\left[\begin{array}{c} dt_1\cdots dt_d \\
{t_1,\ldots,t_d} \end{array}\right]$ in $\sExt^d_{\sO_{P_{n-1}}}(\sO_{Y_{n-1}},
C^d_{P_{n-1}})$ is $\left[\begin{array}{c} \cl(dt_1\cdots dt_d) \\
{t_1,\ldots,t_d} \end{array}\right]$. On the other hand, it follows from the
construction of $\tF^n$ in Proposition \ref{DefFtilde} that $\tF^n_P([t_i]) = t_i^{p^n}
\in \sO_{P_{n-1}}$, and $\tF^n_P(d[t_i]) = \cl(t_i^{p^n-1}dt_i) \in
\sH^1(\Omega\hbul_{P_{n-1}})$. Since the $t_i^{p^n}$'s form a regular sequence in
$\sO_{P_{n-1}}$, we may argue as in the proof of Proposition \ref{Propfundclass} to
show that the symbols $\left[\begin{array}{c} d[t_1]\cdots d[t_d] \\
{[t_1],\ldots,[t_d]} \end{array}\right]$ and $\left[\begin{array}{c}
\cl(t_1^{p^n-1}\cdots t_d^{p^n-1}dt_1\cdots dt_d) \\
{t_1^{p^n},\ldots,t_d^{p^n}} \end{array}\right]$ have same image in
$\sH^d_{Y_{n-1}}(C^d_{P_{n-1}})$. The wanted equality is then a consequence of Lemma
\ref{Changet}, and the commutativity of \eqref{WtoO} follows.

Returning to the homomorphism \eqref{compWnf}, we observe that it is defined by
multiplication by a section $\kappa_n$ of $W_n(\sO_{X_0})$. Proposition
\ref{Proptauipi} (i) implies that, for variable $n$, the sections $\kappa_n$ form a
compatible family under restriction, and satisfy $F(\kappa_n)=\kappa_{n-1}$. If $\kappa
= \varprojlim_n \kappa_n \in \Gamma(X_0, W(\sO_{X_0}))$, then $F(\kappa - r) = \kappa -
r$. On the other hand, the commutativity of \eqref{WtoO} implies that $\tF^n_X(\kappa_n
- r) = 0$. So, if $R_n : W(\sO_{X_0}) \to W_n(\sO_{X_0})$ is the restriction
homomorphism, we obtain that
\eqn{ \kappa - r \in \Ker(F-\Id) \cap \bigcap_{n\geq 1}\Ker(\tF^n_X \circ R_n), }
which is zero by \eqref{zerointer}. Thus $\kappa = r$, hence $\kappa_n = r$ for all $n$. 

If we now consider in the derived category of inverse systems the composition 
\eqn{ W\lbul(\sO_{X_0}) \xra{f_{0\,\sbul}^*} \RR f_{0\,\sbul\,*}(W\lbul(\sO_{Y_0})) 
\xra{\tau_{i,\pi,\sbul}} W\lbul(\sO_{X_0}), }
we obtain a morphism which has \eqref{compWnf} as component of degree $n$. Therefore, 
this composition is multiplication by $r$ on the inverse system $W\lbul(\sO_{X_0})$. 
It follows that the composition 
\eqn{ W(\sO_{X_0}) \xra{\RR\varprojlim f_{0\,\sbul}^*} 
\RR\varprojlim \RR f_{0\,\sbul\,*}(W\lbul(\sO_{Y_0})) 
\xra{\RR\varprojlim\tau_{i,\pi,\sbul}} W(\sO_{X_0}) }
is multiplication by $r$. Using the isomorphism $\RR\varprojlim\,\circ\;\RR 
f_{0\,\sbul\,*} \simeq \RR f_{0\,*} \circ \RR\varprojlim$, we obtain that 
\eqref{compWf} is multiplication by $r$. Tensoring by $\Q$ and using the commutation 
of $\RR f_{0\,*}$ with tensorisation by $\Q$, we obtain that \eqref{compWfQ} is 
multiplication by $r$.
\end{proof}

\subsection{}\label{PfinjW}\textit{Proof of Theorem \ref{ThinjWitt}}. 
The first assertion is a particular case of Theorem \ref{Injth}. To prove the other
ones, we choose a factorization $f = \pi \circ i$, where $i$ is a closed immersion of
$Y$ into a projective space $P = \P^{\,d}_X$ over $X$, and $\pi$ is the structural
morphism, and we keep the notations of the previous subsections. Applying the functor
$H^q(X_n, -)$ (resp.~$H^i(X_0,-)$), the morphisms $\tau_{f_n}$, $\tau_{i,\pi,n}$
and $\tau_{i,\pi}$ define homomorphisms 
\gan{ H^q(Y_n, \sO_{Y_n}) \xra{\tau_{f_n}} H^q(X_n, \sO_{X_n}), \\
H^q(Y_0, W_n(\sO_{Y_0})) \xra{\tau_{i,\pi,n}} H^q(X_0, W_n(\sO_{X_0})), \\
H^q(Y_0, W(\sO_{Y_0})) \xra{\tau_{i,\pi}} H^q(X_0, W(\sO_{X_0})), \\
H^q(Y_0, W\sO_{Y_0,\Q}) \xra{\tau_{i,\pi}} H^q(X_0, W\sO_{X_0,\Q}). }
Proposition \ref{Mult} implies that the composition of these homomorphisms with the 
functoriality homomorphisms defined by $f_n$ (resp.~$f_0$) is multiplication by $r$, 
and this implies Theorem \ref{ThinjWitt}.
\hfill$\Box$
\bigskip

This also completes the proof of Theorems \ref{Tworeg}, 
\ref{Wittvanish} and \ref{Main}.

\section{An example}\label{TheExample}

Because Theorem \ref{Main} was previously known in some cases, and can be proved in
some other cases without using the most difficult results of this paper, it may be
worth giving an example for which we would not know how to prove congruence 
\eqref{maincong} without using them. We give here such an example for 
each $p \geq 7$, except perhaps when $p$ is a Fermat number.

\subsection{}\label{Conditions}
We begin with a list of conditions that we want our example to satisfy. In these 
conditions, $R$, $K$ and $k$ are as in Theorem \ref{Main}, and $X$ is an $R$-scheme. 

\vspace{1mm}
\numerop{} $X$ is a regular scheme, projective and flat over $R$.  

\numerop{} $H^0(X_K, \sO_{X_K}) = K$, and $H^q(X_K, \sO_{X_K}) = 0$ for all $q \geq 1$. 

\numerop{} There exists $q \geq 1$ such that $H^q(X_k, \sO_{X_k}) \neq 0$.

\numerop{} $X$ is not a semi-stable $R$-scheme (in particular, not smooth).

\numerop{} $\dim X_K \geq 3$.

\numerop{} $X_K$ is a variety of general type.
\vspace{1mm}

Conditions (1) and (2) will ensure that $X$ satisfies the hypotheses of Theorem 
\ref{Main}. Condition (3) will ensure that we are not in the trivial situation 
described in the first paragraph of subsection \ref{Trivial}. Condition (4) will 
ensure that Theorem \ref{Semistable} does not suffice to conclude. Condition (5) will 
rule out the case of surfaces, for which Theorem \ref{Main} is already known by
\cite[Th.~1.3]{Es06}. Condition (6) rules out rationally connected varieties, for which 
Theorem \ref{Main} is also known because they satisfy the coniveau condition of 
\cite[Th.~1.1]{Es06}. It also grants that, if $X$ can be embedded as a global 
complete intersection in some projective space over $R$, then congruence 
\eqref{maincong} cannot be proved by applying Katz's theorem \cite[Th.~1.0]{Kz71} to 
$X_k$, since a smooth complete intersection in a $K$-projective space for which Katz's
$\mu$ invariant is $\geq 1$ is a Fano variety.

\begin{rmks}\label{Remks}
We begin with a few remarks that make it easier to find an example satisfying the 
previous conditions.

\romain Examples such that $\dim_k H^1(X_k, \sO_{X_k}) > \dim_K H^1(X_K, \sO_{X_K}) =
0$ have been known since Serre's construction of a counter-example to Hodge symmetry in
characteristic $p$ \cite[Prop.~16]{Se58}. The general principle behind such examples, 
which goes back to Grothendieck (see \cite[XI, 6.11, (*)]{SGA 1} over an
algebraically closed field, and \cite[Prop.~6.2.1]{Ra70} for a general statement), is 
that the datum of a torsor $Y$ on $X$ under a finite group $G$ defines a morphism $G'
\to \underline{\Pic}_{X/R}$, where $G'$ is the Cartier dual of $G$. Then, under certain
conditions, the Lie algebra of $G'_k$ can have a non-zero image in the tangent space
$H^1(X_k, \sO_{X_k})$ to $\underline{\Pic}_{X_k/k}$. The simplest case (which was the
one considered by Serre) is when $G$ is the \'etale group $\Z/p\Z$. Then the
Artin-Schreier exact sequence shows that, when the torsor $Y_k$ remains non-trivial
after extension to an algebraic closure $\ok$ of $k$, its class gives a non-zero
element in $H^1(X_{\ok}, \sO_{X_{\ok}})$, and therefore $H^1(X_k, \sO_{X_k}) \neq 0$.
This happens in particular when $Y_k$ is a complete intersection in some projective
space, since we then have $\dim_{\ok}H^0(Y_{\ok},\sO_{Y_{\ok}}) = 1$.

To simplify our quest, we will therefore replace condition (3) (and condition (5)) by
the more restrictive condition:

\vspace{1mm}
\setcounter{numero}{2}
\numerop{'} $X$ is the quotient of an hypersurface $Y$ in a projective space
$\P^n_R$ of relative dimension $n \geq 4$ over $R$ by a free action of the group
$\Z/p\Z$.
\vspace{1mm}

\romain Assume that $X$ satisfies condition (3'). Then $H^0(Y_K, \sO_{Y_K}) = K$, and
$H^q(Y_K, \sO_{Y_K}) = 0$ for $q \neq 0, n-1$. Because $\Char(K) = 0$, we have
$H^q(X_K, \sO_{X_K}) = H^q(Y_K, \sO_{Y_K})^G$. Hence, $H^0(X_K, \sO_{X_K}) = K$, and
condition (2) is satisfied if and only if $\chi(\sO_{X_K}) = 1$. As $Y_K$ is an \'etale
cover of $X_K$ of degree $p$, the Riemann-Roch-Hirzebruch formula implies that
\eq{RRH}{ \chi(\sO_{Y_K}) = p\chi(\sO_{X_K}). }
Then condition (2) is satisfied if and only if $\chi(\sO_{Y_K}) = p$. If $d$ is the
degree of the hypersurface $Y$, we obtain
\eqna{ (-1)^{n-1}(p - 1) & = & \dim_K H^{n-1}(Y_K, \sO_{Y_K}) \\
& = & \dim_K H^n(\P^n_K, \sO_{\P^n_K}(-d)) \\
& = & \dim_K H^0(\P^n_K, \sO_{\P^n_K}(d-n-1)). }
The simplest choice for checking this equation is $d-n-1 = 1$, so that we get $\dim_K
H^0(\P^n_K, \sO_{\P^n_K}(d-n-1)) = n+1$. Then we have to satisfy the conditions
\eq{numcond}{ p > 2, \quad\quad n = p-2, \quad\quad d = p.  }

Therefore, we will simplify even further our quest by replacing condition (3') by the 
following more precise condition, which implies (2), (3) and (5):

\setcounter{numero}{2}
\numerop{''} $X$ is the quotient of an hypersurface $Y$ of degree $p$ in the projective
space $\P^n_R$ of relative dimension $n = p-2$ over $R$ by a free action of the group
$\Z/p\Z$, with $p \geq 7$.

\romain Assuming that $X$ satisfies conditions (1) and (3''), then condition (6)
follows automatically. Indeed, $Y_K$ is smooth over $K$ since $\Char(K) = 0$, and its
canonical sheaf is then $\sO_{Y_K}(-n-1+d) = \sO_{Y_K}(1)$. Since $Y_K$ is an \'etale
covering of $X_K$, it is the inverse image of the canonical sheaf on $X$, which
therefore is ample too.

So it suffices for our purpose to construct an example satisfying conditions (1), (3'')
and (4).
\end{rmks}

\subsection{}\label{Studyg}
We now begin the construction of our example. Assume that $p \geq 5$, and let $E$ be
the free $\Z_{(p)}$-module $(\Z_{(p)})^p$. We denote by $e_0,\ldots,e_{p-1}$ its
canonical basis. Let $\sigma$ be a generator of $G := \Z/p\Z$. We let $\sigma$ act on
$E$ by cyclic permutation of the basis:
\eq{sigma}{ \sigma : e_0 \mapsto e_1 \mapsto \cdots \mapsto e_{p-1} (\mapsto e_0). }
Let $H \subset E$ be the hyperplane consisting of elements for which the sum of
coordinates is $0$. It is stable under the action of $G$, and we endow it with the
basis $v_1, \ldots, v_{p-1}$ defined by $v_i = e_i-e_{i-1}$. We take as projective
space the space $\P(H) \simeq \P^{p-2}_{\Z_{(p)}}$, with the induced $G$-action, and we
denote by $X_1,\ldots,X_{p-1}$ the homogeneous coordinates on $\P(H)$ defined by the
dual basis to the basis $v_1,\ldots,v_{p-1}$ of $H$. Letting $G$ act by composition on 
functions on $H$, one checks easily that the orbit of $X_1$ is described by 
\eq{orbit}{ X_1 \mapsto -X_{p-1} \mapsto X_{p-1}-X_{p-2} \mapsto X_{p-2}-X_{p-3} 
\mapsto \cdots \mapsto X_2-X_1\ (\mapsto X_1). }

Let $g_0(X_1,\ldots,X_{p-1})$ be the sum of the elements of the orbit of $X_1^p$, i.e., 
\eq{defg0}{ g_0(X_1,\ldots,X_{p-1}) = X_1^p + (-X_{p-1})^p +
\sum_{i=2}^{p-1}(X_i-X_{i-1})^p. }
Then $g_0 \in p\Z[X_1,\ldots,Z_{p-1}]$, and we can define a polynomial 
$g(X_1,\ldots,X_{p-1}) \in \Z[X_1,\ldots,Z_{p-1}]$ by 
\eq{defg}{ g(X_1,\ldots,X_{p-1}) = \frac{1}{p}g_0(X_1,\ldots,X_{p-1}). }

Let $Z \subset \P(H)$ be the hypersurface defined by $g$. Since $g$ is $G$-invariant,
the action of $G$ on $\P(H)$ induces an action on $Z$. We 
denote by $\bg$ the reduction of $g$ in $\FF_p[X_1,\ldots,X_{p-1}]$. We first study the 
singular points of $Z_{\FF_p}$. They are solutions of the system of homogeneous equations 
$\del\bg/\del X_i = 0$, $1 \leq i \leq p-1$, which can be written as
\eq{syst}{\left\{ \begin{array}{r@{\ =\ }l}X_1^{p-1} & (X_2-X_1)^{p-1} \\
(X_2-X_1)^{p-1} & (X_3-X_2)^{p-1} \\
\vdots\hspace{8mm} & \hspace{8mm}\vdots \\
(X_{p-1}-X_{p-2})^{p-1} & (-X_{p-1})^{p-1}\ \ . \end{array} \right.}

\begin{lem}\label{SinginZ}
Let $\Fpb$ be an algebraic closure of $\FF_p$. 

\romain The solutions of \eqref{syst} in $\P^n(\Fpb)$ belong to $\P^n(\FF_p)$, and they
correspond bijectively to the families $(u_1,\ldots,u_{p-1}) \in
(\FF_p^{\times})^{p-1}$ such that
\eq{condu}{ 1 + u_1 + \cdots + u_{p-1} = 0. }

\romain For $u \in \FF_p^{\times}$, let $\tu = [u] \in \mu_{p-1}(\Z_p)$ be its
Teichm\"uller representative. Then a point $x \in \P^n(\FF_p)$ which is a solution of
\eqref{syst} belongs to $Z_{\FF_p}$ if and only if
\eq{condut}{ 1 + \tu_1 + \cdots + \tu_{p-1} \in p^2\Z_p, }
where $(u_1,\ldots,u_{p-1}) \in (\FF_p^{\times})^{p-1}$ corresponds to $x$ by 
\normalfont{(i)}. 
\end{lem}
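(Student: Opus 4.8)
The statement has two parts, and the natural strategy is to
set up coordinates for the singular points first and then test whether the defining
polynomial $\bg$ vanishes there, lifting the computation to $\Z_p$ to see the answer
modulo $p^2$.

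\emph{Proof of (i).} First I would rewrite the system \eqref{syst}. Introduce the
differences $Y_1 = X_1$, $Y_i = X_i - X_{i-1}$ for $2 \leq i \leq p-1$, and $Y_p =
-X_{p-1}$; note $Y_1 + \cdots + Y_p = 0$ is automatic and these are the elements of the
$G$-orbit of $X_1$ from \eqref{orbit}. The system \eqref{syst} then becomes the chain
of equalities $Y_1^{p-1} = Y_2^{p-1} = \cdots = Y_p^{p-1}$. If some $Y_j = 0$ then all
$Y_j = 0$, which is impossible in $\P^n$; so all $Y_j \in \Fpb^\times$ at a solution.
Since $z \mapsto z^{p-1}$ sends $\Fpb^\times$ onto $\mu_{p-1}(\Fpb) = \FF_p^\times$, the
common value lies in $\FF_p^\times$, and the equations $Y_j^{p-1} = Y_1^{p-1}$ force
$Y_j/Y_1 \in \FF_p^\times$ for all $j$ (Hensel/uniqueness of $(p-1)$-st roots in a
field, or: the map $z \mapsto z^{p-1}$ is injective on cosets of $\FF_p^\times$). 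Thus,
after scaling so that $Y_1 = 1$, we get $Y_j = u_{j-1} \in \FF_p^\times$ with the
convention $u_0 = 1$, $u_{p-1}$ standing for $Y_p$; wait — let me index cleanly: set
$u_i = Y_{i+1}/Y_1$ for $1 \leq i \leq p-1$, so $(u_1,\ldots,u_{p-1}) \in
(\FF_p^\times)^{p-1}$, and the constraint $\sum_j Y_j = 0$ becomes $1 + u_1 + \cdots +
u_{p-1} = 0$, which is \eqref{condu}. Conversely any such tuple gives, via $X_i = Y_1 +
\cdots + Y_i = 1 + u_1 + \cdots + u_{i-1}$, a well-defined point of $\P^n(\FF_p)$
solving \eqref{syst}; the recovery of $X$ from $Y$ and vice versa is a bijection. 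This
also shows all solutions are rational over $\FF_p$, proving (i).

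\emph{Proof of (ii).} Now evaluate $\bg$ at such a point. Recall $g = \frac1p g_0$ with
$g_0 = \sum_{j=1}^p Y_j^{\,p}$ (the orbit sum of $X_1^p$, rewritten in the $Y_j$'s).
Given a solution $x$ corresponding to $(u_1,\ldots,u_{p-1})$ with representative tuple
$Y_j \in \{1, u_1, \ldots, u_{p-1}\}$ as above, lift each $Y_j$ to its Teichmüller
representative: $Y_1 = 1 \mapsto 1$, $u_i \mapsto \tu_i = [u_i] \in \mu_{p-1}(\Z_p)$.
These Teichmüller lifts satisfy $\tu_i^{\,p} = \tu_i$ exactly in $\Z_p$, since elements
of $\mu_{p-1}$ are fixed by $z \mapsto z^p$. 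Therefore, with $\tilde Y_j$ the tuple of
lifts, $g_0(\tilde Y) = \sum_j \tilde Y_j^{\,p} = \sum_j \tilde Y_j = 1 + \tu_1 + \cdots
+ \tu_{p-1}$. The key point is that $x \in Z_{\FF_p}$ means $\bg(x) = 0$ in $\FF_p$,
i.e. $g_0$ evaluated at \emph{any} lift of the coordinates of $x$ lies in $p^2\Z_p$
(since $g = g_0/p$ and we need $g \equiv 0 \bmod p$). The coordinate tuple of $x$ used
in (i) has entries in $\FF_p$, and while $g_0$ is a polynomial with $\Z$-coefficients
that is only \emph{divisible} by $p$ as a polynomial, its value at a specified integral
lift is a well-defined element of $\Z_p$ whose class mod $p^2$ is what controls
$\bg(x)$. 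Choosing the Teichmüller lift is legitimate because (as the paper notes for
the analogous $\dlog$ computation, or by a direct check) changing the lift $\tilde Y_j
\mapsto \tilde Y_j + p w_j$ changes $\tilde Y_j^{\,p}$ by $p \tilde Y_j^{p-1} w_j +
O(p^2) \equiv p w_j \bmod p^2$ only through the sum $p \sum_j w_j$, and $\sum_j \tilde
Y_j = 0$-type cancellation — actually more carefully: $g_0(\tilde Y) \bmod p^2$ is
independent of the choice of lift because $g_0 \equiv 0$ identically as a polynomial mod
$p$, so its first-order variation in the lift is $p \cdot (\text{derivative of }
g_0/p)$, and $\partial(g_0/p)/\partial Y_j$ at $x$ is $\partial \bg/\partial X_i$
combinations which all vanish at the singular point $x$ by \eqref{syst}. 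Hence
$g_0(\tilde Y) \bmod p^2$ is canonical, equals $1 + \tu_1 + \cdots + \tu_{p-1} \bmod
p^2$, and $\bg(x) = 0 \iff g_0(\tilde Y) \equiv 0 \bmod p^2 \iff 1 + \tu_1 + \cdots +
\tu_{p-1} \in p^2 \Z_p$, which is \eqref{condut}.

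\emph{Main obstacle.} The delicate point is part (ii): one must argue cleanly that
$\bg(x)$ is computed by the \emph{mod $p^2$} value of the lift of $g_0$, and that the
Teichmüller lift is an admissible choice giving precisely $1 + \sum \tu_i$. The cleanest
way is to observe that $g_0$ vanishes identically mod $p$ as a polynomial, so for any
two integral lifts $\tilde Y, \tilde Y'$ of the same $\FF_p$-point one has $g_0(\tilde
Y) - g_0(\tilde Y') \equiv p \sum_j \tfrac{\partial(g_0/p)}{\partial Y_j}(\tilde Y)
(\tilde Y_j' - \tilde Y_j) \bmod p^2$, and $\tfrac{\partial(g_0/p)}{\partial Y_j} =
Y_j^{p-1}$ up to sign, which at a solution of \eqref{syst} takes a common value; the
differences $\tilde Y_j' - \tilde Y_j$ are all $\equiv 0 \bmod p$ and sum (over the
constraint) in a way making the total correction a multiple of $p^2$. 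I would make this
last cancellation precise using the homogeneity of $g_0$ and the relation $\sum_j Y_j =
0$, exactly parallel to the Euler-relation computation; everything else is routine.
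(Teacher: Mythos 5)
Your proof follows the same route as the paper's: part (i) is the identical bijection (the paper also normalizes $\xi_1$ and recovers the $X_i$ as partial sums of the $u_j$), and part (ii) is the same idea of lifting via Teichm\"uller representatives and reading off $g_0$ modulo $p^2$ from $\tu^{\,p}=\tu$. The conclusion is correct, but the step you yourself flag as delicate is not closed by the argument you give. The tuple $(1,\tu_1,\ldots,\tu_{p-1})$ of Teichm\"uller lifts of the orbit values is \emph{not} the $Y$-tuple of any lift of $x$: an actual lift $(\tilde X_1,\ldots,\tilde X_{p-1})$ forces $\sum_{j}\tilde Y_j=0$, whereas $1+\tu_1+\cdots+\tu_{p-1}=p\alpha$ is precisely the quantity being detected and is nonzero in general. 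Your independence-of-lift argument compares two genuine lifts of $x$, so it does not authorize substituting the Teichm\"uller tuple, and the proposed patch via ``homogeneity and the Euler relation'' is not the right tool. The correct and much simpler observation is that for the unconstrained polynomial $\sum_{j=1}^{p}Y_j^{\,p}$ one has $(a+pc)^p\equiv a^p \bmod p^2$ (the binomial coefficient $\binom{p}{1}$ supplies the extra factor of $p$ -- note that your first displayed variation ``$p\tilde Y_j^{p-1}w_j+O(p^2)$'' is off by a factor of $p$ for exactly this reason), so the value mod $p^2$ depends only on the tuple mod $p$ with no linear constraint needed; since the Teichm\"uller tuple and the $Y$-tuple of a genuine lift agree mod $p$, your computation does give the right class. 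This is in substance what the paper does: it builds an honest lift $\eta$ with $\eta_i-\eta_{i-1}=\tu_{i-1}\eta_1$, so that only the last orbit element $-\eta_{p-1}=(\tu_{p-1}-p\alpha)\eta_1$ fails to be Teichm\"uller, and it kills the discrepancy with the binomial theorem in \eqref{keycong}.

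Two minor points. In (i), the relevant fact is that $(Y_j/Y_1)^{p-1}=1$ forces $Y_j/Y_1\in\FF_p^{\times}$ -- which you do state -- not that $z\mapsto z^{p-1}$ has image $\mu_{p-1}(\Fpb)$, which is false on $\Fpb^{\times}$ (that map is surjective onto $\Fpb^{\times}$). And the appeal to the vanishing of the partial derivatives of $\bg$ at $x$ is unnecessary for the mod-$p^2$ lift-independence: $g_0=pg$ with $g$ having integral coefficients and $g(\tilde X')\equiv g(\tilde X)\bmod p$ already give $g_0(\tilde X')\equiv g_0(\tilde X)\bmod p^2$.
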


\begin{proof}
Given $(u_1,\ldots,u_{p-1}) \in (\FF_{p}^{\times})^{p-1}$ satisfying \eqref{condu}, the
corresponding solution $x = (\xi_1:\ldots:\xi_{p-1}) \in \P^n(\FF_p)$ of the system
\eqref{syst} is obtained by choosing $\xi_1 \in \FF_p^{\times}$, setting
\eq{defxi}{ \xi_i-\xi_{i-1} = u_{i-1}\xi_1 \quad \text{for } 2 \leq i \leq p-1, }
and observing that \eqref{condu} implies that $-\xi_{p-1} = u_{p-1}\xi_1$. 
Assertion (i) is then straightforward. 

Let $\eta_1 \in \Z_p$ be a lifting of $\xi_1$, and let $\eta_i$ be defined inductively 
for $2 \leq i \leq p-1$ by 
\eq{defeta}{ \eta_i - \eta_{i-1} = \tu_{i-1}\eta_1. }
Define $\alpha \in \Z_p$ by 
\eq{defalpha}{ 1 + \tu_1 + \cdots + \tu_{p-1} = p\alpha. }
Then we get by adding the equations in \eqref{defeta}
\eq{etapm1}{ \eta_{p-1} = (1 + \cdots + \tu_{p-2}) \eta_1 = (p\alpha-\tu_{p-1})\eta_1. }
We can now substitute \eqref{defeta} and \eqref{etapm1} in $g_0$, and we obtain the 
relation 
\eqa{}{ g_0(\eta_1,\ldots,\eta_{p-1}) & = & \eta_1^p(1+\tu_1^p+\ldots+\tu_{p-2}^p 
+(\tu_{p-1}-p\alpha)^p) \notag \\ 
& = & \eta_1^p (1+\tu_1+\ldots+\tu_{p-2}+ \tu_{p-1} + \sum_{j=1}^p
{p \choose j}\tu_{p-1}^{p-j}(-p\alpha)^j) \label{keycong} \\
& \equiv & p\alpha\eta_1^p \mod p^2\Z_p. \notag}
Hence we get 
\eq{congg}{ g(\eta_1,\ldots,\eta_{p-1}) \equiv \alpha\eta_1^p \mod p\Z_p, } 
and assertion (ii) follows. 
\end{proof}

\begin{lem}\label{NofixonZ}\hspace{-8mm}
\romain The action of $G$ on $Z_{\FF_p}$ is free.

\romain If $p$ is not a Fermat number, then $Z_{\FF_p}$ is singular, and is not the special 
fibre of a semi-stable scheme.
\end{lem}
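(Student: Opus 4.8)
The two assertions of Lemma \ref{NofixonZ} are of a rather different nature. For (i), the plan is to combine Lemma \ref{SinginZ} with an explicit analysis of the $G$-action \eqref{orbit}. A point $x = (\xi_1 : \ldots : \xi_{p-1})$ fixed by a nontrivial element of $G$ (hence by all of $G$, since $G$ has prime order) would have to be an eigenvector of the permutation-type matrix describing $\sigma$ on $\P(H)$. The plan is to write out the fixed-point equation $\sigma(x) = \lambda x$ using \eqref{orbit}, deduce that all the differences $\xi_i - \xi_{i-1}$ and $\xi_1$, $-\xi_{p-1}$ are proportional to each other with ratio a $p$-th root of unity $\lambda \in \Fpb$, and conclude that, since $\lambda^p = 1$ forces $\lambda = 1$ in characteristic $p$, one would need $\xi_1 = \xi_2 - \xi_1 = \cdots$, which, combined with $\xi_1 + (\xi_2-\xi_1) + \cdots + (-\xi_{p-1}) = 0$ (the defining relation of $H$, equivalently the telescoping sum), gives $p\xi_1 = 0$, i.e.\ $0=0$, so this alone is not a contradiction; rather one checks that the resulting fixed locus inside $\P(H)$ is a single point (or linear subspace) which one verifies directly does not lie on $Z_{\FF_p}$, by substituting into $\bg$. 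I expect this to be a short computation once the eigenvalue discussion is set up correctly; the one subtlety is that the $G$-representation $H \otimes \FF_p$ is not semisimple (it is a Jordan block for the unipotent operator $\sigma$), so ``eigenvector'' must be handled via the actual linear equations rather than by invoking diagonalizability.

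For (ii), the plan has two parts. First, to show $Z_{\FF_p}$ is singular: by Lemma \ref{SinginZ}, singular points correspond to tuples $(u_1,\ldots,u_{p-1}) \in (\FF_p^\times)^{p-1}$ with $1 + u_1 + \cdots + u_{p-1} = 0$ and, moreover, with Teichm\"uller lifts satisfying $1 + \tilde u_1 + \cdots + \tilde u_{p-1} \in p^2\Z_p$. So I must exhibit such a tuple, or count them, under the hypothesis that $p$ is not a Fermat number. The idea is that the $\tilde u_i$ range over the $(p-1)$-st roots of unity $\mu_{p-1}(\Z_p)$, and the condition is about the $p$-adic valuation of a sum of roots of unity; a natural reduction is to take the $u_i$ to come in a structured family (e.g.\ all equal, or forming a subgroup coset) so that the sum $1 + \sum \tilde u_i$ is forced into $p^2\Z_p$. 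I anticipate the relevant arithmetic is: $1 + \tilde u_1 + \cdots + \tilde u_{p-1} \equiv 0 \bmod p^2$ can be arranged precisely when a certain additive-combinatorial condition on $\FF_p$ fails to be ``rigid'', and the rigidity breaks exactly when $p$ is not of the form $2^{2^k}+1$. \textbf{This is the step I expect to be the main obstacle}: identifying the precise number-theoretic mechanism connecting the existence of a solution to \eqref{condut} with $p$ not being a Fermat number. The likely route is to show that if $Z_{\FF_p}$ were smooth then every solution of \eqref{condu} would violate \eqref{condut}, force a strong constraint (via \eqref{keycong}-style congruences) on all tuples of $(p-1)$-st roots of unity summing to $-1$ mod $p$, and then argue that such uniform behavior for all such tuples happens only for Fermat primes — plausibly by a dimension/counting argument on $\FF_p^\times$, or by relating it to the multiplicative structure of $\FF_p^\times \cong \Z/(p-1)\Z$ and the fact that $p-1$ having an odd factor $>1$ produces extra solutions.

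Second, to show $Z_{\FF_p}$ is not the special fibre of a semi-stable scheme: a semi-stable scheme over $R$ has special fibre which is, Zariski-locally, a normal-crossings divisor, hence étale-locally a union of smooth components meeting transversally; in particular its singularities are never isolated hypersurface singularities of the type cut out by \eqref{syst} unless they have the normal-crossings local form $x_1\cdots x_r = 0$. The plan is to analyze the local ring of $Z_{\FF_p}$ at one of the singular points found above, compute (or estimate) the tangent cone / the $\bg$-expansion to second order there, and show it is not of normal-crossings type — e.g.\ the Hessian of $\bg$ vanishes or the singularity is not a product of smooth branches. Concretely, near a singular point the leading term of $\bg$ is a form of degree $\geq 2$ in the local coordinates coming from the vanishing of all $\partial\bg/\partial X_i$; since $\bg$ is (up to the factor $1/p$) a sum of $p$-th powers, its low-order Taylor expansion at the singular point should be governed by a single higher-degree term rather than a transverse union of hyperplanes, so the local equation cannot be written as $\prod_{j=1}^r x_j$. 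I would finish by noting that this local obstruction (non-normal-crossings) persists under any modification of $R$ and hence $Z_{\FF_p}$ cannot arise as the special fibre of any semi-stable $R$-scheme with this generic fibre, which is exactly what condition (4) of \ref{Conditions} requires. The main technical care here is to make the ``not normal crossings'' assertion precise and local-ring-theoretic rather than heuristic.
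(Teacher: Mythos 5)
Part (i) of your plan is essentially the paper's argument and is sound: the operator $\sigma$ on $H\otimes\Fpb$ is a single unipotent Jordan block, so the fixed locus in $\P(H)$ is the single point $x_0=(1:2:\cdots:p-1)$, and one checks directly that $x_0\notin Z_{\FF_p}$ (conveniently, $x_0$ is the solution of \eqref{syst} with all $u_i=1$, so Lemma \ref{SinginZ} (ii) applies with $\tilde u_i=1$ and $1+\cdots+1=p\notin p^2\Z_p$).

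Part (ii) has a genuine gap at exactly the step you flag as the main obstacle, and the resolution is much simpler than the mechanisms you speculate about. One does not need to show that smoothness would impose a "uniform constraint" on all tuples, nor any counting argument: it suffices to exhibit \emph{one} family $(\tilde u_i)_{1\le i\le p-1}$ of $(p-1)$-th roots of unity with $1+\sum_i\tilde u_i\in p^2\Z_p$, and the cheapest way is to make that sum \emph{exactly zero}. Since $p$ is not a Fermat number, $p-1$ has an odd prime factor $q$; choose a primitive $q$-th root of unity $\zeta\in\mu_{p-1}(\Z_p)$, set $\tilde u_i=\zeta^i$ for $1\le i\le q-1$ (so that $1+\zeta+\cdots+\zeta^{q-1}=0$), and split the remaining $p-q$ indices (an even number, as $p$ and $q$ are odd) evenly between $\tilde u_i=1$ and $\tilde u_i=-1$. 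The total sum vanishes on the nose, so \eqref{condut} holds. This is precisely where the non-Fermat hypothesis enters: for a Fermat prime, $\mu_{p-1}$ is a $2$-group and no such short vanishing subsum is available. Your proposal does gesture at "$p-1$ having an odd factor produces extra solutions" but never produces the construction, so as written the singularity of $Z_{\FF_p}$ is not established.

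For the non-semi-stability, your plan (compute the tangent cone at a singular point and show it is not of normal-crossings type) is both harder than necessary and not carried out; the claim that the local expansion of $\bg$ is "governed by a single higher-degree term" would itself require proof. The paper's argument is a one-liner: the system \eqref{syst} has only finitely many solutions, so the singular points of $Z_{\FF_p}$ are isolated, whereas the special fibre of a semi-stable scheme is a reduced normal crossings divisor whose singular locus, if nonempty, has pure codimension $1$ in the fibre; this is impossible since $\dim Z_{\FF_p}=p-3\ge 4$. Also, your closing remark about the obstruction "persisting under any modification of $R$" is not needed: the statement is simply that $Z_{\FF_p}$ is not the special fibre of any semi-stable scheme.
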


Let us recall that the Fermat numbers are the integers of the form $2^{2^n}+1$ with $n
\geq 0$, that any prime number of the form $2^n+1$ with $n>0$ is a Fermat number, and
that the only known prime Fermat numbers are $3$, $5$, $17$, $257$ and $65537$.

\begin{proof}
Over $\Fpb$, the matrix of the action of $\sigma$ on $(\Fpb)^p$ has $1$ as unique 
eigenvalue, with a corresponding eigenspace of dimension $1$, generated by the 
eigenvector $(1,\ldots,1)$. This eigenvector belongs to $H/pH$, where it has 
coordinates $(p-1,p-2,\ldots,1) = -(1,\ldots,p-1)$ in the basis
$v_1,\ldots,v_{p-1}$. Therefore, the only fixed point of $\sigma$ in $\P^n(\Fpb)$ is
the point $x_0 = (1:2:\ldots:p-1)$. This point is the solution of \eqref{syst}
corresponding to $u_1 = \ldots = u_{p-1} = 1$. Lemma \ref{SinginZ} (ii) implies that it
does not belong to $Z_{\FF_p}$, which proves assertion (i).

As the system \eqref{syst} has only a finite number of solutions, the singular points
of $Z_{\FF_p}$ are isolated. In particular, since $\dim Z_{\FF_p} \geq 4$, $Z_{\FF_p}$
cannot be the special fibre of a semi-stable scheme if it has a singular point. To find
a singular point on $Z_{\FF_p}$, Lemma \ref{SinginZ} shows that it suffices to construct a
family $(\tu_i)_{1\leq i\leq p-1}$ of $(p-1)$-th roots of unity in $\Z_p$ such that $1
+ \sum_i \tu_i \in p^2\Z_p$. Since $p$ is not a Fermat number, $p-1$ has an odd prime
factor $q$. We can choose a primitive $q$-th root of unity $\zeta$, and set $\tu_i =
\zeta^i$ for $1 \leq i \leq q-1$, $\tu_i = 1$ for $q \leq i \leq q + (p-q)/2 -1$,
$\tu_i = -1$ for $q + (p-q)/2 \leq i \leq p-1$. So $Z_{\FF_p}$ is singular.
\end{proof}

\subsection{}\label{Deff}
We now address the regularity condition in \ref{Conditions} (1). We replace $Z$ by
another equivariant lifting of $Z_{\FF_p}$ defined as follows. Let $R$ be the ring of
integers of a finite extension $K$ of $\Q_p$, of degree $>1$, with residue field $k$.
If $K/\Q_p$ is unramified, we set $\pi = p$, otherwise we choose a uniformizer $\pi$ of
$R$. Let $\lambda \in R$ be an element satisfying the following condition:

\alphab If $K/\Q_p$ is unramified, then the reduction of $\lambda$ mod $p$ does not 
belong to $\FF_p$;

\alphab If $K/\Q_p$ is ramified, then $\lambda \in R^{\times}$.

\noindent Let $h \in 
\Z[X_1,\ldots,X_{p-1}]$ be the product of the elements of the orbit of $X_1$, i.e., 
\eq{defh}{ h(X_1,\ldots,X_{p-1}) = X_1(-X_{p-1})\prod_{i=2}^{p-1}(X_i-X_{i-1}), }
and let $f \in R[X_1,\ldots,X_{p-1}]$ be defined by 
\eq{deff}{ f = g + \pi\lambda h. }

We define $Y \subset \P^n_R$ to be the hypersurface with equation $f$. Since $f$ is
invariant under $G$, the action of $G$ on $\P^n_R$ induces an action on $Y$. Its
special fibre $Y_k$ is equal to $Z_k$, on which $G$ acts freely by Lemma
\ref{NofixonZ}. Then the fixed locus of $\sigma$ is a closed subscheme of $Y$, and its
projection on $\Spec R$ is a closed subset which does not contain the closed point.
Therefore it is empty, and the action of $G$ on $Y$ is free. We define $X$ to be
the quotient scheme $X = Y/G$.

\begin{prop}\label{ProofofEx}
Assume that $p$ is an odd prime which is not a Fermat number. Then the scheme $X$
defined above satisfies conditions {\normalfont{(1) - (6)}} of\,
\normalfont{\ref{Conditions}}.
\end{prop}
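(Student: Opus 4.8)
The plan is to verify conditions (1)--(6) in turn, since by Remarks~\ref{Remks} (and the discussion following condition (3'')) it suffices to check (1), (3'') and (4), the remaining conditions being formal consequences. Condition (3'') is built into the construction: $X = Y/G$ where $Y \subset \P^n_R$ is the hypersurface of degree $p = \deg f$ in $\P^n_R$ with $n = p-2$, and $p \geq 7$ is assumed odd and not a Fermat number; the freeness of the $G$-action on $Y$ was established in \ref{Deff} using Lemma~\ref{NofixonZ} (i) together with the fact that the fixed locus, being a closed subscheme of $Y$ whose image in $\Spec R$ misses the closed point, must be empty by properness. Condition (4) follows from Lemma~\ref{NofixonZ} (ii): since $p$ is not a Fermat number, $Z_{\FF_p} = Y_k$ is singular with isolated singularities and $\dim Y_k \geq 4$, so $Y_k$ — hence also $X_k = Y_k/G$, the quotient by a free action — is not the special fibre of a semi-stable scheme, and in particular $X$ is not semi-stable over $R$.

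The main work is condition (1): that $X$ is regular and projective and flat over $R$. Projectivity and flatness are easy — $Y \subset \P^n_R$ is a hypersurface, hence flat and projective over $R$, and $X = Y/G$ is projective over $R$ since $G$ is finite acting freely (so the quotient exists as a scheme and inherits flatness and projectivity from $Y$). For regularity, since $Y \to X$ is a finite étale $G$-cover it suffices to prove $Y$ is regular. The generic fibre $Y_K$ is smooth over $K$: indeed $f = g + \pi\lambda h$ reduces mod $\pi$ to $Z_k$, but over $K$ one checks $Y_K$ is smooth — this is where the choice of $\lambda$ enters. So the only possible non-regular points of $Y$ lie over the closed point of $\Spec R$, i.e. in $Y_k = Z_k$. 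A point $y \in Y_k$ at which $Y_k$ is smooth is automatically a regular point of $Y$ (then $\sO_{Y,y}$ is regular because $\sO_{Y_k,y}$ is regular of dimension $\dim Y - 1$ and $\pi$ or $p$ is a nonzerodivisor); so it remains to analyze the singular points of $Y_k$, which by Lemma~\ref{SinginZ} are the finitely many points in $\P^n(\FF_p)$ satisfying \eqref{syst} and lying on $Z_{\FF_p}$. At such a point $y$, I would compute in the complete local ring $\widehat{\sO}_{Y,y}$: the hypersurface $Y$ is cut out in the regular ring $\widehat{\sO}_{\P^n_R, y}$ (of dimension $n$, with regular parameters including $\pi$) by the single equation $f$. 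The claim is that $f$ is part of a regular system of parameters there, i.e. that $f \notin \mathfrak{m}_y^2$ in $\widehat{\sO}_{\P^n_R,y}$. Now $f = g + \pi\lambda h$; at a singular point of $Z_k$ all the partials $\partial g/\partial X_i$ vanish mod $\pi$, so the linear part of $f$ at $y$ is controlled by $\pi$ together with the partials of $\pi\lambda h$ and the ``$\pi$-derivative'' of $g$. The key computation — essentially formula \eqref{congg}/\eqref{keycong}, refined — is that $g$ evaluated along the lift $(\eta_1,\dots,\eta_{p-1})$ of $y$ is $\equiv p\alpha\eta_1^p$ mod $p^2$ with $\alpha \in p\Z_p$ precisely at the singular points on $Z_{\FF_p}$, so $g(\eta) \in p^2\Z_p$; and then $\pi\lambda h(\eta) \notin p^2 R$ (here $h(\eta) = \prod(\tu_i)\eta_1^p$ is a unit times $\eta_1^p$ since all $u_i \neq 0$, and the conditions a)/b) on $\lambda$ and $\pi$ are arranged exactly so that $\pi\lambda \cdot(\text{unit})$ has $\pi$-adic valuation $1$ while staying outside $\mathfrak{m}_y^2$ — in the unramified case $\lambda \bmod p \notin \FF_p$ forces $f$ to have a genuinely new linear term, in the ramified case $\pi$ itself has valuation $1 < 2 = v(p) \le v(g)$-type estimate). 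This shows $\pi \mid f$ is impossible to "second order", so $f$ together with suitable coordinates forms a regular system of parameters and $\widehat{\sO}_{Y,y}$ is regular.

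Having (1), (3''), (4), the remaining conditions are immediate: (2) holds because $\chi(\sO_{X_K}) = \chi(\sO_{Y_K})/p = 1$ by the numerical arrangement \eqref{numcond} and the computation in Remarks~\ref{Remks} (ii), together with $H^0(X_K,\sO_{X_K}) = K$ and the vanishing $H^q(Y_K,\sO_{Y_K}) = 0$ for $q \neq 0, n-1$ for a smooth hypersurface, which pins down all intermediate cohomology to zero; (3) and (5) are contained in (3'') as explained in Remarks~\ref{Remks} (ii) (here $H^q(X_k,\sO_{X_k}) \neq 0$ for some $q \geq 1$ via the Artin--Schreier argument of Remarks~\ref{Remks} (i), using that $Y_k = Z_k$ is a complete intersection over $\bar k$ so $H^0(Y_{\bar k}, \sO) = \bar k$ and the torsor $Y_k \to X_k$ is non-trivial over $\bar k$, forcing a non-zero class in $H^1(X_{\bar k}, \sO_{X_{\bar k}})$); and (6) follows from Remarks~\ref{Remks} (iv), since $Y_K$ is smooth with canonical bundle $\sO_{Y_K}(d - n - 1) = \sO_{Y_K}(1)$ ample, and $Y_K \to X_K$ being étale this descends to show the canonical bundle of $X_K$ is ample, i.e. $X_K$ is of general type. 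The only genuine obstacle is the local regularity computation at the singular points of $Y_k$; everything else is bookkeeping with the explicit polynomials $g$, $h$ and the Riemann--Roch computation already carried out in the remarks. I expect the regularity verification to hinge on a careful refinement of the congruence \eqref{keycong} modulo $p^2$ (respectively modulo $\mathfrak{m}_y^2$) combined with the nonvanishing of $h$ at the singular points, which is exactly why the hypothesis "$p$ not a Fermat number" and the two-case choice of $\lambda$ were made.
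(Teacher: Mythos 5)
Your overall strategy coincides with the paper's: reduce to conditions (1), (3$''$) and (4) via Remarks \ref{Remks}, dispose of (3$''$) and (4) with Lemma \ref{NofixonZ} and the \'etale-localness of semi-stability, and reduce (1) to showing $f_* \notin \fm_x^2$ at each singular point $x$ of $Y_k = Z_k$. However, the key congruence as you state it is false, and the error matters. From \eqref{keycong} one gets $g_0(\eta) \equiv p\alpha\eta_1^p \bmod p^2$, hence $g(\eta) \equiv \alpha\eta_1^p \bmod p$; at a singular point lying on $Z_{\FF_p}$ one has $\alpha = p\beta$ with $\beta \in \Z_p$, and the refined computation (re-running \eqref{keycong} with $\alpha = p\beta$, so that the binomial error terms fall into $p^3\Z_p$) gives $g_*(\eta) \equiv p\beta \bmod p^2\Z_p$ --- \emph{not} $g(\eta) \in p^2\Z_p$. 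There is no reason for $\beta = (1+\tu_1+\cdots+\tu_{p-1})/p^2$ to be divisible by $p$, so $g_*(\eta)$ is in general a unit multiple of $p$ modulo $p^2$. This is exactly why condition a) on $\lambda$ in \ref{Deff} is not superfluous in the unramified case: one ends with $f_* \equiv p\,(\beta + \lambda\prod_i\tu_i) \bmod \fm_x^2$, and since $\beta$ reduces into $\FF_p$ while $\prod_i u_i \in \FF_p^\times$, the sum could vanish mod $p$ for a bad unit $\lambda$; the hypothesis $\bar\lambda \notin \FF_p$ is what rules this out. Under your claim the constant term of $f_*$ would be $p\lambda$ times a unit for \emph{any} unit $\lambda$, condition a) would play no role, and the cancellation you allude to with ``genuinely new linear term'' is never actually controlled. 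One must also verify that the linear part of $g_*$ at $x$ lies in $\fm_x^2$, which for the last variable is not automatic and requires the computation $\partial g_*/\partial X_{p-1}(\eta) = 1-(p^2\beta-\tu_{p-1})^{p-1} \equiv 0 \bmod p^2$ as in \eqref{cong3}.

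A secondary, repairable issue: your reduction of the regularity of $Y$ to its special fibre passes through ``$Y_K$ is smooth'', which you assert without proof and misattribute to the choice of $\lambda$ (that choice only intervenes at the singular points of $Y_k$). The paper instead notes that the singular locus of the excellent scheme $Y$ is closed with closed image in $\Spec R$, so it is empty as soon as it misses the special fibre; equivalently, every point of $Y$ is a generization of a point of $Y_k$, and regularity at all points of $Y_k$ suffices. Smoothness of $Y_K$ is then a consequence of condition (1), not an input to it.
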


\begin{proof}
As observed in \ref{Remks} (iii), it suffices to check that $X$ satisfies conditions 
(1), (3'') and (4), and condition (3'') holds by construction. 

The hypersurface $Y$ is projective and flat over $R$, since $g$ is not divisible by
$\pi$. So $X$ is also projective and flat. As $Y_k = Z_k$, Lemma \ref{NofixonZ} (ii)
implies that $Y$ is not semi-stable. Since $Y \to X$ is \'etale and semi-stability is a
local property for the \'etale topology, $X$ is not semi-stable either. So we only have
to prove that $X$ is regular. This is again a local property for the \'etale topology,
hence it suffices to prove that $Y$ is regular. Because $Y$ is excellent, its singular
locus is closed, and the same holds for its projection to $\Spec R$. So it is enough to
check the regularity of $Y$ at the points of its special fibre. The regularity is clear
at the smooth points of $Y_k$, and we need to prove it at the singular points.

Let $x = (\xi_1:\ldots:\xi_{p-1}) \in \P^n(k)$ be a singular point of $Y_k$. As $Y_k =
Z_k$, $x$ corresponds by Lemma \ref{SinginZ} to a family $(u_1,\ldots,u_{p-1}) \in
(\FF_p^{\times})^{p-1}$ such that
\begin{equation}\label{tui1}
1 + \tu_1 + \cdots + \tu_{p-1} = p^2\beta
\end{equation}
for some $\beta \in \Z_p$. We have seen in the proof of Lemma \ref{SinginZ} that 
$\xi_1 \in \FF_p^{\times}$, so we may assume that $\xi_1 = 1$. We set $\eta_1 = 1$, 
and we define inductively $\eta_i$ for $2 \leq i \leq p-1$ by \eqref{defeta}. This 
allows to work on the affine space $\A^n_R = D_+(X_1) \subset \P^n_R$, and we will 
denote 
\eqn{ a_*(X_2,\ldots,X_{p-1}) := a(1,X_2,\ldots,X_{p-1}) }
for any homogeneous polynomial $a(X_1,\ldots,X_{p-1}) \in R[X_1,\ldots,X_{p-1}]$. For 
$2 \leq i \leq p-1$, we set 
\eqn{ X_i = \eta_i + Y_i, }
so that $(\pi,Y_2,\ldots,Y_{p-1})$ is a regular sequence of generators of the maximal 
ideal $\fm_x$ of the regular local ring $\sO_{\A^n_R,x}$. 

We want to prove that $\sO_{\A^n_R,x}/(f_*)$ is regular, i.e., that $f_* \notin 
\fm_x^2$. We first claim that 
\eq{conggstar}{ g_* \equiv p\beta \mod \fm_x^2. }
Indeed, applying \eqref{keycong} with $\alpha = p\beta$, we obtain the congruence 
\eqn{ g_{0\,*}(\eta_2,\ldots,\eta_{p-1}) \equiv p^2\beta \mod p^3\Z_p, }
hence 
\eq{cong1}{ g_*(\eta_2,\ldots,\eta_{p-1}) \equiv p\beta \mod p^2\Z_p \subset \fm_x^2.}
On the other hand, equations \eqref{defeta} show that, for $2 \leq i \leq p-2$, 
\eq{cong2}{ \frac{\del g_*}{\del X_i}(\eta_2,\ldots,\eta_{p-1}) = 0. }
Finally, equations \eqref{defeta} and \eqref{etapm1} show that 
\eqa{cong3}{
\frac{\del g_*}{\del X_{p-1}}(\eta_2,\ldots,\eta_{p-1}) & = & 
(\eta_{p-1}-\eta_{p-2})^{p-1} - \eta_{p-1}^{p-1} 
\notag \\
& = & 1 - (p^2\beta - \tu_{p-1})^{p-1} \notag \\
& \equiv & 0 \mod p^2\Z_p \subset \fm_x^2. }
Applying \eqref{cong1}, \eqref{cong2} and \eqref{cong3} to the Taylor development of 
$g_*$ proves \eqref{conggstar}. 

From the definition of $h$, we obtain 
\eq{cong4}{ h_*(\eta_2,\ldots,\eta_{p-1}) = -(p^2\beta - \tu_{p-1}) 
\prod_{i=1}^{p-2}\tu_i \equiv \prod_{i=1}^{p-1}\tu_i \mod \fm_x. }
As $h_* \equiv h_*(\eta_2,\ldots,\eta_{p-1})$ mod $\fm_x$, $f_*$ satisfies the congruence 
\eq{cong4}{ f_* = g_* + \pi\lambda h_* \equiv \pi(\frac{p}{\pi}\beta + 
\lambda\prod_{i=1}^{p-1}\tu_i) \mod \fm_x^2. }
Let $w =\frac{p}{\pi}\beta + \lambda\prod_i\tu_i$. If $K/\Q_p$ is ramified, then
condition \ref{Deff} b) implies that $w$ is a unit. If $K/\Q_p$ is unramified, then 
$\pi=p$, and condition \ref{Deff} a) implies that the reduction mod $p$ of $w$ is 
non-zero, hence $w$ is again a unit. In each case, $f_* \notin \fm_x^2$, and $\sO_{Y,x}$ 
is regular.
\end{proof}

\begin{center}\textbf{Appendix: Complete intersection morphisms of virtual relative
dimension $0$}\end{center}
\addtocontents{toc}{Appendix: Complete intersection morphisms of virtual relative
dimension $0$}

\appendix

As mentioned in the introduction, we explain here the construction of the morphism
$\tau_f : \RR f_*\sO_Y \to \sO_X$ for a proper complete intersection morphism $f : Y
\to X$ of virtual dimension $0$, and we give a proof of Theorem \ref{Thtau}. 

The Appendix consists of two sections. In section A, we recall the construction of the
invertible sheaf $\omega_{Y/X}$ associated to a complete intersection morphism $f : Y
\to X$, and we prove some of its properties. We do not use duality theory here, even if
we keep for convenience the terminology ``relative dualizing sheaf''. Instead, we use
the complete intersection assumption to deduce our constructions from the elementary
properties of smooth morphisms and regular immersions, thanks to the canonical
isomorphisms defined by Conrad \cite[2.2]{Co00}. It is then easy to define the
canonical section $\delta_f$ of $\omega_{Y/X}$ when $f$ has virtual relative dimension
$0$, and to prove its basic properties.

In section B, we assume that $X$ is noetherian and has a dualizing complex. We then use
duality theory and the identification $\omega_{Y/X} \riso f^!\sO_X$ to deduce $\tau_f$
from the canonical section $\delta_f$. To translate the properties of $\delta_f$ into
the properties of $\tau_f$ listed in Theorem \ref{Thtau}, we need to use the
fundamental identifications of duality theory, as well as the various compatibilities
between these identifications. Our proofs rely in an essential way on Conrad's
exposition \cite{Co00}.

It may be worth pointing out that we need in this article the compatibility of $\tau_f$
with base change in a context which is not covered by the base change results of
\cite{Co00}. Indeed, we consider morphisms $f$ which are not flat in general (such as
in Theorem \ref{Tworeg}), and base change morphisms which are not flat either (such as
reduction mod $p^n$ in the proof of Proposition \ref{Mult}). The key property we use
here, which is familiar to the experts, but not so well documented in the literature,
is the Tor-independence of $f$ and the base change morphism.

\begin{center}\textsc{A. The canonical section of the
relative dualizing sheaf}\end{center}
\addcontentsline{toc}{section}{A.\hspace{3mm}The canonical section of the
relative dualizing sheaf}\setcounter{section}{1}

We recall now the construction of the invertible sheaf $\omega_{Y/X}$ for a complete
intersection morphism, and we explain some of its properties. As often, the main work
is to prove that the constructions are well-defined, and in particular to check the
sign conventions. As the details are easy but tedious, we leave most of them as
exercises, and only sketch the main steps of the verifications.

We first recall a standard base change result for complete intersection morphisms (see 
\cite[3.7.1]{SGA 6} for the finite Tor-dimension of $\RR f_*\sE\hbul$).

\begin{prop}\label{ci}
Let $f : Y \to X$ be a complete intersection morphism of virtual relative dimension 
$m$, and let
\eq{cart}{\xymatrix{Y' \ar[r]^v\ar[d]_{f'} & Y\ar[d]^f\\
X' \ar[r]^u & X}}
be a cartesian square such that $X'$ and $Y$ are Tor-independent over $X$. 

\romain The morphism $f'$ is a complete intersection morphism of virtual relative 
dimension $m$. 

\romain Assume that $X$ is quasi-compact, and that $f$ is separated of finite type. If
$\sE\hbul \in \Dbqc(\sO_Y)$ is of finite Tor-dimension over $\sO_Y$, then $\RR
f_*\sE\hbul$ is of finite Tor-dimension over $\sO_X$, and the base change morphism
\eq{basechange}{\LL u^* \RR f_*\sE\hbul \lra \RR f'_*\LL v^*\sE\hbul}
is an isomorphism.
\end{prop}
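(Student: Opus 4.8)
The statement is Proposition \ref{ci}, a standard base-change result for complete intersection morphisms. The plan is to treat the two assertions separately, both being local on the base and then reducible to the two elementary cases of a smooth morphism and of a regular immersion, via the definition of complete intersection morphism recalled in the general conventions.

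For (i), I would first observe that the question is Zariski-local on $Y$ (and hence on $Y'$), so by the very definition of a complete intersection morphism I may assume $f = \pi \circ j$ where $\pi : \P \to X$ is smooth of relative dimension $n$ and $j : Y \inj \P$ is a regular immersion of codimension $d$, with $m = n - d$. Pulling back the square \eqref{cart} along this factorization, one gets $f' = \pi' \circ j'$ where $\pi' : \P' = X' \times_X \P \to X'$ is the base change of $\pi$, hence smooth of the same relative dimension $n$, and $j' : Y' \inj \P'$ is the base change of $j$. The only real point is that $j'$ is again a regular immersion of codimension $d$: here one uses the Tor-independence hypothesis. Since $\P$ is smooth (hence flat) over $X$, the schemes $X'$ and $\P$ are also Tor-independent over $X$, and the ideal $\sI$ of $Y$ in $\P$, being locally generated by a regular sequence of length $d$, pulls back to an ideal of $\P'$ still locally generated by a regular sequence of length $d$ — this is the standard fact that a regular immersion stays regular, of the same codimension, under a Tor-independent base change (one checks locally, using the Koszul complex on the generators of $\sI$ as a flat resolution of $\sO_Y$ over $\sO_\P$ and the vanishing of the higher Tor's of $\sO_Y$ with $\sO_{X'}$ over $\sO_X$, which transfers to the vanishing of the Koszul homology after pullback). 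Thus $f'$ is a complete intersection morphism of virtual relative dimension $n - d = m$.

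For (ii), the finite Tor-dimension of $\RR f_*\sE\hbul$ over $\sO_X$ is exactly \cite[3.7.1]{SGA 6} under the stated hypotheses (a separated finite-type morphism being quasi-projective locally, or more simply because $f$ factors through a projective space over an affine base and one combines the finite cohomological dimension of $\RR\pi_*$ with the finite Tor-dimension of $\LL j^*$), so I would simply cite it. The base change isomorphism \eqref{basechange} then follows by the usual two-step argument: for the projective space projection $\pi$ the base change morphism is an isomorphism without any Tor-independence hypothesis (flat base change for quasi-coherent cohomology, together with the fact that $\pi$ has finite cohomological dimension so everything stays in the bounded derived category), while for the closed immersion $j$ the base change morphism $\LL u^* \RR j_* \sF\hbul \to \RR j'_* \LL v^* \sF\hbul$ is an isomorphism precisely because of the Tor-independence of $X'$ and $Y$ over $X$ — again checked locally via the Koszul resolution. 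Composing the two gives the result for $f = \pi \circ j$; since both sides of \eqref{basechange} are compatible with the composition $f = \pi \circ j$ and the isomorphisms are canonical, one glues the local statements.

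\textbf{Main obstacle.} The genuinely delicate point is the preservation of regular immersions under Tor-independent (not necessarily flat) base change in (i), and the corresponding base change isomorphism for closed immersions in (ii): both rest on the same local Koszul-complex computation, and one must be careful that Tor-independence of $X'$ and $Y$ over $X$ indeed forces the pulled-back Koszul complex on $\P'$ to remain acyclic in positive degrees (equivalently, that the pulled-back regular sequence is still regular). Everything else — the smooth case, the citation for finite Tor-dimension, the flat base change for $\RR\pi_*$, and the bookkeeping of the factorization — is routine.
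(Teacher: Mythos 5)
Your argument for part (i) is essentially the paper's: localize on $Y'$, factor $f=\pi\circ i$ with $\pi$ smooth and $i$ regular, and use the Koszul complex on a regular sequence of generators of $\sI$ together with Tor-independence to see that the pulled-back sequence is still regular of the same length. For part (ii) your route is organized differently from the paper's, and the difference matters at one point. You propose to prove base change separately for the smooth morphism and for the regular immersion and then ``glue the local statements''; but the factorization $f=\pi\circ j$ exists only locally on $Y$, and $\RR f_*$ does not localize on the source, so the gluing is not automatic --- it is precisely the non-formal step. The paper handles it by taking a finite affine covering $\fU$ of $Y$ (with $X$ affine), replacing $\sE\hbul$ by the alternating \v{C}ech resolution $\vC\hbul(\fU,\sI\hbul)$, whose terms are of the form $\RR j_{\ualpha*}j_{\ualpha}^*\sE\hbul$ for affine open immersions $j_{\ualpha}$, and thereby reducing both the finite Tor-dimension claim and the base change isomorphism to the case of affine $Y$. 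In that case the paper does not re-factor $f$ at all: it takes a bounded flat quasi-coherent resolution $\sP\hbul$ of $\sE\hbul$ and observes that Tor-independence makes each $f_*\sP^n$ acyclic for $u^*$, which gives the isomorphism directly; this also sidesteps the point you elide when invoking ``flat base change'' for the smooth piece, namely that $u$ itself need not be flat, so one must track finite Tor-dimension through $\RR\pi_*$ rather than quote flat base change verbatim. Your approach can certainly be completed, but you should make the \v{C}ech reduction explicit; as written, ``one glues the local statements'' hides the only step of the proof that is not a local Koszul computation.
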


\begin{proof}
The first claim is local on $Y'$, so we may assume that there exists a factorization $f
= \pi \circ i$ such that $\pi : P \to X$ is a smooth morphism of relative dimension
$n$, and $i : Y \inj P$ is a closed immersion of codimension $d = n-m$. Then $i$ is a
regular immersion, defined by an ideal $\sI \subset \sO_P$, and, since the claim is
local, we may assume that $\sI$ is generated by a regular sequence $t_1,\ldots,t_d$ of
sections of $\sO_P$. Then the Koszul complex $K\lbul(t_1,\ldots,t_d)$ is a resolution
of $\sO_Y$ by $\sO_P$-modules which are flat relatively to $X$. Let $P'=X'\times_X P$,
and let $t'_1,\ldots,t'_d$ be the images of $t_1,\ldots,t_d$ in $\sO_{P'}$. Since $X'$
and $Y$ are Tor-independent over $X$, the Koszul complex $K(t'_1,\ldots,t'_d)$ is a
resolution of $\sO_{Y'}$ over $\sO_{P'}$, which shows that $f'$ is a complete
intersection morphism of virtual relative dimension $m$.

Assume now that the hypotheses of (ii) are satisfied. Since $X$ is quasi-compact, it
suffices to check that $\RR f_*\sE\hbul$ is of finite Tor-dimension when $X$ is affine.
We can then choose a finite covering $\fU$ of $Y$ by affine open subsets $U_\alpha$,
and we may assume that the $U_\alpha$ are small enough so that the restriction
$f_\alpha$ of $f$ to $U_\alpha$ can be factorized as $f_\alpha = \pi_\alpha \circ
i_\alpha$, where $\pi_\alpha : P_\alpha \to X$ is smooth and $i_\alpha : U_\alpha \inj
P_\alpha$ is a closed immersion defined by a regular sequence of sections of
$\sO_{P_\alpha}$. For each sequence $\alpha_0 < \cdots < \alpha_r$, denote $U_{\ualpha}
= U_{\alpha_0} \cap \cdots \cap U_{\alpha_r}$, $j_{\ualpha} : U_{\ualpha} \inj Y$, and
let $f_{\ualpha}$ be the restriction of $f$ to $U_{\ualpha}$. If $\sI\hbul$ is an
injective resolution of $\sE\hbul$, then the alternating \v{C}ech complex
$\vC\hbul(\fU,\sI\hbul)$ is a resolution of $\sE\hbul$. Since $j_{\ualpha}$ is an
affine open immersion, the complex $j_{\ualpha\,*}j^*_{\ualpha}\sI\hbul = \RR
j_{\ualpha\,*}j^*_{\ualpha}\sE\hbul$ belongs to $\Dqctdf(\sO_Y)$ for each $\ualpha$.
Therefore it suffices to prove that $\RR f_*\sE\hbul \in \Dqctdf(\sO_X)$ for complexes
$\sE\hbul$ of the form $\RR j_*\sF\hbul$, where $j$ is the inclusion of an affine open
subscheme $U$, and $\sF\hbul \in \Dqctdf(\sO_U)$. This reduces the proof to the case
where $Y$ is affine. Then there exists a bounded complex of $\sO_Y$-modules $\sP\hbul$
with flat quasi-coherent terms, and a quasi-isomorphism $\sP\hbul \to \sE\hbul$. Since
$\sO_Y$ has finite Tor-dimension over $\sO_X$, so does any flat $\sO_Y$-module, and the
first assertion of (ii) follows.

The complex $\LL v^*\sE\hbul$ belongs to $\Dqctdf(\sO_{Y'})$, and the base change
morphism \eqref{basechange} can be defined by adjunction as usual. Arguing as before,
it suffices to prove that it is an isomorphism when $X$ is affine and $\sE\hbul$ is of
the form $\RR j_*\sF\hbul$, where $j$ is the inclusion of an affine open subscheme $U
\subset Y$, and $\sF\hbul \in \Dqctdf(\sO_U)$. Let $U' = X' \times_X U$, and let $w :
U' \to U$ be the projection, $j' : U' \inj Y'$ the pull-back of $j$. Since $j$ is an
affine morphism and $\sF\hbul \in \Dqctdf(\sO_U)$, the base change morphism $\LL v^*\RR
j_*\sF\hbul \to \RR j'_*\LL w^*\sF\hbul$ is an isomorphism. This implies that the base
change morphism for $f$ and $\sE\hbul$ is an isomorphism if and only if the base change
morphism for $f\circ j$ and $\sF\hbul$ is an isomorphism. If one chooses a bounded,
flat, quasi-coherent resolution $\sP\hbul$ of $\sF\hbul$, the Tor-independence
assumption implies that, for each $n$, $(f\circ j)_*\sP^n$ is $u^*$-acyclic. It follows
easily that the base change morphism for $\sP\hbul$ is an isomorphism, which ends the
proof.
\end{proof}

\begin{rem}
Assertion (ii) holds more generally if one replaces the complete intersection 
hypothesis on $f$ by the assumption that $\sE\hbul$ has finite Tor-dimension over 
$\sO_X$. It is also standard to extend the assertion to the case where $f$ is only 
assumed to be coherent, i.e., quasi-compact and quasi-separated.
\end{rem}

\subsection{}\label{Defomega}
Let $f : Y \to X$ be a complete intersection morphism of relative dimension $m$. We 
now recall how one can associate to $f$ an invertible $\sO_Y$-module $\omega_{Y/X}$,
called the \textit{relative dualizing sheaf} of $Y/X$ (or $f$). We will use here the 
direct construction based on elementary algebra\footnote{\ For a more intrinsic 
construction, one can use the general properties of the cotangent complex $L_{Y/X}$
\cite{Il71}. Here, $L_{Y/X}$ is a perfect complex, of perfect amplitude in $[-1,0]$,
and of rank $m$ \cite[3.2.6]{Il71}. Taking its (graded) determinant in the sense of
Knudsen-Mumford \cite{KM76}, one obtains the complex $\omega_{Y/X}[m]$. Special
attention should be paid in this construction to sign compatibilities, as, for
historical reasons, the sign conventions used in \cite{Il71} and \cite{KM76} conflict
with those of \cite{Co00}.}, which makes explicit the existence of the canonical
section when $m=0$, and is a natural extension of Conrad's constructions for the
canonical isomorphisms $\zeta'_{f,g}$ \cite[2.2]{Co00}.

If $f = \pi \circ i$ is a factorization of $f$ where $\pi : P \to X$ is a smooth
morphism of relative dimension $n$ and $i : Y \inj P$ a closed immersion of codimension
$d = n-m$, defined by a regular ideal $\sI \subset \sO_P$, then one defines 
$\omega_{Y/X}$ by setting
\begin{eqnarray}\label{defomega}
\omega_{Y/X} & = & \omega_{Y/P} \otimes_{\sO_Y} i^*\omega_{P/X} \\
& = & \wedge^d((\sI/\sI^2)^\vee) \otimes_{\sO_Y} i^*\Omega^n_{P/X}.\notag
\end{eqnarray}
Up to canonical isomorphism, this construction is made independent of the choice of the
factorization as follows. Let $f = \pi' \circ i'$ be another factorization of $f$
through a smooth morphism $\pi' : P' \to X$, and let $\omega^P_{Y/X}$ and
$\omega^{P'}_{Y/X}$ be the invertible $\sO_Y$-modules defined by \eqref{defomega} using
the two factorizations. Assume first that there exists an $X$-morphism $u : P' \to P$
such that $u \circ i' = i$, and which is either a smooth morphism or a regular
immersion. Then, one defines an isomorphism $\varepsilon^{P',P}(u) : \omega^P_{Y/X}
\riso \omega^{P'}_{Y/X}$ by the commutative diagram
\eq{defepsi}{\xymatrix@C=40pt{\omega^P_{Y/X} = \omega_{Y/P} \otimes i^*\omega_{P/X} 
\ar[r]_-{\sim}^-{\zeta'_{i',u}\,\otimes\,\Id} \ar[dr]^{\sim}_{\varepsilon^{P',P}(u)} & 
\omega_{Y/P'} \otimes i'{}^*\omega_{P'/P} \otimes i'{}^*u^*\omega_{P/X} \\
& \omega_{Y/P'} \otimes i'{}^*\omega_{P'/X} = \omega^{P'}_{Y/X}. 
\ar[u]^{\sim}_{\Id\,\otimes\,i'^*(\zeta'_{u,\pi})}
}}
The definitions of $\zeta'_{i',u}$ and $\zeta'_{u,\pi}$ depend upon whether $u$ is a 
smooth morphism or a regular immersion (the two definitions agree when $u$ is an 
open and closed immersion):

\alphab If $u$ is smooth, then $\zeta'_{i',u}$ is defined by \cite[p.\ 29, (d)]{Co00},
and $\zeta'_{u,\pi}$ is defined by \cite[p.\ 29, (a)]{Co00}.

\alphab If $u$ is a regular immersion, then $\zeta'_{i',u}$ is defined by \cite[p.\ 29,
(b)]{Co00}, and $\zeta'_{u,\pi}$ is defined by \cite[p.\ 29, (c)]{Co00}.

Let $f = \pi''\circ i''$ be a third factorization of $f$ through a smooth morphism
$\pi'' : P'' \to X$, let $\omega^{P''}_{Y/X}$ be defined by \eqref{defomega} using this
factorization, and assume that there exists an $X$-morphism $v : P'' \to P'$ such that
$v\circ i'' = i'$ and such that each of the morphisms $v$ and $u\circ v$ is either a
smooth morphism or a regular immersion. Then it follows readily from Conrad's general
transitivity relation for compositions of smooth morphisms and regular immersions
\cite[(2.2.4)]{Co00} that
\eq{transepsi}{\varepsilon^{P'',P'}(v) \circ \varepsilon^{P',P}(u) = 
\varepsilon^{P'',P}(u\circ v).}

If $f = \pi\circ i = \pi'\circ i'$ are any factorizations as above, let now $P'' = P' 
\times_X P$, and let $i'' : Y \inj P''$ be the diagonal immersion, and $q : P'' \to P, 
q' : P'' \to P'$ the two projections. One defines the canonical isomorphism 
$\varepsilon^{P',P} : \omega^P_{Y/X} \xra{\sim} \omega^{P'}_{Y/X}$  by setting
\eq{defepsi2}{\varepsilon^{P',P} := \varepsilon^{P'',P'}(q')^{-1} \circ 
\varepsilon^{P'',P}(q).} 
Whenever there exists a smooth morphism or a regular immersion $u : P' \to P$ as 
above, it follows from \eqref{transepsi} that $\varepsilon^{P',P}(u) = 
\varepsilon^{P',P}$. One checks similarly that the isomorphisms $\varepsilon_{P',P}$ 
satisfy the usual cocycle condition for three factorizations. 

Thanks to this cocycle condition, one can then define the invertible sheaf
$\omega_{Y/X}$ even when there does not exist a global factorization $f = \pi \circ i$
as above, by choosing local factorizations and glueing the invertible sheaves obtained
locally by the previous construction. By construction, the sheaf $\omega_{Y/X}$ 
commutes with Zariski localization, and is equipped with a canonical isomorphism for 
which we keep the notation $\zeta'$:
\eq{transomega1}{ \zeta'_{\pi,i} : \omega_{Y/X} \riso 
\omega_{Y/P} \otimes_{\sO_Y} i^*\omega_{P/X},}
for any factorization $f = \pi \circ i$ where $\pi$ is a smooth morphism and $i$ is a 
regular immersion.

If $m$ is the virtual relative dimension of $Y$ over $X$, we will need to work with the
complex $\omega_{Y/X}[m]$ which is the single $\sO_Y$-module $\omega_{Y/X}$ sitting in
degree $-m$. If $f = \pi \circ i$ as above, we define in $\Db(\sO_Y)$ the isomorphism
of complexes
\eq{defzeta}{\zeta'_{i,\pi} : \omega_{Y/X}[m] \riso \omega_{Y/P}[-d] \otimesL_{\sO_Y} 
\LL i^*(\omega_{P/X}[n])}
by \eqref{transomega1} in degree $-m$, without any sign modification. If $f$ is a
smooth morphism or a regular immersion, this definition is consistent with
\cite[(2.2.6)]{Co00}. By \cite[(1.3.6)]{Co00}, the isomorphism \eqref{defzeta} is equal
to the composed isomorphism
\eqna{\omega_{Y/X}[m] \xrightarrow[{\eqref{transomega1}[m]}]{\sim} (\omega_{Y/P}
\otimesL_{\sO_Y} \LL i^*(\omega_{P/X}))[m]
& \riso & (\omega_{Y/P} \otimesL_{\sO_Y} \LL i^*(\omega_{P/X}[n]))[-d] \\
& \riso & \omega_{Y/P}[-d] \otimesL_{\sO_Y} \LL i^*(\omega_{P/X}[n])}
and differs from the composed isomorphism 
\eqna{\omega_{Y/X}[m] \xrightarrow[{\eqref{transomega1}[m]}]{\sim} (\omega_{Y/P}
\otimesL_{\sO_Y} \LL i^*(\omega_{P/X}))[m]
& \riso & (\omega_{Y/P}[-d] \otimesL_{\sO_Y} \LL i^*(\omega_{P/X}))[n] \\
& \riso & \omega_{Y/P}[-d] \otimesL_{\sO_Y} \LL i^*(\omega_{P/X}[n])}
by multiplication by $(-1)^{dn}$.

\begin{lem}\label{basechangeomega}
Under the assumptions of Proposition \ref{ci}, there exists
a canonical isomorphism
\eq{pbomega}{
\LL v^*(\omega_{Y/X}) \cong v^*(\omega_{Y/X}) \riso \omega_{Y'/X'}. }
Moreover, if the assumptions of Proposition \ref{ci} (ii) are satisfied, the
canonical base change morphism
\eq{bcomega}{ \LL u^*\RR f_*(\omega_{Y/X}) \to \RR f'_*(\omega_{Y'/X'}).}
is an isomorphism.
\end{lem}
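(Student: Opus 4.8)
The plan is to construct the isomorphism \eqref{pbomega} locally from the definition \eqref{defomega} of $\omega_{Y/X}$, then to prove the base change statement \eqref{bcomega} by reducing to the case of a global factorization and using the explicit Koszul resolution.

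First I would treat \eqref{pbomega}. Working Zariski-locally on $Y'$, I may assume that $f$ admits a factorization $f = \pi \circ i$ with $\pi : P \to X$ smooth of relative dimension $n$ and $i : Y \inj P$ a regular immersion of codimension $d$, defined by a regular sequence $t_1,\ldots,t_d$. By Proposition \ref{ci} (i), the pulled-back sequence $t'_1,\ldots,t'_d$ in $\sO_{P'}$ is regular and defines $Y' \inj P' := X'\times_X P$, so that $f' = \pi' \circ i'$ is a local factorization of $f'$ of the same type. The standard base change isomorphisms for smooth morphisms, $v^*i^*\omega_{P/X} \riso i'^*\pi'^*\Omega^n_{X'\text{-rel}} = i'^*\omega_{P'/X'}$, and for regular immersions, $v^*\wedge^d((\sI/\sI^2)^\vee) \riso \wedge^d((\sI'/\sI'^2)^\vee)$ — the latter being an isomorphism precisely because $X'$ and $Y$ are Tor-independent over $X$, so $\sI/\sI^2 \otimes_{\sO_Y}\sO_{Y'} \riso \sI'/\sI'^2$ — combine via \eqref{defomega} to give $v^*(\omega_{Y/X}) \riso \omega_{Y'/X'}$. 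To see this is independent of the chosen local factorization, and hence glues to a global isomorphism, I would check its compatibility with the transition isomorphisms $\varepsilon^{P',P}$ of \ref{Defomega}; this follows from the functoriality in base change of Conrad's canonical isomorphisms $\zeta'_{i',u}$, $\zeta'_{u,\pi}$ for smooth morphisms and regular immersions \cite[2.2]{Co00}, applied to the morphism $u : \widetilde{P}' \to \widetilde{P}$ realizing a comparison of two factorizations. (Since $\omega_{Y/X}$ is invertible and $v$ is flat relative to nothing in particular but $\LL v^* = v^*$ on an invertible sheaf, the $\LL v^*$ in \eqref{pbomega} is harmless.)

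Next I would establish \eqref{bcomega}. Under the hypotheses of Proposition \ref{ci} (ii), $\omega_{Y/X}$ is an invertible $\sO_Y$-module, in particular a complex in $\Dbqc(\sO_Y)$ of finite Tor-dimension over $\sO_Y$ (it is flat), so Proposition \ref{ci} (ii) applies verbatim with $\sE\hbul = \omega_{Y/X}$: the base change morphism $\LL u^*\RR f_*(\omega_{Y/X}) \to \RR f'_*(\LL v^*\omega_{Y/X})$ is an isomorphism. Composing with $\RR f'_*$ of the isomorphism \eqref{pbomega} yields \eqref{bcomega}. The only genuine point to verify is that the source of \eqref{bcomega} is literally $\LL u^*\RR f_*(\omega_{Y/X})$, i.e. that I am invoking the same base change morphism; this is the content of Proposition \ref{ci} (ii) once one knows $\omega_{Y/X}$ has finite Tor-dimension over $\sO_Y$, which holds since it is locally free.

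The main obstacle I anticipate is not in \eqref{bcomega} — that is essentially a direct citation of Proposition \ref{ci} (ii) — but in the independence-of-factorization part of \eqref{pbomega}: one must unwind the definition \eqref{defepsi} of $\varepsilon^{P',P}(u)$ and the gluing recipe \eqref{defepsi2} and check, carefully with the sign conventions of \cite{Co00}, that the locally-defined base change isomorphisms are compatible with $\varepsilon^{P',P}$. This is the familiar but tedious bookkeeping with Conrad's canonical isomorphisms; I would state the compatibility, indicate that it follows from the base change compatibilities of $\zeta'_{i,u}$ and $\zeta'_{u,\pi}$ recorded in \cite[2.2]{Co00}, and leave the diagram chase to the reader in the same spirit as the other verifications in this Appendix.
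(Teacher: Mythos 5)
Your proposal is correct and follows essentially the same route as the paper: the isomorphism \eqref{pbomega} is built from a local factorization $f=\pi\circ i$ using the fact that Tor-independence makes $u^*(\sI/\sI^2)\to\sI'/\sI'^2$ an isomorphism, checked to be compatible with the transition isomorphisms $\varepsilon^{P',P}$, and \eqref{bcomega} is then a direct application of \eqref{basechange} from Proposition \ref{ci} (ii) to the invertible (hence finite Tor-dimension) module $\omega_{Y/X}$. The only difference is one of presentation: you spell out the compatibility check with the $\zeta'$ base change functorialities that the paper dismisses as ``easy to check.''
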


\begin{proof}
Since $\omega_{Y/X}$ is invertible, $\LL v^*(\omega_{Y/X}) \riso v^*(\omega_{Y/X})$. To
prove the isomorphism \eqref{pbomega}, assume first that there exists a factorization
$f = \pi\circ i$ where $\pi$ is smooth and $i$ is a regular immersion. Let $f' =
\pi'\circ i'$ be the factorization deduced from $f = \pi\circ i$ by base change. Then,
if $\sI$ and $\sI'$ are the ideals defining $i$ and $i'$, the Tor-independence
assumption implies that the canonical homomorphism $u^*(\sI/\sI^2) \to \sI'/\sI'{^2}$
is an isomorphism, which defines \eqref{pbomega}. It is easy to check that, for two
factorizations of $f$, the corresponding isomorphisms are compatible with the
identifications \eqref{defepsi2}. This provides the isomorphism \eqref{pbomega} in the
general case.

When the assumptions of \ref{ci} (ii) are satisfied, the isomorphism \eqref{bcomega}
follows from \eqref{pbomega} and \eqref{basechange}.
\end{proof}

\subsection{}\label{chi}

Let
\eqn{\xymatrix{Y' \ar[r]^v\ar[d]_{f'} & Y\ar[d]^f\\
X' \ar[r]^u & X}}
be a cartesian square, and assume that:

\alphab $f$ and $u$ are complete intersection morphisms of relative dimensions $m$ and
$n$;

\alphab $X'$ and $Y$ are Tor-independent over $X$. 

Then Lemma \ref{basechangeomega} provides canonical isomorphisms
\eqn{v^*(\omega_{Y/X}) \riso \omega_{Y'/X'}, \quad\quad 
f'{}^*(\omega_{X'/X}) \riso \omega_{Y'/Y}.}
One defines the canonical isomorphism
\eq{defchi}{ \chi_{f,u} : \omega_{Y'/Y} \otimes_{\sO_{Y'}} v^*(\omega_{Y/X}) \riso 
\omega_{Y'/X'} \otimes_{\sO_{Y'}} f'{}^*(\omega_{X'/X}) }
as being the product by $(-1)^{mn}$ of the composite 
\eqn{ \omega_{Y'/Y} \otimes_{\sO_{Y'}} v^*(\omega_{Y/X}) \riso 
f'{}^*(\omega_{X'/X}) \otimes_{\sO_{Y'}} \omega_{Y'/X'} \riso 
\omega_{Y'/X'} \otimes_{\sO_{Y'}} f'{}^*(\omega_{X'/X}), }
where the first isomorphism is the product of the previous base change isomorphisms, 
and the second one is the usual commutativity isomorphism of the tensor product (see 
\cite[Appendix, (a)]{De83} and \cite[p.\ 215-216]{Co00}). 

The following relations follow easily from the local description of the isomorphisms 
$\zeta'_{f,g}$ given in \cite[p.~30, (a) - (d)]{Co00}:

\romain In the above cartesian square, assume that each of the three morphisms $u$, $f$
and $u \circ f' = f \circ v$ is either a smooth morphism or a regular immersion. Then
the following isomorphisms $\omega_{Y'/X} \riso \omega_{Y'/X'} \otimes_{\sO_{Y'}}
f'{}^*(\omega_{X'/X})$ are equal:
\eq{chizetasq}{ \zeta'_{f',u} = \chi_{f,u} \circ \zeta'_{v,f}. }

\romain Let
\eqn{\xymatrix@M=6pt{   
& Y' \ar@{^{(}->}[r]^v \ar[d]_{f'} & Y \ar[d]^f\\
X'' \ar@{^{(}->}[r]^i \ar@{^{(}->}[ur]^j & X' \ar@{^{(}->}[r]^u & X
}}
be a commutative diagram in which the square is cartesian, $f$ is smooth, $i$ and $u$ 
are regular immersions. Then the following isomorphisms 
\eqn{ \omega_{X''/X} \riso \omega_{X''/Y'} \otimes_{\sO_{X''}} j^*(\omega_{Y'/X'}) 
\otimes_{\sO_{X''}} i^*(\omega_{X'/X}) }
are equal:
\eq{chizetares}{ (\zeta'_{j,f'}\otimes\Id) \circ \zeta'_{i,u} = 
(\Id\otimes j^*(\chi_{f,u})) \circ (\zeta'_{j,v}\otimes\Id) \circ \zeta'_{vj,f}.}

\romain Let
\eqn{\xymatrix@M=6pt{
Y'' \ar[r]^{v'} \ar[d]_{f''} & Y' \ar[r]^v \ar[d]_{f'} & Y \ar[d]^f \\
X'' \ar[r]^{u'} & X' \ar[r]^u & X
}}
be a commutative diagram in which both squares are cartesian, each of the morphisms
$f$, $u$, $u'$ and $u \circ u'$ is either a smooth morphism or a regular immersion,
$X'$ and $Y$ are Tor-independent over $X$, and $X''$ and $Y'$ are Tor-independent over
$X'$ (so that $X''$ and $Y$ are Tor-independent over $X$, and all immersions are
regular). Then the following isomorphisms 
\eqn{ \omega_{Y''/Y}\otimes_{\sO_{Y''}}(vv')^*(\omega_{Y/X}) \riso 
\omega_{Y''/X''} \otimes_{\sO_{Y''}} 
f''{}^*(\omega_{X''/X'} \otimes_{\sO_{X''}} u'{}^*(\omega_{X'/X})) }
are equal:
\eq{chizetacomp}{ (\Id\otimes f''{}^*(\zeta'_{u',u})) \circ \chi_{f,uu'} =
(\chi_{f',u'}\otimes\Id) \circ (\Id\otimes v'{}^*(\chi_{f,u})) \circ
(\zeta'_{v',v}\otimes\Id). }
\medskip

We will also need to extend the isomorphism $\chi_{f,u}$ to the derived category. We 
define
\eq{defchi2}{ \chi_{f,u} : \omega_{Y'/Y}[n] \otimesL_{\sO_{Y'}} \LL 
v^*(\omega_{Y/X}[m]) \riso \omega_{Y'/X'}[m] \otimesL_{\sO_{Y'}} \LL 
f'{}^*(\omega_{X'/X}[n]) }
by \eqref{defchi} in degree $-(m+n)$, without any further sign modification. Because 
of the sign convention in the commutativity isomorphism for the derived tensor product 
\cite[p.~11]{Co00}, $\chi_{f,u}$ can also be described as the composite 
\begin{eqnarray*}
\omega_{Y'/Y}[n] \otimesL_{\sO_{Y'}} \LL v^*(\omega_{Y/X}[m]) & \riso &
\LL f'{}^*(\omega_{X'/X}[n]) \otimesL_{\sO_{Y'}} \omega_{Y'/X'}[m] \\
& \riso & \omega_{Y'/X'}[m] \otimesL_{\sO_{Y'}} \LL f'{}^*(\omega_{X'/X}[n]), 
\end{eqnarray*}
where the first isomorphism is the tensor product of the base change isomorphisms, and 
the second one is the commutativity isomorphism. With this definition, the previous 
relations \eqref{chizetasq} to \eqref{chizetacomp} remain valid in $\Db(\sO_{Y'})$.

\subsection{}\label{transomega}
We now consider the composition of two complete intersection morphisms $f : Y \to X$, 
$g : Z \to Y$, and define a canonical isomorphism 
\eq{transomega2}{\zeta'_{g,f} : \omega_{Z/X} \riso \omega_{Z/Y} \otimes_{\sO_Z} 
g^*(\omega_{Y/X})}
extending the isomorphism \eqref{transomega1}.

Assume first that there exists a factorization $f = \pi \circ i$, where $\pi : P \to 
X$ is a smooth morphism, and a factorization $ i \circ g = \pi'' \circ j$, where 
$\pi'' : P'' \to P$ is a smooth morphism (such factorizations always exist when $X$, 
$Y$ and $Z$ are affine). Let $\pi' : P' \to Y$ be the pull-back of $\pi''$,  so that 
we get a commutative diagram
\eq{transdiag}{\xymatrix{
& & P''  \ar@/^2.1pc/[ddrr]^{\psi} \ar[dr]^{\pi''} & & \\ 
& P' \ar@{^{(}->}[ur]^{i''} \ar[dr]^{\pi'} & \Box & P \ar[dr]^{\pi} & \\
Z \ar@{^{(}->}@< 2.5pt>@/^2.1pc/[uurr]!<0pt,-2pt>^j \ar@{^{(}->}[ur]^{i'} \ar[rr]_g & &
Y \ar@{^{(}->}[ur]^{i} \ar[rr]_f & & X
}}
where the middle square is cartesian. Using \eqref{transomega1} for $(j,\psi)$, and the
isomorphisms $\zeta'_{i',i''} \otimes j^*(\zeta'_{\pi'',\pi})$, we obtain an isomorphism
\begin{eqnarray*}
\omega_{Z/X}\ \cong\ \omega_{Z/P''} \otimes j^*(\omega_{P''/X}) & \riso & 
(\omega_{Z/P'} \otimes i'{}^*(\omega_{P'/P''})) \otimes j^*(\omega_{P''/P} \otimes 
\pi''{}^*(\omega_{P/X})) \\
& \riso & \omega_{Z/P'} \otimes i'{}^*(\omega_{P'/P''}\otimes i''{}^*(\omega_{P''/P})) 
\otimes g^*i^*(\omega_{P/X}).
\end{eqnarray*}
Using the isomorphism 
\eqn{ \chi_{\pi'',i} : \omega_{P'/P''}\otimes i''{}^*(\omega_{P''/P}) \riso 
\omega_{P'/Y} \otimes \pi'{}^*(\omega_{Y/P}) }
defined in \ref{chi}, and $(\zeta'_{i',\pi'} \otimes g^*(\zeta'_{i,\pi}))^{-1}$, we
then obtain the composed isomorphism
\begin{eqnarray*}
\omega_{Z/X} & \riso & \omega_{Z/P'} \otimes i'{}^*(\omega_{P'/Y} \otimes 
\pi'{}^*(\omega_{Y/P})) \otimes g^*i^*(\omega_{P/X}) \\
& \riso & (\omega_{Z/P'} \otimes i'{}^*(\omega_{P'/Y})) \otimes 
g{}^*(\omega_{Y/P} \otimes i^*(\omega_{P/X})) \\
& \riso & \omega_{Z/Y} \otimes g^*(\omega_{Y/X}),
\end{eqnarray*}
which defines \eqref{transomega2}. 

To prove that this isomorphism is well defined, and to glue the local constructions to
obtain a global one when a diagram \eqref{transdiag} does not exist globally, we must
check that it does not depend on the chosen factorizations. If we have two diagrams
\eqref{transdiag}, with factorizations $f = \pi_k \circ i_k$, $i_k \circ g = \pi''_k
\circ j_k$, for $k = 1, 2$, we can embed $Y$ diagonally in $P_1 \times_X P_2$, and $Z$
in $P''_1 \times_X P''_2$. This allows to reduce the verification to the case where
there exists a smooth $X$-morphism $u : P_2 \to P_1$ such that $u \circ i_2 = i_1$, and
a smooth morphism $u'' : P''_2 \to P''_1$ such that $\pi''_1 \circ u'' = u \circ
\pi''_2$, and $j_1 = u'' \circ j_2$. Morever, the same argument shows that we may
assume that the morphism $P''_2 \to P''_1 \times_{P_1} P_2$ is smooth. The verification
can then be reduced to the following two cases:

\romain The morphism $P''_2 \to P''_1 \times_{P_1} P_2$ is an isomorphism;

\romain The morphism $P_2 \to P_1$ is an isomorphism.

In each of these cases, the equality of the two definitions of \eqref{transomega2}
breaks down to a succession of elementary commutative diagrams involving isomorphisms
of the form $\zeta'_{f,g}$ and $\chi_{f,u}$. We omit details here, and only point out
that, in addition to \cite[(2.2.4)]{Co00}, the first case uses relation
\eqref{chizetasq}, and the second one uses relation \eqref{chizetares}. In particular,
the sign convention introduced in the definition of $\chi_{f,u}$ in \ref{chi} is
necessary for this independence result.
\medskip

If $m$ and $m'$ are the virtual relative dimensions of $f$ and $g$, we define as in
\ref{Defomega} the derived category variant of \eqref{transomega2} as being the morphism
\eq{transomega3}{ \zeta'_{g,f} : \omega_{Z/X}[m+m'] \riso \omega_{Z/Y}[m']
\otimesl_{\sO_Z} \LL f^*(\omega_{Y/X}[m]) }
defined by applying \eqref{transomega2} to the underlying modules (sitting in degree 
$-m-m'$), without any sign modification. 
\medskip

With the definition of $\zeta'_{g,f}$ provided by \eqref{transomega2}
(resp.~\eqref{transomega3}), we now extend to complete intersection morphisms Conrad's
transitivity relation \cite[(2.2.4)]{Co00}.

\begin{prop}\label{Transzetap}
Let 
\eqn{T \xra{\ h\ } Z \xra{\ g\ } Y \xra{\ f\ } X }
be three complete intersection morphisms. Then
\eq{transzetap}{ (\Id \otimes h^*(\zeta'_{g,f})) \circ \zeta'_{h,fg} = (\zeta'_{h,g}
\otimes \Id) \circ \zeta'_{gh,f}. }
\end{prop}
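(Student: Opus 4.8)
The plan is to reduce the transitivity relation \eqref{transzetap} for complete intersection morphisms to Conrad's transitivity relation \cite[(2.2.4)]{Co00} for compositions of smooth morphisms and regular immersions, in the same spirit as the proof that \eqref{transomega2} is well defined. Since the statement is local on $T$, I would first choose factorizations locally: pick a factorization $f = \pi \circ i$ with $\pi : P \to X$ smooth and $i$ a regular immersion, then a factorization $i \circ g = \rho \circ j$ with $\rho : Q \to P$ smooth and $j$ a regular immersion, and finally a factorization $j \circ h = \sigma \circ \ell$ with $\sigma : S \to Q$ smooth and $\ell$ a regular immersion. Taking appropriate pullbacks of $\rho$ and $\sigma$ over $Y$, $Z$, and $P$, this produces a three-step analogue of diagram \eqref{transdiag}, in which all horizontal morphisms $T \to Z \to Y \to X$ are exhibited as composites of a regular immersion followed by a smooth morphism, and the various auxiliary squares are cartesian with the relevant Tor-independence holding automatically (because the immersions are regular and the smooth pullbacks are flat).

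With such a diagram in hand, the key step is to unwind both sides of \eqref{transzetap}. Each of $\zeta'_{h,fg}$, $\zeta'_{g,f}$, $\zeta'_{h,g}$, $\zeta'_{gh,f}$ is, by the definition in \ref{transomega}, a composite of instances of Conrad's isomorphisms $\zeta'_{-,-}$ between smooth morphisms and regular immersions, together with instances of the base change isomorphism $\chi_{-,-}$ from \ref{chi}. So both composites in \eqref{transzetap} expand into long strings of such elementary isomorphisms on a common tensor product of invertible sheaves pulled back from the various $P$-type schemes. The task is then to check that these two strings agree. This is done by repeatedly invoking Conrad's transitivity \cite[(2.2.4)]{Co00} to handle the smooth/regular-immersion parts, and the compatibility relations \eqref{chizetasq}, \eqref{chizetares}, \eqref{chizetacomp} between $\chi$ and $\zeta'$ to move the base change isomorphisms past the $\zeta'$'s, eventually bringing both sides to a common normal form. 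As in \ref{transomega}, it suffices to treat the reduction to a couple of special configurations (one where a comparison morphism of ambient smooth schemes is an isomorphism, and one where it arises from a cartesian square), and in each configuration the identity breaks down to a finite diagram chase.

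The main obstacle I expect is bookkeeping of signs and of the order of the tensor factors: the derived-category versions \eqref{transomega3}, \eqref{defchi2}, \eqref{defzeta} each carry sign conventions (powers of $(-1)^{dn}$, $(-1)^{mn}$, etc.) coming from \cite[p.~11, (1.3.6)]{Co00}, and one must verify that these sign factors cancel consistently on both sides of \eqref{transzetap}. Since all four isomorphisms in the statement are defined by applying the module-level isomorphisms to sheaves sitting in a single degree without further sign modification, the degree shifts should contribute the same sign to both composites, so in fact the derived statement will follow formally from the underived one \eqref{transomega2}; the real content is therefore the underived identity, where only the $(-1)^{mn}$ in $\chi$ is at play. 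I would isolate that point explicitly: first prove \eqref{transzetap} for the underlying invertible sheaves, then observe that passing to complexes introduces identical shifts on both sides. Granting the local verification, the global statement follows because both $\omega_{-/-}$ and all the isomorphisms involved commute with Zariski localization, and because the local construction of $\zeta'_{g,f}$ was already shown in \ref{transomega} to be independent of the chosen factorizations, so the locally defined identities glue.
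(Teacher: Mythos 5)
Your proposal follows essentially the same route as the paper: localize on $T$, build the three-step staircase of smooth morphisms and regular immersions with cartesian (hence Tor-independent) squares generalizing diagram \eqref{transdiag}, and then reduce the identity to Conrad's transitivity \cite[(2.2.4)]{Co00} together with the compatibilities between $\chi$ and $\zeta'$ (the paper invokes \eqref{chizetacomp}), leaving the resulting elementary diagram chase undetailed. Your additional remarks on the sign conventions and on deducing the derived-category statement from the underived one are consistent with the paper's conventions and do not change the argument.
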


\begin{proof} As the verification is local on $T$, we may assume that there exists a 
commutative diagram
\eqn{\xymatrix@M=4pt{
& & & R'' \ar[rd]^{\psi''} & & & \\ 
& & Q'' \ar@{^{(}->}[ur]^{k''} \ar[rd]^{\varphi''} & & Q' \ar[rd]^{\varphi'} & & \\
& P'' \ar@{^{(}->}[ur]^{j''} \ar[rd]^{\pi''} & & 
P' \ar@{^{(}->}[ur]^{j'} \ar[rd]^{\pi'} & & P \ar[rd]^{\pi} & \\
T \ar@{^{(}->}[ur]^{i''} \ar[rr]_{h} & & Z \ar@{^{(}->}[ur]^{i'} \ar[rr]_{g} & & 
Y \ar@{^{(}->}[ur]^{i} \ar[rr]_{f} & & X
}}
in which the three squares are cartesian, the morphisms $\pi$, $\varphi'$, $\psi''$ 
are smooth, and the immersions $i$, $i'$, $i''$ are regular. Using 
\cite[(2.2.4)]{Co00} and the relation \eqref{chizetacomp}, the proof of 
\eqref{transzetap} again breaks into a succession of elementary commutative diagrams, 
which we do not detail here.
\end{proof}

\subsection{}\label{defdelta}

We now assume that $f : Y \to X$ is a complete intersection morphism of (virtual)
relative dimension $0$, and, under this hypothesis, we define a section 
$\delta_f \in \Gamma(Y, \omega_{Y/X})$, which we call the \textit{canonical section}. 

We first assume that there is a factorization $f = \pi \circ i$ such that $\pi : P
\to X$ is a smooth morphism of relative dimension $n$, and $i : Y \inj P$ is a regular
closed immersion, necessarily of codimension $n$ since $f$ has relative dimension $0$. 
Let $\sI \subset \sO_P$ be the ideal defining $i$. The
canonical derivation $d : \sO_P \to \Omega^1_{P/X}$ induces an $\sO_Y$-linear
homomorphism $\bd : \sI/\sI^2 \to i^*\Omega^1_{P/X}$. Taking its $n$-th exterior
power, we obtain a linear homomorphism
\eq{wedged}{\wedge^n \bd : \wedge^n(\sI/\sI^2) \lra i^* \Omega^n_{P/X}.}
Through the canonical isomorphisms 
\begin{eqnarray*}
\sHom_{\sO_Y}(\wedge^n(\sI/\sI^2), i^*\Omega^n_{P/X}) & \cong & 
(\wedge^n(\sI/\sI^2))^\vee\otimes_{\sO_Y}i^*\Omega^n_{P/X} \\
& \cong & \wedge^n((\sI/\sI^2)^\vee) \otimes_{\sO_Y}i^*\Omega^n_{P/X} \\
& = & \omega_{Y/X},
\end{eqnarray*}
it can be seen as a section of $\omega_{Y/X}$, which is the section $\delta_f$. If
$(t_1,\ldots,t_n)$ is a regular sequence of generators of $\sI$ on a neighbourhood $U$
of some point $y \in Y$, then
\eq{locdeltaf}{ \delta_f = (\bt_1^{\,\vee}\wedge\ldots\wedge\bt_n^{\,\vee}) \otimes 
i^*(dt_n\wedge\ldots\wedge dt_1) \ \in\Gamma(U, \omega_{Y/X}), }
since the canonical isomorphism $(\wedge^n(\sI/\sI^2))^\vee \cong 
\wedge^n((\sI/\sI^2)^\vee)$ maps $(\bt_n\wedge\ldots\wedge\bt_1)^\vee$ to 
$\bt_1^{\,\vee}\wedge\ldots\wedge\bt_n^{\,\vee}$. 

To end the construction of $\delta_f$, it suffices to check that the section obtained
in this way does not depend on the chosen factorization. Using the diagonal embedding,
it suffices as usual to compare the sections $\delta_f$ and $\delta'_f$ defined by two
factorizations $f = \pi \circ i = \pi' \circ i'$ when there exists a smooth
$X$-morphism $u : P' \to P$ such that $u \circ i' = i$. Let $\sI'$ be the ideal of $Y$
in $P'$, and
\eqn{\omega'_{Y/X} = \wedge^{n'}((\sI'/\sI'{}^2)^\vee) \otimes_{\sO_Y} 
i'{}^*\Omega^{n'}_{P'/X},} 
where $n'$ is the codimension of $Y$ in $P'$. Then the canonical identification
$\omega_{Y/X} \cong \omega'_{Y/X}$ is given by \eqref{defepsi}, case a), and, thanks to
\eqref{locdeltaf}, the equality $\delta_f = \delta'_f$ follows from \cite[p.\ 30, (a)
and (d)]{Co00}.

\begin{prop}\label{propdelta}
Let $f : Y \to X$ be a complete intersection morphism of virtual relative dimension $0$.

\romain Let $g : Z \to Y$ be a second complete intersection morphism of virtual
relative dimension $0$. The image of $\delta_{fg}$ under the isomorphism $\zeta'_{g,f}$
defined in \eqref{transomega2} is given by 
\eq{transdelta}{ \zeta'_{g,f}(\delta_{fg}) = \delta_{g} \otimes g^*(\delta_f). }

\romain For any cartesian square \eqref{cart}, the isomorphism \eqref{pbomega}
\eqn{v^*(\omega_{Y/X}) \riso \omega_{Y'/X'}}
maps $v^*(\delta_f)$ to $\delta_{f'}$. 
\end{prop}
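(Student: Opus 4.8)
\textbf{Plan for the proof of Proposition \ref{propdelta}.}
Both assertions are local on the target $Y$ (resp.\ $Y'$), and the sheaves $\omega_{Y/X}$, $\delta_f$, and the isomorphisms $\zeta'$ and \eqref{pbomega} all commute with Zariski localization by construction. So I would begin by localizing enough to obtain, for (i), a diagram \eqref{transdiag} with $f = \pi\circ i$ and $i\circ g = \pi''\circ j$ as in \ref{transomega}, and, for (ii), a factorization $f = \pi\circ i$ through a smooth $\pi:P\to X$ and a regular immersion $i:Y\inj P$. After this reduction everything is a matter of tracing through the explicit local formula \eqref{locdeltaf} for the canonical section and the local descriptions of $\zeta'_{f,g}$ in \cite[p.~30, (a)--(d)]{Co00} and of $\chi_{f,u}$ in \ref{chi}.

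For assertion (ii), I would choose a regular sequence $(t_1,\ldots,t_n)$ of generators of the ideal $\sI$ of $Y$ in $P$ on a small enough open set; by the Tor-independence hypothesis the images $(t'_1,\ldots,t'_n)$ in $\sO_{P'}$ form a regular sequence generating the ideal $\sI'$ of $Y'$ in $P' = X'\times_X P$, and the canonical map $u^*(\sI/\sI^2)\to \sI'/\sI'{}^2$ is the isomorphism defining \eqref{pbomega}. Then one just observes that this isomorphism sends $\overline{t_k}$ to $\overline{t'_k}$ and $\overline{t_k}^{\,\vee}$ to $\overline{t'_k}^{\,\vee}$, and that pullback of differentials sends $i^*(dt_n\wedge\cdots\wedge dt_1)$ to $i'^*(dt'_n\wedge\cdots\wedge dt'_1)$; comparing with \eqref{locdeltaf} gives $v^*(\delta_f) = \delta_{f'}$ on the chosen chart. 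A short argument shows this is independent of the factorization, using the compatibility (already noted in the proof of Lemma \ref{basechangeomega}) of the base change isomorphisms with the glueing identifications \eqref{defepsi2}.

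For assertion (i), the strategy is to write out, in the local coordinates provided by \eqref{transdiag}, each of the three composed isomorphisms whose product is $\zeta'_{g,f}$ (see \ref{transomega}): the use of \eqref{transomega1} for $(j,\psi)$, the isomorphisms $\zeta'_{i',i''}\otimes j^*(\zeta'_{\pi'',\pi})$, the isomorphism $\chi_{\pi'',i}$, and $(\zeta'_{i',\pi'}\otimes g^*(\zeta'_{i,\pi}))^{-1}$. One picks a regular sequence generating the ideal of $T$ in $R''$ compatible with the given local data, applies \eqref{locdeltaf} to $fg$ to express $\delta_{fg}$, and pushes it through each arrow using the local formulas in \cite[p.~30]{Co00}. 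Because $\chi_{f,u}$ carries the sign $(-1)^{mn}$ built into its definition in \ref{chi} (which is trivial here, since both relative dimensions are $0$), no sign intervenes, and the result lands on $\delta_g\otimes g^*(\delta_f)$ by \eqref{locdeltaf} applied separately to $g$ and to $f$. One then checks, again by the usual diagonal-embedding reduction, that the identity is independent of the chosen factorizations in \eqref{transdiag}; this is where the consistency arguments of \ref{transomega} (relations \eqref{chizetasq} and \eqref{chizetares}) are invoked.

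The only genuinely delicate point is bookkeeping: the local formula \eqref{locdeltaf} involves the reversal of the $dt_k$'s, the canonical identification $(\wedge^n(\sI/\sI^2))^\vee\cong\wedge^n((\sI/\sI^2)^\vee)$, and Conrad's sign conventions for $\zeta'_{f,g}$ and for the commutativity of the derived tensor product. So the main obstacle is simply verifying that all of these signs cancel correctly along the chain of isomorphisms defining $\zeta'_{g,f}$ — this is the reason the sign $(-1)^{mn}$ was inserted in the definition of $\chi_{f,u}$ in the first place. Once one is careful to use the same regular sequences throughout and to track the reversals, each step reduces to an elementary verification, and I would present the computation on a single chart and then remark that the independence of the factorization follows from the constructions of \ref{Defomega} and \ref{transomega}.
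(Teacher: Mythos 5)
Your plan is the paper's proof: both parts are verified locally via the explicit formula \eqref{locdeltaf}, with (ii) immediate once Tor-independence is used to carry a regular sequence of generators of $\sI$ to one of $\sI'$, and (i) obtained by pushing $\delta_{fg}$ through the chain of local isomorphisms defining $\zeta'_{g,f}$ in a diagram \eqref{transdiag}. One correction, however: your parenthetical claim that the sign in $\chi$ ``is trivial here, since both relative dimensions are $0$'' is wrong. The $\chi$ occurring in the chain is $\chi_{\pi'',i}$, where $\pi''$ is smooth of relative dimension $n'$ and $i$ is a regular immersion of codimension $n$ (virtual relative dimension $-n$); its built-in sign is therefore $(-1)^{nn'}$, and this is precisely what cancels the $(-1)^{nn'}$ produced by $\zeta'_{i',i''}$ when one reorders $t''_n{}^\vee\wedge\cdots\wedge t''_1{}^\vee\wedge t'_{n'}{}^\vee\wedge\cdots\wedge t'_1{}^\vee$ into $(\bt'_{n'}{}^\vee\wedge\cdots\wedge\bt'_1{}^\vee)\otimes i'{}^*(t''_n{}^\vee\wedge\cdots\wedge t''_1{}^\vee)$. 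The virtual relative dimensions of $f$ and $g$ being $0$ plays no role in that cancellation, so ``no sign intervenes'' is false as stated; two nontrivial signs intervene and cancel. Two smaller points: assertion (i) is local on $Z$ (not on $Y$), since $\delta_{fg}$ is a section of $\omega_{Z/X}$; and the closing independence-of-factorization checks you propose are unnecessary, because $\zeta'_{g,f}$, the base change isomorphism, and the three canonical sections are already globally well defined by \ref{Defomega}, \ref{transomega} and Lemma \ref{basechangeomega}, so a computation in one chart with one choice of factorization suffices.
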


\begin{proof}
As the first claim is local on $Z$, we may assume that there exists a diagram
\eqref{transdiag} in which the immersion $i$ is defined by a regular sequence
$(t_1,\ldots,t_n)$, and the immersion $j=i''\circ i'$ by a regular sequence
$(t'_1,\ldots,t'_{n'},t''_1,\ldots,t''_n)$, with $t''_i = \pi''{}^*(t_i)$. If we set
$\bt'_i = i''{}^*(t'_i)$, then $i'$ is defined by the regular sequence
$(\bt'_1,\ldots,\bt'_{n'})$. By construction, $\delta_{fg}$ corresponds by
$\zeta'_{j,\psi}$ to the section
\eqn{(t''_n{}^\vee \wedge \ldots \wedge t''_1{}^\vee \wedge
t'_{n'}{}^\vee \wedge \ldots \wedge t'_1{}^\vee) \otimes j^*(dt'_1 \wedge \ldots \wedge
dt'_{n'} \wedge dt''_1 \wedge \ldots \wedge dt''_n)}
of $\omega_{Z/P''}\otimes j^*(\omega_{P''/X})$, which is mapped by 
$\zeta'_{i',i''}\otimes j^*(\zeta'_{\pi'',\pi})$ to the section 
\eqn{((-1)^{nn'}(\bt'_{n'}{}^\vee \wedge \ldots \wedge \bt'_1{}^\vee) \otimes
i'{}^*(t''_n{}^\vee \wedge \ldots \wedge t''_1{}^\vee)) \otimes 
j^*((dt'_1 \wedge \ldots \wedge dt'_{n'}) \otimes 
\pi''{}^*(dt_1 \wedge \ldots \wedge dt_n)) }
of $(\omega_{Z/P'}\otimes i'{}^*(\omega_{P'/P''})) \otimes j^*(\omega_{P''/P} \otimes
\pi''{}^*(\omega_{P/X}))$. We then get via
$\chi_{\pi'',i}$ the section 
\eqn{ (\bt'_{n'}{}^\vee \wedge \ldots \wedge \bt'_1{}^\vee) \otimes 
i'{}^*(d\bt'_1 \wedge \ldots \wedge d\bt'_{n'}) \otimes 
i'{}^*\pi'{}^*(t_n{}^\vee \wedge \ldots \wedge t_1{}^\vee) \otimes 
j^*\pi''{}^*(dt_1 \wedge \ldots \wedge dt_n)), }
of $\omega_{Z/P'} \otimes i'{}^*(\omega_{P'/Y}) \otimes i'{}^*\pi'{}^*(\omega_{Y/P})
\otimes j^*\pi''{}^*(\omega_{P/X})$, which, by construction, corresponds by
$(\zeta'_{i',\pi'}\otimes g^*(\zeta'_{i,\pi}))^{-1}$ to the section $\delta_{g} 
\otimes g^*(\delta_{f})$ of $\omega_{Z/Y} \otimes g^*(\omega_{Y/X})$. 

The second claim follows from \eqref{locdeltaf}. 
\end{proof}

\begin{center}\textsc{B. The trace morphism $\tau_f$ on $\RR f_*(\sO_Y)$}\end{center}
\addcontentsline{toc}{section}{B.\hspace{3mm}The trace morphism $\tau_f$ on $\RR
f_*(\sO_Y)$}\setcounter{section}{2}\setcounter{subsection}{0}

Let $f : Y \to X$ be a proper complete intersection morphism of virtual relative
dimension $0$. This section is devoted to the construction of the ``trace morphism''
$\tau_f : \RR f_*\sO_Y \to \sO_X$, derived from the canonical section of $\omega_{Y/X}$
defined in \ref{defdelta}. The key step is to define an identification $\lambda_f$
between $\omega_{Y/X}$ as defined in \ref{Defomega}, and $f^!\sO_X$. The construction
is then a straightforward application of the relative duality theorem, and the
properties of $\tau_f$ listed in Theorem \ref{Thtau} follow from corresponding
properties of $\delta_f$ and $\lambda_f$.
\medskip

\subsection{}\label{Deflambda}
For the whole section, we assume that $X$ is a noetherian scheme with a dualizing
complex. If $f : Y \to X$ is a morphism of finite type, and $K\hbul$ a residual complex
on $X$, let $f^\Delta K\hbul$ be its inverse image on $Y$ in the sense of residual
complexes, which is a residual complex on $Y$. Then $K\hbul$ and $f^\Delta K\hbul$
define respectively duality $\delta$-functors $D_X$ on $\Dcoh(\sO_X)$ and $D_Y$ on
$\Dcoh(\sO_Y)$. We recall that, following \cite[3.3]{Co00}, the functor $f^! : 
\Dpcoh(\sO_X) \to \Dpcoh(\sO_Y)$ is defined by $f^! = D_Y \circ \LL f^* \circ D_X$. We 
also recall that, when $f$ is smooth of relative dimension $m$, $f^\sharp :
\Dpcoh(\sO_X) \to \Dpcoh(\sO_Y)$ denotes the functor defined by
\eq{deffsharp}{ f^\sharp(\sE\hbul) := \omega_{Y/X}[m] \otimesL_{\sO_Y} \LL
f^*(\sE\hbul),}
while, when $f$ is finite, $f^\flat : \Dpcoh(\sO_X) \to \Dpcoh(\sO_Y)$ denotes the 
functor defined by
\eq{deffflat}{f^\flat(\sE\hbul) := \overline{f}^*\RR\sHom_{\sO_X}(f_*\sO_Y, \sE\hbul),}
where $\overline{f}$ is the (flat) morphism of ringed spaces $(Y,\sO_Y) \to
(X,f_*\sO_Y)$.

Assume now that $f : Y \to X$ is a complete intersection morphism of virtual relative
dimension $m$. We first explain the relation between the relative dualizing module
$\omega_{Y/X}$ defined in the previous section, and the extraordinary inverse image
functor $f^!$. We will consider complexes of the form $\sE\hbul = \sL[r] \in
\Dbcoh(\sO_X)$, where $r \in \Z $ is some integer, and $\sL$ is an invertible $\sO_X$-module. 
For such a complex, we generalize the above notation $f^\sharp$, and define again 
\eq{deffsharp2}{ f^\sharp(\sE\hbul) := \omega_{Y/X}[m] \otimesL_{\sO_Y} \LL f^*(\sE\hbul).}
We observe that $f^\sharp(\sE\hbul)$ is another complex concentrated in a single degree,
with an invertible cohomology sheaf. We can then construct a canonical isomorphism
\eq{deflambda}{ \lambda_{f,\sE\hbul} : f^\sharp(\sE\hbul) \riso f^!(\sE\hbul) }
as follows.

\alphab If $f$ is smooth, then definitions \eqref{deffsharp} and \eqref{deffsharp2} 
coincide, and we set
\eq{deflambdas}{ \lambda_{f,\sE\hbul} = e_f : f^\sharp(\sE\hbul) \riso f^!(\sE\hbul), }
where $e_f$ is the isomorphism defined by \cite[(3.3.21)]{Co00}. 

\alphab If $f$ is a regular immersion, then we define $\lambda_{f,\sE\hbul}$ to be the 
composition
\eq{deflambdai}{ \lambda_{f,\sE\hbul} : f^\sharp(\sE\hbul) \xrightarrow[\sim]{\eta_f^{-1}} 
f^\flat(\sE\hbul) \xrightarrow[\sim]{d_f} f^!(\sE\hbul), }
where $\eta_f$ is defined by \cite[(2.5.3)]{Co00} and $d_f$ by \cite[(3.3.19)]{Co00}. 

\alphab In the general case, let us assume first that there exists a factorization $f =
\pi\circ i$, where $\pi : P \to X$ is a smooth morphism of relative dimension $n$, and
$i$ is a regular immersion of codimension $d = n-m$. Then we define $\lambda_{f,\sE\hbul}$ 
by the commutative diagram
\eq{deflambdag}{ \xymatrix@C=40pt{\entry{\omega_{Y/X}[m] \otimesL_{\sO_Y} \LL f^*\sE\hbul} 
\ar[d]!<0ex,3ex>^-\sim_-{\zeta'_{i,\pi}\otimes\Id} 
\ar[rr]^-{\lambda_{f,\sE\raisebox{.3mm}{$\cdot$}}}_-{\sim} 
& & \entry{f^!\sE\hbul} \ar[d]!<0ex,2ex>^{c_{i,\pi}}_\sim \\
\entry{\omega_{Y/P}[-d] \otimesL_{\sO_Y} \LL i^*\pi^\sharp\sE\hbul} 
\ar[r]^-{\lambda_{i,\pi^\sharp\sE\raisebox{.3mm}{$\cdot$}}}_-\sim & 
\entry{i^!\pi^\sharp\sE\hbul} 
\ar[r]^-{i^!(\lambda_{\pi,\sE\raisebox{.3mm}{$\cdot$}})}_-\sim & \entry{i^!\pi^!\sE\hbul} 
}}
where $c_{i,\pi}$ is the transitivity isomorphism \cite[(3.3.14)]{Co00}.

This isomorphism is actually independent of the chosen factorization. To check it, one
can argue as in \ref{Defomega} to reduce the comparison between the isomorphisms
\eqref{deflambda} defined by two factorizations $f = \pi \circ i = \pi' \circ i'$ to
the case where there is a smooth $X$-morphism $u : P' \to P$ such that $u \circ i' =
i$. It is then a long but straightforward verification, using various functorialities,
the compatibility between $\zeta'_{i',u}$ and the isomorphism $i^{\flat} \simeq
i'{}^{\flat}u^{\sharp}$ \cite[(2.7.4)]{Co00}, the compatibility between
$\zeta'_{u,\pi}$ and the isomorphism $\pi'{}^{\sharp} \simeq u^{\sharp}\pi^{\sharp}$
\cite[(2.2.7)]{Co00}, and the properties (VAR1), (VAR3) and (VAR5) of the functor $f^!$
(see \cite[III, Th.~8.7]{Ha66} and \cite[p.~139]{Co00}).

Since $f^!\sE\hbul$ is acyclic outside degree $-m-r$, a morphism $\omega_{Y/X}[m] 
\otimesl \LL f^*\sE\hbul \to
f^!\sE\hbul$ in $D(\sO_Y)$ is simply a module homomorphism $\omega_{Y/X}\otimes f^*\sL \to
\sH^{-m-r}(f^!\sE\hbul)$. Therefore, the previous construction provides in the general
case local isomorphisms which can be glued to define a global isomorphism even if there
does not exist a global factorization $f = \pi \circ i$ as above.

When $\sE\hbul = \sO_X[0]$, the isomorphism \eqref{deflambda} will simply be denoted 
\eq{deflambda2}{ \lambda_f : \omega_{Y/X}[m] \riso f^!\sO_X.}
If $f$ is flat, hence is a CM map, it provides the identification between the 
construction of $\omega_{Y/X}$ used in this article, and the construction of Conrad 
for CM maps \cite[3.5, p.~157]{Co00}.

We now give for the isomorphisms $\lambda_{f,\sE\hbul}$ a transitivity property which 
generalizes \eqref{deflambdag}.

\begin{prop}\label{Translambda}
Let $f : Y \to X$, $g : Z \to Y$ be two complete intersection morphisms, with virtual
relative dimensions $m$, $m'$, and let $\sE\hbul = \sL[r]$, for some invertible
$\sO_X$-module $\sL$ and some integer $r$. Then the diagram
\eq{translambda}{ \xymatrix@C=35pt{
\entry{\omega_{Z/X}[m+m'] \otimesL_{\sO_{Z}} \LL (fg)^*\sE\hbul} 
\ar[d]!<0ex,3ex>^-\sim_-{\zeta'_{g,f}\otimes\Id} 
\ar[rr]^-{\lambda_{fg,\sE\raisebox{.3mm}{$\cdot$}}}_-{\sim} 
& & \entry{(fg)^!\sE\hbul} \ar[d]!<0ex,2ex>^(.55){c_{g,f}}_(.55)\sim \\
\entry{\omega_{Z/Y}[m'] \otimesL_{\sO_Z} \LL g^*f^\sharp\sE\hbul} 
\ar[r]^-{\lambda_{g,f^\sharp\sE\raisebox{.3mm}{$\cdot$}}}_-\sim \ar@{=}[d]!<0ex,3ex> & 
\entry{g^!f^\sharp\sE\hbul} 
\ar[r]^-{g^!(\lambda_{f,\sE\raisebox{.3mm}{$\cdot$}})}_-\sim & 
\entry{g^!f^!\sE\hbul} \ar@{=}[d]!<0ex,2ex> \\
\entry{\omega_{Z/Y}[m'] \otimesL_{\sO_Z} \LL g^*f^\sharp\sE\hbul} 
\ar[r]^-{g^\sharp(\lambda_{f,\sE\raisebox{.3mm}{$\cdot$}})}_-\sim & 
\entry{\omega_{Z/Y}[m'] \otimesL_{\sO_Z} \LL g^*f^!\sE\hbul} 
\ar[r]^-{\lambda_{g,f^!\sE\raisebox{.3mm}{$\cdot$}}}_-\sim & \entry{g^!f^!\sE\hbul} 
}}
commutes.
\end{prop}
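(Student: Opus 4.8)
The plan is to reduce the commutativity of \eqref{translambda} to the transitivity relations already established separately for smooth morphisms, for regular immersions, and for the canonical isomorphisms $\zeta'$, $\chi$, $c_{i,\pi}$ of \cite{Co00}. As usual, the statement is local on $Z$, so I would first choose a diagram of the shape \eqref{transdiag}, i.e.\ a factorization $f = \pi\circ i$ with $\pi : P \to X$ smooth and $i : Y \inj P$ a regular immersion, together with a factorization $i\circ g = \psi \circ j$ with $\psi : P'' \to P$ smooth and $j : Z \inj P''$ a regular immersion, and with $\pi' : P' \to Y$ the pullback of $\psi$ and $i' : Z \inj P'$ the induced regular immersion. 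With such a diagram in hand, both $\lambda_{fg,\sE\hbul}$ and $\lambda_{f,\sE\hbul}$, $\lambda_{g,f^!\sE\hbul}$ get expanded, via the defining diagram \eqref{deflambdag}, into compositions of the ``atomic'' isomorphisms $\lambda_{\pi,-} = e_\pi$, $\lambda_{i,-}$, $\lambda_{j,-}$, $\lambda_{i',-}$, $\lambda_{\psi,-} = e_\psi$, $\lambda_{\pi',-} = e_{\pi'}$, together with the transitivity isomorphisms $c$ and the maps $\zeta'$, $\chi$ appearing in the construction of $\zeta'_{g,f}$ in \ref{transomega}.

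After this expansion the claim becomes the commutativity of one large diagram of isomorphisms in $D(\sO_Z)$, all of whose objects are complexes concentrated in a single degree with invertible cohomology; hence I can check commutativity after passing to that cohomology sheaf, where everything is a genuine module isomorphism and signs are the only subtlety. I would then factor the big diagram into smaller cells, each of which is one of the following: (a) the transitivity of the trace/variance isomorphism $c$ for a composite of two smooth maps, or two regular immersions, or a smooth map followed by a regular immersion — this is (VAR1), (VAR3), (VAR5) of $f^!$ (\cite[III, Th.~8.7]{Ha66}, \cite[p.~139]{Co00}) together with \cite[(3.3.14)]{Co00}; (b) the compatibility of $e_f$ with $\zeta'$ for smooth morphisms \cite[(2.2.7)]{Co00}; (c) the compatibility of $d_f\circ\eta_f^{-1}$ with $\zeta'$ for regular immersions \cite[(2.7.4)]{Co00}; (d) the base-change compatibility of $e_\psi$ (and of $\eta$, $d$) along the cartesian square $P' = Y\times_P P''$, which enters because $\zeta'_{g,f}$ was built using $\chi_{\psi,i}$; and (e) purely formal manipulations of $\otimesL$, $\LL f^*$, shifts and the commutativity/associativity constraints, whose sign bookkeeping is exactly the one already used to set up \eqref{defzeta}, \eqref{transomega3} and \eqref{defchi2}.

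The main obstacle is precisely this sign bookkeeping: the isomorphism $\zeta'_{g,f}$ of \eqref{transomega2} is itself defined through a chain involving $\chi_{\psi,i}$, which carries a $(-1)^{mn}$ sign (see \ref{chi}), and the derived-category version \eqref{transomega3} is declared to agree with the module map in degree $-m-m'$ with no further sign change, whereas the functors $f^\sharp$, $g^\sharp$ and the isomorphisms $e_f$, $d_f\circ\eta_f^{-1}$ involve shifts by the codimensions $d$, $n$, $d'$, $n'$, and Conrad's convention \cite[(1.3.6)]{Co00} for commuting shifts past $\otimesL$ introduces signs $(-1)^{dn}$ etc. I would therefore isolate, once and for all, a ``sign lemma'' recording how many transpositions of degree-shifts occur along the top route $\zeta'_{g,f}\otimes\Id$ followed by $\lambda_{g,-}$, $g^!(\lambda_{f,-})$, $c_{g,f}$, versus the bottom route, and check that the total parity matches — exactly as was done to verify that \eqref{defzeta} is consistent with \cite[(2.2.6)]{Co00} and that \eqref{defchi2} is consistent with \cite[p.~11]{Co00}. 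Once the parities agree, the remaining cells are the standard Hartshorne/Conrad transitivities quoted above, and the diagram chase goes through; since the individual verifications are long but elementary, I would present the reduction and the sign computation in detail and leave the cell-by-cell check to the reader, as is done for the analogous Propositions \ref{Transzetap} and \ref{propdelta}.
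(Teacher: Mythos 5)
Your proposal is correct and follows essentially the same strategy as the paper: localize to obtain a diagram of the shape \eqref{transdiag}, unwind the definitions of the $\lambda$'s and of $\zeta'_{g,f}$ into Conrad's atomic isomorphisms, invoke the (VAR) axioms and the compatibilities of $e$, $d$, $\eta$ with $\zeta'$ and with flat base change, and observe that the $(-1)^{mn}$ built into $\chi$ exactly cancels the sign produced by commuting shifts past $\otimesL$. The paper organizes the chase a little differently — it first settles the smooth/smooth, immersion/immersion and smooth-then-immersion cases, then uses a two-out-of-three remark (resting on Proposition \ref{Transzetap} and (VAR1)) to reduce everything to the single mixed composite $i\circ\pi'$, where the explicit diagram \eqref{bigtransdiag} delivers precisely your sign observation — but this is bookkeeping, not substance.
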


\begin{proof}
The commutativity of the lower part of the diagram is due to the functoriality of the 
isomorphism $\lambda_{g}$ with respect to morphisms between two complexes 
concentrated in the same degree.

We first observe that the commutativity of \eqref{translambda} is clear in the 
following cases:

\alphab If $f$ is smooth and $g$ is a closed immersion, the diagram is
\eqref{deflambdag}, which commutes by construction.

\alphab If $f$ and $g$ are smooth, the isomorphism $(fg)^\sharp \cong g^\sharp
f^\sharp$ is defined by $\zeta'_{g,f}$, hence the commutativity of \eqref{translambda}
is the compatibility of the isomorphisms $e_f$ with composition, i.e., property (VAR3)
of the functor $f^!$ \cite[p.~139]{Co00}.

\alphab If $f$ and $g$ are regular immersions, then isomorphisms such as $\eta_{fg}$
commute with $\zeta'_{g,f}$ and $c_{g,f}$ \cite[Th.~2.5.1]{Co00}, and the commutativity
of \eqref{translambda} follows from the compatibility of the isomorphisms $d_f$ with
composition, i.e., property (VAR2) of the functor $f^!$ \cite[p.~139]{Co00}.

We will also use the following remark. Let $h : T \to Z$ be a third complete
intersection morphism, yielding the four couples of composable complete intersection
morphisms $(h,g)$, $(g,f)$, $(gh,f)$ and $(h,fg)$. Then, if the diagrams
\eqref{translambda} for the couples $(h,g)$ and $(g,f)$ are commutative, the
commutativity of \eqref{translambda} for $(gh,f)$ is equivalent to the commutativity of
\eqref{translambda} for $(h,fg)$: this is a consequence of \eqref{transzetap} and of
the compatibility of the isomorphisms $c_{g,f}$ with triple composites (i.e., property
(VAR1) of the functor $f^!$ \cite[p.~139]{Co00}).

In the general case, the complexes entering in \eqref{translambda} are concentrated in
the same degree, hence its commutativity can be checked locally. So we may assume that
there exists a diagram \eqref{transdiag}. Thanks to the three particular cases listed
above, one can then deduce the commutativity of \eqref{translambda} for $(f,g)$ from
the commutativity of \eqref{translambda} for $(\pi',i)$, by applying the previous
remark successively to the triples $(i',i'',\pi'')$, $(i''i',\pi'',\pi)$, $(i',\pi',i)$
and $(g,i,\pi)$.

To prove the commutativity of \eqref{translambda} for $(\pi',i)$, we use the
factorization $i\circ \pi' = \pi''\circ i''$ to define $\lambda_{i\pi',\sE}$. Let $d$
be the codimension of $Y$ in $P$, and $n'$ the relative dimension of $P''$ over $P$.
Then, if $\sE$ is a complex on $P$ as in \ref{Deflambda}, \eqref{translambda} for
$(\pi',i)$ is made of the exterior composites in the diagram
\eq{bigtransdiag}{ \hspace{-2mm}\xymatrix@C=-48pt{ 
& \omega_{P'/P}[n'-d] \otimesL \LL(i\pi')^*\sE\hbul 
\ar[ld]!<8ex,3.5ex>_-(.9){\zeta'_{\pi',i}\otimes\Id}^(.6){\sim} 
\ar[rd]!<-8ex,3.5ex>^(.9){\zeta'_{i'',\pi''}\otimes\Id}_(.6){\sim} & \\
\entry{\omega_{P'/Y}[n'] \otimesL \pi'{}^*(\omega_{Y/P}[-d] \otimesL \LL i^*\sE\hbul)}
\ar[rr]^-{\chi_{\pi'',i}\otimes\Id}_-{\sim} \ar[d]^{\pi'{}^\sharp(\eta_i^{-1})}_{\sim} 
\ar@/_2.5pc/[dd]_{\pi'{}^\sharp(\lambda_{i,\sE\raisebox{0.2mm}{$\cdot$}})}
& & \entry{\omega_{P'/P''}[-d] \otimesL \LL i''{}^*(\omega_{P''/P}[n'] \otimesL 
\pi''{}^*\sE\hbul)} \ar[d]_{\eta_{i''}^{-1}}^{\sim} 
\ar@/^2.5pc/[dd]^{\lambda_{i'',\pi''{}^\sharp\sE\raisebox{0.2mm}{$\cdot$}}} \\ 
\pi'{}^\sharp i^\flat\sE\hbul \ar[rr]^{\sim} \ar[d]^{\pi'{}^\sharp(d_i)}_{\sim} & & 
i''{}^\flat\pi''^\sharp\sE\hbul \ar[d]_{d_{i''}}^{\sim} \\
\pi'{}^\sharp i^!\sE\hbul \ar[d]^{e_{\pi'}=\lambda_{\pi',i^!\sE\raisebox{0.2mm}{$\cdot$}}}_{\sim} & & 
i''{}^!\pi''^\sharp\sE\hbul 
\ar[d]_{i''{}^!(\lambda_{\pi'',\sE\raisebox{0.2mm}{$\cdot$}})=i''{}^!(e_{\pi''})}^{\sim} \\
\pi'{}^!i^!\sE\hbul & (i\pi')^!\sE\hbul = (\pi''i'')^!\sE\hbul 
\ar[l]^-{c_{\pi',i}}_-{\sim} \ar[r]_-{c_{i'',\pi''}}^-{\sim} & 
i''{}^!\pi''^!\sE\hbul.
 } }
Here, the middle horizontal arrow is the standard isomorphism \cite[Lemma 2.7.3]{Co00},
and the lower rectangle commutes thanks to property (VAR4) of the functor $f^!$
\cite[Theorem 3.3.1]{Co00}. The upper triangle commutes thanks to \eqref{chizetasq}. To
check the comutativity of the middle rectangle, one observes on the one hand that
$\eta_i$ commutes with the flat base change $\pi''$ and that $\eta_{i''}$ commutes with
tensorisation by the invertible sheaf $\omega_{P''/P}$ (see \cite[last paragraph of
p.~54]{Co00}). On the other hand, $\eta_{i''}$ commutes also with the translation by
$n'$, provided that the convention \cite[(1.3.6)]{Co00} is used for the commutation of
the tensor product with translations applied to the second argument (see the discussion
on \cite[p.~53]{Co00}). This requires here multiplication by $(-1)^{dn'}$ on
$\omega_{P'/P''}\otimes i''{}^*\omega_{P''/P}$, since $\omega_{P'/P''}$ sits in
degree $d$. As this is the sign entering in the definition of $\chi_{\pi'',i}$, this
ends the proof.
\end{proof}

\subsection{}\label{Trsharp}
Assume now that $f$ is proper. As in \ref{Deflambda}, let $\sE\hbul = \sL[r] \in
\Dbcoh(\sO_X)$, $\sL$ being an invertible $\sO_X$-module and $r$ an integer. Using
\eqref{deflambda}, we can define the trace morphism $\Trs_{f,\sE\hbul}$ on $\RR
f_*f^\sharp\sE\hbul = \RR f_*(\omega_{Y/X}[m]\otimesl_{\sO_Y} \LL f^*\sE\hbul)$ as the
composite
\eq{trsharp}{ \Trs_{f,\sE\hbul} : \RR f_*f^\sharp\sE\hbul
\xrightarrow[\sim]{\RR f_*(\lambda_{f,\sE\raisebox{0.2mm}{$\cdot$}})} 
\RR f_*f^!\sE\hbul \xra{\Tr_f} \sE\hbul, }
where $\Tr_f$ denotes the classical trace morphism defined in \cite[VII, Cor.\
3.4]{Ha66} and \cite[3.4]{Co00}. When $\sE\hbul = \sO_X[0]$, we will use the shorter notation
\eq{trsharp2}{ \Trs_f : \RR f_*(\omega_{Y/X}[m]) \to \sO_X.}

We first give some basic properties of the morphism $\Trs_f$.

\begin{lem}\label{Projform}
With the previous hypotheses, let 
\eq{defmu}{ \mu_{f,\sE\hbul} : \RR f_*(\omega_{Y/X}[m]) \otimesL_{\sO_X} \sE\hbul \riso
\RR f_*(\omega_{Y/X}[m] \otimesL_{\sO_Y} \LL f^*\sE\hbul) = \RR f_*f^\sharp\sE\hbul }
be the isomorphism given by the projection formula \cite[II, Prop.~5.6]{Ha66}. Then 
the diagram
\eq{projform}{ \xymatrix{
\entry{\RR f_*(\omega_{Y/X}[m]) \otimesL_{\sO_X} \sE\hbul}
\ar[rr]^-{\mu_{f,\sE\raisebox{0.2mm}{$\cdot$}}}_-{\sim}
\ar[rd]^-{\sim}_{\Trs_f\otimes\Id} 
& & \entry{\RR f_*f^\sharp\sE\hbul}
\ar[dl]_(.6){\sim}^{\Trs_{f,\sE\raisebox{0.2mm}{$\cdot$}}} \\
& \sE\hbul &
} }
commutes. 
\end{lem}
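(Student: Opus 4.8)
The plan is to reduce the commutativity of \eqref{projform} to the analogous well-known compatibility between the classical trace morphism $\Tr_f$ of \cite[VII, Cor.~3.4]{Ha66} and the projection formula, by unwinding the definition \eqref{trsharp} of $\Trs_{f,\sE\hbul}$. First I would recall that $\Trs_{f,\sE\hbul}$ is by definition the composite $\Tr_f \circ \RR f_*(\lambda_{f,\sE\hbul})$, and similarly $\Trs_f = \Tr_f \circ \RR f_*(\lambda_f)$. So I would rewrite both legs of the triangle \eqref{projform} in terms of $\Tr_f$, $\lambda_{f,\sE\hbul}$, $\lambda_f$, and the projection formula isomorphism $\mu_{f,\sE\hbul}$, and observe that it suffices to split \eqref{projform} into two subdiagrams: (a) a square relating $\mu_{f,\sE\hbul}$ to $\RR f_*(\lambda_f)\otimes\Id$ and $\RR f_*(\lambda_{f,\sE\hbul})$, expressing the compatibility of the projection formula with the identification $\lambda$; and (b) the triangle built from $\Tr_f$ and the projection formula for $f^!$, which is the classical statement.

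For subdiagram (b), I would invoke the standard compatibility of $\Tr_f$ with the projection formula for the twisted inverse image functor $f^!$: for $\sG\hbul \in \Dbcoh(\sO_X)$ of finite Tor-dimension (here $\sG\hbul = \sE\hbul = \sL[r]$ is perfect since $\sL$ is invertible), one has a canonical isomorphism $\RR f_*(f^!\sO_X) \otimesL \sE\hbul \riso \RR f_*(f^!\sE\hbul)$, and the composite with $\Tr_{f,\sE\hbul} : \RR f_*(f^!\sE\hbul)\to\sE\hbul$ equals $\Tr_f \otimes \Id$. This is documented in \cite[VII, 3.4, variance property (TRA3) or its analogue]{Ha66}; I would cite the corresponding statement in \cite{Co00}, §3.4. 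The only point to check is that the projection-formula isomorphism for $f^\sharp$ used in \eqref{defmu} is carried, under $\RR f_*(\lambda_f)$ and $\RR f_*(\lambda_{f,\sE\hbul})$, to the projection-formula isomorphism for $f^!$. This is subdiagram (a).

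Subdiagram (a) is the heart of the matter, so I expect it to be the main obstacle. It asks that the composite isomorphism $\lambda_{f,\sE\hbul} : \omega_{Y/X}[m]\otimesL \LL f^*\sE\hbul \riso f^!\sE\hbul$ be compatible, via $\LL f^*$ and the projection formula, with $\lambda_f : \omega_{Y/X}[m]\riso f^!\sO_X$. Concretely, writing $f^!\sE\hbul = f^!\sO_X \otimesL \LL f^*\sE\hbul$ (the standard isomorphism for $\sE\hbul$ perfect, cf.~\cite[III, Prop.~8.8]{Ha66}), one must verify $\lambda_{f,\sE\hbul} = \lambda_f \otimes \Id$ under this identification. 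I would prove this by the same strategy used throughout §A–B: first handle the case $f$ smooth, where $\lambda_{f,\sE\hbul} = e_f$ and the claim is the compatibility of $e_f$ with twisting, i.e.~the standard property of $e_f$ in \cite[(3.3.21)]{Co00}; then the case $f$ a regular immersion, where $\lambda_{f,\sE\hbul} = d_f\circ\eta_f^{-1}$ and the claim follows from the compatibility of $\eta_f$ with tensoring by an invertible module and of $d_f$ with twisting (see the discussion on \cite[p.~53–54]{Co00} and \cite[(3.3.19)]{Co00}); and finally the general case, by choosing locally a factorization $f = \pi\circ i$ and combining the two cases via the defining diagram \eqref{deflambdag} together with the transitivity isomorphism $c_{i,\pi}$, which is itself compatible with twisting. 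Since all complexes in sight are concentrated in a single degree with invertible cohomology, the verification is local on $Y$, so the glueing causes no difficulty. Once subdiagrams (a) and (b) commute, pasting them gives \eqref{projform}.
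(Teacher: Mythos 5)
Your decomposition is essentially the one the paper uses: your subdiagram (a) is the paper's diagram \eqref{takeout1} (compatibility of $\lambda_f\otimes\Id$ with $\lambda_{f,\sE\hbul}$ via the identifications $f^\sharp\sO_X\otimesL\LL f^*\sE\hbul\riso f^\sharp\sE\hbul$ and $f^!\sO_X\otimesL\LL f^*\sE\hbul\riso f^!\sE\hbul$), and your subdiagram (b) is the paper's diagram \eqref{projform2}. Your treatment of (a) is correct; the paper handles it a little more economically by observing that all complexes sit in a single degree, so the check is local, which reduces to $\sL=\sO_X$ and hence to the compatibility of $e_f$, $\eta_f$ and $d_f$ with translations --- but your case-by-case verification (smooth, regular immersion, general via \eqref{deflambdag}) rests on the same facts and works.

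The gap is in subdiagram (b). The compatibility of $\Tr_f$ with the projection formula is \emph{not} available in the references you invoke in the generality needed here: (TRA3) concerns only the projective-space trace, and Conrad's statement of the duality theorem with its projection-formula compatibility (Th.~4.4.1 of \cite{Co00}) is proved only for flat (CM) morphisms, whereas $f$ is not assumed flat --- the paper opens its proof by noting precisely that this is why a direct argument is required (and its introduction recalls Conrad's warning that the foundational work for the trace of general projectively embeddable morphisms, including such compatibilities, has not been carried out). You must therefore prove (b) rather than cite it. The paper does so by exploiting that $\sE\hbul=\sL[r]$: after reducing to $r=0$, it unwinds $\Tr_f$ as the composite $\RR f_*f^!(\cdot)\riso\RR\sHom_{\sO_X}(D_X(\cdot),f_*f^\Delta K)\to\RR\sHom_{\sO_X}(D_X(\cdot),K)\liso\Id$ (adjunction, the trace $\Tr_{f,K}$ for residual complexes, and local biduality) and checks that each of these three maps is compatible with tensoring the argument by an invertible sheaf. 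Some such argument, or an explicit reference covering non-flat $f$, is needed to complete your step (b).
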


\begin{proof}
When $f$ is flat, it suffices to invoke \cite[Th.~4.4.1]{Co00}. Since we make no such 
assumption on $f$, we give a direct argument, which is made a lot simpler by the very 
special nature of the complex $\sE\hbul$. 

Using the fact that $\sE\hbul = \sL[r]$, with $\sL$ invertible, one easily sees that 
there is a canonical isomorphism which commutes with translations acting on $\sE\hbul$
\eq{shriekstar}{ f^!\sO_X \otimesL_{\sO_Y} \LL f^*\sE\hbul \riso f^!\sE\hbul. } 
On the other hand, we have by definition a canonical isomorphism
\eq{sharpstar}{ f^\sharp\sO_X \otimesL_{\sO_Y} \LL f^*\sE\hbul \riso f^\sharp\sE\hbul, }
which also commutes with translations. A first observation is that the diagram 
\eq{takeout1}{ \xymatrix@C=50pt{
\entry{f^\sharp\sO_X \otimesL_{\sO_Y} \LL f^*\sE\hbul} 
\ar[d]!<0ex,3ex>_-{\lambda_f \otimes \Id}^-{\sim}
\ar[r]^-{\eqref{sharpstar}}_-{\sim}
& \entry{f^\sharp\sE\hbul} 
\ar[d]!<0ex,2ex>^(.52){\lambda_{f,\sE\raisebox{0.2mm}{$\cdot$}}}_(.52){\sim} \\
\entry{f^!\sO_X \otimesL_{\sO_Y} \LL f^*\sE\hbul} 
\ar[r]^-{\eqref{shriekstar}}_-{\sim} 
& \entry{f^!\sE\hbul} 
} }
commutes. Indeed, all complexes are concentrated in the same degree $m-r$, hence the 
verification can be done locally. This allows to assume that $\sL = \sO_X$, which 
reduces the verification to the commutation of the vertical arrows with translations 
acting on $\sE\hbul$. This now follows from the fact that the isomorphisms $e_f$, $\eta_f$ 
and $d_f$ used in the construction of $\lambda_f$ commute with translations.

Applying $\RR f_*$ to this diagram, and using the functoriality of the projection 
formula isomorphism, the proof is reduced to proving the commutativity of the diagram 
\eq{projform2}{ \xymatrix{ 
\entry{\RR f_*f^!\sO_X \otimesL_{\sO_X} \sE\hbul} \ar[rd]_(.54){\Tr_f\otimes\Id\ \ }^-{\sim}
\ar[r]^-{\nu_f}_-{\sim} &
\entry{\RR f_*(f^!\sO_X \otimesL_{\sO_Y} \LL f^*\sE\hbul)} 
\ar[r]^-{\eqref{shriekstar}}_-{\sim} & 
\entry{\RR f_*f^!\sE\hbul} \ar[ld]^{\Tr_f}_(.6){\sim} \\
& \sE\hbul & \hspace{1.5cm},
 } }
where $\nu_f$ is the projection formula isomorphism. As all morphisms of the diagram
commute with translations, we may assume that $r = 0$. We recall that $\Tr_f$ is
defined as the morphism of functors defined by the composite
\eqn{ \RR f_*f^!(\cdot) \riso \RR\sHom_{\sO_X}(D_X(\cdot),f_*f^\Delta K) 
\xra{\Tr_{f,K}} \RR\sHom_{\sO_X}(D_X(\cdot), K) \liso \Id,}
where the first isomorphism follows from the definition of $f^!$ and the adjunction
between $\LL f^*$ and $\RR f_*$, the second morphism is defined by the trace morphism
for residual complexes $\Tr_{f,K}$ and the last isomorphism is the local biduality
isomorphism (see \cite[p.~146]{Co00}). Each of these morphisms has a natural
compatibility with respect to the tensor product of the argument by an invertible
sheaf. Putting together these compatibilities yields the commutativity of
\eqref{projform2}.
\end{proof}

\begin{prop}\label{TransTrs}
Let $g : Z \to Y$ be a second proper complete intersection morphism, with virtual
relative dimension $m'$. Then the diagram
\eq{transTrs}{\xymatrix@C=70pt{
\entry{\RR f_*\RR g_*(\omega_{Z/X}[m'+m])} \ar[dd]_{\Trs_{fg}}
\ar[r]_-{\sim}^-{\RR f_* \RR g_*(\zeta'_{g,f})} & 
\entry{\RR f_*\RR g_*(\omega_{Z/Y}[m']\otimesL_{\sO_Z} \LL g^*(\omega_{Y/X}[m]))}
\ar[d]_{\sim} \\
 & \RR f_*(\RR g_*(\omega_{Z/Y}[m'])\otimesL_{\sO_Y} \omega_{Y/X}[m]) 
\ar[d]^{\RR f_*(\Trs_g\otimes\Id)} \\
 \sO_X  & 
\RR f_*(\omega_{Y/X}[m]) \ar[l]_{\Trs_f}
}}
\lp where the second isomorphism is given by the projection formula\hspace{1pt}\rp\ is
commutative.
\end{prop}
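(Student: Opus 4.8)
The plan is to reduce the commutativity of \eqref{transTrs} to the transitivity of the classical trace morphism $\Tr_f$ together with the transitivity property of the isomorphisms $\lambda$ established in Proposition \ref{Translambda}. First I would unwind the definition \eqref{trsharp} of $\Trs_f$, $\Trs_g$ and $\Trs_{fg}$ in terms of $\Tr_f$, $\Tr_g$, $\Tr_{fg}$ and the isomorphisms $\lambda_{f,\sE\hbul}$, $\lambda_{g,\sE\hbul}$, $\lambda_{fg,\sE\hbul}$. Thus the left vertical composite $\Trs_{fg}$ is $\Tr_{fg} \circ \RR(fg)_*(\lambda_{fg})$, while the right-hand path of the diagram involves $\Trs_f = \Tr_f\circ\RR f_*(\lambda_f)$ and, applied fibrewise over $Y$, $\Trs_g = \Tr_g\circ\RR g_*(\lambda_g)$ but now read with $f^\sharp\sO_X = \omega_{Y/X}[m]$ as coefficient complex $\sE\hbul$; this is precisely the setting of $\Trs_{g,f^\sharp\sO_X}$, which by \eqref{trsharp} equals $\Tr_{g,f^\sharp\sO_X}\circ\RR g_*(\lambda_{g,f^\sharp\sO_X})$.

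Next I would split the big diagram into two parts glued along the intermediate object $\RR f_*\RR g_*(g^!f^\sharp\sO_X)$ (equivalently $\RR f_*\RR g_*(g^\sharp f^!\sO_X)$ after applying $g^!(\lambda_f)$ or $g^\sharp(\lambda_f)$). The first, ``$\lambda$-part'' is obtained by applying $\RR f_*\RR g_*$ to diagram \eqref{translambda} for the couple $(f,g)$ with $\sE\hbul = \sO_X[0]$: this identifies $\lambda_{fg}$, composed with the transitivity isomorphism $c_{g,f}: (fg)^! \riso g^!f^!$, with the composite $g^!(\lambda_f)\circ\lambda_{g,f^\sharp\sO_X}\circ(\zeta'_{g,f}\otimes\Id)$ (using also that the lower commutative triangle of \eqref{translambda} lets us replace $\lambda_{g,f^\sharp\sO_X}$ by $g^\sharp(\lambda_f)\circ\lambda_{g,f^!\sO_X}$, hence rewrite the intermediate term in whichever form is convenient). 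The second, ``trace-part'' is the compatibility of the classical trace with composition of proper morphisms, i.e. the identity $\Tr_{fg} = \Tr_f\circ\RR f_*(\Tr_g)$ modulo the canonical isomorphism $\RR(fg)_*\cong\RR f_*\RR g_*$ and the transitivity isomorphism $c_{g,f}$; this is (TRA1) from \cite[3.4.3]{Co00} (equivalently \cite[VII, 3.4]{Ha66}). One must also insert the compatibility of $\Tr_g$ (as trace for $\RR g_* g^!$) with the projection-formula isomorphism when the coefficient complex is the invertible-sheaf-shift $f^\sharp\sO_X = \omega_{Y/X}[m]$: this is exactly Lemma \ref{Projform}, applied to the morphism $g$ and the complex $\sE\hbul = f^\sharp\sO_X$, which tells us that $\Trs_{g,f^\sharp\sO_X}$ agrees, via $\mu_{g,f^\sharp\sO_X}$, with $\Trs_g\otimes\Id_{\omega_{Y/X}[m]}$, making the ``projection formula'' vertical arrow on the right of \eqref{transTrs} compatible with the trace.

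Concretely I would write the proof as: start from the top-left object, apply $\RR f_*\RR g_*$ of \eqref{translambda} to pass the top horizontal arrow and the two $\lambda$-arrows through, landing on $\RR f_*\RR g_*((fg)^!\sO_X)\cong\RR(fg)_*((fg)^!\sO_X)$; then invoke (TRA1) to rewrite $\Tr_{fg}$ as $\Tr_f\circ\RR f_*(\Tr_g)$; then recognise, via Lemma \ref{Projform} for $g$ with coefficients $f^\sharp\sO_X$, that the composite $\RR f_*(\Tr_g\circ\RR g_*(\lambda_{g,f^\sharp\sO_X}))$ precomposed with the projection-formula isomorphism is $\RR f_*(\Trs_g\otimes\Id)$ precomposed with $\RR f_*$ of the projection formula; finally apply $\Tr_f\circ\RR f_*(\lambda_f) = \Trs_f$. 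Chasing these identifications around shows the two composites of \eqref{transTrs} coincide.

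The main obstacle will be bookkeeping: keeping straight the several canonical isomorphisms ($c_{g,f}$, the projection formula $\mu$, the base $\RR(fg)_*\cong\RR f_*\RR g_*$, and the two ways of factoring $\lambda$ through either $g^!f^\sharp$ or $g^\sharp f^!$) and checking that no spurious sign is introduced — the delicate point being the $(-1)^{mn}$-type sign hidden in $\zeta'_{g,f}$ and in the commutativity isomorphism for the derived tensor product, which however have already been calibrated in \ref{transomega} and Proposition \ref{Translambda} precisely so that \eqref{translambda} commutes on the nose. So once \eqref{translambda}, (TRA1), and Lemma \ref{Projform} are in hand, the argument is a diagram chase with no genuinely new input; the work is to arrange the chase so that each face used is one of these three already-proven commutativities. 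I expect the cleanest writeup is to display the enlarged diagram obtained by juxtaposing $\RR f_*\RR g_*(\text{\eqref{translambda}})$ on the left and the trace-transitivity square on the right, with Lemma \ref{Projform} accounting for the projection-formula arrow in the middle, and to remark that every inner region commutes by one of these results.
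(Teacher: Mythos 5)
Your proposal is correct and follows essentially the same route as the paper: the paper likewise invokes Lemma \ref{Projform} to identify the right-hand composite with $\Trs_{g,\omega_{Y/X}[m]}$, and then reduces the commutativity of \eqref{transTrs} to that of \eqref{translambda} via the adjunctions between $\RR f_*$ and $f^!$ and between $\RR g_*$ and $g^!$ — which is exactly your combination of $\RR f_*\RR g_*(\text{\eqref{translambda}})$ with the transitivity (TRA1) of the classical trace, the trace being the counit of those adjunctions. Your writeup merely makes explicit the diagram chase that the paper compresses into one sentence.
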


\begin{proof}
It follows from Lemma \ref{Projform} that the right vertical arrow is equal to the 
morphism 
\eqn{ \RR f_*\RR g_*(\omega_{Z/Y}[m']\otimes g^*(\omega_{Y/X}[m])) 
\xra{\Trs_{g,\omega_{Y/X}[m]}} \RR f_*(\omega_{Y/X}[m]). }
Then, using adjunction between $\RR f_*$ and $f^!$, and adjunction between $\RR g_*$ and 
$g^!$, one sees that the commutativity of \eqref{transTrs} is equivalent to the 
commutativity of \eqref{translambda}. 
\end{proof}

\begin{prop}\label{basechangeTrs}
With the hypotheses of Proposition \ref{ci}, assume in addition that $X$ and $X'$ are
noetherian schemes with dualizing complexes, and that one of the following conditions 
is satisfied:

\alphab $f$ is projective;

\alphab $f$ is proper and $u$ is residually stable \cite[p.~132]{Co00};

\alphab $f$ is proper and flat. 

Then the triangle
\eq{bctrace}{\xymatrix{
\LL u^*\RR f_*(\omega_{Y/X}[m]) \ar[dd]_{\eqref{bcomega}}^-{\sim} \ar[dr]^-{\ \LL u^*(\Trs_f)} &
\\
 & \sO_{X'} \\
\RR f'_*(\omega_{Y'/X'}[m]) \ar[ur]_-{\Trs_{f'}} 
}}
is commutative.
\end{prop}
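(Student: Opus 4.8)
The plan is to reduce the statement to a base-change compatibility for the classical trace morphism $\Tr_f$ of Grothendieck duality, which is available in Conrad's book under each of the hypotheses a), b), c). First I would unwind the definition: by construction $\Trs_f = \Tr_f \circ \RR f_*(\lambda_f)$, where $\lambda_f : f^\sharp\sO_X = \omega_{Y/X}[m] \riso f^!\sO_X$ is the isomorphism of \ref{deflambda}, and similarly for $\Trs_{f'}$. So the triangle \eqref{bctrace} will follow once I establish two separate compatibilities with the base change $u$: first, that the isomorphism $\LL v^*(\lambda_f) : \LL v^*(f^\sharp\sO_X) \riso \LL v^*(f^!\sO_X)$ is identified, under the canonical base change isomorphisms $\LL v^* f^\sharp\sO_X \simeq f'^\sharp\sO_{X'}$ (which is \eqref{pbomega}, twisted by the shift) and $\LL v^* f^!\sO_X \simeq f'^!\sO_{X'}$, with $\lambda_{f'}$; and second, the standard base change compatibility of $\Tr_f$, namely that $\LL u^*(\Tr_f)$ corresponds to $\Tr_{f'}$ under the base change isomorphism $\LL u^* \RR f_* f^!\sO_X \simeq \RR f'_* \LL v^* f^!\sO_X \simeq \RR f'_* f'^!\sO_{X'}$. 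Granting both, one pastes the two squares together with the projection-formula-free base change isomorphism \eqref{basechange} for $\RR f_*$ applied to the complex $\omega_{Y/X}[m]$, which is legitimate by Proposition \ref{ci} (ii) since $f$ is a complete intersection morphism and $X'$, $Y$ are Tor-independent over $X$.

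For the $\Tr_f$ base change step, I would cite the relevant statement in Conrad: under hypothesis b), residual stability, this is \cite[Theorem 3.4.1, (TRA4)]{Co00} or the base change property of residual complexes; under hypothesis c), flatness, it follows from the flat base change theory of \cite[3.6]{Co00}; under hypothesis a), projectivity, one factors $f$ through a projective space and a finite morphism and checks the compatibility on each piece, using that the trace for projective space and the trace for finite flat morphisms are both visibly base-change compatible, the key input again being Tor-independence so that $f'$ inherits the factorization (Proposition \ref{ci} (i)). For the $\lambda_f$ step, since all complexes involved are concentrated in a single degree with invertible cohomology, the verification is local on $Y'$, so I may assume a global factorization $f = \pi \circ i$ with $\pi : P \to X$ smooth and $i$ a regular immersion. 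Then $\lambda_f$ is built from $\zeta'_{i,\pi}$, $e_\pi$, $\eta_i$, $d_i$ and $c_{i,\pi}$ via diagram \eqref{deflambdag}, and each of these is individually compatible with the base change: $\zeta'_{i,\pi}$ via Lemma \ref{basechangeomega} and the Tor-independence of $\sI/\sI^2$ with $u^*$; $e_\pi$ via the known base change for $\pi^\sharp$ when $\pi$ is smooth; $\eta_i$ and $d_i$ via the base change properties of the fundamental local isomorphism and $i^\flat \simeq i^!$ for the Tor-independent closed immersion $i'$; and $c_{i,\pi}$ via the compatibility of the transitivity isomorphism with base change. Assembling these gives $\LL v^*(\lambda_f) = \lambda_{f'}$ as desired, and independence of the factorization follows as in \ref{Defomega}.

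The main obstacle I expect is not any single diagram but the bookkeeping of base change isomorphisms for $f^!$ when $f$ is \emph{not} flat and $u$ is \emph{not} flat either — precisely the situation (e.g. reduction mod $p^n$ of a non-flat $f$ as in Proposition \ref{Mult}) that is not directly covered by the base change theorems of \cite{Co00}, which typically assume flatness of $f$ or residual stability of $u$. The saving grace, as emphasized in the introduction to the Appendix, is Tor-independence of $f$ and $u$: under this hypothesis one can still define a base change morphism $\LL u^* f^!\sO_X \to f'^!\sO_{X'}$ and, because $f$ is a complete intersection morphism, check that it is an isomorphism by reducing via a local factorization to the separate cases of a smooth morphism and a regular immersion, for each of which the relevant base change is elementary (it comes down to the behaviour of $\Omega^n_{P/X}$ and of $\sI/\sI^2$ under pullback, together with the Tor-independence of the Koszul resolution, exactly as in the proof of Proposition \ref{ci}). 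So the strategy is: establish the Tor-independent base change isomorphism for $f^!$ on complexes of the form $\sL[r]$, verify it is compatible with $\lambda_f$ and with $\Tr_f$ term by term through the local factorization, and then splice. Since under each of hypotheses a), b), c) one of the classical base change results of \cite{Co00} is anyway available as a cross-check, I would present the argument uniformly via Tor-independence and remark that it specializes to the classical statement in each listed case.
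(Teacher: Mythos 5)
Your overall strategy coincides with the paper's: in case a) reduce, via a factorization $f=\pi\circ i$ through $\P^n_X$ and the transitivity of the trace, to the two elementary cases of the smooth projection and the regular immersion, handle the projection by the explicit \v{C}ech characterization of $\Trp_\pi$, handle the immersion via Tor-independence and the Koszul resolution, and dispose of cases b) and c) by citing \cite[(TRA4)]{Co00} and \cite[Th.~3.6.5]{Co00} respectively. Whether one separates the $\lambda_f$-compatibility from the $\Tr_f$-compatibility first (as you do) or treats $\Trs_f$ as a whole in each factor (as the paper does) is only a difference of bookkeeping.

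The one place where your argument is genuinely thinner than it needs to be is the regular immersion. You describe $i$ as contributing a ``trace for finite flat morphisms'' that is ``visibly base-change compatible,'' but $i:Y\inj P$ is finite and \emph{not} flat over $P$, and the base change $w:P'\to P$ is not flat either; so the compatibility of the evaluation map $i_*\RR\sHom_{\sO_P}(\sO_Y,\sO_P)\to\sO_P$ with $\LL w^*$ is precisely the non-visible point. Knowing that the Koszul resolution identifies $\LL w^*\,i^\flat\sO_P$ with $i'^\flat\sO_{P'}$ handles the \emph{objects}, but not the \emph{morphism} $\Trf_i$. The paper closes this gap by factoring $\Trf_i$ through local cohomology, $\RR\sHom_{\sO_P}(\sO_Y,\sO_P)\to\RR\uGamma_Y(\sO_P)\to\sO_P$, proving that $\LL w^*\RR\uGamma_Y(\sO_P)\to\RR\uGamma_{Y'}(\sO_{P'})$ is an isomorphism by exhibiting $\RR\uGamma_Y(\sO_P)$ as the flat complex $\sO_P\to j_*\vC(\fV,\sO_V)$, and then checking the remaining square \eqref{bctri2} by a local generator computation: the map $\omega_{Y/P}\to\sH^d_Y(\sO_P)$ sends $(\bt_1^\vee\wedge\ldots\wedge\bt_d^\vee)\otimes a$ to $\varepsilon(d)\,a/t_1\cdots t_d$, and this formula is manifestly stable under the Tor-independent base change of the regular sequence. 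You would need to supply this verification (or an equivalent explicit computation) to make the immersion step complete; the rest of your outline, including the treatment of $\lambda_f$ factor by factor and the uniform use of Tor-independence, is sound.
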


\begin{proof}[Proof of Case a)]
We can choose a factorization $f = \pi\circ i$, where $\pi : P \to X$ is the structural
morphism of some projective space $P = \P^n_X$ over $X$, and $i$ is a regular
immersion of codimension $d = n-m$. Let $f' = \pi' \circ i'$ be the factorisation of
$f'$ defined by base change, with $\pi' : P' = \P^n_{X'} \to X'$, and let $w : P'
\to P$ be the projection.

The isomorphisms $\zeta'_{i,\pi}$ and $\zeta'_{i',\pi'}$ are clearly compatible with
the base change isomorphisms \eqref{pbomega} relative to $f$ and $u$, and the same
holds for the projection formula isomorphisms $\mu_{i,\omega_{P/X}[n]}$ and
$\mu_{i',\omega_{P'/X'}[n]}$, and the base change isomorphisms \eqref{pbomega} relative
to $i$ and $w$. Then, using Proposition \ref{TransTrs}, one sees that it suffices to
prove the proposition for $f = i$ and for $f = \pi$.

When $f = \pi : \P^n_X \to X$, let $X_0,\ldots,X_n$ be the canonical coordinates on
$\P^n_X$, and $x_i = X_i/X_0$, $1 \leq i \leq n$. If $\fU$ is the relatively affine
covering of $\P^n_X$ defined by $X_0,\ldots,X_n$, the corresponding alternating
\v{C}ech resolution provides a canonical isomorphism
\eq{trp}{f_*(\vC\hbul(\fU, \omega_{P/X})[n]) \riso \RR f_*(\omega_{P/X}[n]). }
Recall that $e_{\pi} : \pi^{\sharp} \cong \pi^!$ identifies the trace morphism for
projective spaces $\Trp_\pi$ with the general trace morphism $\Tr_\pi$ \cite[Lemma
3.4.3, (TRA3)]{Co00}. Then the commutativity of \eqref{bctrace} for $\pi$ follows from
the fact that, thanks to \eqref{ConradTrp}, $\Trp_{\pi}$ can be characterized as the
only morphism which, via \eqref{trp}, induces on $\sH^0$ the map sending the cohomology
class $dx_1\wedge \ldots \wedge dx_n / x_1\cdots x_n$ to $(-1)^{n(n-1)/2}$.

When $f = i : Y \hookrightarrow P$, recall that $d_i : i^\flat \cong i^!$ identifies
the trace morphism for finite morphisms $\Trf_i$ with the general trace morphism
$\Tr_i$ \cite[Lemma 3.4.3, (TRA2)]{Co00}, and that $\Trf_i : \RR\sHom_{\sO_P}(\sO_Y,
\sO_P) \to \sO_P$ is the canonical morphism induced by $\sO_P \surj
\sO_Y$. Using local cohomology with supports in $Y$, it can be factorized as
\eq{trviasupp}{\Trf_i : \RR\sHom_{\sO_P}(\sO_Y, \sO_P) \to 
\RR\uGamma_Y(\sO_P) \to \sO_P.}
On the other hand, there exists a canonical morphism
\eq{bcloccoh}{\LL w^*\RR\uGamma_Y(\sO_P) \lra \RR\uGamma_{Y'}(\sO_{P'}),}
which is an isomorphism: to check this, it suffices to choose a finite affine covering
$\fV$ of $V=P\setminus Y$, and to identify $\RR\uGamma_Y(\sO_P)$ with its flat
resolution provided by the total complex
\eqn{\sO_P \to j_*\vC(\fV,\sO_V),}
where $j$ denotes the inclusion of $V$ in $P$ and $\sO_P$ sits in
degree $0$. Moreover, this shows that the diagram
\eqn{\xymatrix{
\LL w^*\RR\uGamma_Y(\sO_P) \ar[d]_{\sim} \ar[r] & 
\LL w^*(\sO_P) \ar[d]^{\sim} \\
\RR\uGamma_{Y'}(\sO_{P'}) \ar[r] & \sO_{P'}
}}
commutes. Therefore, it suffices to prove the commutativity of the diagram 
\eq{bctri2}{\xymatrix@C=30pt{
\LL w^*i_*(\omega_{Y/P}[-d]) 
\ar[d]_{\sim} \ar[r] & \LL w^*\RR\uGamma_Y(\sO_P) \ar[d]^{\sim} \\
\omega_{Y'/P'}[-d] 
\ar[r]+<-29pt,0pt> & \hspace{9pt}\RR\uGamma_{Y'}(\sO_{P'})\hspace{6pt}.
}}
Since $Y' \inj P'$ is a regular immersion of codimension $d$, all complexes in this
diagram are acyclic except in degree $d$, so that, up to translation by $-d$, the
diagram is actually a diagram of morphisms of $\sO_{P'}$-modules. It follows that its
commutativity can be checked locally on $P'$. Thus we may assume that  
$P$ is affine, and that the ideal $\sI$ of $Y$ in $P$ is
generated by a regular sequence $t_1,\ldots,t_d$. Then the ideal $\sI'$ of $Y'$ in $P'$
is generated by the images $t'_1,\ldots,t'_d$ of $t_1,\ldots,t_d$ in $\sO_{P'}$, which
form a regular sequence. Let $\fV = (V_1,\ldots,V_d)$ be the open covering of
$P\setminus Y$ defined by the sequence $(t_1,\ldots,t_d)$. For any section $a \in 
\Gamma(P, \sO_P)$, let us still denote by $a/t_1\cdots t_d$ the image of 
$a/t_1\cdots t_d \in \Gamma(V_1\cap\ldots\cap V_d, \sO_P)$ under the 
canonical homomorphisms
\eqn{\Gamma(V_1\cap\ldots\cap V_d, \sO_P) \to H^{d-1}(P\setminus 
Y,\sO_P) \to H^d_Y(P, \sO_P) = \Gamma(P, \sH^d_Y(\sO_P)). }
Then the canonical morphism
\eqn{\omega_{Y/P} \riso \sExt^d_{\sO_P}(\sO_Y,\sO_P) \to 
\sH^d_Y(\sO_P)}
maps $(\bt_1^\vee\wedge\ldots\wedge\bt_d^\vee)\otimes a$ to $\varepsilon(d)a/t_1\cdots
t_d$, where $\varepsilon(d) \in \{\pm 1\}$ only depends upon $d$ (see
\cite[p.~252-254]{Co00}). The commutativity of \eqref{bctri2} follows.
\end{proof}

\begin{proof}[Proof of Case b)]
When $u$ is residually stable, the diagram analogous to \eqref{bctrace} commutes,
thanks to \cite[3.4.3, (TRA4)]{Co00}. Moreover, the isomorphisms $e_\pi$ and $d_i$
entering in the local definition of $\lambda_f$ in \ref{deflambda} c) also commute with
base change by $u$, thanks to \cite[p.~139, (VAR6)]{Co00}. Then it suffice to observe
that $\eta_i$ commutes with flat base change, which is clear.
\end{proof}

\begin{proof}[Proof of Case c)]
When $f$ is flat, $f$ is a CM map, and the results of \cite[3.5 - 3.6]{Co00} can be
applied. Then the commutativity of \eqref{bctrace} follows from \cite[Theorem
3.6.5]{Co00}, provided one checks that $\lambda_f$ identifies the base change
isomorphism \eqref{pbomega} for $\omega_{Y/X}$ with the more subtle base change
isomorphism $\beta_{f,u}$ for $\omega_f$ defined in \cite[Theorem 3.6.1]{Co00}. As we
will not use Case c) in this article, we leave the details to the reader.
\end{proof}

\subsection{}\label{Deftau}
Let $X$ be a noetherian scheme with a dualizing complex, and $f : Y \to X$ a proper
complete intersection morphism of virtual relative dimension $0$. One can define in
$\Dbcoh(X)$ a ``trace morphism''
\eq{tauf}{\tau_f : \RR f_*(\sO_Y) \lra \sO_X}
as follows. Thanks to the relative duality theorem (see \cite[VII, 3.4]{Ha66} or
\cite[Th.\ 3.4.4]{Co00}), defining $\tau_f$ is equivalent to defining a morphism $\sO_Y
\to f^!\sO_X$. Using the isomorphism $\lambda_f$, this is also equivalent to
defining a morphism
\eq{phif}{\varphi_f:\sO_Y \lra \omega_{Y/X},}
\ie a section of the invertible sheaf $\omega_{Y/X}$. We define $\varphi_f$ as being 
the morphism which maps $1$ to the canonical section $\delta_f$ of $\omega_{Y/X}$, 
defined in \ref{defdelta}. 

From this construction, it follows that the morphism $\tau_f$ can be described
equivalently either as the composition
\eq{phitau1}{\tau_f : \RR f_*(\sO_Y) \xrightarrow{\RR f_*(\lambda_f\circ\varphi_f)} \RR 
f_*(f^!\sO_X) \xrightarrow{\Tr_{f}} \sO_X,}
or as the composition
\eq{phitau2}{\tau_f : \RR f_*(\sO_Y) \xrightarrow{\RR f_*(\varphi_f)} \RR 
f_*(\omega_{Y/X}) \xrightarrow{\Trs_{f}} \sO_X,}
where $\Trs_f$ is the trace map defined in \eqref{trsharp}.

Before proving Theorem \ref{Thtau}, we relate $\tau_f$ to the residue symbol defined 
in \cite[(A.1.4)]{Co00} (which differs by a sign from Hartshorne's definition in 
\cite{Ha66}).

\begin{prop}\label{Taures}
With the hypotheses of \ref{Deftau}, assume in addition that $f$ is finite and flat,
and that $f = \pi \circ i$, where $\pi$ is smooth of relative dimension $d$, and $i$ is
a closed immersion, globally defined by a regular sequence $(t_1,\ldots,t_d)$ of
sections of $\sO_P$. Then, for any section $a$ of $\sO_P$, with reduction $\bar{a}$ on
$Y$, we have
\eq{taures}{ \tau_f(\bar{a}) = \Res_{P/X} \left[\begin{array}{c} 
a\,dt_1\wedge \ldots \wedge dt_d \\ t_1,\ldots,t_d 
\end{array}\right]. }
\end{prop}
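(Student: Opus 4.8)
The statement to prove, Proposition \ref{Taures}, asserts that under the additional hypotheses that $f$ is finite flat and admits a global factorization $f = \pi \circ i$ with $\pi : P \to X$ smooth of relative dimension $d$ and $i : Y \hookrightarrow P$ defined by a regular sequence $(t_1,\dots,t_d)$, the trace morphism $\tau_f$, evaluated on a section $\bar a$ of $\sO_Y$ lifted from a section $a$ of $\sO_P$, is given by the residue symbol $\Res_{P/X}\left[\begin{smallmatrix} a\,dt_1\wedge\cdots\wedge dt_d \\ t_1,\dots,t_d\end{smallmatrix}\right]$. The natural approach is to unwind the definition of $\tau_f$ in \eqref{phitau1}--\eqref{phitau2} and match each piece with Conrad's construction of the residue symbol in \cite[Appendix, \S A.1]{Co00}. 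Concretely, $\tau_f$ is the composition $\Tr_f \circ \RR f_*(\lambda_f \circ \varphi_f)$, where $\varphi_f$ sends $1$ to $\delta_f$, and $\lambda_f$ is built locally as in \ref{deflambda} c) by the diagram \eqref{deflambdag}: first $\zeta'_{i,\pi}$, then the fundamental local isomorphism $\eta_i$ (in its $d_i = d_i$-incarnation, identifying $i^\flat$ with $i^!$), then the transitivity isomorphism $c_{i,\pi}$ composed with $e_\pi : \pi^\sharp \riso \pi^!$.

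First I would trace through the identification of $\delta_f$ under $\eta_i^{-1} \circ \zeta'_{i,\pi}$: by Proposition \ref{DeltaFLI} (applied in degree $0$, since everything here is concentrated in a single degree because $f$ is finite), $\delta_f$ is mapped to the Ext symbol $\left[\begin{smallmatrix} dt_1\wedge\cdots\wedge dt_d \\ t_1,\dots,t_d\end{smallmatrix}\right] \in \sExt^d_{\sO_P}(\sO_Y,\omega_{P/X})$. Multiplying by $\bar a$ (which, by linearity and the dependence of the symbol only on the class mod $\sI$, equals $\left[\begin{smallmatrix} a\,dt_1\wedge\cdots\wedge dt_d \\ t_1,\dots,t_d\end{smallmatrix}\right]$) and passing through $d_i$, one lands in $i^!(\omega_{P/X}[d]) = i^!\pi^\sharp\sO_X$. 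Then $i^!(e_\pi)$ and $c_{i,\pi}$ bring us to $f^!\sO_X$, and finally $\Tr_f = \Tr_\pi \circ \RR\pi_*(\Tr_i)$ (using transitivity of the trace, \cite[3.4.3 (TRA1)]{Co00}) applied to this class. The key point to verify is that this composite $\Tr_\pi \circ \RR\pi_*(\Trf_i)$ applied to the symbol $\left[\begin{smallmatrix} a\,dt_1\wedge\cdots\wedge dt_d \\ t_1,\dots,t_d\end{smallmatrix}\right]$ — viewed in $\RR\pi_*(i_* i^\flat(\omega_{P/X}[d]))$ via the local cohomology identification $\sExt^d_{\sO_P}(\sO_Y, -) \riso \sH^d_Y(-)$ and the finite trace $\Trf_i : i_*\RR\sHom_{\sO_P}(\sO_Y,-) \to \Id$ — is exactly the definition of $\Res_{P/X}$ as spelled out in \cite[(A.1.4)]{Co00}. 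That definition of the residue symbol is: compose the canonical map $\sExt^d_{\sO_P}(\sO_Y,\omega_{P/X}) \to H^d_Y(P,\omega_{P/X}) \subset R^d\pi_* \omega_{P/X}$ with the projective/smooth trace $\gamma : R^d\pi_*\omega_{P/X} \riso \sO_X$ (up to Conrad's sign conventions). So the verification reduces to checking that the sign built into $\Trs_f$ via $\lambda_f$, $\eta_i$, $e_\pi$, $c_{i,\pi}$ matches the sign $\varepsilon(d)$ built into Conrad's residue symbol; this is precisely the kind of bookkeeping already carried out in the proof of Proposition \ref{Taugamma} and in the discussion on \cite[p.~252--254]{Co00}.

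The cleanest route is probably to invoke Proposition \ref{Taugamma}, which already gives $\tau_f = \Trp_\pi \circ \RR\pi_*(\gamma_f)$ with $\gamma_f$ constructed from $\eta_i^{-1}\circ\zeta'_{i,\pi}(\delta_f) = \left[\begin{smallmatrix} dt_1\wedge\cdots\wedge dt_d \\ t_1,\dots,t_d\end{smallmatrix}\right]$ (Proposition \ref{DeltaFLI}). Then, by $\sO_P$-linearity of $\RR\pi_*(\gamma_f)$ over $\sO_X$-modules and the fact that $\gamma_f$ factors through $\sExt^d_{\sO_P}(\sO_Y,\omega_{P/X})$, applying $\gamma_f$ to $\bar a$ gives the class $\beta_\sI\bigl(\left[\begin{smallmatrix} a\,dt_1\wedge\cdots\wedge dt_d \\ t_1,\dots,t_d\end{smallmatrix}\right]\bigr) \in \sH^d_Y(\omega_{P/X})$, pushed into $\Omega^d_{P/X}[d]$; then $\Trp_\pi = \Trp_{\pi,1}$, which by Proposition \ref{CompTrp} agrees with Conrad's $\Trp_\pi$ for $\sO_X$, sends the resulting $R^d\pi_*$-class to $\sO_X$. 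Comparing this with the definition \cite[(A.1.4)]{Co00} of $\Res_{P/X}$ as the composition $\sExt^d_{\sO_P}(\sO_Y,\omega_{P/X}) \to H^d_Y(P,\omega_{P/X}) \to R^d\pi_*\omega_{P/X} \xrightarrow{\gamma} \sO_X$ yields \eqref{taures} directly.

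\textbf{Main obstacle.} The only real difficulty is the sign reconciliation: Conrad's residue symbol \cite[(A.1.4)]{Co00} and our trace morphism $\tau_f$ are each defined with a chain of normalizing signs (the factor $(-1)^{d(d+1)/2}$ in $\psi_{\mbt,\sM}$ from \ref{KoszulToExt}, the $(-1)^d$ and $(-1)^{d^2}$ appearing in the proof of Proposition \ref{DeltaFLI}, the $(-1)^{d(d-1)/2}$ in the definition of $\Trp_{\pi,n}$, and Conrad's $\varepsilon(d)$ on \cite[p.~252--254]{Co00}), and one must check that they cancel correctly. Since $\tau_f$ was built precisely so that Proposition \ref{DeltaFLI} holds with the stated (sign-free on the right) formula, and since \cite[(A.1.4)]{Co00} is Conrad's own packaging of the residue symbol in terms of the same fundamental local isomorphism $\eta_i$ and the same projective-space trace, the signs do match — but spelling this out carefully, by tracking the symbol through \eqref{deflambdag} and through \cite[(A.1.4)]{Co00} term by term, is the part that requires care rather than ingenuity.
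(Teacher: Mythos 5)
Your proposal is correct in outline, but it takes a genuinely more computational route than the paper, and the point where the two diverge is worth flagging. The paper's proof never passes through local cohomology or the projective-space trace at all: it unwinds $\tau_f(\bar a)$ exactly as you do in your first route — via $\zeta'_{i,\pi}(\delta_f)=(-1)^{d(d-1)/2}\,t_1^{\vee}\wedge\cdots\wedge t_d^{\vee}\otimes i^*(dt_1\wedge\cdots\wedge dt_d)$ and the chain $c_{i,\pi}^{-1}\circ i^!(e_\pi)\circ d_i\circ\eta_i^{-1}$ followed by $\Tr_f$ — but then it compares this with Conrad's \emph{actual} definition \cite[(A.1.4)]{Co00} of the residue symbol, namely $(-1)^{d(d-1)/2}$ times evaluation at $1$ of the image of the same element under $d_f\circ\psi_{i,\pi}^{-1}\circ\eta_i^{-1}$, where $\psi_{i,\pi}: f^\flat\riso i^\flat\pi^\sharp$ is the canonical isomorphism and $\Trf_f$ is evaluation at $1$. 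The whole proposition then collapses to the single identity $d_f\circ\psi_{i,\pi}^{-1}=c_{i,\pi}^{-1}\circ i^!(e_\pi)\circ d_i$, which is precisely Conrad's compatibility (VAR5) \cite[(3.3.26)]{Co00}; no sign-chasing beyond the one factor $(-1)^{d(d-1)/2}$ (common to both sides) is needed. Your "cleanest route" via Proposition \ref{Taugamma}, $\sH^d_Y(\omega_{P/X})$ and $\Trp_{\pi}$ is workable, but it silently substitutes for \cite[(A.1.4)]{Co00} the alternative description of the residue as the composite $\sExt^d_{\sO_P}(\sO_Y,\omega_{P/X})\to H^d_Y(P,\omega_{P/X})\to R^d\pi_*\omega_{P/X}\xrightarrow{\gamma}\sO_X$; that identification is true but is itself a nontrivial step carrying Conrad's sign $\varepsilon(d)$ (it is essentially the computation the paper performs separately in the proof of Proposition \ref{basechangeTrs}, case a)), so your route trades the one-line appeal to (VAR5) for an extra explicit comparison. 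In short: what you do would succeed, but the missing observation that makes the paper's argument short is that (VAR5) is exactly the statement that the two composites of canonical isomorphisms agree.
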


\begin{proof}
Let $\omega = a\,dt_1\wedge\ldots\ldots\wedge dt_d$. By construction, the residue 
symbol is given by
\eq{ressymb}{ \Res_{P/X} \left[\begin{array}{c} 
\omega \\ t_1,\ldots,t_d 
\end{array}\right] = (-1)^{d(d-1)/2}\;\varphi_{\omega}(1), }
where $\varphi_{\omega} : f_*\sO_Y \to \sO_X$ is the image of 
$(t_1^{\vee}\wedge\ldots\wedge t_d^{\vee})\otimes i^*(\omega)$ by the 
isomorphism of complexes concentrated in degree $0$ \cite[(A.1.3)]{Co00}
\eq{resiso}{ \omega_{Y/P}[-d]\otimesL_{\sO_Y} \LL i^*(\omega_{P/X}[d]) 
\xrightarrow[\sim]{\eta_i^{-1}} i^{\flat}\pi^{\sharp}\sO_X 
\xrightarrow[\sim]{\psi_{i,\pi}^{-1}} f^\flat\sO_X; }
here $f^\flat\sO_X = \sHom_{\sO_X}(f_*\sO_Y,\sO_X)$ viewed as a $\sO_Y$-module, and
$\psi_{i,\pi}$ is the canonical isomorphism of functors $f^\flat \riso
i^\flat\pi^\sharp$. Since $\Trf_{f}$ is the morphism
\linebreak$f_*\sHom_{\sO_X}(f_*\sO_Y,\sO_X) \to \sO_X$ given by evaluation at $1$, we
can use the isomorphism $d_f : f^\flat \riso f^!$ and the equality $\Tr_f\circ f_*(d_f)
= \Trf_f$ \cite[3.4.3, (TRA2)]{Co00} to write
\eq{res}{ \Res_{P/X} \left[\begin{array}{c} 
\omega \\ t_1,\ldots,t_d \end{array}\right] = 
(-1)^{d(d-1)/2}\;\Tr_f(f_*(d_f\circ\psi_{i,\pi}^{-1}\circ\eta_i^{-1})
(t_1^{\vee}\wedge\ldots\wedge t_d^{\vee}\otimes i^*(\omega))). 
}

On the other hand, we have by definition
\eqn{\zeta'_{i,\pi}(\delta_f) = (-1)^{d(d-1)/2}\;
t_1^{\vee}\wedge\ldots\wedge t_d^{\vee} \otimes 
i^*(dt_1\wedge\ldots\ldots\wedge dt_d),}
so we deduce from \eqref{phitau1} the equality
\begin{eqnarray*} 
\tau_f(\bar{a}) & = & \Tr_f(f_*(\lambda_f\circ\varphi_f)(\bar{a})) \\
& = & (-1)^{d(d-1)/2}\;\Tr_f(f_*(c_{i,\pi}^{-1}\circ i^!(e_{\pi})\circ d_i \circ 
\eta_i^{-1})(t_1^{\vee}\wedge\ldots\wedge t_d^{\vee}\otimes i^*(\omega))).
\end{eqnarray*}
Therefore, it suffices to check that
\eqn{d_f\circ\psi_{i,\pi}^{-1} = c_{i,\pi}^{-1}\circ i^!(e_{\pi})\circ d_i,}
and this results from (VAR5) \cite[(3.3.26)]{Co00}.
\end{proof}

\subsection{}\label{Prooftau}
\textit{Proof of Theorem \ref{Thtau}.}

\romain The transitivity formula \eqref{transtau} is the equality of the exterior 
composites in the diagram
\eqn{ \xymatrix@C=58pt{
\RR f_* \RR g_* \sO_Z \ar[r]^-{\RR f_* \RR g_*(\varphi_g)} 
\ar[d]!<0ex,2ex>^{\RR f_* \RR g_*(\varphi_{fg})}
& \RR f_* \RR g_* \omega_{Z/Y} \ar@/^2.5pc/[rddd]^-{\RR f_*(\Trs_g)}
\ar[d]!<0ex,3.5ex>_(.65){\RR f_* \RR g_*(\Id\otimes\LL g^*(\varphi_{f}))}\\
\entry{\RR f_* \RR g_* \omega_{Z/X}} \ar[r]^-{\RR f_* \RR g_*(\zeta'_{g,f})}
\ar[dd]^-{\Trs_{fg}}
& \entry{\RR f_* \RR g_*(\omega_{Z/Y} \otimesL_{\sO_Z} \LL g^*\omega_{Y/X})}
\ar[d]_-{\sim}\\
 & \RR f_*((\RR g_*\omega_{Z/Y}) \otimesL_{\sO_Y} \omega_{Y/X})
\ar[d]_-{\RR f_*(\Trs_g\otimes\Id)} \\
 \sO_X & \RR f_*\omega_{Y/X} \ar[l]_-{\Trs_f} & 
\RR f_*\sO_Y, \ar[l]_-{\RR f_*(\varphi_f)}
}} 
where the upper left square commutes thanks to \eqref{transdelta}, the lower left 
square is the commutative square \eqref{transTrs}, and the right triangle commutes by 
functoriality.

\romain Thanks to Proposition \ref{basechangeTrs} and to the description \eqref{phitau2}
of $\tau_f$, the assertion follows from the compatibility of the canonical section
$\delta_f$ with Tor-independent pull-backs (proved in Proposition \ref{propdelta} (ii))
and the functoriality of the base change morphism.

\romain To prove \eqref{tautrace}, it suffices to prove that the equality holds in the
henselization $\sO_{X,x}^{\mathrm{h}}$ of the local ring of $X$ at each point $x$. As
the morphism $\Spec\sO_{X,x}^{\mathrm{h}} \to X$ is residually stable
\cite[p.~132]{Co00}, Proposition \ref{basechangeTrs} and the commutation with base
change of the classical trace map for the finite locally free algebra $f_*\sO_Y$ allow
to assume that $X = \Spec A$, where $A$ is a henselian noetherian local ring. Then $Y$
is a disjoint union of open subschemes $Y_i = \Spec B_i$, where $B_i$ is a finite local
algebra over $A$. Each of the morphisms $Y_i \to X$ is a complete intersection morphism
of virtual relative dimension $0$ (since this is a local condition on $Y$), and the
additivity of the trace (valid both for $\Tr_f$, hence for $\tau_f$, and for
$\trace_{f_*\sO_Y/\sO_X}$) shows that it suffices to prove \eqref{tautrace} for each
morphism $Y_i \to X$. So we may assume that $B$ is local. We can choose a presentation
$B \cong C/I$, where $C$ is a smooth $A$-algebra, and $I$ is an ideal in $C$. Let $P =
\Spec C$, $\sI = I\sO_P$, and let $y \in Y \subset P$ be the closed
point. Then $\sI_y$ is generated by a regular sequence $(t_1,\ldots,t_d)$.
Shrinking $P$ if necessary, we may assume that $t_1,\ldots,t_d$ generate $\sI$ globally
on $P$, so that the hypotheses of \ref{Taures} are satisfied. Then \eqref{tautrace}
follows from \eqref{taures} and from property (R6) of the residue symbol
\cite[p.~240]{Co00}.
\hfill$\Box$


\begin{thebibliography}{99999}

\bibitem[Ax64]{Ax64} J.~Ax, \textit{Zeroes of polynomials over finite fields}, Amer. 
Journal of Math. \textbf{86} (1964), 255--261.

\bibitem[Be74]{Be74} P.~Berthelot, \textit{Cohomologie cristalline des sch\'emas de 
caract\'eristique $p$}, Lecture Notes in Math. \textbf{407}, Springer-Verlag (1974).

\bibitem[BBE07]{BBE07} P.~Berthelot, S.~Bloch, H.~Esnault, \textit{On Witt vector
cohomology for singular varieties}, Compositio Math. \textbf{143} (2007), 363--392.

\bibitem[BO78]{BO78} P.~Berthelot, A.~Ogus, \textit{Notes on crystalline cohomology}, 
Mathematical Notes \textbf{21}, Princeton University Press (1978). 

\bibitem[Bo70]{Bo70} N.~Bourbaki, \textit{Alg\`ebre}, Chapitres 1--3, Springer-Verlag
(1970).

\bibitem[Co00]{Co00} B.~Conrad, \textit{Grothendieck Duality and Base Change}, Lecture 
Notes in Math. \textbf{1750}, Springer-Verlag (2000).

\bibitem[CT03]{CT03} B.~Chiarellotto, N.~Tsuzuki, \textit{Cohomological descent of 
rigid cohomology for \'etale coverings}, Rend. Sem. Mat. Univ. Padova 
\textbf{109} (2003), 63--215.

\bibitem[De71]{De71} P.~Deligne, \textit{Th\'eorie de Hodge : II}, Publ. Math. Inst. 
Hautes \'Etudes Sci. \textbf{40} (1971), 5--57.

\bibitem[De74]{De74} P.~Deligne, \textit{Th\'eorie de Hodge : III}, Publ. Math. Inst. 
Hautes \'Etudes Sci. \textbf{44} (1974), 5--77.

\bibitem[De83]{De83} P.~Deligne, \textit{Int\'egration sur un cycle \'evanescent}, 
Inv. Math. \textbf{76} (1983), 129--143.

\bibitem[dJ96]{dJ96} A.~J.~de~Jong, \textit{Smoothness, semi-stability and 
alterations}, Publ. Math. Inst. Hautes \'Etudes Sci. \textbf{83} (1996), 51--93.

\bibitem[Dw64]{Dw64} B.~Dwork, \textit{On the Zeta Function of a Hypersurface: II}, 
Annals of Math. \textbf{80} (1964), 227--299.

\bibitem[Es06]{Es06} H.~Esnault, \textit{Deligne's integrality theorem in unequal 
characteristic and rational points over finite fields}, Annals of Math. \textbf{164} 
(2006), 715--730. 

\bibitem[Ek84]{Ek84} T.~Ekedahl, \textit{On the multiplicative properties of the de 
Rham-Witt complex I}, Arkiv F\"or Matematik \textbf{22} (1984), 185--239.

\bibitem[El78]{El78} F.~El~Zein, \textit{Complexe dualisant et applications \`a la 
classe fondamentale d'un cycle}, M\'em. Soc. Math. France \textbf{58} (1978), 5--66. 

\bibitem[Fo94]{Fo94} J.-M.~Fontaine, \textit{Repr\'esentations $p$-adiques
semi-stables}, in \textit{P\'eriodes $p$-adiques}, S\'eminaire de Bures 1988,
Ast\'erisque \textbf{223} (1994), 113--184.

\bibitem[Gr69]{Gr69} A.~Grothendieck, \textit{Hodge's general conjecture is false for 
trivial reasons}, Topology \textbf{8} (1969), 299--303. 

\bibitem[Gs85]{Gs85} M.~Gros, \textit{Classes de Chern et classes de cycles en 
cohomologie de Hodge-Witt logarithmique}, M\'em. Soc. Math. France \textbf{21} 
(1985). 

\bibitem[Ha66]{Ha66} R. Hartshorne, \textit{Residues and duality}, Lecture Notes in
Math. \textbf{20}, Springer-Verlag (1966).

\bibitem[Hy91]{Hy91} O.~Hyodo, \textit{On the de Rham-Witt complex attached to a
semi-stable family}, Compositio Math. \textbf{78} (1991), 241--260.

\bibitem[HK94]{HK94} O.~Hyodo, K.~Kato, \textit{Semi-stable reduction and crystalline
cohomology with logarithmic poles}, in \textit{P\'eriodes $p$-adiques}, S\'eminaire de
Bures 1988, Ast\'erisque \textbf{223} (1994), 221--268.

\bibitem[Il71]{Il71} L.~Illusie, \textit{Complexe cotangent et d\'eformations I}, 
Lecture Notes in Math. \textbf{239}, Springer-Verlag (1971). 

\bibitem[Il79]{Il79} L.~Illusie, \textit{Complexe de de Rham-Witt et cohomologie 
cristalline}, Ann. Scient. \'Ec. Norm. Sup. \textbf{12} (1979), 501--661.

\bibitem[Il90]{Il90} L.~Illusie, \textit{Ordinarit\'e des intersections compl\`etes 
g\'en\'erales}, in \textit{The Grothendieck Festschrift}, vol. II, Birkh\"auser 
(1990), 375--405.

\bibitem[Il06]{Il06} L.~Illusie, \textit{Miscellany on traces in $\ell$-adic 
cohomology : a survey}, Japan. J. of Math. \textbf{1} (2006), 107-136.

\bibitem[IR83]{IR83} L.~Illusie, M. Raynaud, \textit{Les suites spectrales associ\'ees
au complexe de de Rham-Witt}, Publ. Math. Inst. Hautes \'Etudes Sci. \textbf{57} (1983),
73--212.

\bibitem[Ka89]{Ka89} K.~Kato, \textit{Logarithmic structures of Fontaine-Illusie}, in 
\textit{Algebraic analysis, geometry and number theory}, John Hopkins Univ. Press (1989), 
191--224.

\bibitem[KM76]{KM76} F.~Knudsen, D.~Mumford, \textit{The projectivity of the moduli space of 
stable curves I - Preliminaries on ``det'' andÊ``Div''}, Math. Scand. \textbf{39} (1976), 
19--55. 

\bibitem[Kz71]{Kz71} N.~Katz, \textit{On a theorem of Ax}, Amer. Journ. Math. 
\textbf{93} (1971), 485--499.

\bibitem[Lo02]{Lo02} P.~Lorenzon, \textit{Logarithmic Hodge-Witt Forms and Hyodo-Kato 
Cohomology}, J. of Algebra \textbf{249} (2002), 247-265. 

\bibitem[LZ04]{LZ04} A.~Langer, T.~Zink, \textit{De Rham-Witt cohomology for a proper and
smooth morphism}, J. Inst. Math. Jussieu \textbf{3} (2004), 231--314.

\bibitem[Ma72]{Ma72} B.~Mazur, \textit{Frobenius and the Hodge filtration}, Bull. 
Amer. Math. Soc. \textbf{78} (1972), 653--667.

\bibitem[Ma73]{Ma73} B.~Mazur, \textit{Frobenius and the Hodge filtration 
(estimates)}, Annals of Math. \textbf{98} (1973), 58--95.

\bibitem[Na09]{Na09} Y.~Nakkajima, \textit{Weight filtration and slope filtration on the 
rigid cohomology of a variety}, preprint (2009). 

\bibitem[Ol07]{Ol07} M.~Olsson, \textit{Crystalline cohomology of algebraic stacks and 
Hyodo-Kato cohomology}, Ast\'erisque \textbf{316} (2008).

\bibitem[Ra70]{Ra70} M.~Raynaud, \textit{Sp\'ecialisation du foncteur de Picard}, 
Publ. Math. Inst. Hautes \'Etudes Sci. \textbf{38} (1970), 27--76. 

\bibitem[Se58]{Se58} J.-P.~Serre, \textit{Sur la topologie des vari\'et\'es 
alg\'ebriques en caract\'eristique $p$}, Symp. Int. Top. Alg. Mexico (1958), 24--53.

\bibitem[SGA 1]{SGA 1} \textit{Rev\^etements \'etales et groupe fondamental}, 
S\'eminaire de G\'eom\'etrie Alg\'ebrique du Bois-Marie 1960--1961 (SGA
1), dirig\'e par A.~Grothendieck. Lecture Notes in Math. \textbf{224}, 
Springer-Verlag (1971).

\bibitem[SGA 4 II]{SGA 4 II} \textit{Th\'eorie des topos et cohomologie \'etale des 
sch\'emas. Tome 2}, S\'eminaire de G\'eom\'etrie Alg\'ebrique du Bois-Marie 1963--1964 (SGA
4), dirig\'e par M.~Artin, A.~Grothendieck et J.-L.~Verdier, avec la collaboration de 
N.~Bourbaki, P.~Deligne et B.~Saint-Donat. Lecture Notes in Math. \textbf{270}, 
Springer-Verlag (1972).

\bibitem[SGA 4$\half$]{SGA 412} P.~Deligne, \textit{Cohomologie \'etale}, Lecture 
Notes in Math. \textbf{569}, Springer-Verlag (1977). 

\bibitem[SGA 6]{SGA 6} \textit{Th\'eorie des intersections et th\'eor\`eme de
Riemann-Roch}, S\'eminaire de G\'eom\'etrie Alg\'ebrique du Bois-Marie 1966--1967 (SGA
6), dirig\'e par P. Berthelot, A. Grothendieck et L. Illusie, avec la collaboration de
D. Ferrand, J. P. Jouanolou, O. Jussila, S. Kleiman, M. Raynaud et J. P. Serre. Lecture
Notes in Math. \textbf{225}, Springer-Verlag (1971).

\bibitem[SGA 7 II]{SGA 7 II} \textit{Groupes de monodromie en g\'eom\'etrie
alg\'ebrique. II}, S\'eminaire de G\'eom\'etrie Alg\'ebrique du Bois-Marie 1967--1969
(SGA 7), dirig\'e par P. Deligne et N. Katz. Lecture Notes in Math. \textbf{340},
Springer-Verlag (1973).

\bibitem[Ts98]{Ts98} T.~Tsuji, \textit{$p$-adic Hodge Theory in the
Semi-Stable Reduction Case}, Proceedings of the International Congress
of Mathematicians, Vol. II (Berlin, 1998). Doc. Math. 1998, Extra Vol.
II, 207--216.

\bibitem[Ts99]{Ts99} T.~Tsuji, \textit{$p$-adic \'etale cohomology and 
crystalline cohomology in the semi-stable reduction case}, 
Inventiones Math. \textbf{137} (1999), 233--411.

\bibitem[Tz04]{Tz04} N.~Tsuzuki, \textit{Cohomological descent in rigid cohomology}, 
in \textit{Geometric aspects of Dwork theory}, edited by A.~Adolphson 
et alii, vol.~2, De Gruyter (2004), 931--981.

\end{thebibliography}
\end{document}